\newcommand{\R}{\mathbb{R}}
\newcommand{\Z}{\mathbb{Z}}
\newcommand{\N}{\mathbb{N}}
\newcommand{\geps}{\Gamma_{\epsilon}}
\newcommand{\veps}{v_{\epsilon}}
\newcommand{\ueps}{u_{\epsilon}}
\newcommand{\oem}{\Omega_{\epsilon}^M}
\newcommand{\U}{\mathcal{U}}
\newcommand{\tueps}{\tilde{u}_{\epsilon}}
\newcommand{\sepm}{S_{\epsilon}^{\pm}}
\newcommand{\x}{\bar{x}}
\newcommand{\fxe}{\dfrac{x}{\epsilon}}
\newcommand{\hZ}{\widehat{Z}}
\newcommand{\E}{\mathcal{E}}
\newcommand{\e}{\mathfrak{e}}
\newcommand{\oems}{\Omega_{\epsilon}^{M,s}}
\newcommand{\foe}{\dfrac{1}{\epsilon}}
\newcommand{\bxfxe}{\left(\bar{x},\dfrac{x}{\epsilon}\right)}
\newcommand{\reps}{r^{\epsilon}}
\newcommand{\treps}{\tilde{r}^{\epsilon}}
\newcommand{\phieps}{\phi_{\epsilon}}
\newcommand{\rats}{\overset{t.s.}{\longrightarrow}}
\newcommand{\y}{\bar{y}}
\newcommand{\tf}{\tilde{f}}
\newcommand{\hueps}{\widehat{u}_{\epsilon}}
\newcommand{\bphi}{\bar{\phi}}
\newcommand{\Reps}{\mathcal{R}_{\epsilon}}
\renewcommand{\reps}{r_{\epsilon}}
\renewcommand{\div}{\mathrm{div}}
\newcommand{\gr}{>}
\newcommand{\kl}{<}
\newtheorem{definition}{Definition}
\newtheorem{remark}{Remark}
\newtheorem{theorem}{Theorem}
\newtheorem{proposition}{Proposition}
\newtheorem{lemma}{Lemma}
\newtheorem{corollary}{Corollary}
\title{Two-scale tools for homogenization and dimension reduction of perforated thin layers:  Extensions, Korn-inequalities, and two-scale compactness of scale-dependent sets in Sobolev spaces}
\date{}
\author{M. Gahn$^{*}$, W. J\"ager$^{*}$, M. Neuss-Radu$^{**}$}
\begin{document}

\maketitle

\begin{abstract}
This investigation develops basic methods  for the multi-scale analysis for problems in  thin porous layers. More precisely, we provide tools for the homogenization in case of "tangentially” periodic structures and dimensional reduction letting the layer thickness tend to zero proportional to the scale parameter $\epsilon$. A crucial point is the identification of
 scale limits of functions $\veps$ in subsets of function spaces characterized by uniform \textit{a priori} estimates with respect to $\epsilon$, arising for solutions of differential equations in heterogeneous media with thin layers, e.\,g. of a Navier-Stokes system, models in linear elasticity, or problems with fluid-structure interaction. Often in problems from continuum mechanics, 
 in a first step, the symmetric gradients of arising vector fields can be controlled and Korn´s inequality in porous layers is required  to estimate the gradients, such that crucial constants do not depend on $\epsilon$.
Controllable pore-filling extension are constructed and, thus, the analysis is reduced to a fixed basic domain. 
%The extensions can be controlled sufficiently well, using a proper rescaling of the relevant vector fields.
The proof of the required Korn-inequalities for porous thin layers, formulated with respect to $L^p$-spaces, is based on these constructions. Also, the investigation of compactness with respect to two-scale convergence  and the characterization of the scale limits is strongly based on the extension theorem and the Korn-inequalities.
To illustrate the range of applications of the developed analytic multiscale method a semi-linear elastic wave equation in a thin periodically perforated layer with an inhomogeneous Neumann boundary condition on the surface of the elastic substructure is treated and an homogenized, reduced system is derived.

\end{abstract}

\let\thefootnote\relax\footnotetext{$^{*}$Interdisciplinary Center for Scientific Computing, University of Heidelberg, Im Neuenheimer Feld
	205, 69120 Heidelberg, Germany, markus.gahn@iwr.uni-heidelberg.de; wjaeger@iwr.uni-heidelberg.de.
	
	\vspace{.5mm}
	$^{**}$Department Mathematik, Friedrich-Alexander-Universität Erlangen-Nürnberg, Cauerstr. 11, 91058 Erlangen, Germany, maria.neuss-radu@math.fau.de }

\section{Introduction}

In this paper we develop general multiscale methods for homogenization and dimension reduction for thin perforated layers, which are in particular important for problems in continuum mechanics like linear elasticity and fluid dynamics. The periodicity and the thickness of the layer is of order $\epsilon$, where the parameter $\epsilon$ is small compared to the length of the layer.
We have to cope with a simultaneous scale transition for the homogenization of the "tangentially" periodic structure and the dimension reduction when the thickness of the layer reduces to zero. For this we derive two-scale compactness results for $\epsilon$-dependent sets in Sobolev spaces. These compactness results depend on uniform \textit{a priori} estimates with respect to $\epsilon$ in Sobolev norms. However, for problems in continuum mechanics, usually  only the symmetric gradient of arising vector fields can be controlled in a first step. To obtain estimates for the $W^{1,p}$-norms, we derive  Korn-inequalities for porous thin layers with explicit dependence on the scaling parameter $\epsilon$.  For the proof we construct an extension operator for Sobolev functions from the perforated layer to the whole layer, where the extension, and especially its symmetric gradient, can be controlled sufficiently well with respect to $\epsilon$. Such extension operators are of crucial importance in homogenization theory, because they allow to reduce the analysis to fixed basis domains (independent of the scaling parameter). Especially, our two-scale compactness results and the characterization of the scale limits is strongly based on the extension operator. Further, they are important for the treatment of nonlinear problems, since they allow the application of compact embeddings  from functional analysis.
\\
In the homogenization theory it is well-known, see \cite{Acerbi1992,CioranescuSJPaulin}, that for a  periodically perforated connected Lipschitz-domain $\Omega_{\epsilon}$, every function $\ueps \in W^{1,p}(\Omega_{\epsilon})$ can be extended to a function $\tueps \in W^{1,p}(\Omega)$, such that
\begin{align*}
\Vert \tueps \Vert_{L^p(\Omega)} \le C \Vert \ueps \Vert_{L^p(\Omega_{\epsilon})}, \qquad \Vert \nabla \tueps \Vert_{L^p(\Omega)} \le C \Vert \nabla \ueps \Vert_{L^p(\Omega_{\epsilon})},
\end{align*}
with a constant $C\gr 0$ independent of $\epsilon$. 
In problems in continuum mechanics, like linear elasticity or the Navier-Stokes-equations, we usually obtain a bound for the symmetric gradient. Hence, the Korn-inequality guarantees the boundedness of the $W^{1,p}$-norm of the function. However, a critical question is in which way the constant in the Korn-inequality depends on $\epsilon$. In this paper we construct an extension operator for $W^{1,p}$-functions defined in a perforated thin layer to the whole layer, such that the symmetric gradient of the extension can be controlled by the symmetric gradient of the function itself uniformly with respect to $\epsilon$. More precisely, for the perforated layer $\oems$ and for  fields $\ueps \in W^{1,p}(\oems)^n$ we construct an extension $\hueps \in W^{1,p}(\oem)^n$ to the whole (thin) layer $\oem$, which in particular fulfills
\begin{align*}
\Vert D(\hueps) \Vert_{L^p(\oem)} \le C \Vert D(\ueps)\Vert_{L^p(\oems)},
\end{align*}
where $D$ denotes the symmetric gradient. Now, using a simple rescaling argument for the thin layer $\oem$, the extension $\hueps$ can be treated as a vector field on a fixed domain with standard embedding results. The key idea of the proof is to use a decomposition argument for the thin perforated layer, see \cite{Acerbi1992} for periodically perforated domains, and a local extension operator preserving the bound for the symmetric gradient. The latter one is obtained by applying the local extension operator from \cite{Acerbi1992} to a vector field modified with a rigid-displacement, which allows to use a Korn-inequality. The extension operator for the perforated layer is the basis for the derivation of Korn-inequalities in the thin porous layer and the two-scale compactness results.
\\
While there exists a huge literature on Korn-inequalities on (fixed) domains \cite{ciarlet1988mathematical,duvant2012inequalities,Necas,Oleinik1992},   and also thin layers \cite{chechkin2000weighted,ciarlet1997mathematical,lewicka2011uniform,zhikov2004korn}, such results are rare for perforated thin layers. Here we have to mention the work \cite{griso2020homogenization} where a Korn-inequality for a thin porous layer (not necessarily with Lipschitz-boundary) similar to our result is shown by using a decomposition of the layer with an interpolation argument applied to a decomposition for a vector field introduced in \cite{griso2008decompositions}. However, the situation seems to be slightly different compared to our setting, since in \cite{griso2020homogenization} the Korn-inequality is proved for functions vanishing in a neighborhood of the lateral boundary of $\partial \oems$ and therefore the constructed interpolation vanishes on the entire lateral boundary of the whole layer $\oem$. Since in our case  we consider functions vanishing on the lateral boundary, but not in its neighborhood, we cannot guarantee that the extension is zero on the lateral boundary of the whole layer.
\\
There is a huge literature on the derivation of plate and shell equations, see for example \cite{ciarlet1997mathematical,ciarlet2000theory},  which is usually based on a rescaling argument for the thin layer leading to function spaces on scale independent domains. However, since we are working on $\epsilon$-depending Sobolev spaces on heterogeneous domains,  convergence concepts adapted to this situation are necessary. We use the method of two-scale convergence for thin perforated layers. This method was investigated for periodic domains in \cite{Allaire_TwoScaleKonvergenz,Nguetseng}, and extended to thin homogeneous structures in \cite{MarusicMarusicPalokaTwoScaleConvergenceThinDomains} respectively thin porous layers in \cite{GahnEffectiveTransmissionContinuous,NeussJaeger_EffectiveTransmission}. We prove two-scale compactness results for $W^{1,p}$-functions in thin perforated layers based on \textit{a priori} estimates including bounds for the symmetric gradient. The thin structure of the layer induces a different behavior in tangential and vertical direction, which leads in the two-scale limit to a Kirchhoff-Love displacement.  Further, for $p=2$ we identify the limit of the symmetric gradient, which involves an additional corrector function corresponding to the formal asymptotic expansion. Our proof is based on a periodic Helmholtz-decomposition for symmetric matrix valued functions.
Similar results for $p=2$ can be found in \cite{griso2020homogenization} (with slightly different lateral boundary conditions as mentioned above), where the decomposition for the displacement from \cite{griso2008decompositions} and the unfolding method \cite{CioranescuGrisoDamlamian2018} is used. Further, we have to mention the pioneering work \cite{caillerie1984thin}, which is to our knowledge the first result combining homogenization and dimension reduction for linear elastic problems in a plate. In \cite{Orlik2017} a limit model is derived using the unfolding method for a thin layer made up of thin vertical beams.
\\
As a proof of concept we apply our results to a semi-linear elastic wave equation in a clamped perforated thin layer with an inhomogeneous Neumann  boundary condition on the surface of the elastic substructure. Using standard energy estimates and the Korn-inequality for porous thin layers we derive \textit{a priori} estimates, which allow to apply the two-scale compactness results and pass to the limit $\epsilon \to 0$. For the treatment of the nonlinear term we use our extension operator together with the Aubin-Lions lemma to obtain strong compactness results. In the macroscopic model obtained for $\epsilon \to 0$, the displacement is described by a time-dependent plate equation with homogenized coefficients and including the second time-derivative of the vertical displacement. In the forthcoming paper \cite{GahnJaegerNeussRaduStokesPlate} a more complex problem for fluid-structure interaction for flow through a thin porous elastic layer is treated with the methods developed in this work. 
\\
The paper is organized as follows: In Section \ref{SectionMainResults} we introduce the geometrical setting and formulate the main results of the paper. In Section \ref{SectionExtensionOperator} we construct the extension operator. The Korn-inequality for perforated domains is shown in Section \ref{SectionKornInequality}, and the two-scale compactness results are given in Section \ref{SectionTwoScaleCompactness}. The homogonization of the semi-linear wave equation is done in Section \ref{SectionApplication}, followed by a conclusion in Section \ref{SectionConclusion}. In the Appendix \ref{SectionAppendix} we prove an auxiliary density result for periodic functions with weak divergence.

\section{Statement of the main results}
\label{SectionMainResults}
Let $p \in (1,\infty)$ and $p'$ denotes the dual exponent of $p$.
Let $\Sigma = (a,b)\subset \R^{n-1}$ with $a,b \in \Z^{n-1}$ and $a_i \kl b_i$ for $i=1,\ldots,n-1$. Further we assume that $\epsilon^{-1} \in \N$. We define the reference cell
\begin{align*}
Z := Y\times (-1,1) := (0,1)^{n-1} \times (-1,1),
\end{align*}
 and for $y \in Z$ we use the notation $\y:= (y_1,\ldots,y_{n-1})^T$. We consider a solid reference element $Z^s \subsetneq Z$, see Figure \ref{fig:Cell_Layer}, such that $Z^s $ is open and  connected with Lipschitz-boundary and  for $i=1,\ldots,n-1$
  \begin{align*}
 \mathrm{int} \left(\partial Z^s \cap \{y_i = 0\}\right)+e_i =  \mathrm{int} \left(\partial Z^s \cap \{y_i = 1\}\right).
 \end{align*}
% \begin{align*}
% S_i +e_i =  \mathrm{int} \left(\partial Z^s \cap \{y_i = 1\}\right)
% \end{align*}
% with
%\begin{align*}
%S_i:= \mathrm{int} \left(\partial Z^s \cap \{y_i = 0\}\right) .
%\end{align*}
\begin{figure}[h]
\begin{minipage}[t]{0.45\textwidth}
\centering
\includegraphics[scale=0.3]{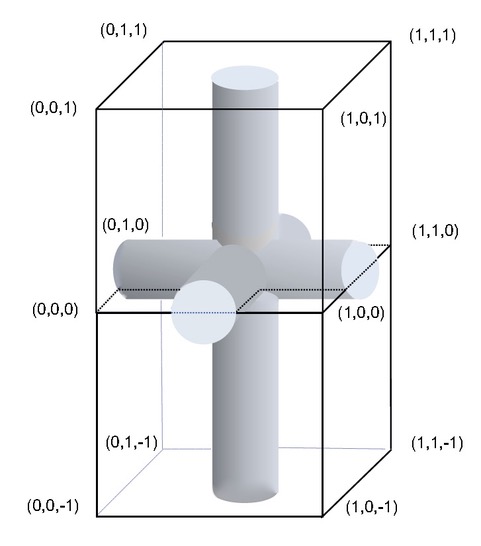}

\end{minipage}
\begin{minipage}[t]{0.49\textwidth}
\includegraphics[scale=0.30]{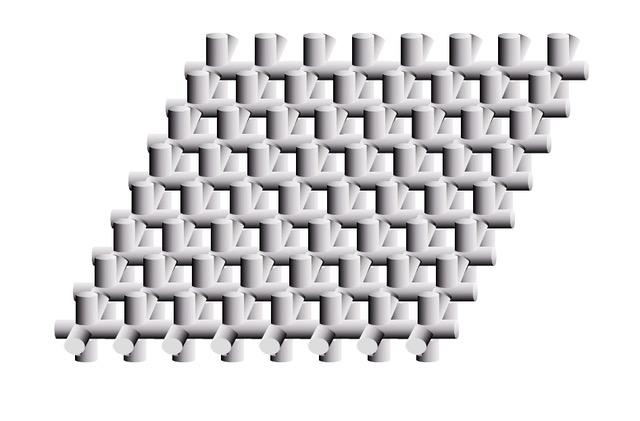}
\end{minipage}
\caption{Left: A solid reference cell $Z^s$ in grey. Right: The perforated layer $\oems$. }
 \label{fig:Cell_Layer}

\end{figure}
%\begin{figure}
%\begin{minipage}[t]{0.45\textwidth}
%\centering
%\includegraphics[scale=0.3]{Grafik/Reference_Cell}
%\caption{The solid reference cell in grey.}
% \label{fig:Cell}
%\end{minipage}
%\begin{minipage}[t]{0.49\textwidth}
%\includegraphics[scale=0.35]{Grafik/Perforated_Layer}
%\caption{The perforated layer $\oems$ for $\epsilon = \frac18$. \textcolor{blue}{(Muss noch anders angeordnet werden.)}}
% \label{fig:Perforated_Layer}
%\end{minipage}
%\end{figure}

Further, we define $\Gamma: = \partial Z^s \setminus (\partial Y \times (-1,1))$. The complete thin layer $\oem$ is defined by
\begin{align*}
\oem := \Sigma \times (-\epsilon,\epsilon),
\end{align*}
and for $x \in \oem$ we write $\x := (x_1,\ldots,n-1)^T$.
We split the boundary of $\oem $ in its upper/lower part 
\begin{align*}
\sepm := \Sigma \times \{\pm 1 \},
\end{align*}
and its lateral part
\begin{align*}
\partial_D \oem:= \partial \Sigma \times (-\epsilon,\epsilon).
\end{align*}
Let $K_{\epsilon}:= \{k \in \Z^{n-1} \times \{0\} \, : \, \epsilon(Z + k) \subset \oem\}$. Clearly, we have $\oem = \mathrm{int}\left(\bigcup_{k\in K_{\epsilon}} \epsilon(\overline{Z} + k)\right)$. Now we define the perforated thin layer, see Figure \ref{fig:Cell_Layer}, by
\begin{align*}
\oems := \mathrm{int} \left(\bigcup_{k \in K_{\epsilon}} \epsilon\left(\overline{Z^s} + k\right)\right),
\end{align*}
and its lateral surface via
\begin{align*}
\partial_D \oems := \partial_D \oem \cap \partial \oems, 
\end{align*}
and its inner surface by
\begin{align*}
\geps := \partial \oems \setminus \overline{\partial_D \oems} = \mathrm{int}\left( \bigcup_{k\in K_{\epsilon}} \epsilon \left(\overline{\Gamma} + k\right)\right).
\end{align*}
By construction we have $\oems$ is connected and we assume that it is a Lipschitz-domain.
\\
For a weakly differentiable vector field $u$ we denote its symmetric gradient by
\begin{align*}
D(u):= \frac12 \left(\nabla u + \nabla u^T \right).
\end{align*}
Further, for a weakly differentiable function $u:\Sigma \rightarrow \R$, we use the notation $\nabla_{\x} u$ for both, a vector in $\R^2$, and also as the trivial embedding in $\R^3$ via $\nabla_{\x} u = (\partial_1 u , \partial_2 u, 0)^T$.
\\
Now, let us  summarize the main results of the paper. First of all, we have the following extension theorem which proof can be found in Section \ref{SectionExtensionOperator}.

\begin{theorem}\label{TheoremExtensionOperator}
There exists an extension operator $E_{\epsilon} : W^{1,p}(\oems)^n \rightarrow W^{1,p}(\oem)^n$, such that for all $\veps \in W^{1,p}(\oems)^n$ it holds that ($i=1,\ldots,n$)
\begin{align*}
\Vert (E_{\epsilon}\veps)^i \Vert_{L^p(\oem)} &\le C \left( \Vert \veps^i \Vert_{L^p(\oems)} + \epsilon \Vert \nabla \veps \Vert_{L^p(\oems)} \right),
\\
\Vert \nabla E_{\epsilon}\veps \Vert_{L^p(\oem)} &\le  C \Vert \nabla \veps \Vert_{L^p(\oems)},
\\
\Vert D(E_{\epsilon}\veps) \Vert_{L^p(\oem)} &\le C \Vert D(\veps)\Vert_{L^p(\oems)},
\end{align*}
for a constant $C\gr 0$ independent of $\epsilon$. 
%If additionally $\veps = 0$ on $\partial_D \oems$,  it holds that
%\begin{align*}
%\Vert E_{\epsilon} \veps \Vert_{L^p(\partial \Sigma \times (-\epsilon,\epsilon))} \le C\sqrt{\epsilon} \Vert D(\veps)\Vert_{L^p(\oems)}.
%\end{align*}
\end{theorem}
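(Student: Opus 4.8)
The plan is to follow the classical decomposition strategy of \cite{Acerbi1992} adapted to the thin-layer geometry, with the crucial modification that on each periodicity cell the local extension is applied not to $\veps$ itself but to $\veps$ minus its best-approximating rigid displacement, so that Korn's inequality on the fixed reference cell converts the resulting bound into a bound on the symmetric gradient. First I would fix a single cell $Z = Y \times (-1,1)$ and, using that $Z^s$ is an open connected Lipschitz subdomain with the matching-faces property, invoke the extension operator of \cite{Acerbi1992} to obtain a bounded linear $E : W^{1,p}(Z^s)^n \to W^{1,p}(Z)^n$ with $\Vert E w \Vert_{W^{1,p}(Z)} \le C \Vert w \Vert_{W^{1,p}(Z^s)}$, the constant depending only on $Z^s$. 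For a rigid displacement $r(y) = Ay + c$ with $A$ skew-symmetric, $Er = r$ on $Z$ (affine functions are extended by themselves after a suitable normalization of the operator, or one simply notes $E(w - r) + r$ is again an extension of $w$). Combining this with the Korn inequality on $Z^s$ — there is a rigid displacement $r_w$ with $\Vert w - r_w \Vert_{W^{1,p}(Z^s)} \le C \Vert D(w) \Vert_{L^p(Z^s)}$ — and applying $E$ to $w - r_w$ yields $\Vert D(E w) \Vert_{L^p(Z)} = \Vert D(E(w-r_w)) \Vert_{L^p(Z)} \le C \Vert w - r_w \Vert_{W^{1,p}(Z^s)} \le C \Vert D(w) \Vert_{L^p(Z^s)}$, all with $Z$-dependent constants only.

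The second step is to assemble these local extensions into a global one on $\oems$ and to track the $\epsilon$-scaling. For $k \in K_\epsilon$ consider the scaled cell $\epsilon(Z + k)$ and its solid part $\epsilon(Z^s + k)$; pulling back by the affine map $y \mapsto \epsilon(y + k)$ transforms $W^{1,p}$-seminorms homogeneously, so the cellwise bound $\Vert D(E_\epsilon \veps) \Vert_{L^p(\epsilon(Z+k))} \le C \Vert D(\veps) \Vert_{L^p(\epsilon(Z^s + k))}$ holds with the \emph{same} constant $C$, because dilation scales both sides by the same power of $\epsilon$ and the symmetric gradient is first-order. Summing the $p$-th powers over $k \in K_\epsilon$ gives the global estimate $\Vert D(E_\epsilon \veps) \Vert_{L^p(\oem)} \le C \Vert D(\veps) \Vert_{L^p(\oems)}$. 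The subtle point here — and where I expect the main obstacle — is \emph{matching on the interfaces between adjacent cells}: the local extensions must agree on the faces $\{y_i = 0\}$ and $\{y_i = 1\}$ so that the glued function lies in $W^{1,p}(\oem)^n$ without a jump. This is exactly why the matching-faces hypothesis $\mathrm{int}(\partial Z^s \cap \{y_i = 0\}) + e_i = \mathrm{int}(\partial Z^s \cap \{y_i = 1\})$ is imposed, and it forces the local extension operator $E$ on $Z$ to be built so that its trace on $\partial Y \times (-1,1)$ depends only on the trace of $w$ on $\Gamma$ near that face in a translation-compatible way — equivalently, one uses the periodic extension construction of \cite{Acerbi1992}, which produces an operator commuting with the gluing. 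Care is needed because here the rigid-displacement correction $r_w$ differs from cell to cell, so one must check that $E(w - r_w) + r_w$ still glues: the jump across a shared face is $(r_{w,k} - r_{w,k'})$ minus the jump of $E(w - r_{w,k}) - E(w - r_{w,k'})$, and one argues these cancel by going back to the untranslated formulation where $w$ is genuinely periodic-compatible across that face; alternatively, one first extends $\veps$ across only the \emph{interior} inter-cell faces trivially (since $\oems$ is already connected through necks, the ambiguity is only in the pores), so that the correction needs to be made consistently only within pore-clusters. I would handle this by following \cite{Acerbi1992} closely: extend first to a slightly enlarged connected set by reflection/partition of unity so that the pore regions are treated one connected pore at a time, each pore carrying a single rigid displacement.

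Finally, the $L^p$ and full-gradient bounds follow by the same cellwise-plus-scaling bookkeeping, now keeping the lower-order terms: from $\Vert E w \Vert_{W^{1,p}(Z)} \le C \Vert w \Vert_{W^{1,p}(Z^s)}$ and the rigid-displacement decomposition one gets $\Vert (E_\epsilon \veps)^i \Vert_{L^p(\epsilon(Z+k))} \le C(\Vert \veps^i \Vert_{L^p(\epsilon(Z^s+k))} + \epsilon \Vert \nabla \veps \Vert_{L^p(\epsilon(Z^s+k))})$ — the extra factor $\epsilon$ on the gradient term arising precisely from the scaling of the Poincaré-type constant on the small cell (the $L^p$ norm of the rigid-displacement correction involves the $L^p$ norm of $w$ plus $\mathrm{diam}(\epsilon Z) = O(\epsilon)$ times the gradient) — and $\Vert \nabla E_\epsilon \veps \Vert_{L^p(\epsilon(Z+k))} \le C \Vert \nabla \veps \Vert_{L^p(\epsilon(Z^s+k))}$ directly. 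Summing $p$-th powers over $K_\epsilon$ yields the three stated inequalities with $\epsilon$-independent $C$. The only genuinely new ingredient beyond \cite{Acerbi1992} is the insertion of Korn on the reference cell to upgrade the gradient bound to a symmetric-gradient bound, and the bookkeeping that the affine scaling keeps all constants frozen; both are routine once the interface-matching of the local operator is in place.
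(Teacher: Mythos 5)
Your overall strategy (Acerbi-type local extension applied to $v$ minus a rigid-displacement correction, Korn on the reference configuration to convert the resulting $W^{1,p}$-bound into a symmetric-gradient bound, then affine rescaling with frozen constants) is exactly the paper's, and your scaling bookkeeping for the three estimates, including the origin of the factor $\epsilon$ in the $L^p$-bound, is right. But the step you yourself flag as "the main obstacle" --- gluing the cell-wise extensions --- is where the proof actually lives, and none of the three fixes you sketch resolves it. The jumps do \emph{not} cancel: on a shared face the two candidate extensions agree on the solid part (both equal $\veps$ there) but differ in the pore by $E(w-r_{w,k})+r_{w,k}-E(w-r_{w,k'})-r_{w,k'}$, and since $r_{w,k}\neq r_{w,k'}$ in general this is a genuine jump; "going back to the untranslated formulation" does not remove it, and extending "trivially across interior inter-cell faces" is not meaningful because the object to be constructed in the pore does not yet exist. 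Your hypothesis that the matching-faces condition forces a trace-compatible local operator is also not how the construction works --- that condition only guarantees the periodic geometry is consistent; no trace-compatible single-cell operator is built.

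What the paper does instead, and what is missing from your argument, is the following. The local operator $\e$ is defined not on one cell but on the enlarged element $\hZ^s$ (a cell together with all its neighbours), via $\e v = \tau(v-M(v)y)+M(v)y$ with $M(v)$ the mean of the antisymmetric gradient; the global extension is then $E\veps=\sum_\alpha \veps^\alpha\phi^\alpha$ with a periodic partition of unity. The price of blending is the cross term $\sum_{e}\nabla\phi^{\alpha+e}\otimes\veps^{\alpha+e}$, which using $\sum_e\nabla\phi^{\alpha+e}=0$ becomes $\sum_e(\veps^{\alpha+e}-\veps^{\alpha})\otimes\nabla\phi^{\alpha+e}$; the decisive point is that $\veps^{\alpha+e}-\veps^{\alpha}$ vanishes on the solid overlap $\hZ^s_\alpha\cap\hZ^s_{\alpha+e}$, so a \emph{second} application of Korn's inequality (Lemma \ref{LemmaKornInequalities}\ref{LemmaKornInequalitiesZeroSubsetBoundary}, for functions vanishing on a boundary portion) controls this difference by $\Vert D(\veps^{\alpha+e})-D(\veps^{\alpha})\Vert_{L^p}$ and hence by $\Vert D(\veps)\Vert_{L^p}$ on neighbouring solid cells. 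Without this second Korn argument the partition-of-unity cross term is only bounded by $L^p$-norms of $\veps$ itself, which after rescaling destroys the estimate $\Vert D(E_\epsilon\veps)\Vert_{L^p(\oem)}\le C\Vert D(\veps)\Vert_{L^p(\oems)}$. This is the concrete missing idea; the rest of your proposal is sound.
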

This extension result is a key ingredient for obtaining the following 
 Korn-inequality for perforated thin layers with zero boundary conditions on $\partial_D \oems$ (For arbitrary boundary conditions see Proposition \ref{KornInequalityGeneralBoundaryRigidDisplacement}). The proof is given in Section \ref{SectionKornInequality} and is a direct consequence of the more general  result in Theorem \ref{KornInequalityScaledMembraneTheorem}.
\begin{theorem}\label{KornInequalityPerforatedLayer}
For all $\ueps \in W^{1,p}(\oems)^3$ with $\ueps = 0 $ on $ \partial_D \oems$ it holds that 
\begin{align*}
\sum_{i=1}^2 \foe \Vert \ueps^i &\Vert_{L^p(\oems)} + \sum_{i,j=1}^2 \foe\Vert \partial_i \ueps^j \Vert_{L^p(\oems)} 
\\
&+ \Vert \ueps^3\Vert_{L^p(\oems)} +  \Vert \nabla \ueps \Vert_{L^p(\oems)} \le \frac{C}{\epsilon} \Vert D(\ueps)\Vert_{L^p(\oems)}.
\end{align*}
Especially, this result holds in the trivial case $Z^s = Z$, where we have $\oems = \oem$.
\end{theorem}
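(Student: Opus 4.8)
The plan is to reduce the perforated Korn inequality of Theorem~\ref{KornInequalityPerforatedLayer} to a Korn inequality on the \emph{full} thin layer $\oem$ by means of the extension operator $E_\epsilon$ of Theorem~\ref{TheoremExtensionOperator}. First I would set $\hueps := E_\epsilon \ueps \in W^{1,p}(\oem)^3$. The third estimate in Theorem~\ref{TheoremExtensionOperator} gives $\Vert D(\hueps)\Vert_{L^p(\oem)} \le C \Vert D(\ueps)\Vert_{L^p(\oems)}$, so if one can prove the claimed inequality with $\oem$ in place of $\oems$ for the extended field, the left-hand side restricted to $\oems$ is only smaller (all norms are monotone under restriction of the domain), and we are done. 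Thus the task splits into (a) the extension step, which is already available, and (b) a Korn inequality on the \emph{homogeneous} thin plate $\oem = \Sigma\times(-\epsilon,\epsilon)$ with the scaling factors $\foe$ in front of the tangential components and tangential derivatives, which is exactly what the ``trivial case $Z^s = Z$'' remark refers to.

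For step (b), the natural approach is the standard rescaling to the fixed domain $\Omega := \Sigma\times(-1,1)$. Given $w\in W^{1,p}(\oem)^3$ with $w=0$ on $\partial_D\oem$, define $\tilde w(\x,y_n) := w(\x,\epsilon y_n)$ on $\Omega$. One computes how the symmetric gradient transforms: the tangential–tangential entries of $D$ are unchanged, the tangential–vertical (mixed) entries pick up a factor, and the vertical–vertical entry picks up a factor $\foe$; more precisely $\partial_n w = \foe \partial_n \tilde w$ after the substitution. On the fixed domain $\Omega$ one now invokes the classical Korn inequality (first or second kind, with the zero lateral trace this is Korn's inequality of the first kind on a Lipschitz domain, cf. the references \cite{Necas,Oleinik1992} cited in the paper) to bound $\Vert \tilde w\Vert_{W^{1,p}(\Omega)}$ by $C\Vert D(\tilde w)\Vert_{L^p(\Omega)}$ with $C$ independent of $\epsilon$. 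Translating this back to $\oem$ via the substitution and carefully tracking which power of $\epsilon$ accompanies each term yields precisely the asserted weighted inequality: the tangential components $\ueps^i$ ($i=1,2$) and their tangential derivatives $\partial_i\ueps^j$ come with $\foe$, while $\ueps^3$ and the full gradient $\nabla\ueps$ come with weight $1$, and the right-hand side is $\tfrac{C}{\epsilon}\Vert D(w)\Vert_{L^p(\oem)}$. Combining with $\Vert D(\hueps)\Vert_{L^p(\oem)} \le C\Vert D(\ueps)\Vert_{L^p(\oems)}$ finishes the proof.

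Two points require care. The first is that the lateral boundary condition is preserved under extension: $E_\epsilon$ as constructed is a local operator built from the decomposition of the layer into $\epsilon$-cells, and one must check (or cite from the construction in Section~\ref{SectionExtensionOperator}) that $\ueps = 0$ on $\partial_D\oems$ forces $\hueps = 0$ on $\partial_D\oem$, so that the fixed-domain Korn inequality with zero lateral trace is legitimately applicable to $\tilde w$. The second, and the genuine obstacle, is the $\epsilon$-uniformity of the constant when passing through the rescaling: one must verify that the factors of $\epsilon$ generated by the change of variables $x_n \mapsto \epsilon y_n$ — both in the volume element and in the vertical derivative — conspire so that the final constant in front of $\Vert D(\ueps)\Vert_{L^p(\oems)}$ is genuinely $O(\epsilon^{-1})$ and not worse, and that the asymmetric weighting $(\foe$ on tangential data, $1$ on vertical data) emerges correctly rather than, say, a uniform $\foe$ everywhere. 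This bookkeeping is exactly the content of the more general Theorem~\ref{KornInequalityScaledMembraneTheorem} announced in the text, from which the present statement is said to follow directly; I would therefore either prove the weighted fixed-domain estimate directly or defer to that theorem. The case $Z^s = Z$ is then immediate since no perforation is present and $E_\epsilon$ is the identity.
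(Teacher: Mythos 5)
Your overall architecture (extend with $E_\epsilon$, rescale to the fixed plate, apply a fixed-domain Korn inequality, translate back) is the same as the paper's, and the bookkeeping of the $\epsilon$-weights via the rescaling is handled there exactly as you describe, through the norm $\Vert\cdot\Vert_{\oem,\epsilon}$ and the weighted symmetric gradient $\kappa_\epsilon$ in $\eqref{IdentityNormsScaledMembrane}$. The genuine gap is your first ``point requiring care'': the check that $\ueps=0$ on $\partial_D\oems$ forces $E_\epsilon\ueps=0$ on all of $\partial_D\oem$ \emph{fails}. The paper states this explicitly (both in the introduction and in the discussion following Proposition~\ref{KornInequalityGeneralBoundaryRigidDisplacement}): the local extensions fill the perforations adjacent to the lateral boundary with values that need not vanish on $\partial_D\oem\setminus\overline{\partial_D\oems}$. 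Consequently you cannot invoke the classical first Korn inequality with zero lateral trace on the fixed domain $\Sigma\times(-1,1)$ with an $\epsilon$-independent constant. What you do have is a zero trace on the $\epsilon$-dependent set $\partial_D\oems$, and Lemma~\ref{LemmaKornInequalities}\ref{LemmaKornInequalitiesZeroSubsetBoundary} then only yields a constant $C_\epsilon$ that may blow up as $\epsilon\to 0$, since the vanishing set changes with $\epsilon$. This is precisely the obstruction the paper is organized around.

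The paper's resolution, which your sketch does not supply, is twofold: Lemma~\ref{TraceInequalityOuterBoundaryLemma} shows that for functions vanishing on $\partial_D\oems$ the trace of the extension on the \emph{full} lateral boundary is small, of order controlled by $\Vert D(\ueps)\Vert_{L^p(\oem)}$; and Theorem~\ref{KornInequalityScaledMembraneTheorem} then establishes the uniform fixed-domain estimate $\Vert\tueps\Vert_{W^{1,p}(\Omega_1^M)}\le C\Vert\kappa_\epsilon(\tueps)\Vert_{L^p(\Omega_1^M)}$ by a compactness/contradiction argument: a normalized sequence violating the inequality would converge weakly to a rigid displacement of unit norm (via Proposition~\ref{KornInequalityGeneralBoundaryRigidDisplacement} and Lemma~\ref{LemmaKornInequalities}\ref{LemmaKornInequalitiesRigidDisplacement}), while the trace estimate forces that limit to vanish on the lateral boundary, hence to be zero --- a contradiction. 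Your fallback of ``defer to Theorem~\ref{KornInequalityScaledMembraneTheorem}'' does recover the paper's proof verbatim, but the direct argument you propose in its place is not valid; the missing ingredient is exactly the trace estimate plus the contradiction argument replacing the naive application of the zero-lateral-trace Korn inequality.
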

The extension operator and the Korn-inequality for thin perforated layers enable us to prove the following compactness result with respect to the two-scale convergence (for the definition see Section \ref{SubsectionTSConvergence}), which gives a Kirchhoff-Love displacement in the limit:
\begin{theorem}\label{MainTheoremTwoScaleConvergence}
Let $\ueps\in W^{1,p}(\oems)^3$ be a sequence with
\begin{align*}
\Vert \ueps^3 \Vert_{L^p(\oem)} + \Vert \nabla \ueps \Vert_{L^p(\oem)} + \foe\Vert D(\ueps)\Vert_{L^p(\oem)} + \sum_{\alpha =1}^2 \foe\Vert \ueps^{\alpha} \Vert_{L^p(\oem)} \le C\epsilon^{\frac{1}{p}}.
\end{align*}
Then  there exist $u_0^3 \in W^{2,p}(\Sigma)$ and $\tilde{u}_1 \in W^{1,p}(\Sigma)^3$ with $\tilde{u}_1^3 = 0$, such that up to a subsequence (for $\alpha = 1,2$)
\begin{align*}
\chi_{\oems}\ueps^3 &\rats \chi_{Z^s}u_0^3,
\\
\chi_{\oems}\frac{\ueps^{\alpha}}{\epsilon} &\rats \chi_{Z^s}\big(\tilde{u}_1^{\alpha} - y_3 \partial_{\alpha} u_0^3\big).
\end{align*}
If additionally $\ueps = 0$ on $\partial_D \oems$, it holds that $u_0^3 \in W^{2,p}_0(\Sigma)$ and $\tilde{u}_1 \in W_0^{1,p}(\Sigma)^3$.
\\
Further, for $p = 2$ there exists $u_2 \in L^2(\Sigma , H^1_{\#}(Z)/\R)^3$, such that up to a subsequence 
\begin{align*}
\frac{1}{\epsilon} \chi_{\oems} D(\ueps) &\rats  \chi_{Z^s} \left(D_{\x}(\tilde{u}_1) - y_3 \nabla_{\x}^2 u_0^3 + D_y(u_2) \right).
\end{align*}
\end{theorem}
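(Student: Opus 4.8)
The plan is to reduce everything to the whole layer by extension, read off the Kirchhoff--Love structure from the scaled \emph{a priori} bounds, and — for $p=2$ — identify the corrector by a periodic Helmholtz decomposition of symmetric matrix fields.

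\emph{Step 1 (extension, a priori bounds).} I would first set $\hueps := E_\epsilon\ueps$ and invoke Theorem~\ref{TheoremExtensionOperator}, which turns the hypotheses into
$$\Vert\hueps^3\Vert_{L^p(\oem)}+\Vert\nabla\hueps\Vert_{L^p(\oem)}+\foe\Vert D(\hueps)\Vert_{L^p(\oem)}+\textstyle\sum_{\alpha=1}^2\foe\Vert\hueps^\alpha\Vert_{L^p(\oem)}\le C\epsilon^{\frac1p}$$
on the whole layer $\oem$; since $\hueps=\ueps$ on $\oems$ and multiplying a two-scale convergent sequence by $\chi_{\oems}$ multiplies its limit by $\chi_{Z^s}$, it is enough to analyse $\hueps$ on $\oem$ and to restrict at the end. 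Next I would apply the Korn-inequality — Theorem~\ref{KornInequalityPerforatedLayer} in the trivial case $Z^s=Z$, or Proposition~\ref{KornInequalityGeneralBoundaryRigidDisplacement} in general, absorbing a rigid displacement whose coefficients are controlled through $\Vert\hueps^3\Vert_{L^p(\oem)}$ and $\Vert\hueps^\alpha\Vert_{L^p(\oem)}$ (using that $\Sigma$ does not depend on $\epsilon$) — to obtain in addition $\sum_{\alpha,\beta=1}^2\Vert\partial_\beta\hueps^\alpha\Vert_{L^p(\oem)}\le C\epsilon^{1+\frac1p}$. With the thin-layer normalization $\epsilon^{-1/p}$, the quantities $\hueps^3$ and $\nabla_{\x}\hueps^3$ are then of order one, whereas $D(\hueps)$, $\hueps^\alpha$ and $\nabla_{\x}\hueps^\alpha$ ($\alpha=1,2$) are of order $\epsilon$.

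\emph{Step 2 (Kirchhoff--Love limit, general $p$).} Using the two-scale compactness results of Section~\ref{SubsectionTSConvergence} I would extract, along a subsequence, $\hueps^3\rats u_0^3$ and $\hueps^\alpha/\epsilon\rats w^\alpha$ together with the corresponding gradient limits. Because the tangential gradients of $\hueps^3$ and of $\hueps^\alpha/\epsilon$ are of order one, the structure result forces $u_0^3$ and $w^\alpha$ to be independent of $\y$; because $\partial_3\hueps^3=D_{33}(\hueps)$ is of order $\epsilon$, $u_0^3$ is moreover independent of $y_3$, i.e. $u_0^3=u_0^3(\x)$. Writing $\partial_3\hueps^\alpha=2D_{\alpha3}(\hueps)-\partial_\alpha\hueps^3$ and passing to the two-scale limit (the first term vanishes, the second tends to $\partial_\alpha u_0^3$, and the possible $\y$-corrector drops out by periodicity and the $\y$-independence of $w^\alpha$) gives $\partial_{y_3}w^\alpha=-\partial_\alpha u_0^3$, hence $w^\alpha(\x,y_3)=\tilde u_1^\alpha(\x)-y_3\partial_\alpha u_0^3(\x)$ with $\tilde u_1^\alpha:=\langle w^\alpha\rangle_{y_3}$, and I set $\tilde u_1^3:=0$. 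Boundedness of $\nabla_{\x}(\hueps^\alpha/\epsilon)$ gives $\nabla_{\x}w^\alpha\in L^p(\Sigma\times(-1,1))$, and integrating against $1$ and $y_3$ in the variable $y_3$ separates $\nabla_{\x}\tilde u_1^\alpha\in L^p(\Sigma)$ from $\nabla_{\x}\partial_\alpha u_0^3\in L^p(\Sigma)$, so $\tilde u_1\in W^{1,p}(\Sigma)^3$ and $u_0^3\in W^{2,p}(\Sigma)$. If $\ueps=0$ on $\partial_D\oems$, the vanishing boundary traces of $\ueps^3$ and $\ueps^\alpha/\epsilon$ pass to the (here $\y$-independent) two-scale limit, forcing $u_0^3=0$ and $\tilde u_1^\alpha-y_3\partial_\alpha u_0^3=0$ on $\partial\Sigma$ for every $y_3$, i.e. $u_0^3\in W^{2,p}_0(\Sigma)$ and $\tilde u_1\in W^{1,p}_0(\Sigma)^3$.

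\emph{Step 3 ($p=2$: the corrector, and the main obstacle).} For $p=2$ the bound $\foe\Vert D(\hueps)\Vert_{L^2(\oem)}\le C\epsilon^{1/2}$ yields, along a further subsequence, $\epsilon^{-1}D(\hueps)\rats\bar M_0\in L^2(\Sigma\times Z)^{3\times3}_{\mathrm{sym}}$, and it remains to show $\bar M_0=D_{\x}(\tilde u_1)-y_3\nabla_{\x}^2u_0^3+D_y(u_2)$ with $u_2\in L^2(\Sigma,H^1_\#(Z)/\R)^3$; restricting to $\oems$ then gives the stated convergence with $\chi_{Z^s}$. Comparing the tangential $2\times2$ block with the two-scale limit of $\tfrac12(\partial_\alpha(\hueps^\beta/\epsilon)+\partial_\beta(\hueps^\alpha/\epsilon))$ already produces $D_{\x}(\tilde u_1)_{\alpha\beta}-y_3\partial^2_{\alpha\beta}u_0^3$ plus a $\y$-symmetric-gradient correction, so the crux is to identify $\bar M_0$ minus the explicit Kirchhoff--Love matrix as a full $y$-symmetric gradient. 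I would do this by a periodic Helmholtz decomposition: characterize the $L^2$-closure of $\{D_y(\phi):\phi\in H^1_\#(Z)^3\}$ as the orthogonal complement of the symmetric fields $G$ with $\div_y G=0$ in the weak, $\y$-periodic sense (with the natural side conditions on $\Gamma$ and on $Y\times\{\pm1\}$), and verify that $\bar M_0-D_{\x}(\tilde u_1)+y_3\nabla_{\x}^2u_0^3$ is orthogonal to every such $G$ by testing the two-scale identity for $\epsilon^{-1}D(\hueps)$ against $G(\x,x/\epsilon)$ and integrating by parts, the density result of Appendix~\ref{SectionAppendix} making this legitimate for merely $L^2$ divergence-free fields. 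The hard part is precisely this: the test fields $G$ must be divergence-free in $y$ \emph{and} geometrically compatible (vanishing in the appropriate components on $Y\times\{\pm1\}$ and on $\Gamma$) so that the formally $O(\epsilon^{-1})$ interior and boundary contributions in the integration by parts cancel and the limit is genuinely $0$ — and it is exactly the rigidity from Step~2, namely $u_0^3=u_0^3(\x)$ and $w^\alpha$ affine in $y_3$, that produces this cancellation. A secondary technical point is that $E_\epsilon\ueps$ need not vanish on $\partial_D\oem$, so the $W^{2,p}_0$/$W^{1,p}_0$ refinement has to be read from the traces of $\ueps$ on $\oems$ rather than from $\hueps$.
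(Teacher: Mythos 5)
Your overall architecture coincides with the paper's: extend by Theorem \ref{TheoremExtensionOperator}, read off the Kirchhoff--Love form from the fact that $\epsilon^{-1/p}D(\ueps)\to 0$, recover the regularity of $\tilde u_1$ and $u_0^3$ from the zeroth and first $x_3$-moments, and for $p=2$ identify the corrector by testing $\epsilon^{-1}D(\ueps)$ against symmetric, $Y$-periodic, divergence-free fields with vanishing normal trace on $S^{\pm}$ and invoking the Helmholtz decomposition together with the density result of Appendix \ref{SectionAppendix}. Your Step 3 is essentially the paper's Proposition \ref{TSConvergenceSymmetricGradient} (note only that no condition on $\Gamma$ is needed: the convergence is proved for the extended field on the whole layer, so the test fields live on all of $Z$). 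Where you genuinely diverge is Step 2: you derive the Kirchhoff--Love structure componentwise from $\partial_3\hueps^{\alpha}=2D_{\alpha 3}(\hueps)-\partial_\alpha\hueps^3$, which forces you to first upgrade the tangential bounds to $\Vert\partial_\beta\hueps^\alpha\Vert_{L^p}\le C\epsilon^{1+1/p}$ via Korn with an explicit control of the rigid displacement. The paper avoids this entirely: since $D(\ueps)\rats 0$, the two-scale limit of the gradient satisfies $D_{\x}(u_0)+D_y(u_1)=0$, and Lemma \ref{LemmaKirchhoffZerlegungSymGra} (a rigid-displacement-in-$y$ argument using periodicity) yields the affine-in-$y_3$ form at once; the limit $\eta^\alpha$ of $\ueps^\alpha/\epsilon$ is then matched to $u_1^\alpha$ by testing with mean-zero oscillating functions represented via the Bogovskii operator. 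Your route is workable but two of its steps are only asserted: the claim that $w^\alpha$ is independent of $\y$ does not follow from Lemma \ref{TwoScaleCompactnessStandard}(ii), because $\partial_3(\hueps^\alpha/\epsilon)$ is \emph{not} of order $\epsilon^{1/p}$; you need a separate integration-by-parts argument in the tangential variables (which does work with your improved bound, but must be written out), and the rigid-displacement control itself deserves a line of justification.

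The one place where I see a genuine gap rather than a stylistic difference is the boundary-condition refinement. You write that ``the vanishing boundary traces of $\ueps^3$ and $\ueps^\alpha/\epsilon$ pass to the two-scale limit,'' but two-scale convergence does not transmit traces, and $\ueps$ vanishes only on the perforated set $\partial_D\oems$ while the extension need not vanish anywhere on $\partial_D\oem$ --- a difficulty you flag but do not resolve. The paper's resolution (Lemma \ref{TraceInequalityOuterBoundaryLemma} and Proposition \ref{PropositionZeroBoundaryConditionCompactness}) is to apply a cellwise Korn inequality on the boundary cells to show $\Vert E_\epsilon\ueps\Vert_{L^p(\partial\Sigma\times(-\epsilon,\epsilon))}\le C\epsilon^{1/p}\Vert D(E_\epsilon\ueps)\Vert_{L^p(\oem)}$, and then to transfer this to the moments $\U_\epsilon$ and $\mathcal{R}_\epsilon$, whose weak $W^{1,p}(\Sigma)$ convergence gives strong convergence of traces in $L^p(\partial\Sigma)$ and hence $\tilde u_1^\alpha,\ \partial_\alpha u_0^3,\ u_0^3\in W^{1,p}_0(\Sigma)$. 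Without an estimate of this type your argument for $u_0^3\in W^{2,p}_0(\Sigma)$ and $\tilde u_1\in W^{1,p}_0(\Sigma)^3$ is incomplete.
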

This result follows immediately from the Korn-inequality in Theorem \ref{KornInequalityPerforatedLayer} and the results in Section \ref{SectionTwoScaleCompactness}.

\begin{remark}
For the sake of simplicity we consider in Theorem \ref{KornInequalityPerforatedLayer} and \ref{MainTheoremTwoScaleConvergence} only the most important case $n=3$. However, the results can be extended in an obvious way to arbitrary space dimensions.
\end{remark}

\section{Extension operator}
\label{SectionExtensionOperator}

The aim of this section is to construct the extension operator $E_{\epsilon}$ from Theorem \ref{TheoremExtensionOperator}. While extension theorems  for perforated domains with estimates for the $L^p$-norm and the $L^p$-norm of the gradient are well-known, see \cite{Acerbi1992,CioranescuSJPaulin}, here
 the crucial point is the estimate for the symmetric gradient. The main ingredient of the proof is to show a local extension result of the same type for suitable reference elements. This idea can be found in \cite{Acerbi1992}. For the construction of this local operator we make use of special Korn-inequalities, which we summarize in the following.
%
%
%The aim of this section is to prove Theorem \ref{TheoremExtensionOperator}.
%We construct an extension operator $ E_{\epsilon}: W^{1,p}(\oems)^n \rightarrow W^{1,p}(\oem)^n$, which especially preserves the bound for the symmetric gradient. More precisely, it holds that
%\begin{align*}
%\Vert D(E_{\epsilon}\ueps)\Vert_{L^p(\oem)}  \le C \Vert D(\ueps)\Vert_{L^p(\oems)}.
%\end{align*}
%The idea is to construct a local extension operator on the reference elements. Two difficulties......

\subsection{General Korn-inequalities in fixed domains}
Let $\Omega \subset \R^n$ be bounded, open, and connected.
We denote the space of rigid-displacements by $N(\Omega)$, which is defined via
\begin{align*}
N(\Omega):= \left\{ b + Ax \, : \, b \in \R^n, \, A \in \R^{n\times n} \mbox{ with } A + A^T = 0 \right\}.
\end{align*}
Further, we denote the antisymmetric gradient for $u \in W^{1,p}(\Omega)^n$ by
\begin{align*}
R(u) := \frac12 \left( \nabla u - \nabla u^T\right).
\end{align*}
Then we define the mean value of the antisymmetric gradient by
\begin{align}\label{MeanValueAntiSymGradient}
M(u):= \frac{1}{\vert \Omega \vert } \int_{\Omega} R(u) dx \in \R^{n\times n}.
\end{align}
%Then, for all $u \in W^{1,p}(\Omega)^n$ it holds the inequality
%\begin{align}\label{InequalityErrorAntisymmetricGradient}
%\Vert \mathcal{R}(u) - M(u) \Vert_{L^p(\Omega)} \le C \Vert D(u)\Vert_{L^p(\Omega)}.
%\end{align}
%This is a consequence of the Ne\v{c}as-inequality, see \cite[Chapter 3, Lemma 7.1]{Necas}, 
%\begin{align*}
%\Vert v \Vert_{L^p(\Omega)} \le C \Vert \nabla v \Vert_{H^{-1}(\Omega)}
%\end{align*}
%for all $v \in L^p(\Omega)$ with mean value zero, applied to the antisymmetric gradient $\mathcal{R}(u)$.
\\

In the paper we frequently use the following Korn-inequalities:

\begin{lemma}\label{LemmaKornInequalities}
Let $\Omega \subset \R^n$ open, bounded, and connected with Lipschitz boundary.
\begin{enumerate}
[label = (\roman*)]
\item\label{LemmaKornInequalitiesMu} For all $u \in W^{1,p}(\Omega)^n$ it holds that
\begin{align*}
\Vert \nabla u - M(u) \Vert_{L^p(\Omega)} \le C \Vert D(u)\Vert_{L^p(\Omega)}.
\end{align*}
\item\label{LemmaKornInequalitiesWithL2Norm} For all $u \in W^{1,p}(\Omega)^n$ it holds that 
\begin{align*}
\Vert u \Vert_{W^{1,p}(\Omega)} \le C \left(\Vert u \Vert_{L^p(\Omega)} + \Vert D(u)\Vert_{L^p(\Omega)} \right).
\end{align*}
\item\label{LemmaKornInequalitiesRigidDisplacement} For every $u \in W^{1,p}(\Omega)^n$ there exists a rigid-displacement $r \in N(\Omega)$ (depending on $u$), such that
\begin{align*}
\Vert u - r \Vert_{W^{1,p}(\Omega)} \le C \Vert D(u)\Vert_{L^p(\Omega)}.
\end{align*}
\item\label{LemmaKornInequalitiesSubsetIntersectionRDZero} Let $V \subset W^{1,p}(\Omega)^n$ be a closed subspace with $V \cap N(\Omega) = \{0\}$.  Then every $u \in V$ fulfills 
\begin{align*}
\Vert u \Vert_{W^{1,p}(\Omega)} \le C \Vert D(u)\Vert_{L^p(\Omega)}.
\end{align*}
\item\label{LemmaKornInequalitiesZeroSubsetBoundary}
Let $\Gamma_0 \subset \partial \Omega$ with $\vert \Gamma_0 \vert \neq 0$ and such that 
\begin{align*}%\label{ConditionRigidDisplacementZeroBC}
\left\{v \in N(\Omega) \, : \, v = 0 \mbox{ on } \Gamma_0 \right\} = \{0\}.
\end{align*}
Then for all $u \in W^{1,p}(\Omega)^n$ with $u=0 $ on $\Gamma_0$ it holds that
\begin{align*}
\Vert u \Vert_{W^{1,p}(\Omega)} \le C \Vert D(u)\Vert_{L^p(\Omega)}.
\end{align*}
The assumptions are fulfilled for $\Omega = Z^s $ and $\Gamma_0 = \Gamma $ or $n=3$. 
\end{enumerate}
\end{lemma}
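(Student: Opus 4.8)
\noindent The plan is to treat \ref{LemmaKornInequalitiesWithL2Norm} — the $L^p$-form of Korn's second inequality — as the base case and to derive everything else from it by soft functional analysis, using only that $N(\Omega)$ is finite dimensional and that, on a bounded Lipschitz domain, $W^{1,p}(\Omega)\hookrightarrow L^p(\Omega)$ compactly. For \ref{LemmaKornInequalitiesWithL2Norm} itself I would either quote \cite{Necas,Oleinik1992}, or recall the standard argument: from the distributional identity $\partial_i\partial_j u_k=\partial_i D_{jk}(u)+\partial_j D_{ik}(u)-\partial_k D_{ij}(u)$ one sees that each $\partial_j u_k$ lies in $W^{-1,p}(\Omega)$ together with all of its first-order derivatives, with norms controlled by $\Vert u\Vert_{L^p(\Omega)}+\Vert D(u)\Vert_{L^p(\Omega)}$; the Ne\v{c}as (Lions) inequality on the bounded Lipschitz domain $\Omega$ then promotes this to $\partial_j u_k\in L^p(\Omega)$ with the stated bound.

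Next I would establish \ref{LemmaKornInequalitiesRigidDisplacement}. The bounded linear map $D\colon W^{1,p}(\Omega)^n\to L^p(\Omega)^{n\times n}$ has kernel exactly $N(\Omega)$: if $D(u)=0$ then the identity above forces $\partial_i\partial_j u_k=0$, so $\nabla u$ equals a constant matrix $B$ on the connected set $\Omega$, hence $u=b+Bx$ with $B+B^{T}=2D(u)=0$. Thus $N(\Omega)$ is a closed subspace of dimension $n(n+1)/2$, and a Peetre–Tartar type compactness argument yields \ref{LemmaKornInequalitiesRigidDisplacement}: if the inequality failed, there would exist $u_k$, normalized (by subtracting the best-approximating rigid displacement and rescaling) so that $\operatorname{dist}(u_k,N(\Omega))=\Vert u_k\Vert_{W^{1,p}(\Omega)}=1$, with $\Vert D(u_k)\Vert_{L^p(\Omega)}\to0$; by \ref{LemmaKornInequalitiesWithL2Norm} and the compact embedding a subsequence would converge in $W^{1,p}(\Omega)$ to some $u$ with $D(u)=0$, i.e.\ $u\in N(\Omega)$ and $\Vert u\Vert_{W^{1,p}(\Omega)}=1$, contradicting $\operatorname{dist}(u_k,N(\Omega))=1$.

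The remaining items follow quickly from \ref{LemmaKornInequalitiesRigidDisplacement}. For \ref{LemmaKornInequalitiesMu} I would split $\nabla u=D(u)+R(u)$, pick $r=b+Ax\in N(\Omega)$ with $\Vert u-r\Vert_{W^{1,p}(\Omega)}\le C\Vert D(u)\Vert_{L^p(\Omega)}$, and use that, $A$ being antisymmetric, $R(r)=M(r)=A$; hence $R(u)-M(u)=R(u-r)-M(u-r)$, and estimating the constant $M(u-r)$ by Jensen/H\"older gives $\Vert\nabla u-M(u)\Vert_{L^p(\Omega)}\le\Vert D(u)\Vert_{L^p(\Omega)}+2\Vert R(u-r)\Vert_{L^p(\Omega)}\le C\Vert D(u)\Vert_{L^p(\Omega)}$. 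For \ref{LemmaKornInequalitiesSubsetIntersectionRDZero} I would argue by contradiction: normalized $u_k\in V$ with $\Vert u_k\Vert_{W^{1,p}(\Omega)}=1$ and $\Vert D(u_k)\Vert_{L^p(\Omega)}\to0$ would, by \ref{LemmaKornInequalitiesRigidDisplacement}, satisfy $\Vert u_k-r_k\Vert_{W^{1,p}(\Omega)}\to0$ for suitable $r_k\in N(\Omega)$; as $N(\Omega)$ is finite dimensional a subsequence of $(r_k)$ converges to some $r\in N(\Omega)$, so $u_k\to r$ in $W^{1,p}(\Omega)$, and $V$ being closed forces $r\in V\cap N(\Omega)=\{0\}$, contradicting $\Vert u_k\Vert_{W^{1,p}(\Omega)}=1$. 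Finally \ref{LemmaKornInequalitiesZeroSubsetBoundary} is the instance of \ref{LemmaKornInequalitiesSubsetIntersectionRDZero} with $V=\{u\in W^{1,p}(\Omega)^n\colon u=0\text{ on }\Gamma_0\}$, which is closed by continuity of the trace and satisfies $V\cap N(\Omega)=\{0\}$ by hypothesis; the concrete cases are elementary linear algebra, since a rigid displacement $b+Ax$ vanishing on $\Gamma_0$ also vanishes on the affine set $\{x\colon Ax=-b\}$, whose dimension is $\le n-2$ whenever $A\neq0$ (the rank of a skew-symmetric matrix is even), so it cannot contain $\Gamma$ (of positive surface measure, by the assumptions on $Z^s$) unless $A=b=0$, and for $n=3$ it suffices that $\Gamma_0$ does not lie on a line.

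I do not expect a genuine obstacle here: the only non-soft ingredient is the Ne\v{c}as/Lions inequality behind \ref{LemmaKornInequalitiesWithL2Norm}, whose proof really does use the Lipschitz regularity of $\partial\Omega$ — which is also why that hypothesis appears throughout the lemma. If one wanted a fully self-contained account, reproving that inequality (equivalently, that $\nabla\colon L^p(\Omega)\to W^{-1,p}(\Omega)^n$ has closed range with the natural estimate) is the one step requiring real work.
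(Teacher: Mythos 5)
Your proposal is correct; the only structural difference from the paper is the direction of the implication between \ref{LemmaKornInequalitiesMu} and \ref{LemmaKornInequalitiesRigidDisplacement}. The paper proves \ref{LemmaKornInequalitiesMu} \emph{directly} from the Ne\v{c}as inequality, applying it to the mean-free antisymmetric gradient $R(u)-M(u)$ via the distributional identity $\partial_{x_j}R(u)_{ik}=\partial_{x_k}D(u)_{ji}-\partial_{x_i}D(u)_{jk}$, and then obtains \ref{LemmaKornInequalitiesRigidDisplacement} from \ref{LemmaKornInequalitiesMu} by a contradiction argument using a projection onto the finite-dimensional space $N(\Omega)$. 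You instead take \ref{LemmaKornInequalitiesWithL2Norm} as the base case (Ne\v{c}as applied to the components of $\nabla u$), deduce \ref{LemmaKornInequalitiesRigidDisplacement} by a Peetre--Tartar compactness argument, and only then recover \ref{LemmaKornInequalitiesMu} from \ref{LemmaKornInequalitiesRigidDisplacement} using that $R(r)=M(r)=A$ for a rigid displacement $r=b+Ax$. Both routes rest on exactly the same hard input (the Ne\v{c}as/Lions inequality on a bounded Lipschitz domain) and the same soft ingredients (compact embedding, finite dimensionality of $N(\Omega)$); the paper's route yields \ref{LemmaKornInequalitiesMu} without any compactness argument, hence with a constant traceable to the Ne\v{c}as constant, while yours concentrates all the analysis in the single base case \ref{LemmaKornInequalitiesWithL2Norm} and keeps the rest purely functional-analytic. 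Your treatment of \ref{LemmaKornInequalitiesSubsetIntersectionRDZero} and \ref{LemmaKornInequalitiesZeroSubsetBoundary} (closedness of the trace-zero subspace, and the even-rank argument for skew-symmetric matrices showing that a set of positive $(n-1)$-dimensional measure cannot lie in the zero set of a nonzero rigid displacement) fills in precisely what the paper delegates to the citation of Oleinik et al., and is complete.
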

\begin{proof}
There exists an huge literature on different type of Korn-inequalities and the results stated in the lemma are quite common. However, for the sake of completeness  we give a sketch of the proof.
\\
Inequality \ref{LemmaKornInequalitiesMu} follows from $\nabla u = R(u) + D(u)$ and the inequality (which is closely related to arguments by Lions, see \cite[Section 3, Proof of Theorem 3.1]{duvant2012inequalities}) 
\begin{align}\label{InequalityErrorAntisymmetricGradient}
\Vert R(u) - M(u) \Vert_{L^p(\Omega)} \le C \Vert D(u)\Vert_{L^p(\Omega)} \qquad \mbox{for all } v \in W^{1,p}(\Omega)^n.
\end{align}
This is a consequence of the Ne\v{c}as-inequality, see \cite[Chapter 3, Lemma 7.1]{Necas}, 
\begin{align}\label{NecasInequality}
\Vert v \Vert_{L^p(\Omega)} \le C \Vert \nabla v \Vert_{H^{-1}(\Omega)}
\end{align}
for all $v \in L^p(\Omega)$ with mean value zero, applied to the antisymmetric gradient $R(u)$. In fact, for $i,j,k=1,\ldots,n$ it holds that
\begin{align*}
\partial_{x_j} R(u)_{ik} = \partial_{x_k} D(u)_{ji} - \partial_{x_i} D(u)_{jk} \qquad \mbox{in } H^{-1}(\Omega),
\end{align*}
and inequality $\eqref{NecasInequality}$ implies
\begin{align*}
\Vert R(u)_{ik}- M(u)_{ik} \Vert_{L^p(\Omega)} \le C \Vert \nabla R(u)_{ik}\Vert_{H^{-1}(\Omega)} \le C\Vert D(u)\Vert_{L^p(\Omega)}.
\end{align*}
\\
Statement \ref{LemmaKornInequalitiesWithL2Norm} can be shown in a similar way, see \cite[Section 3; Proof of Theorem 3.1]{duvant2012inequalities} for more details in the case $p=2$, which can be generalized in an obvious way.
\\
%To prove \ref{LemmaKornInequalitiesWithL2Norm} we use a contradiction argument. If the inequality is not true we find $u_n \in W^{1,p}(\Omega)^n$ and 
%\begin{align*}
%1 = \Vert u_n \Vert_{W^{1,p}(\Omega)} \gr n \left( \Vert u_n \Vert_{L^p(\Omega) } + \Vert D(u_n)\Vert_{L^p(\Omega)}\right).
%\end{align*}
%Hence, up to a subsequence
%\begin{align*}
%u_n &\rightharpoonup u &\mbox{ weakly in }& W^{1,p}(\Omega)^n,
%\\
%u_n &\rightarrow 0 &\mbox{ in }& L^p(\Omega)^n,
%\\
%D(u_n) &\rightarrow D(u) = 0 &\mbox{ in }& L^p(\Omega)^{n\times n}.
%\end{align*}
%Especially, $M(u) = 0$ and the weak convergence of $u_n$ in $W^{1,p}(\Omega)^n$ implies
%\begin{align*}
%M(u_n) \rightarrow M(u) = 0.
%\end{align*}
%Now, part \ref{LemmaKornInequalitiesMu} implies
%\begin{align*}
%\Vert \nabla u_n \Vert_{L^p(\Omega)} \le C \left(\Vert D(u_n)\Vert_{L^p(\Omega)} + \vert M(u_n)\vert \right) \rightarrow 0,
%\end{align*}
%which contradicts $\Vert u_n \Vert_{W^{1,p}(\Omega)} = 1$. \textcolor{green}{(Statement \ref{LemmaKornInequalitiesWithL2Norm} can also be shown directly using similar arguments as in \ref{LemmaKornInequalitiesMu})}
%\\
%%
Let us prove \ref{LemmaKornInequalitiesRigidDisplacement}: Since $N(\Omega)$ has finite dimension there exists a closed subspace $\mathcal{M} \subset L^p(\Omega)^n$  such that
\begin{align*}
\mathcal{M} \oplus N(\Omega) = L^p(\Omega)^n.
\end{align*}
Further, there exists a linear and continuous projection operator $P:L^p(\Omega)^n \rightarrow N(\Omega)$. Now the result follows from a standard  contradiction argument and \ref{LemmaKornInequalitiesMu} (see also \cite[Section 3; Proof of Theorem 3.4]{duvant2012inequalities} for a similar argument).
%
%We show that
%\begin{align*}
%\Vert u - Pu \Vert_{W^{1,p}(\Omega)} \le C \Vert D(u)\Vert_{L^p(\Omega)}.
%\end{align*}
%We assume that this inequality is not valid. This implies the existence of a sequence $u_n \in W^{1,p}(\Omega)^n$ with
%\begin{align*}
%1 = \Vert u_n - Pu_n \Vert_{W^{1,p}(\Omega)} \gr n \Vert D(u_n)\Vert_{L^p(\Omega)}.
%\end{align*}
%Up to a subsequence we obtain for $w_n:= u_n - P_n$ (use $D(u_n) = D(w_n)$)
%\begin{align*}
%w_n &\rightharpoonup w &\mbox{ weakly in }& W^{1,p}(\Omega)^n,
%\\
%w_n &\rightarrow w &\mbox{ in }& L^p(\Omega)^n,
%\\
%D(w_n) &\rightarrow 0 = D(w) &\mbox{ in }& L^p(\Omega)^{n\times n}.
%\end{align*}
%From $D(w) = 0$ we get $w \in N(\Omega)$. Since $w_n \in \mathcal{M}$ and $\mathcal{M}$ is closed in $L^p(\Omega)^n$, we get $w \in \mathcal{M}$, which implies $w = 0$.  Now, from $\eqref{LemmaKornInequalitiesMu}$ we get with the weak convergence of $w_n$
%\begin{align*}
%\Vert w_n \Vert_{L^p(\Omega)} \le C \left(\Vert D(w_n)\Vert_{L^p(\Omega)} + \vert M(w_n)\vert \right)\rightarrow C\vert M(w) \vert = 0.
%\end{align*}
%This contradicts $\Vert w_n\Vert_{W^{1,p}(\Omega)}=1$.
\\
%Let us prove \ref{LemmaKornInequalitiesRigidDisplacement}: Since $N(\Omega)$ has finite dimension there exists a closed subspace $\mathcal{M} \subset L^p(\Omega)^n$ (also valid in $L^p(\Omega)^n$!!!) such that
%\begin{align*}
%\mathcal{M} \oplus N(\Omega) = L^p(\Omega)^n.
%\end{align*}
%Further, there exists a linear and continuous projection operator $P:L^p(\Omega)^n \rightarrow N(\Omega)$ (also valid in $L^p$!!!). We show that
%\begin{align*}
%\Vert u - Pu \Vert_{W^{1,p}(\Omega)} \le C \Vert D(u)\Vert_{L^p(\Omega)}.
%\end{align*}
%We assume that this inequality is not valid. This implies the existence of a sequence $u_n \in W^{1,p}(\Omega)^n$ with
%\begin{align*}
%1 = \Vert u_n - Pu_n \Vert_{W^{1,p}(\Omega)} \gr n \Vert D(u_n)\Vert_{L^p(\Omega)}.
%\end{align*}
%Up to a subsequence we obtain for $w_n:= u_n - P_n$ (use $D(u_n) = D(w_n)$)
%\begin{align*}
%w_n &\rightharpoonup w &\mbox{ weakly in }& W^{1,p}(\Omega)^n,
%\\
%w_n &\rightarrow w &\mbox{ in }& L^p(\Omega)^n,
%\\
%D(w_n) &\rightarrow 0 = D(w) &\mbox{ in }& L^p(\Omega)^{n\times n}.
%\end{align*}
%From $D(w) = 0$ we get $w \in N(\Omega)$. Since $w_n \in \mathcal{M}$ and $\mathcal{M}$ is closed in $L^p(\Omega)^n$, we get $w \in \mathcal{M}$, which implies $w = 0$.  Now, from $\eqref{LemmaKornInequalitiesMu}$ we get with the weak convergence of $w_n$
%\begin{align*}
%\Vert w_n \Vert_{L^p(\Omega)} \le C \left(\Vert D(w_n)\Vert_{L^p(\Omega)} + \vert M(w_n)\vert \right)\rightarrow C\vert M(w) \vert = 0.
%\end{align*}
%This contradicts $\Vert w_n\Vert_{W^{1,p}(\Omega)}=1$.
%\\
% 
The result in \ref{LemmaKornInequalitiesSubsetIntersectionRDZero} can be found in \cite[Chapter I, Theorem 2.5]{Oleinik1992} for $p=2$ and the proof is also valid for arbitrary $p\in (1,\infty)$.
\ref{LemmaKornInequalitiesZeroSubsetBoundary} is a special case of \ref{LemmaKornInequalitiesSubsetIntersectionRDZero}.
% We use again a contradiction argument. If the inequality does not hold we obtain a sequence $u_n \in W^{1,p}(\Omega)^n$ with $u_n = 0$ on $\Gamma_0$ such that
%\begin{align*}
%1 = \Vert u_n \Vert_{W^{1,p}(\Omega)} \gr n \Vert D(u_n)\Vert_{L^p(\Omega)}.
%\end{align*}
%Up to a subsequence we get
%\begin{align*}
%u_n &\rightharpoonup u &\mbox{ weakly in }& W^{1,p}(\Omega)^n,
%\\
%D(u_n) &\rightarrow 0 = D(u) &\mbox{ in }& L^p(\Omega)^{n\times n}.
%\end{align*}
%From \ref{LemmaKornInequalitiesRigidDisplacement} we obtain the existence of $r_n \in N(\Omega)$ such that
%\begin{align}\label{LemmaKornAuxiliaryInequality}
%\Vert u_n - r_n \Vert_{W^{1,p}(\Omega)} \le C \Vert D(u_n)\Vert_{L^p(\Omega)} \rightarrow 0.
%\end{align}
%From $1 = \Vert u_n \Vert_{W^{1,p}(\Omega)}$ we get
%\begin{align*}
%\lim_{n\to \infty} \Vert r_n \Vert_{W^{1,p}(\Omega)} = 1.
%\end{align*}
%Hence, there exists $r\in N(\Omega)$ with $r \neq 0$ such that $r_n \rightarrow r$ in $W^{1,p}(\Omega)^n$ (up to a subsequence). From $\eqref{LemmaKornAuxiliaryInequality}$ we get $r = u = 0$ on $\Gamma_0$ and from assumption $\eqref{ConditionRigidDisplacementZeroBC}$ follows $r=0$ which gives the contradiction. 
\end{proof}

\subsection{The extension operator for perforated thin layers}

Now we construct the extension operator $E_{\epsilon}$ and prove the main extension Theorem \ref{TheoremExtensionOperator}.
We split the proof in several steps and start with a  local extension result, which is based on the Korn-inequality in Lemma \ref{LemmaKornInequalities}\ref{LemmaKornInequalitiesMu} including the mean $M(u)$ of the antisymmetric gradient. First of all, we introduce the following notations (see also Figure \ref{fig:Reference_Element_Extension}):
\begin{align*}
%Y &:= (0,1)^{n-1},
%\\
\E &:=  \{-1,0,1\}^{n-1} \times \{0\},
\\
Z_{\alpha} &:= Z + \alpha \qquad \mbox{for } \alpha \in \Z^{n-1}\times \{0\},
\\
Z^s_{\alpha} &:= Z^s + \alpha \qquad \mbox{for } \alpha \in \Z^{n-1}\times \{0\},
\\
K_{\epsilon} &:= \{\alpha \in \Z^{n-1} \times \{0\} \, : \,  \epsilon Z_{\alpha} \subset \oem \},
\\
\hZ_{\alpha} &:= \mathrm{int} \bigcup_{ e \in \E} \big( \overline{Z_{\alpha}} + e\big),
\\
\hZ^s_{\alpha} &:= \mathrm{int} \bigcup_{ e \in \E} \big( \overline{Z^s_{\alpha}} + e\big).
\end{align*}

\begin{figure}
\centering
\includegraphics[scale=0.4]{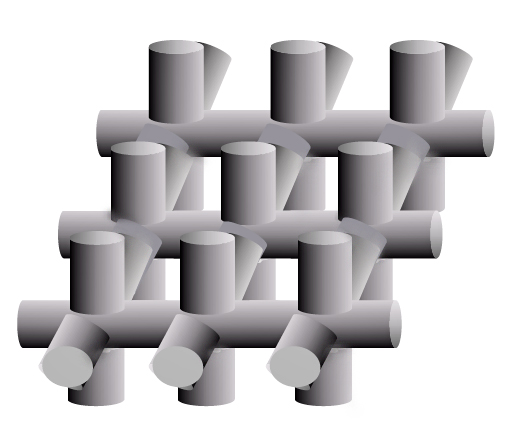}
\caption{A reference element $\hZ^s$ consisting of neighboring cells $Z^s$ for $n=3$}
 \label{fig:Reference_Element_Extension}
\end{figure}
We emphasize that $\hZ_{\alpha}$ (resp. $\hZ^s_{\alpha}$) is the cell $Z_{\alpha}$ with all neighboring cells. Further, we define the shift/scale function for $h \gr 0$ and $\alpha \in \Z^{n-1}\times \{0\}$
\begin{align*}
\pi^{\alpha}_h (x):=  \alpha + h x,
\end{align*}
and we shortly write
\begin{align*}
\pi^{\alpha}:= \pi^{\alpha}_1, \qquad \pi_h:= \pi^0_h.
\end{align*}

In the following lemma we construct a local extension operator on the reference element $\hZ^s$. This local operator is the main ingredient for the global extension operator on the perforated thin layer $\oems$.
\begin{lemma}\label{LocalExtensionOperator}
There exists an extension operator $\e : W^{1,p}(\hZ^s)^n \rightarrow W^{1,p}(\hZ)^n$, such that for all $v\in W^{1,p}(\hZ^s)^3$ it holds that ($i=1,\ldots,n$)
\begin{align*}
\Vert (\e v)^i \Vert_{L^p(\hZ)} &\le C\left(\Vert v^i \Vert_{L^p(\hZ^s)} + \Vert \nabla v \Vert_{L^p(\hZ^s)}\right),
\\
\Vert\nabla \e v \Vert_{L^p(\hZ)} &\le C \Vert \nabla v \Vert_{L^p(\hZ^s)},
\\
\Vert D(\e v)\Vert_{L^p(\hZ)} &\le C \Vert D(v) \Vert_{L^p(\hZ^s)}.
\end{align*}
\end{lemma}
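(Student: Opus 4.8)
The plan is to build $\e$ from the classical Acerbi–Chenais–Dal Maso–Percivale extension operator applied not to $v$ itself, but to $v$ corrected by a rigid displacement. Concretely, fix the reference element $\hZ^s$, which is a bounded, connected Lipschitz domain (a finite union of translated copies of $Z^s$, glued along the matching lateral faces guaranteed by the periodicity assumption on $\partial Z^s$). By \cite{Acerbi1992} there is a linear extension operator $\tilde{\e}:W^{1,p}(\hZ^s)^n \to W^{1,p}(\hZ)^n$ with
\begin{align*}
\Vert \tilde{\e} w \Vert_{L^p(\hZ)} \le C \Vert w \Vert_{L^p(\hZ^s)}, \qquad \Vert \nabla \tilde{\e} w \Vert_{L^p(\hZ)} \le C \Vert \nabla w \Vert_{L^p(\hZ^s)}.
\end{align*}
Now, given $v\in W^{1,p}(\hZ^s)^n$, apply Lemma \ref{LemmaKornInequalities}\ref{LemmaKornInequalitiesMu} on $\hZ^s$: the affine map $r(x) := M(v)x$ (the rigid displacement whose gradient is the constant antisymmetric matrix $M(v)$) satisfies $\Vert \nabla v - M(v)\Vert_{L^p(\hZ^s)} \le C\Vert D(v)\Vert_{L^p(\hZ^s)}$. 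Since $r$ is affine it extends trivially to all of $\hZ$ with $\nabla(r) \equiv M(v)$ and $D(r)\equiv 0$. I would then define
\begin{align*}
\e v := \tilde{\e}(v - r) + r.
\end{align*}
This is clearly a linear extension operator (it restricts to $v$ on $\hZ^s$).

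For the three estimates: the gradient bound follows from $\nabla \e v = \nabla \tilde{\e}(v-r) + M(v)$, hence
\begin{align*}
\Vert \nabla \e v\Vert_{L^p(\hZ)} \le C\Vert \nabla(v-r)\Vert_{L^p(\hZ^s)} + C\vert M(v)\vert \le C\Vert \nabla v\Vert_{L^p(\hZ^s)},
\end{align*}
using $\vert M(v)\vert \le C\Vert \nabla v\Vert_{L^p(\hZ^s)}$ by Jensen. The symmetric-gradient bound is the point of the construction: $D(\e v) = D(\tilde{\e}(v-r)) + D(r) = D(\tilde{\e}(v-r))$ since $D(r)=0$, and then
\begin{align*}
\Vert D(\e v)\Vert_{L^p(\hZ)} \le \Vert \nabla \tilde{\e}(v-r)\Vert_{L^p(\hZ)} \le C\Vert \nabla(v-r)\Vert_{L^p(\hZ^s)} = C\Vert \nabla v - M(v)\Vert_{L^p(\hZ^s)} \le C\Vert D(v)\Vert_{L^p(\hZ^s)},
\end{align*}
where the last step is exactly Lemma \ref{LemmaKornInequalities}\ref{LemmaKornInequalitiesMu}. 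For the $L^p$-estimate of the components, write $(\e v)^i = (\tilde{\e}(v-r))^i + r^i$; the first term is bounded by $C\Vert (v-r)^i\Vert_{L^p(\hZ^s)} + C\Vert \nabla(v-r)\Vert_{L^p(\hZ^s)}$ (the $\nabla$-term is needed because Acerbi's componentwise $L^p$-bound on a connected perforated-type domain comes with a gradient term), and $\Vert (v-r)^i\Vert_{L^p(\hZ^s)} \le \Vert v^i\Vert_{L^p(\hZ^s)} + \vert M(v)\vert \cdot \vert x\vert$, while on $\hZ$ one has $\Vert r^i\Vert_{L^p(\hZ)} \le C\vert M(v)\vert \le C\Vert \nabla v\Vert_{L^p(\hZ^s)}$. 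Collecting terms gives $\Vert (\e v)^i\Vert_{L^p(\hZ)} \le C(\Vert v^i\Vert_{L^p(\hZ^s)} + \Vert \nabla v\Vert_{L^p(\hZ^s)})$, as claimed.

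The main obstacle I anticipate is purely geometric bookkeeping rather than analytic: one must check that $\hZ^s$ genuinely has a Lipschitz boundary so that \cite{Acerbi1992} applies, which requires that the matching condition $\mathrm{int}(\partial Z^s\cap\{y_i=0\})+e_i = \mathrm{int}(\partial Z^s\cap\{y_i=1\})$ makes the copies fit together without creating cusps or degenerate contacts along the shared faces — this is where the "int" in the hypothesis and the connectedness of $Z^s$ are used. A second, minor point is that the constant in \cite{Acerbi1992} depends on $\hZ^s$ only, which is a fixed domain independent of $\epsilon$ and of $\alpha$ (all the $\hZ^s_\alpha$ are translates of the same set), so the localization-to-$\oems$ step later will indeed produce an $\epsilon$-independent constant. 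Everything else — linearity, the extension property, and the three inequalities — is the short computation sketched above.
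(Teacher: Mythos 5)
Your construction is exactly the paper's: you subtract the affine map $M(v)y$, apply the Acerbi extension operator to the difference, add the affine map back, and derive the symmetric-gradient bound from $D(M(v)y)=0$ together with Lemma \ref{LemmaKornInequalities}\ref{LemmaKornInequalitiesMu}, with the other two estimates following from $\vert M(v)\vert \le C\Vert\nabla v\Vert_{L^p(\hZ^s)}$. The proposal is correct and takes essentially the same approach as the paper.
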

\begin{proof}
Due to \cite{Acerbi1992}, there exists an extension operator $\tau : W^{1,p}(\hZ^s)^n \rightarrow W^{1,p}(\hZ)^n$ such that for all $v \in W^{1,p}(\hZ^s)^n$ it holds that ($i=1,\ldots,n$) 
\begin{align*}
\Vert (\tau v)^i \Vert_{L^p(\hZ)} &\le C \Vert v^i\Vert_{L^p(\hZ^s)},
\\
\Vert \nabla (\tau v)^i \Vert_{L^p(\hZ)} &\le C \Vert \nabla v^i \Vert_{L^p(\hZ^s)}.
\end{align*}
We define $\e : W^{1,p}(\hZ^s)^n \rightarrow W^{1,p}(\hZ)^n$ for  $v \in W^{1,p}(\hZ^s)^n$ by
\begin{align*}
\e v := \tau \big( v - M(v)y) + M(v)y,
\end{align*}
with the mean value of the antisymmetric gradient $M(v) \in \R^{n\times n}$ defined in $\eqref{MeanValueAntiSymGradient}$.
%by
%\begin{align*}
%M_{ij}(v):= \frac{1}{2\vert \hZ^s \vert} \int_{\hZ^s} \partial_i v^j  - \partial_j v^i dy.
%\end{align*}
%In other words, $M(v)$ is the mean-value of the antisymmetric gradient. Obviously, $\e$ is well-defined and an extension operator. 
For $i=1,\ldots,n$ we have
\begin{align*}
\Vert (\e v)^i \Vert^p_{L^p(\hZ)} = \int_{\hZ} \vert (\e v)^i \vert^p dy
&\le C \int_{\hZ} \left\vert \tau(v - M(v)y)^i \right\vert^p + \vert M(v)y\vert^p dy 
\\
&\le C \int_{\hZ^s} \left\vert v_i - \big(M(v)y\big)^i \right\vert^p dy + C \vert M(v) \vert^p
\\
&\le C\left(\Vert v^i \Vert_{L^p(\hZ^s)}^p + \Vert \nabla v \Vert_{L^p(\hZ^s)}^p\right).
\end{align*}
For the gradient $\nabla \e v$ we can argue in a similar way.
%For the gradient of $\e v$ it holds with the Korn-inequality in Lemma \ref{LemmaKornInequalities}\ref{LemmaKornInequalitiesMu}  \textcolor{blue}{(Braucht man eigentlich nicht....)} that
%\begin{align*}
%\int_{\hZ} \vert \nabla \e v \vert^p dy &\le C \int_{\hZ} \left\vert \nabla \tau (v - M(v)y) \right\vert^p + \vert M(v) \vert^p dy
%\\
%&\le C \int_{\hZ^s} \vert \nabla v - M(v) \vert^p + \vert M(v)\vert^p dy
%\\
%&\le C \Vert \nabla v \Vert_{L^p(\hZ^s)}^p
%\end{align*}
It remains to estimate the symmetric gradient. Since $D(M(v)y) = \frac12 \big(M(v) + M(v)^T) = 0$, we obtain
\begin{align*}
\int_{\hZ} \vert D(\e v)\vert^p dy 
%&=
% \int_{\hZ} \vert D(\tau(v-M(v)y))\vert^p dy
%\\
&\le C \int_{\hZ} \vert \nabla (\tau (v - M(v)y)) \vert^p dy 
\\
&\le C \int_{\hZ^s} \vert \nabla v - M(v) \vert^p dy 
\le C \int_{\hZ^s} \vert D(v) \vert^p dy,
\end{align*}
where at the end we used  the Korn-inequality  in Lemma \ref{LemmaKornInequalities}\ref{LemmaKornInequalitiesMu}.
\end{proof}

\begin{remark}\label{RemarkLocalExtensionOperator}
If we define the set 
\begin{align*}
\mathcal{Z} := \left\{ W = \mathrm{int} \bigcup_{\alpha \in I } \overline{Z_{\alpha}^s} \, : \, I \subset \E \cup \{0\}, \, W \mbox{ connected} \right\}, 
\end{align*}
the result of Lemma \ref{LocalExtensionOperator} is still valid if we replace $\hZ^s$ with an arbitrary $W\in \mathcal{Z}$  (see also \cite[Lemma 2.6]{Acerbi1992}). Especially, since $\mathcal{Z}$ is finite, we can choose the constant $C\gr 0$ in the inequalities the same for all $W$. We use the same notation $\e$ for all $W$ and neglect its dependence on the set $W$. 
\end{remark}

Next, we define an extension operator on the space $W^{1,p}(\epsilon^{-1} \oems)^n$ which preserves the norm of the symmetric gradient.
\begin{lemma}\label{ExtensionOperatorEpsilonScaledMembrane}
There exists an extension operator $E: W^{1,p}(\epsilon^{-1}\oems)^n \rightarrow W^{1,p}(\epsilon^{-1} \oem)^n $ (we neglect here the dependence on $\epsilon$) such that for all $\veps \in W^{1,p}(\epsilon^{-1} \oems)^n $ it holds that ($i=1,\ldots,n$)
\begin{align*}
\Vert (E \veps)^i \Vert_{L^p(\epsilon^{-1} \oem)} &\le C \left(\Vert \veps^i \Vert_{L^p(\epsilon^{-1}\oems)} + \Vert \nabla \veps \Vert_{L^p(\epsilon^{-1} \oems)}\right),
\\
\Vert \nabla E\veps \Vert_{L^p(\epsilon^{-1} \oem)} &\le C \Vert \nabla \veps \Vert_{L^p(\epsilon^{-1}\oems)},
\\
\Vert D(E\veps ) \Vert_{L^p(\epsilon^{-1} \oem)} &\le C \Vert D(\veps) \Vert_{L^p(\epsilon^{-1}\oems)},
\end{align*}
for a constant $C \gr 0 $ independent of $\epsilon$. 
%If additionally $\veps = 0$ on $\epsilon^{-1} \partial_D \oems$, it holds that
%\begin{align*}
%\Vert E\veps \Vert_{L^p(\epsilon^{-1} \partial \Sigma \times (-1,1))} \le C \Vert D(\veps)\Vert_{L^p(\epsilon^{-1} \oems)}
%\end{align*}
\end{lemma}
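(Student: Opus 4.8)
The plan is to build the global extension operator $E$ on $\epsilon^{-1}\oems$ by gluing together copies of the local operator $\mathfrak{e}$ from Lemma \ref{LocalExtensionOperator} (and Remark \ref{RemarkLocalExtensionOperator}), following the decomposition strategy of \cite{Acerbi1992}. First I would set up a partition of unity subordinate to a covering of $\epsilon^{-1}\oem$ by the enlarged cells $\widehat{Z}_{\alpha} = \widehat{Z}^s_{\alpha} \cup (\text{pores})$ for $\alpha \in K_{\epsilon}$ (suitably truncated near the lateral boundary $\partial_D(\epsilon^{-1}\oem)$, where one uses the variant sets $W \in \mathcal{Z}$ of Remark \ref{RemarkLocalExtensionOperator} so that the covering cells stay inside $\epsilon^{-1}\oem$). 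Let $\{\psi_{\alpha}\}_{\alpha \in K_{\epsilon}}$ be this partition of unity with $\|\nabla \psi_{\alpha}\|_{L^{\infty}} \le C$ uniformly in $\alpha$ (this is where it is essential that we work on the $\epsilon^{-1}$-rescaled domain: the cells have fixed unit size, so the partition-of-unity gradients are $O(1)$, not $O(\epsilon^{-1})$). For each $\alpha$, apply the local operator to the restriction of $\veps$ to $\widehat{Z}^s_{\alpha}$ to get $\mathfrak{e}(\veps|_{\widehat{Z}^s_{\alpha}}) \in W^{1,p}(\widehat{Z}_{\alpha})^n$, and define
\begin{align*}
E\veps := \sum_{\alpha \in K_{\epsilon}} \psi_{\alpha}\, \mathfrak{e}\big(\veps|_{\widehat{Z}^s_{\alpha}}\big).
\end{align*}
On $\epsilon^{-1}\oems$ this agrees with $\veps$ since $\sum_{\alpha}\psi_{\alpha} = 1$ and each $\mathfrak{e}$ is an extension, so $E$ is indeed an extension operator.

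The next step is the three norm estimates. The $L^p$-bound on $(E\veps)^i$ and the $L^p$-bound on $\nabla E\veps$ follow from the corresponding local bounds in Lemma \ref{LocalExtensionOperator}, the finite overlap of the covering (each point of $\epsilon^{-1}\oem$ lies in boundedly many $\widehat{Z}_{\alpha}$, with the bound depending only on $n$ and $\mathcal{E}$), and $\|\nabla\psi_{\alpha}\|_{L^{\infty}} \le C$. Summing the $p$-th powers of the local estimates over $\alpha \in K_{\epsilon}$ and using that the $\widehat{Z}^s_{\alpha}$ also have finite overlap gives the first two claimed inequalities. The delicate one is the symmetric-gradient estimate. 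Naively, $D(E\veps) = \sum_{\alpha} \psi_{\alpha} D(\mathfrak{e}_{\alpha} v) + \sum_{\alpha} \nabla\psi_{\alpha} \odot (\mathfrak{e}_{\alpha} v)$ (symmetrized tensor product), and the second term involves the full function $\mathfrak{e}_{\alpha}v$, not just its symmetric gradient, so it is not directly controlled by $\|D(\veps)\|$. The standard trick — and I expect this to be the main obstacle — is to exploit that $\sum_{\alpha}\nabla\psi_{\alpha} = 0$: fix a reference cell index, and on each overlap region subtract a single rigid displacement (or at least a constant, but rigid displacement is the right object here) before applying $\nabla\psi_{\alpha}$. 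Concretely one should rewrite the correction term as $\sum_{\alpha}\nabla\psi_{\alpha}\odot(\mathfrak{e}_{\alpha}v - r)$ for an appropriate $\alpha$-localized rigid displacement $r = r_{\alpha}$ chosen (via Lemma \ref{LemmaKornInequalities}\ref{LemmaKornInequalitiesRigidDisplacement}) so that $\|\mathfrak{e}_{\alpha}v - r_{\alpha}\|_{W^{1,p}(\widehat{Z}_{\alpha})} \le C\|D(\mathfrak{e}_{\alpha}v)\|_{L^p(\widehat{Z}_{\alpha})} \le C\|D(v)\|_{L^p(\widehat{Z}^s_{\alpha})}$; since adjacent cells share overlap, one must also check that the $r_{\alpha}$ for neighboring cells are comparable (their difference is controlled by $\|D(v)\|$ on the union of the two neighborhoods, again by Korn on the connected overlap), so that replacing $\mathfrak{e}_{\alpha}v$ by $\mathfrak{e}_{\alpha}v - r_{\beta}$ for a \emph{single} neighbor index $\beta$ costs only $\|D(v)\|$. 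After this reduction every term on the right is bounded by $\|D(\veps)\|_{L^p}$ on a bounded-overlap family of patches, and summing $p$-th powers yields $\|D(E\veps)\|_{L^p(\epsilon^{-1}\oem)} \le C\|D(\veps)\|_{L^p(\epsilon^{-1}\oems)}$.

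Finally I would verify the $\epsilon$-independence of $C$: the covering $\{\widehat{Z}_{\alpha}\}$, the partition of unity, the local operators $\mathfrak{e}$ (finitely many, one per element of $\mathcal{Z}$, by Remark \ref{RemarkLocalExtensionOperator}), and the rigid-displacement constants from Lemma \ref{LemmaKornInequalities} all live on the \emph{unit-scale} reference geometry and hence carry constants depending only on $Z^s$, $n$, and $p$; the only $\epsilon$-dependence enters through the index set $K_{\epsilon}$, but it affects only how many patches are summed, not the per-patch constants. This gives Lemma \ref{ExtensionOperatorEpsilonScaledMembrane}; the unscaled Theorem \ref{TheoremExtensionOperator} then follows by the obvious change of variables $x \mapsto x/\epsilon$, tracking the scaling factors $\epsilon^{n/p}$ in the $L^p$-norms and $\epsilon^{n/p-1}$ in the gradient norms, which is precisely what produces the extra factor $\epsilon$ in front of $\|\nabla\veps\|$ in the first estimate of Theorem \ref{TheoremExtensionOperator}.
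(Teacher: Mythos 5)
Your proposal is correct and follows essentially the same route as the paper's proof: an Acerbi--Fusco type decomposition with a translation-invariant partition of unity on the unit-scale cells, the local operator $\e$ from Lemma \ref{LocalExtensionOperator} (with Remark \ref{RemarkLocalExtensionOperator} for cells meeting the lateral boundary), and the cancellation $\sum_{\alpha}\nabla\phi^{\alpha}=0$ to reduce the zeroth-order commutator term to differences of neighboring local extensions. The one (harmless) divergence is in how that term is then bounded: you subtract per-cell rigid displacements $r_{\alpha}$ and compare them across overlaps, whereas the paper observes directly that $\veps^{\alpha+e}-\veps^{\alpha}$ vanishes on the common solid part $\hZ^s_{\alpha}\cap\hZ^s_{\alpha+e}$ and applies the Korn inequality of Lemma \ref{LemmaKornInequalities}\ref{LemmaKornInequalitiesZeroSubsetBoundary} to the difference itself, which is a slightly more economical form of the same estimate.
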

\begin{proof}
As in \cite{Acerbi1992} we choose some kind of "periodic" partition of unity in the following way: Let $(\phi^{\alpha})_{\alpha \in \Z^{n-1}\times \{0\} }$ be a partition of unity of $(\hZ_{\alpha})_{\alpha \in \Z^{n-1}\times \{ 0 \} }$ , such that for all $\alpha , \beta \in \Z^{n-1} \times \{0\}$ it holds that
\begin{align*}
\phi^{\beta} = \phi^{\alpha} \circ \pi^{\alpha - \beta}.
\end{align*}
Especially, we have 
\begin{align*}
\sum_{e \in \E} \phi^{\alpha + e} = 1 \quad \mbox{ in } Z_{\alpha},
\\
\vert \phi^{\alpha} (x) \vert + \vert \nabla \phi^{\alpha} (x)\vert \le C
\end{align*}
for all $x \in \R^{n-1} \times (-1,1)$ and a constant $C\gr 0 $ independent of $\alpha$. 
To define a global extension operator, we also have to take account cells $\hZ_{\alpha}$ which are not completely included in $\epsilon^{-1} \oem$. For this we define
\begin{align*}
K_{\epsilon}^{\ast}:= \left\{\alpha \in \Z^{n-1} \times \{0\}\, : \, \hZ_{\alpha} \cap \epsilon^{-1} \oem \neq \emptyset \right\}.
\end{align*}
For $\veps \in W^{1,p}(\epsilon^{-1} \oems)^n$ and $\alpha \in K_{\epsilon}^{\ast}$ the function
\begin{align*}
\veps^{\alpha} := \left( \e (\veps\vert_{\hZ_{\alpha}\cap \epsilon^{-1} \oems} \circ \pi^{\alpha} )\right) \circ \pi^{-\alpha}.
\end{align*}
Now, we define the extension operator $E: W^{1,p}(\epsilon^{-1}\oems)^n \rightarrow W^{1,p}(\epsilon^{-1} \oem)^n $ via
\begin{align*}
E\veps := \sum_{\alpha \in K_{\epsilon}^{\ast}} \veps^{\alpha} \phi^{\alpha}.
\end{align*}
Obviously $E$ is a well-defined, linear, and bounded extension operator. From the product rule we obtain
\begin{align*}
D(\veps^{\alpha} \phi^{\alpha} ) = D(\veps^{\alpha})\phi^{\alpha} + \frac12\left(\nabla \phi^{\alpha} \otimes \veps^{\alpha} + \veps^{\alpha} \otimes \nabla \phi^{\alpha} \right).
\end{align*}
We have
\begin{align*}
\int_{\epsilon^{-1} \oem}& \vert D(E\veps)\vert^p dx = \sum_{\alpha \in K_{\epsilon}} \int_{Z_{\alpha}} \vert D(E\veps)\vert^p dx
\\
&= \sum_{\alpha \in K_{\epsilon} } \int_{Z_{\alpha}} \left\vert \sum_{e\in \E} \left[D(\veps^{\alpha + e})\phi^{\alpha+ e} + \frac12\left(\nabla \phi^{\alpha+ e} \otimes \veps^{\alpha+ e} + \veps^{\alpha+ e} \otimes \nabla \phi^{\alpha+ e} \right)   \right] \right\vert^p dx 
\\
&\le C \sum_{\alpha \in K_{\epsilon}} \bigg[\int_{Z_{\alpha}} \left\vert \sum_{e \in \E} D(\veps^{\alpha + e} ) \phi^{\alpha+ e}  \right\vert^p dx 
+ \int_{Z_{\alpha}} \left\vert \sum_{e \in \E} \nabla \phi^{\alpha + e } \otimes \veps^{\alpha + e } \right\vert^p dx
\\
&=: C \sum_{\alpha \in K_{\epsilon} } (A_{\epsilon,\alpha}^1 + A_{\epsilon,\alpha}^2).
\end{align*}
For the first term we obtain with Lemma \ref{LocalExtensionOperator} (for a constant $C\gr 0$ independent of $\epsilon$, $e$, and $\alpha$)
\begin{align*}
A_{\epsilon,\alpha}^1 \le C \sum_{e \in \E} \int_{\hZ} \left\vert D \left( \e(\veps \circ \pi^{\alpha + e} ) \right)\right\vert^p dx
&\le C \sum_{e \in \E} \int_{\hZ^s} \left\vert D\left(\veps \circ \pi^{\alpha +e} \right) \right\vert^p dx
\\
&= C \sum_{e \in \E} \int_{\hZ^s_{\alpha + e}} \vert D(\veps)\vert^p dx.
\end{align*}
Hence, we obtain
\begin{align*}
\sum_{\alpha \in K_{\epsilon} } A_{\epsilon,\alpha}^1 \le C \sum_{\alpha \in K_{\epsilon}^{\ast}} \int_{Z_{\alpha}^s} \vert D(\veps)\vert^p dx 
\le C \int_{\epsilon^{-1} \oems} \vert D(\veps)\vert^p dx.
\end{align*}
For $A_{\epsilon,\alpha}^2$ we get using $\sum_{e \in \E} \nabla \phi^{\alpha + e } = 0$
\begin{align*}
A_{\epsilon,\alpha}^2 &= \int_{Z_{\alpha}} \left\vert \sum_{e \in \E} \left[ \veps^{\alpha + e} - \veps^{\alpha} \right] \otimes \nabla \phi^{\alpha + e } \right\vert^p dx 
\le C \sum_{e \in \E} \int_{\hZ_{\alpha} \cap \hZ_{\alpha + e}} \vert \veps^{\alpha + e } - \veps^{\alpha} \vert^p dx.
\end{align*}
Since $\veps^{\alpha + e} - \veps^{\alpha} = 0$ on $\hZ^s_{\alpha}\cap \hZ^s_{\alpha + e}$ we can apply the Korn-inequality from Lemma  \ref{LemmaKornInequalities}\ref{LemmaKornInequalitiesZeroSubsetBoundary} (with a constant independent of $\alpha$, since we can transform the integral to the reference element) to get with similar arguments as above
\begin{align*}
A_{\epsilon,\alpha}^2 &\le C \sum_{e \in \E} \int_{\hZ_{\alpha} \cap \hZ_{\alpha + e}} \vert D(\veps^{\alpha + e}) - D(\veps^{\alpha})\vert^p dx
\\
&\le C \sum_{e \in \E} \left[ \int_{\hZ^s_{\alpha}} \vert D(\veps)\vert^p dx + \int_{\hZ^s_{\alpha + e}} \vert D(\veps)\vert^p dx \right].
\end{align*}
Summing over $\alpha $ we obtain
\begin{align*}
\sum_{\alpha \in K_{\epsilon} } A_{\epsilon,\alpha}^2 \le C \int_{\epsilon^{-1} \oems} \vert D(\veps)\vert^p dx.
\end{align*}
Altogether we obtain the desired result for $D(E\veps)$. With similar arguments we also obtain the estimate for $\nabla E\veps $ and $(E\veps)^i$, see also  \cite{Acerbi1992}.
\end{proof}

\begin{proof}[Proof of Theorem \ref{TheoremExtensionOperator}]
Let $\veps \in W^{1,p}(\oems)^n$. We define
\begin{align*}
E_{\epsilon} \veps := E(\veps \circ \pi_{\epsilon}) \circ \pi_{\epsilon^{-1}}.
\end{align*}
Then we have with Lemma \ref{ExtensionOperatorEpsilonScaledMembrane}
\begin{align*}
\int_{\oem} \vert D(E_{\epsilon} \veps ) \vert^p dx &= \int_{\oem} \vert D(E(\veps \circ \pi_{\epsilon})\circ \pi_{\epsilon^{-1}})\vert^p dx
\\
%&= \epsilon^{-p} \int_{\oem} \vert D(E(\veps \circ \pi_{\epsilon})) \circ \pi_{\epsilon^{-1}}\vert^p dx
%\\
&= \epsilon^{n-p} \int_{\epsilon^{-1} \oem}  \vert D(E(\veps \circ \pi_{\epsilon}))\vert^p dx 
\\
&\le C \epsilon^{n-p} \int_{\epsilon^{-1}\oems} \vert D(\veps \circ \pi_{\epsilon}) \vert^p dx
\\
&= C \int_{\oems} \vert D(\veps)\vert^p dx.
\end{align*}
The inequality for $\nabla E_{\epsilon} \veps$ is obtained in a similar way. Further, for $i=1,\ldots,n$ we have
\begin{align*}
\int_{\oem} \vert (E_{\epsilon} \veps)^i\vert^p dx &=  \epsilon^n \int_{\epsilon^{-1} \oem} \vert (E(\veps \circ \pi_{\epsilon}))^i\vert^p dx 
\\
%&\le C \epsilon^n \int_{\epsilon^{-1} \oems } \vert \veps^i \circ \pi_{\epsilon}\vert^p + \vert \nabla (\veps \circ \pi_{\epsilon})\vert^p dx 
%\\
&= C \epsilon^n \int_{\epsilon^{-1} \oems} \vert \veps^i \circ \pi_{\epsilon} \vert^p + \epsilon^p \vert \nabla \veps \circ \pi_{\epsilon}\vert^p dx
\\
&\le C \left( \Vert \veps^i \Vert_{L^p(\oems)}^p + \epsilon^p \Vert \nabla \veps \Vert_{L^p(\oems)}^p \right).
\end{align*}
This finishes the proof.
\end{proof}

\section{Korn-inequality in the perforated thin layer}
\label{SectionKornInequality}

We derive the Korn-inequality for thin perforated domains in Theorem \ref{KornInequalityPerforatedLayer}. The crucial point is the explicit dependence on the parameter $\epsilon$. The idea is to transform the problem to the fixed rescaled domain $\Omega_1^M$ and apply the Korn-inequalities from Lemma \ref{LemmaKornInequalities}. For this we have to extend functions from the perforated layer $\oems$ to the whole domain $\oem$ by using Theorem \ref{TheoremExtensionOperator} which gives a control of the symmetric gradient of the extension. An additional problem arises due to the fact, that the rescaled osciallating lateral boundary $\partial_D \oems$ is depending on $\epsilon$ and therefore not fixed (an application of Lemma \ref{LemmaKornInequalities}\ref{LemmaKornInequalitiesZeroSubsetBoundary} uniformly with respect to $\epsilon $ is not possible). To overcome this problem we use a contradiction argument.
\\
%
%\subsection*{Korn-inequality in the thin perforated layer.... (IDentitisch mit Kapitel\"uberschrift...}
For $v \in W^{1,p}(\Omega_1^M)^3$ we define the $\epsilon$-weighted symmetric gradient $\kappa_{\epsilon}(v)$ by ($i,j=1,2$)
\begin{align*}
\kappa_{\epsilon}(v)_{ij} = D(v)_{ij}, \quad 
\kappa_{\epsilon}(v)_{i3} = \foe D(v)_{i3}, \quad 
\kappa_{\epsilon}(v)_{33} = \frac{1}{\epsilon^2} D(v)_{33}.
%\kappa_{\epsilon}(v)_{ij} &= D(v)_{ij} &\mbox{ for }& i,j=1,2,
%\\
%\kappa_{\epsilon}(v)_{i3} &= \foe D(v)_{i3} &\mbox{ for }&  i=1,2,
%\\
%\kappa_{\epsilon}(v)_{33} &= \frac{1}{\epsilon^2} D(v)_{33}.
\end{align*}
For an arbitrary domain $G_{\epsilon} \subset \R^3$  we define on $W^{1,p}(G_{\epsilon})^3$ the norm
\begin{align*}
\Vert \veps \Vert_{G_{\epsilon},\epsilon}^p &:= \sum_{i=1}^2 \frac{1}{\epsilon^{p+1}} \Vert \veps^i \Vert_{L^p(\oem)}^p + \sum_{i,j=1}^2 \frac{1}{\epsilon^{p+1}}\Vert \partial_i \veps^j \Vert_{L^p(\oem)}^p + \frac{1}{\epsilon} \Vert \veps^3 \Vert_{L^p(\oem)}^p 
\\
&+ \epsilon\Vert \partial_3 \veps^3 \Vert_{L^p(\oem)}^p+ \sum_{i=1}^2 \frac{1}{\epsilon} \left(\Vert \partial_3 \veps^i \Vert_{L^p(\oem)}^p + \Vert \partial_i \veps^3 \Vert_{L^p(\oem)}^p \right)
\end{align*}
for all $\veps \in W^{1,p}(G_{\epsilon})^3$. Now, for every $\ueps \in W^{1,p}(\oem)^3$ we define the function $\tueps: \Omega_1^M \rightarrow \R^3$ by ($i=1,2$)
\begin{align*}
\tueps^i(x):= \frac{1}{\epsilon} \ueps^i(\x,\epsilon x_3),\qquad \tueps^3(x):= \ueps^3(\x,\epsilon x_3).
\end{align*}
An elemental calculation gives
\begin{align}
\begin{aligned}
\label{IdentityNormsScaledMembrane}
\Vert \tueps \Vert_{W^{1,p}(\Omega_1^M)} &= \Vert \ueps\Vert_{\oem,\epsilon},
\\
\epsilon^{1 + \frac{1}{p}}\Vert \kappa_{\epsilon} (\tueps) \Vert_{L^p(\Omega_1^M)} &= \Vert D(\ueps)\Vert_{L^p(\oem)}. 
\end{aligned}
\end{align}

We have the following Korn-inequality in the thin perforated domain  (see also \cite[Theorem 5]{griso2020homogenization} for a similar result for the case $p=2$):

\begin{proposition}\label{KornInequalityGeneralBoundaryRigidDisplacement}
For all $\ueps \in W^{1,p}(\oems)^3$ there exists a rigid-displacement $\reps$, such that
\begin{align*}
\sum_{i=1}^2 &\foe \Vert \ueps^i - \reps^i\Vert_{L^p(\oems)} + \sum_{i,j=1}^2 \foe \Vert \partial_i \ueps^j - \partial_i \reps^j \Vert_{L^p(\oems)} 
\\
%& + \sum_{i=1}^2  \left(\Vert \partial_3 \ueps^i - \partial_3 \reps^i \Vert_{L^p(\oems)} + \Vert \partial_i \ueps^3 - \partial_i \reps^3 \Vert_{L^p(\oems)} \right)
%\\
& + \Vert \ueps^3 -\reps^3\Vert_{L^p(\oems)} + \Vert \nabla \ueps - \nabla \reps \Vert_{L^p(\oems)} \le \frac{C}{\epsilon} \Vert D(\ueps)\Vert_{L^p(\oems)}.
\end{align*}
\end{proposition}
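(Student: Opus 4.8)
The plan is to transform the problem onto the fixed rescaled domain $\Omega_1^M$, apply the extension Theorem~\ref{TheoremExtensionOperator} together with the rigid-displacement Korn-inequality from Lemma~\ref{LemmaKornInequalities}\ref{LemmaKornInequalitiesRigidDisplacement}, and then translate the resulting estimates back to $\oems$ using the scaling identities~\eqref{IdentityNormsScaledMembrane}. Concretely, given $\ueps \in W^{1,p}(\oems)^3$, first extend it by $\hueps := E_{\epsilon}\ueps \in W^{1,p}(\oem)^3$ via Theorem~\ref{TheoremExtensionOperator}, so that $\|D(\hueps)\|_{L^p(\oem)} \le C\|D(\ueps)\|_{L^p(\oems)}$. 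Then pass to the rescaled field $\tilde{u}_{\epsilon} := \widetilde{\hueps}$ on the fixed domain $\Omega_1^M$ according to the definition preceding~\eqref{IdentityNormsScaledMembrane}.

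Next I would apply Lemma~\ref{LemmaKornInequalities}\ref{LemmaKornInequalitiesRigidDisplacement} on the \emph{fixed} domain $\Omega_1^M$ to the rescaled field $\tilde{u}_{\epsilon}$: there is a rigid displacement $r = b + Ax \in N(\Omega_1^M)$ with $\|\tilde{u}_{\epsilon} - r\|_{W^{1,p}(\Omega_1^M)} \le C\|D(\tilde{u}_{\epsilon})\|_{L^p(\Omega_1^M)}$, the constant $C$ now being $\epsilon$-independent since $\Omega_1^M$ is fixed. The point is that $D(\tilde{u}_{\epsilon})$ is dominated componentwise by $\epsilon^{1+1/p}\kappa_{\epsilon}(\tilde{u}_{\epsilon})$ up to the factors $1, 1/\epsilon, 1/\epsilon^2$ built into $\kappa_{\epsilon}$, but in fact one only needs $\|D(\tilde u_\epsilon)\|_{L^p(\Omega_1^M)} \le C\epsilon^{-1-1/p}\|D(\hueps)\|_{L^p(\oem)}$, which follows from the scaling of the symmetric gradient under $x_3 \mapsto \epsilon x_3$ and $u^i \mapsto \epsilon^{-1}u^i$ (this is weaker than the second line of~\eqref{IdentityNormsScaledMembrane} and suffices here). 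Now un-rescale: define $\reps$ to be the rigid displacement on $\oems$ corresponding to $r$ under the inverse of the map $\ueps \mapsto \tilde{u}_\epsilon$ — one must check that this correspondence sends $N(\Omega_1^M)$ into the rigid displacements of $\R^3$, which it does because the anisotropic rescaling of an affine field is again affine, and the antisymmetry of the linear part is preserved (the off-diagonal $\epsilon$-weights act symmetrically on the $(i,3)$ and $(3,i)$ entries). Reading the left-hand side of~\eqref{IdentityNormsScaledMembrane} for $\hueps - \reps$ in place of $\ueps$, the norm $\|\widetilde{\hueps} - r\|_{W^{1,p}(\Omega_1^M)} = \|\hueps - \reps\|_{\oem,\epsilon}$ controls exactly the combination $\sum_{i=1}^2\epsilon^{-1-1/p}\|\cdot\| + \sum_{i,j=1}^2\epsilon^{-1-1/p}\|\partial_i\cdot\| + \epsilon^{-1/p}\|\cdot^3\| + \epsilon^{-1/p}\|\nabla\cdot\|$ (discarding the favorable $\epsilon\|\partial_3 u^3\|$ term), and combining the chain of estimates yields the claimed bound with $\|D(\hueps)\|_{L^p(\oem)}$ on the right, which is $\le C\|D(\ueps)\|_{L^p(\oems)}$. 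Finally restrict from $\oem$ back to $\oems \subset \oem$, which only decreases the left-hand side.

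\textbf{Main obstacle.} The delicate point is bookkeeping the powers of $\epsilon$ so that the single constant on the right is genuinely $\epsilon$-independent: one must verify that the weights $1/\epsilon$ in front of $\ueps^i$ and $\partial_i\ueps^j$ and the weight $1$ in front of $\ueps^3$ and $\nabla\ueps$ in the statement match precisely the weights appearing in $\|\cdot\|_{\oem,\epsilon}^p$ after dividing through by the common factor $\epsilon^{1/p}$, and that the anisotropic rescaling does not spoil the rigid-displacement structure — i.e.\ that $\reps$ is a genuine element of $N(\R^3)$ restricted to $\oems$, not merely an affine field. A secondary subtlety, flagged in the paragraph preceding the proposition, is that one cannot apply a boundary-condition version of Korn's inequality directly on $\oems$ because the lateral boundary $\partial_D\oems$ oscillates with $\epsilon$; the extension-to-$\oem$-then-rescale route circumvents this entirely, which is why Theorem~\ref{TheoremExtensionOperator} (with its control of the \emph{symmetric} gradient, not merely the full gradient) is indispensable here.
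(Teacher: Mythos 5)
Your proposal follows essentially the same route as the paper's proof: extend via $E_{\epsilon}$, rescale to the fixed domain $\Omega_1^M$, apply Lemma \ref{LemmaKornInequalities}\ref{LemmaKornInequalitiesRigidDisplacement} there, un-rescale the rigid displacement (which, as you note, stays rigid under the anisotropic scaling), and read off the estimate via \eqref{IdentityNormsScaledMembrane}. The one point to tighten is your claim that $\Vert\cdot\Vert_{\oem,\epsilon}$ controls $\epsilon^{-1/p}\Vert\nabla\cdot\Vert$: the component $\partial_3 v^3$ enters that norm with the weight $\epsilon^{+1/p}$, so the $\partial_3(\ueps^3-\reps^3)$ part of $\Vert\nabla\ueps-\nabla\reps\Vert_{L^p(\oems)}$ is not covered by the rescaling bookkeeping and must be bounded separately by observing that it equals $D(\ueps)_{33}$ (since $D(\reps)=0$) — which is precisely the closing remark in the paper's proof.
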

\begin{proof}
We extend $\ueps \in W^{1,p}(\oems)^3$ with the extension operator $E_{\epsilon}$ from Theorem \ref{TheoremExtensionOperator} to the whole layer $\oem$ and denote this extension again by $\ueps$. From the Korn-inequality in Lemma \ref{LemmaKornInequalities}\ref{LemmaKornInequalitiesRigidDisplacement} we get  the existence of a rigid-displacement $\treps$, such that
\begin{align*}
\Vert \tueps - \treps\Vert_{W^{1,p}(\Omega_1^M)} \le C \Vert D(\tueps)\Vert_{L^p(\Omega_1^M)}.
\end{align*}
We define the rigid-displacement $\reps$ on $\oem$ by ($\alpha = 1,2$)
\begin{align*}
\reps^{\alpha}(x) := \epsilon \treps\left(\x,\frac{x_3}{\epsilon}\right), \qquad \reps^3(x):= \treps\left(\x,\frac{x_3}{\epsilon}\right).
\end{align*}
Using the identity $\eqref{IdentityNormsScaledMembrane}$ and Theorem \ref{TheoremExtensionOperator}, we obtain
\begin{align*}
\Vert \ueps - \reps  \Vert_{\oems,\epsilon} &\le \Vert \ueps - \reps \Vert_{\oem,\epsilon} 
= \Vert \tueps - \treps \Vert_{W^{1,p}(\Omega_1^M)} 
\le C \Vert D(\tueps)\Vert_{L^p(\Omega_1^M)}
\\
&\le C\Vert \kappa_{\epsilon}\Vert_{L^p(\Omega_1^M)}
= \frac{C}{\epsilon^{1 + \frac{1}{p}}} \Vert D(\ueps)\Vert_{L^p(\oem)}
\le \frac{C}{\epsilon^{1 + \frac{1}{p}}} \Vert D(\ueps)\Vert_{L^p(\oems)}.
\end{align*}
This implies the desired result. We emphasize that the norm $\Vert \cdot \Vert_{\oems,\epsilon}$ includes another $\epsilon$-weight for the component $\partial_3 \ueps^3$ than stated in the claim of the Proposition. However, since this component is included in $D(\ueps)$, the claim follows.
\end{proof}
A crucial question is under which conditions we have $\reps = 0$ in Proposition \ref{KornInequalityGeneralBoundaryRigidDisplacement}. Of course, the Korn-inequality in Lemma \ref{LemmaKornInequalities}\ref{LemmaKornInequalitiesZeroSubsetBoundary} implies for all $\ueps \in W^{1,p}(\oems)^3$ with $\ueps = 0$ on $\partial_D \oems$
\begin{align*}
\Vert \tueps \Vert_{W^{1,p}(\Omega_1^M)} \le C_{\epsilon} \Vert D(\tueps)\Vert_{L^p(\Omega_1^M)}.
\end{align*}
However, the constant $C_{\epsilon}$ may depend on $\epsilon$. If $\tueps = 0$ on the lateral boundary $\partial \Sigma \times (-1,1)$ the constant $C_{\epsilon}$ may be chosen independently of $\epsilon$, see also \cite[Proof of Theorem 1.4-1, page 36]{ciarlet1997mathematical} for $p=2$. In general, this is not fulfilled by the extension operator from Theorem \ref{TheoremExtensionOperator}. The following lemma  gives a bound for the $L^p$-norm of the trace on the whole lateral boundary $\partial_D \oem$ for functions vanishing on the perforated part $\partial_D \oems$.
\begin{lemma}\label{TraceInequalityOuterBoundaryLemma}
For every $\ueps \in W^{1,p}(\oem)^3$ with $\ueps = 0$ on $\partial_D \oems$ it holds that
\begin{align*}
\Vert \ueps \Vert_{L^p(\partial \Sigma \times (-\epsilon,\epsilon)) } \le C \epsilon^{\frac{1}{p}}\Vert D(\ueps)\Vert_{L^p(\oem)}.
\end{align*} 
\end{lemma}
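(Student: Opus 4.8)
The plan is to reduce the estimate on the whole layer $\oem$ to an estimate on a single period and then invoke a fixed-domain trace inequality. First I would decompose $\partial\Sigma$ into its (finitely many, $\epsilon$-independent number of) flat faces; on each face the lateral boundary $\partial\Sigma\times(-\epsilon,\epsilon)$ is tiled by the lateral faces of the cells $\epsilon(\overline Z+k)$ for those $k\in K_\epsilon$ that touch $\partial\Sigma$. By the periodic construction, for such a boundary cell the portion of $\partial\Sigma\times(-\epsilon,\epsilon)$ it contributes lies in $\epsilon(\partial Y\times(-1,1)+\bar k)$, and the vanishing hypothesis $\ueps=0$ on $\partial_D\oems$ says precisely that $\ueps$ vanishes on the \emph{solid} part of that lateral face, namely on $\epsilon(\partial Z^s\cap(\partial Y\times(-1,1))+k)$ — a portion of the cell boundary with positive $(n-1)$-dimensional measure (because $Z^s$ satisfies the face-matching condition in the definition).

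Next I would work on the rescaled cell $\hZ$ (or just $Z$ together with its relevant boundary neighbours), writing $\tveps(y):=\ueps(\epsilon(y+k))$. On this fixed domain I want a trace inequality of the form
\begin{align*}
\Vert \tveps\Vert_{L^p(\partial Y\times(-1,1)\cap\, \partial Z)}\le C\Vert D(\tveps)\Vert_{L^p(Z)}
\end{align*}
valid for all $W^{1,p}$ fields vanishing on the solid part $\partial Z^s\cap(\partial Y\times(-1,1))$ of that face. This follows from the continuity of the trace operator $W^{1,p}(Z)\to L^p(\partial Z)$ combined with the Korn inequality of Lemma~\ref{LemmaKornInequalities}\ref{LemmaKornInequalitiesZeroSubsetBoundary}: the set $\Gamma_0$ equal to the solid face has positive measure and contains no nonzero rigid displacement when $n=3$ (indeed a rigid displacement vanishing on a $2$-dimensional subset of a hyperplane is zero), so $\Vert\tveps\Vert_{W^{1,p}(Z)}\le C\Vert D(\tveps)\Vert_{L^p(Z)}$, and composing with the trace bound gives the displayed inequality with a constant depending only on $Z,Z^s$, hence independent of $\epsilon$ and $k$.

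Then I would undo the rescaling. On the boundary cell indexed by $k$, the map $y\mapsto\epsilon(y+k)$ rescales the $(n-1)$-dimensional surface measure on the lateral face by $\epsilon^{n-1}$ and the volume measure on the cell by $\epsilon^{n}$, while $\nabla$ picks up a factor $\epsilon^{-1}$, so $D(\tveps)=\epsilon\, D(\ueps)(\epsilon(\cdot+k))$. Thus
\begin{align*}
\Vert\ueps\Vert^p_{L^p(\text{face of cell }k)}=\epsilon^{n-1}\Vert\tveps\Vert^p_{L^p(\cdot)}\le C\,\epsilon^{n-1}\Vert D(\tveps)\Vert^p_{L^p(Z)}=C\,\epsilon^{n-1}\epsilon^{p}\epsilon^{-n}\Vert D(\ueps)\Vert^p_{L^p(\epsilon(Z+k))},
\end{align*}
i.e. $\Vert\ueps\Vert^p_{L^p(\text{face }k)}\le C\,\epsilon^{p-1}\Vert D(\ueps)\Vert^p_{L^p(\epsilon(Z+k))}$. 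Summing over the boundary cells $k$ (whose interiors are disjoint) and over the finitely many faces of $\partial\Sigma$ gives $\Vert\ueps\Vert^p_{L^p(\partial\Sigma\times(-\epsilon,\epsilon))}\le C\,\epsilon^{p-1}\Vert D(\ueps)\Vert^p_{L^p(\oem)}$, which is exactly the claimed bound after taking $p$-th roots ($\epsilon^{(p-1)/p}=\epsilon\cdot\epsilon^{-1/p}$, matching $\epsilon^{1/p}$ against the stated power — careful bookkeeping of the exponent is needed here).

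The main obstacle I anticipate is the geometric/measure-theoretic bookkeeping at the boundary cells: one must make sure that the solid face $\partial Z^s\cap(\partial Y\times(-1,1))$ on which $\ueps$ vanishes genuinely has positive surface measure in \emph{every} boundary cell — this is where the cell-matching hypothesis on $Z^s$ and the Lipschitz assumption on $\oems$ enter — and that the rescaling factors for surface versus volume measure are tracked correctly so the power of $\epsilon$ comes out as $\epsilon^{1/p}$ rather than, say, $\epsilon^{1-1/p}$ or $\epsilon^{1+1/p}$. Everything else (trace continuity, Korn on the fixed cell, disjointness and finite overlap of the cells) is routine.
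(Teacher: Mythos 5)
Your argument is the same as the paper's: decompose the lateral boundary into the faces contributed by the boundary cells, rescale each to the reference cell $Z$, apply the trace inequality together with the Korn inequality of Lemma~\ref{LemmaKornInequalities}\ref{LemmaKornInequalitiesZeroSubsetBoundary} (using that the rescaled field vanishes on the solid portion of a lateral face of $Z$, which has positive surface measure by the face-matching condition and connectedness of $\oems$), and sum. The one point you flag yourself is the real issue, and your bookkeeping is in fact the correct one: $\epsilon^{n-1}\cdot\epsilon^{p}\cdot\epsilon^{-n}=\epsilon^{p-1}$, i.e. $\Vert \ueps\Vert_{L^p(\partial\Sigma\times(-\epsilon,\epsilon))}\le C\,\epsilon^{1-\frac1p}\Vert D(\ueps)\Vert_{L^p(\oem)}$; the parenthetical claim that $\epsilon^{(p-1)/p}$ ``matches'' the stated $\epsilon^{1/p}$ is false unless $p=2$. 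Do not paper over this: the paper's own proof makes the same jump (its last line asserts $C\epsilon\Vert D(\ueps)\Vert^p_{L^p(\oem)}$ where the computation yields $C\epsilon^{p-1}$), so the mismatch is an inconsistency in the source rather than a defect of your method. For $p\ge 2$ one has $\epsilon^{1-1/p}\le\epsilon^{1/p}$ and the inequality as stated still follows; for $1\kl p\kl 2$ this argument only yields the weaker exponent $1-\frac1p$, which is nevertheless sufficient for the only place the lemma is invoked (the trace terms in the proof of Theorem~\ref{KornInequalityScaledMembraneTheorem} still vanish as $\epsilon\to0$). Everything else in your proposal --- uniformity of the constant over cells, disjointness in the summation, the finitely many faces of $\partial\Sigma$ --- is exactly as in the paper.
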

\begin{proof}
We define 
\begin{align*}
K_{\epsilon}^b := \{\alpha \in K_{\epsilon}\, : \,  \epsilon \overline{Z_{\alpha}} \cap (\partial \Sigma \times (-\epsilon,\epsilon)) \neq \emptyset\}.
\end{align*}
Using the trace-inequality and the Korn-inequality in Lemma \ref{LemmaKornInequalities}\ref{LemmaKornInequalitiesZeroSubsetBoundary} on $Z$ (what is possible, since we have the zero boundary condition of $\ueps(\epsilon(y+k))$ on one side of cell $Z$), we obtain
\begin{align*}
\Vert \ueps \Vert_{L^p(\partial \Sigma \times (-\epsilon,\epsilon))}^p 
%&\le \sum_{k\in K_{\epsilon}^b} \Vert \ueps \Vert_{L^p(\epsilon \partial Z_k)}^p 
%\\
&\le C \epsilon^{n-1} \sum_{k \in K_{\epsilon}^b} \Vert \ueps (\epsilon(y + k))\Vert_{W^{1,p}(Z)}^p
\\
&\le  C \epsilon^{n-1} \sum_{k \in K_{\epsilon}^b} \Vert D(\ueps(\epsilon(y+k)))\Vert_{L^p(Z)}^p
\\
%&\le  C \epsilon  \sum_{k \in K_{\epsilon}^b} \Vert D(\ueps)\Vert_{L^p(\epsilon(Z+k))}^p 
%\\
&\le C\epsilon \Vert D(\ueps)\Vert_{L^p(\oem)}^p.
\end{align*}
\end{proof}

Now we are able to prove a Korn-inequality for functions vanishing on the perforated lateral boundary $\partial_D \oems$. We use similar ideas as in \cite[Proof of Theorem 2.3]{lewicka2011uniform}.

\begin{theorem}\label{KornInequalityScaledMembraneTheorem}
For every $\ueps \in W^{1,p}(\oem)^3 $ with $\ueps = 0$ on $\partial_D \oems$ it holds that
\begin{align*}
\Vert \tueps \Vert_{W^{1,p}(\Omega_1^M)} \le C \Vert \kappa_{\epsilon}(\tueps) \Vert_{L^p(\Omega_1^M)}.
\end{align*}
\end{theorem}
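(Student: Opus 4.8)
The plan is to argue by contradiction, combining the scaled rigid-displacement decomposition from Proposition~\ref{KornInequalityGeneralBoundaryRigidDisplacement} with the trace estimate from Lemma~\ref{TraceInequalityOuterBoundaryLemma}. Suppose the claimed inequality fails uniformly in $\epsilon$: then there is a sequence $\epsilon \to 0$ and $\ueps \in W^{1,p}(\oem)^3$ with $\ueps = 0$ on $\partial_D \oems$, normalized so that $\Vert \tueps \Vert_{W^{1,p}(\Omega_1^M)} = 1$, while $\Vert \kappa_{\epsilon}(\tueps)\Vert_{L^p(\Omega_1^M)} \to 0$. Using $\eqref{IdentityNormsScaledMembrane}$, the second identity shows $\Vert D(\ueps)\Vert_{L^p(\oem)} = \epsilon^{1+1/p}\Vert \kappa_\epsilon(\tueps)\Vert_{L^p(\Omega_1^M)} \to 0$ faster than $\epsilon^{1+1/p}$, so in particular $\epsilon^{-1-1/p}\Vert D(\ueps)\Vert_{L^p(\oem)} \to 0$.

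Next I would invoke Proposition~\ref{KornInequalityGeneralBoundaryRigidDisplacement} (applied to the extension, which agrees with $\ueps$ on $\oems$, or rather its rescaled version): there is a rigid-displacement $\treps \in N(\Omega_1^M)$ with $\Vert \tueps - \treps\Vert_{W^{1,p}(\Omega_1^M)} \le C\Vert D(\tueps)\Vert_{L^p(\Omega_1^M)} \le C\Vert \kappa_\epsilon(\tueps)\Vert_{L^p(\Omega_1^M)} \to 0$, using that $|D(v)| \le |\kappa_\epsilon(v)|$ componentwise for $\epsilon \le 1$. Hence $\treps$ is bounded in the finite-dimensional space $N(\Omega_1^M)$ and, up to a subsequence, $\treps \to r$ in $W^{1,p}(\Omega_1^M)$ with $r \in N(\Omega_1^M)$, and $\tueps \to r$ in $W^{1,p}(\Omega_1^M)$; in particular $\Vert r \Vert_{W^{1,p}(\Omega_1^M)} = 1$, so $r \neq 0$. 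The task is now to show $r = 0$, a contradiction. Two facts pin down $r$: first, the $\kappa_\epsilon$-control forces the anisotropic components to degenerate, and second, the lateral boundary condition survives in the limit. For the first, $\Vert D(\tueps)_{33}\Vert_{L^p} = \epsilon^2 \Vert \kappa_\epsilon(\tueps)_{33}\Vert_{L^p} \to 0$ and similarly the mixed components $D(\tueps)_{i3} \to 0$; passing to the limit, $D(r)_{i3} = D(r)_{33} = 0$ for $i=1,2$, which combined with $r \in N(\Omega_1^M)$ (so $D(r)=0$ entirely) is automatic — the real information is that the rescaling $\tueps^i = \epsilon^{-1}\ueps^i$ for $i=1,2$ together with $\Vert \tueps\Vert_{W^{1,p}} \le 1$ forces $\ueps^i = O(\epsilon)$, i.e. the in-plane part of the limit is constrained. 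For the second, rescale Lemma~\ref{TraceInequalityOuterBoundaryLemma}: transforming to $\Omega_1^M$ yields $\Vert \tueps\Vert_{L^p(\partial\Sigma\times(-1,1))} \le C\Vert D(\tueps)\Vert_{L^p(\Omega_1^M)} \le C\Vert\kappa_\epsilon(\tueps)\Vert_{L^p(\Omega_1^M)} \to 0$, so by continuity of the trace $r = 0$ on $\partial\Sigma\times(-1,1)$. Since a rigid displacement on $\Sigma\times(-1,1)$ vanishing on $\partial\Sigma\times(-1,1)$ must be identically zero (the assumption of Lemma~\ref{LemmaKornInequalities}\ref{LemmaKornInequalitiesZeroSubsetBoundary} for $n=3$ on this cylinder), we get $r = 0$, contradicting $\Vert r\Vert_{W^{1,p}} = 1$.

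The main obstacle I expect is the bookkeeping around the anisotropic norm: one must be careful that $\Vert \kappa_\epsilon(\tueps)\Vert_{L^p} \to 0$ really does control $\Vert D(\tueps)\Vert_{L^p}$ (true, since each weight $1$, $\epsilon^{-1}$, $\epsilon^{-2}$ is $\ge 1$ for $\epsilon \le 1$, so $|D(v)| \le |\kappa_\epsilon(v)|$ pointwise) — this is what makes Proposition~\ref{KornInequalityGeneralBoundaryRigidDisplacement} and the trace lemma applicable with the small right-hand side. A secondary point is justifying that the rescaled trace inequality holds with a constant independent of $\epsilon$, which follows because the trace operator $W^{1,p}(\Omega_1^M) \to L^p(\partial\Sigma\times(-1,1))$ is on the \emph{fixed} domain $\Omega_1^M$; the $\epsilon$-dependence of the original lateral boundary $\partial_D\oems$ has already been absorbed in Lemma~\ref{TraceInequalityOuterBoundaryLemma}. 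Finally, one should note that the extension operator $E_\epsilon$ need not preserve the vanishing on $\partial_D\oems$ in the naive sense, but the statement of the theorem is for $\ueps \in W^{1,p}(\oem)^3$ already defined on all of $\oem$ with $\ueps = 0$ on $\partial_D\oems$, so no further extension is needed here — the extension machinery was used upstream in Proposition~\ref{KornInequalityGeneralBoundaryRigidDisplacement}, and we only quote that proposition.
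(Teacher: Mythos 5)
Your proposal is correct and follows essentially the same route as the paper: a contradiction argument on the rescaled fixed domain $\Omega_1^M$, using the Korn inequality of Lemma \ref{LemmaKornInequalities}\ref{LemmaKornInequalitiesRigidDisplacement} to approximate $\tueps$ by rigid displacements $\treps$ converging to a nonzero limit $r$, and the rescaled trace estimate of Lemma \ref{TraceInequalityOuterBoundaryLemma} to force $r=0$ on $\partial\Sigma\times(-1,1)$ and hence $r=0$. Two cosmetic points: the rigid-displacement step is really Lemma \ref{LemmaKornInequalities}\ref{LemmaKornInequalitiesRigidDisplacement} on the fixed domain rather than Proposition \ref{KornInequalityGeneralBoundaryRigidDisplacement}, and the intermediate bound $\Vert \tueps\Vert_{L^p(\partial\Sigma\times(-1,1))}\le C\Vert D(\tueps)\Vert_{L^p(\Omega_1^M)}$ is not what the rescaling gives (the correct chain, which you also state, bounds the trace by $C\epsilon^{1/p}\Vert\kappa_{\epsilon}(\tueps)\Vert_{L^p(\Omega_1^M)}\to 0$).
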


%\begin{theorem}\label{KornInequalityPerforatedLayer}
%For all $\ueps \in W^{1,p}(\oems)^3$ with $\ueps = 0 $ on $ \partial_D \oems$ it holds that 
%\begin{align*}
%\sum_{i=1}^2 \frac{1}{\epsilon^{\frac32}} \Vert \ueps^i\Vert_{L^p(\oems)} + \sum_{i,j=2}^2 \frac{1}{\epsilon^{\frac32}} \Vert \partial_i \ueps^j \Vert_{L^p(\oems)} +\frac{1}{\sqrt{\epsilon}}\Vert \ueps^3\Vert_{L^p(\oems)} + \frac{1}{\sqrt{\epsilon}}\Vert \nabla \ueps \Vert_{L^p(\oems)} \le \frac{C}{\epsilon^{\frac32}} \Vert D(\ueps)\Vert_{L^p(\oems)}.
%\end{align*}
%\end{theorem}

%\begin{proof}[Proof of Theorem \ref{KornInequalityPerforatedLayer}]
%Using the extension operator $E_{\epsilon}$ from Theorem \ref{TheoremExtensionOperator} and the identities in $\eqref{IdentityNormsScaledMembrane}$, we obtain
%\begin{align*}
%\Vert \ueps\Vert_{\oems,\epsilon} &\le \Vert E_{\epsilon}\ueps \Vert_{\oem,\epsilon} = \Vert \widetilde{E_{\epsilon} \ueps} \Vert_{W^{1,p}(\Omega_1^M)}
%\\
%&\le C \Vert \kappa_{\epsilon}(\widetilde{E_{\epsilon}\ueps })\Vert_{L^p(\Omega_1^M)} =C \epsilon^{-\frac32}\Vert D(E_{\epsilon}\ueps)\Vert_{L^p(\oem)} 
%\\
%&\le C \epsilon^{-\frac32}\Vert D(\ueps)\Vert_{L^p(\oems)}.
%\end{align*}
%This estimate implies the desired result.
%\end{proof}

\begin{proof}%[Proof of Theorem \ref{KornInequalityScaledMembraneTheorem}]
%The claim follows, if we show the following inequality: 
%There exists a constant $C\gr 0$ such that for all $\ueps \in W^{1,p}(\oems)^3$ with $\ueps = 0$ on $\partial_D \oems$ it holds that
%\begin{align*}
%\sum_{i=1}^2 \frac{1}{\epsilon^{\frac32}} \Vert \ueps^i \Vert_{L^p(\oems)} &+ \sum_{i,j=1}^2 \frac{1}{\epsilon^{\frac32}} \Vert \partial_i \ueps^j \Vert_{L^p(\oems)} + \frac{1}{\sqrt{\epsilon}} \Vert \ueps^3 \Vert_{L^p(\oems)} + \sqrt{\epsilon}\Vert \partial_3 \ueps^3 \Vert_{L^p(\oems)} 
%\\
%&+ \sum_{i=1}^2 \frac{1}{\sqrt{\epsilon}} \left( \Vert \partial_3 \ueps^i \Vert_{L^p(\oems)} + \Vert \partial_i \ueps^3 \Vert_{L^p(\oems)} \right) \le \frac{C}{\epsilon^{\frac32}}\Vert D(\ueps)\Vert_{L^p(\oems)}.
%\end{align*}
We prove this result by a contradiction argument. Let us assume  the inequality is not valid. Then (due to the Korn-inequality from Lemma \ref{LemmaKornInequalities}\ref{LemmaKornInequalitiesZeroSubsetBoundary}), there exists a subsequence $\epsilon = \epsilon_l \rightarrow 0$ and $C_{\epsilon}\gr 0$ with $C_{\epsilon}\rightarrow \infty$, as well as $\ueps \in W^{1,p}(\oem)^3$ with $\ueps  = 0$ on $\partial_D \oems $, such that 
\begin{align}\label{AuxiliaryEstimateContradictionKorn}
1 = \Vert \tueps \Vert_{W^{1,p}(\Omega_1^M)} \geq C_{\epsilon} \Vert \kappa_{\epsilon}(\tueps)\Vert_{L^p(\Omega_1^M)}.
\end{align}
In the following the constant $C_{\epsilon}$ is a generic constant depending on $\epsilon$, such that we always have $C_{\epsilon}\rightarrow \infty$ for $\epsilon \to 0$. 
The boundedness of $\tueps$ in $W^{1,p}(\Omega_1^M)^3$ implies the existence of a subsequence and $u_0 \in W^{1,p}(\Omega_1^M)^3$, such that
\begin{align*}
\tueps &\rightharpoonup u_0 &\mbox{ weakly in }& W^{1,p}(\Omega_1^M)^3,
\\
\tueps &\rightarrow u_0 &\mbox{ strongly in }& L^p(\Omega_1^M)^3,
\\
\tueps\vert_{\partial \Omega_1^M} &\rightarrow u_0\vert_{\partial \Omega_1^M} &\mbox{ strongly in }& L^p(\partial \Omega_1^M)^3.
\end{align*}
Further, Lemma \ref{TraceInequalityOuterBoundaryLemma} implies ($\alpha = 1,2$)
\begin{align*}
\Vert \tueps^{\alpha}\Vert_{L^p(\partial \Sigma \times (-1,1))} &\le \frac{C}{\epsilon} \Vert D(\ueps)\Vert_{L^p(\oem)} = C \epsilon^{\frac{1}{p}}  \Vert \kappa_{\epsilon} (\tueps)\Vert_{L^p(\Omega_1^M)} \overset{\epsilon\to 0}{\longrightarrow} 0,
\\
\Vert  \tueps^3 \Vert_{L^p(\partial \Sigma \times (-1,1))} &\le C \Vert D(\ueps)\Vert_{L^p(\oems)} = C\epsilon^{1 + \frac{1}{p}} \Vert \kappa_{\epsilon} (\tueps)\Vert_{L^p(\Omega_1^M)} \overset{\epsilon\to 0}{\longrightarrow} 0.
\end{align*}
%\textcolor{blue}{(Siehe auch Bemerkung \ref{BemerkungBessereKornUngleichungCiarlet} unten!)} 
Hence, we obtain $u_0 = 0 $ on $\partial \Sigma \times (-1,1)$.

The Korn-inequality in Lemma \ref{LemmaKornInequalities}\ref{LemmaKornInequalitiesRigidDisplacement} implies the existence of a rigid displacement $\reps(x) = b_{\epsilon} + A_{\epsilon}x $ with $b_{\epsilon} \in \R^3$ and $A_{\epsilon} \in \R^{3 \times 3} $ skew-symmetric, such that (see again $\eqref{AuxiliaryEstimateContradictionKorn}$)
\begin{align}\label{ProofKornAuxiliaryKornInequality}
\Vert \tueps - \reps\Vert_{W^{1,p}(\Omega_1^M)} \le C \Vert D(\tueps)\Vert_{L^p(\Omega_1^M)}\le C \Vert \kappa_{\epsilon}(\tueps)\Vert_{L^p(\Omega_1^M)} \overset{\epsilon \to 0}{\longrightarrow}  \ 0.
\end{align}
From   $\eqref{AuxiliaryEstimateContradictionKorn}$ we also obtain 
\begin{align}
\label{ConvergenceNormRigidDisplacement}
\lim_{\epsilon \to 0} \Vert \reps \Vert_{W^{1,p}(\Omega_1^M)} = 1.
\end{align}
Especially, $\reps$ is bounded in $W^{1,p}(\Omega_1^M)$ and therefore $b_{\epsilon}$ is bounded in $\R^3$ and $A_{\epsilon} $ is bounded in $\R^{3\times 3}$. Hence, there exists $b \in \R^3$ and $A\in \R^{3\times 3}$ skew-symmetric, such that up to a subsequence $b_{\epsilon} \rightarrow b $ and $A_{\epsilon} \rightarrow A $.
%\begin{align*}
%b_{\epsilon} \rightarrow b, \qquad A_{\epsilon} \rightarrow A.
%\end{align*}
Further, $\eqref{ConvergenceNormRigidDisplacement}$ implies
\begin{align}\label{ProofKornRigidDisplLimitNotZero}
\vert A \vert + \vert b\vert \neq 0.
\end{align}
Due to $\eqref{ProofKornAuxiliaryKornInequality}$, we get 
\begin{align*}
u_0(x) = b + Ax \qquad \mbox{ in } \Omega_1^M.
\end{align*}
Since $u_0$ vanishes on the lateral boundary, we obtain $b=0$ and $A=0$, which contradicts $\eqref{ProofKornRigidDisplLimitNotZero}$.

\end{proof}

As an immediate consequence we obtain with the extension operator from Theorem \ref{TheoremExtensionOperator} the Korn-inequality in Theorem \ref{KornInequalityPerforatedLayer}:

%\begin{theorem}\label{KornInequalityPerforatedLayer}
%For all $\ueps \in W^{1,p}(\oems)^3$ with $\ueps = 0 $ on $ \partial_D \oems$ it holds that 
%\begin{align*}
%\sum_{i=1}^2 \frac{1}{\epsilon^{\frac32}} \Vert \ueps^i\Vert_{L^p(\oems)} + \sum_{i,j=2}^2 \frac{1}{\epsilon^{\frac32}} \Vert \partial_i \ueps^j \Vert_{L^p(\oems)} +\frac{1}{\sqrt{\epsilon}}\Vert \ueps^3\Vert_{L^p(\oems)} + \frac{1}{\sqrt{\epsilon}}\Vert \nabla \ueps \Vert_{L^p(\oems)} \le \frac{C}{\epsilon^{\frac32}} \Vert D(\ueps)\Vert_{L^p(\oems)}.
%\end{align*}
%\end{theorem}
\begin{proof}[Proof of Theorem \ref{KornInequalityPerforatedLayer}]
Using the extension operator $E_{\epsilon}$ from Theorem \ref{TheoremExtensionOperator} and the identities in $\eqref{IdentityNormsScaledMembrane}$, we obtain
\begin{align*}
\Vert \ueps\Vert_{\oems,\epsilon} &\le \Vert E_{\epsilon}\ueps \Vert_{\oem,\epsilon} = \Vert \widetilde{E_{\epsilon} \ueps} \Vert_{W^{1,p}(\Omega_1^M)}
\\
&\le C \Vert \kappa_{\epsilon}(\widetilde{E_{\epsilon}\ueps })\Vert_{L^p(\Omega_1^M)} =C \epsilon^{-1- \frac{1}{p}}\Vert D(E_{\epsilon}\ueps)\Vert_{L^p(\oem)} 
\\
&\le C \epsilon^{-1- \frac{1}{p}}\Vert D(\ueps)\Vert_{L^p(\oems)}.
\end{align*}
This estimate implies the desired result.
\end{proof}

%
%\textcolor{blue}{
%\begin{remark}
%\label{BemerkungBessereKornUngleichungCiarlet}
%Ich h\"atte gerne eine bessere Korn-Ungleichung gezeigt, n\"amlich
%\begin{align*}
%\Vert \tueps \Vert_{W^{1,p}(\Omega_1^M)} \le C \Vert D(\tueps)\Vert_{L^p(\Omega_1^M)},
%\end{align*}
%Diese gilt (siehe Buch Ciarlet \cite{ciarlet1997mathematical}) f\"ur d\"unne Schichten mit Nullrandbedingung auf dem kompletten \"au{\ss}eren Rand. In meinem Beweis macht der perforierte Rand dieses Resultat kaputt.
%\end{remark}
%}

\section{Two-scale compactness results}
\label{SectionTwoScaleCompactness}

In this section we derive two-scale compactness results  in thin perforated layers based  on uniform bounds for the symmetric gradient. Such results are of particular importance in continuum mechanics, especially in linear elasticity. Our results take into account both the dimension reduction of the thin layer and the homogenization process due to the heterogeneous structure of the thin layer. The thin structure of the layer induces a different behavior in $\x$- and $x_3$ direction, which leads in the limit to a Kirchhoff-Love displacement. We will see that due to the extension operator from Theorem \ref{TheoremExtensionOperator} we can treat $W^{1,p}$-functions on the perforated layer as functions defined on the whole layer. Further, the Korn-inequality in Theorem \ref{KornInequalityPerforatedLayer} guarantees a control of an $\epsilon$-weighted $W^{1,p}$-norm on the perforated layer by the symmetric gradient.
\\
In the following function spaces with the index $\#$ denote spaces which are $Y$-periodic. Especially
we define the space of smooth and $Y$-periodic functions 
\begin{align*}
C_{\#}^{\infty}(\overline{Z}):= \left\{ v \in C^{\infty}(\R^2 \times [-1,1]) \, : \, v(\cdot + e_i ) = v \mbox{ for } i=1,2\right\},
\end{align*}
and  $W^{1,p}_{\#}(Z)$ is the closure of $C_{\#}^{\infty}(\overline{Z})$ with respect to the usual $W^{1,p}$-norm. The space $W^{1,p}_{\#}(Z^s)$ is defined by restriction of functions from $W^{1,p}_{\#}(Z)$.
% For the definition of the two-scale convergence  in thin layers and some basic compactness results we refer to Section \ref{SectionAppendixTwoScaleConvergence} in the appendix.

\subsection{The two-scale convergence}
\label{SubsectionTSConvergence}
We briefly summarize the concept of two-scale convergence in thin layers \cite{GahnNeussRadu2017EffectiveTransmissionConditions,NeussJaeger_EffectiveTransmission}, which takes into account the simultaneous limit process for the   homogenization and the dimension reduction.  

\begin{definition}
We say the sequence $\veps \in L^p(\oem)$ converges (weakly) in the two-scale sense to a limit function $v_0 \in L^p(  \Sigma \times Z)$ if 
\begin{align*}
\lim_{\epsilon\to 0} \foe \int_{\oem} \veps(x) \phi \bxfxe dx = \int_{\Sigma} \int_Z v_0(\x,y) \psi(\x,y) dy d\x
\end{align*}
for all $\phi \in L^p(\Sigma,C_{\#}^0(\overline{Z}))$. We write 
\begin{align*}
\veps \rats v_0.
\end{align*}
We say the sequence converges strongly in the two-scale sense if additionally 
\begin{align*}
\lim_{\epsilon\to 0}  \epsilon^{-\frac{1}{p}} \Vert \veps \Vert_{L^p(\oem)} = \Vert v_0 \Vert_{L^p(\Sigma\times Z)}.
\end{align*}
\end{definition}
We have the following compactness results, see \cite{GahnNeussRadu2017EffectiveTransmissionConditions,NeussJaeger_EffectiveTransmission}.
\begin{lemma}\label{TwoScaleCompactnessStandard}\
\begin{enumerate}
[label = (\roman*)]
\item For every sequence $\veps$ in $L^p(\oem)$ with
\begin{align*}
 \Vert \veps \Vert_{L^p(\oem)} \le C \epsilon^{\frac{1}{p}},
\end{align*}
there exists a (weak) two-scale convergent subsequence.
\item Let $\veps \in W^{1,p}(\oem)$ be a sequence with 
\begin{align*}
  \Vert \veps \Vert_{W^{1,p}(\oem)} \le C\epsilon^{\frac{1}{p}}.
\end{align*} 
Then there exist $v_0 \in W^{1,p}(\Sigma)$ and $v_1 \in L^p(\Sigma,W^{1,p}_{\#}(Z)/\R)$ such that up to a subsequence
\begin{align*}
\veps &\rats v_0 ,
\\
\nabla \veps &\rats \nabla v_0 + \nabla_y v_1.
\end{align*}
\end{enumerate}

\end{lemma}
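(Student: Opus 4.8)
The plan is to prove (i) by the classical functional-analytic extraction argument, transported to the thin-layer scaling, and (ii) by combining (i) with a periodic Helmholtz/de Rham characterisation of the two-scale limit of the gradient on the cell $Z$. For (i), I would first record the scaled mean-value estimate: for every admissible test function $\phi\in L^{p'}(\Sigma,C^0_{\#}(\overline{Z}))$,
\begin{align*}
\foe\int_{\oem}\left|\phi\bxfxe\right|^{p'}\,dx\ \longrightarrow\ \int_{\Sigma}\int_Z|\phi(\x,y)|^{p'}\,dy\,d\x ,
\end{align*}
which follows from $Y$-periodicity by a cell-by-cell Riemann-sum argument, together with the observation that in $\oem=\Sigma\times(-\epsilon,\epsilon)$ the cells meeting the lateral boundary carry only an $O(\epsilon)$-fraction of the rescaled mass; in particular $\foe\Vert\phi\bxfxe\Vert_{L^{p'}(\oem)}^{p'}\le C\Vert\phi\Vert_{L^{p'}(\Sigma\times Z)}^{p'}$ uniformly in $\epsilon$. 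Using the hypothesis $\Vert\veps\Vert_{L^p(\oem)}\le C\epsilon^{1/p}$, H\"older's inequality and $\tfrac1p+\tfrac1{p'}=1$, this gives
\begin{align*}
\left|\foe\int_{\oem}\veps\,\phi\bxfxe\,dx\right|\le\foe\Vert\veps\Vert_{L^p(\oem)}\,\Vert\phi\bxfxe\Vert_{L^{p'}(\oem)}\le C\Vert\phi\Vert_{L^{p'}(\Sigma\times Z)} ,
\end{align*}
so the functionals $\phi\mapsto\foe\int_{\oem}\veps\,\phi\bxfxe\,dx$ are uniformly bounded on the separable space $L^{p'}(\Sigma,C^0_{\#}(\overline{Z}))$. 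A diagonal argument over a countable dense family, combined with this bound, produces a subsequence along which these functionals converge to a bounded linear functional on $L^{p'}(\Sigma\times Z)$, which by reflexivity is represented by some $v_0\in L^p(\Sigma\times Z)$; that is $\veps\rats v_0$.

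For (ii) I would apply (i) to $\veps$ and to each $\partial_i\veps$ (all bounded in $L^p(\oem)$ by $C\epsilon^{1/p}$), obtaining along a subsequence $\veps\rats v_0^{*}$ and $\partial_i\veps\rats\xi_i$ with $v_0^{*},\xi_i\in L^p(\Sigma\times Z)$, and I set $\xi=(\xi_1,\dots,\xi_n)$. The first point is that $v_0^{*}$ is independent of $y$: testing $\partial_i\veps\rats\xi_i$ against $\phi\bxfxe$ with $\phi(\x,y)$ compactly supported in $\x\in\Sigma$ and in $y_n\in(-1,1)$ and $Y$-periodic in $(y_1,\dots,y_{n-1})$, integration by parts in $x$ produces (with no boundary contributions) a summand $\epsilon^{-2}\int_{\oem}\veps\,(\partial_{y_i}\phi)\bxfxe\,dx$; since the left-hand side stays bounded, its limiting behaviour forces $\int_{\Sigma}\int_Z v_0^{*}\,\partial_{y_i}\phi\,dy\,d\x=0$, hence $\nabla_y v_0^{*}=0$, and since $Z$ is connected $v_0^{*}=v_0(\x)$ with $v_0\in L^p(\Sigma)$. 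Next, testing $\partial_i\veps\rats\xi_i$ for $i\le n-1$ against a $y$-independent $\psi\in C^{\infty}_c(\Sigma)$ and integrating by parts identifies $\partial_i v_0=\tfrac1{|Z|}\int_Z\xi_i\,dy\in L^p(\Sigma)$, so in fact $v_0\in W^{1,p}(\Sigma)$.

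It remains to produce the corrector $v_1$, and here I would run the periodic Helmholtz/de Rham argument on $Z$. For every smooth $\R^n$-valued $\Phi(\x,y)$ that is compactly supported in $\x\in\Sigma$, $Y$-periodic in $(y_1,\dots,y_{n-1})$, divergence-free in $y$, and with $\Phi_n=0$ on $\{y_n=\pm1\}$, one has $\div_x\bigl(\Phi\bxfxe\bigr)=(\div_{\x}\Phi)\bxfxe$ and all boundary terms vanish (the lateral ones by the $\x$-support, the top/bottom ones by $\Phi_n=0$ there), so
\begin{align*}
\foe\int_{\oem}\nabla\veps\cdot\Phi\bxfxe\,dx&=-\foe\int_{\oem}\veps\,(\div_{\x}\Phi)\bxfxe\,dx\\
&\xrightarrow{\ \epsilon\to0\ }\ -\int_{\Sigma}\int_Z v_0\,\div_{\x}\Phi\,dy\,d\x=\int_{\Sigma}\int_Z\nabla v_0\cdot\Phi\,dy\,d\x ,
\end{align*}
using $v_0\in W^{1,p}(\Sigma)$ in the last equality. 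Hence $\xi-\nabla v_0$ is $L^{p}$--$L^{p'}$-orthogonal to all such $\Phi$. These test fields are exactly the annihilator in $L^{p'}(Z)^n$ of the closed subspace $\nabla_y\bigl(W^{1,p}_{\#}(Z)/\R\bigr)\subset L^p(Z)^n$ — this is precisely the density statement proved in the Appendix — so by reflexivity $\xi(\x,\cdot)-\nabla v_0(\x)=\nabla_y v_1(\x,\cdot)$ for a.e.\ $\x$, with $v_1(\x,\cdot)$ unique in $W^{1,p}_{\#}(Z)/\R$. Since the potential map (the bounded inverse of $\nabla_y$ on its range) is a bounded linear isomorphism onto that space, applying it pointwise to $\x\mapsto\xi(\x,\cdot)-\nabla v_0(\x)\in L^p(\Sigma;L^p(Z)^n)$ yields $v_1\in L^p(\Sigma,W^{1,p}_{\#}(Z)/\R)$, giving $\nabla\veps\rats\nabla v_0+\nabla_y v_1$.

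I expect the last step to be the main obstacle: the identification $\nabla\veps\rats\nabla v_0+\nabla_y v_1$ hinges on the duality fact that a field in $L^p(Z)^n$ annihilating every $y$-divergence-free field with vanishing normal trace on $\{y_n=\pm1\}$ is a $y$-gradient of a tangentially periodic function. Proving this (equivalently, the Appendix density result) is delicate because $Z$ is a cylinder with a genuine boundary rather than a torus, so the boundary condition $\Phi_n=0$ on the top and bottom faces must be tracked carefully through the integration by parts and the density approximation; the measurable, $L^p$-bounded dependence of $v_1$ on $\x$ is then routine given the bounded inverse of $\nabla_y$ just mentioned.
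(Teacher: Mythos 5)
The paper does not prove this lemma itself; it defers to the cited references, and your argument is essentially the standard one found there (oscillating test functions, diagonal extraction, and identification of the gradient limit by duality against $y$-solenoidal fields), so in substance you have reconstructed the intended proof. Two remarks. First, a technical point in (i): the uniform bound $\foe\Vert\phi\bxfxe\Vert_{L^{p'}(\oem)}^{p'}\le C\Vert\phi\Vert_{L^{p'}(\Sigma\times Z)}^{p'}$ does \emph{not} hold with $C$ independent of $\phi$ (this is the classical admissibility pitfall: a test function concentrating in $y$ along an $\x$-dependent set is not dominated by its $L^{p'}(\Sigma\times Z)$-norm on the diagonal $y=x/\epsilon$). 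The extraction must be run with the uniform bound in the $L^{p'}(\Sigma,C^0_{\#}(\overline{Z}))$-norm, and only the \emph{limit} functional is controlled by the $L^{p'}(\Sigma\times Z)$-norm, via the mean-value convergence for each fixed $\phi$; your displayed chain of inequalities then yields the $L^p(\Sigma\times Z)$ representation exactly as you state. Second, your reduction of the corrector identification in (ii) to the annihilator computation $\bigl(\nabla_y(W^{1,p}_{\#}(Z)/\R)\bigr)^{\perp}=L^{p'}_{\#,\sigma}(Z)$ together with the density of $C^{\infty}_{\#,0,\sigma}(Z)^n$ proved in Theorem \ref{TheoremDensitySmoothCompactPeriodicFunctionsHdiv} is correct and is a legitimate reuse of the Appendix (which the paper itself only deploys for the symmetric-gradient case in Lemma \ref{AuxiliaryLemmaCompactnessSymGrad} and Lemma \ref{HelmholtzDecomposition}); the vanishing of the top/bottom boundary terms and the cancellation of the lateral ones by periodicity are handled correctly, and the measurable $\x$-dependence of $v_1$ via the bounded inverse of $\nabla_y$ on its closed range is routine, as you say.
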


\begin{remark}
The two-scale convergence can be generalized to time-dependent functions. In this case the time-variable acts as a parameter and the results in Lemma \ref{TwoScaleCompactnessStandard} remain valid. For more details we refer to \cite{GahnNeussRadu2017EffectiveTransmissionConditions,NeussJaeger_EffectiveTransmission}.
\end{remark}

\subsection{Compactness results with bounded symmetric gradient}

We derive general two-scale compactness results which are based on uniform estimates of the symmetric gradient. The Korn-inequality from Theorem \ref{KornInequalityPerforatedLayer} guarantees uniform bounds for the functions and the gradient, and due to the extension Theorem \ref{TheoremExtensionOperator} functions in the perforated layer can be treated as a function defined in the whole layer. Similar results with proofs based on the unfolding method can be found in \cite{griso2020homogenization} for $p=2$.

\subsubsection{The Kirchhoff-Love displacement as a two-scale limit}

We start with showing a simple auxiliary lemma which together with the \textit{a priori } bound for the symmetric gradient and the specific structure of the two-scale limit of the gradient in Lemma \ref{TwoScaleCompactnessStandard} already gives a Kirchhoff-Love displacement in the limit.

\begin{lemma}\label{LemmaKirchhoffZerlegungSymGra}
Let $u_0 \in W^{1,p}(\Sigma)^3$ and $u_1 \in L^p(\Sigma,W^{1,p}_{\#}(Z^s)/\R)^3$, such that 
\begin{align*}
D_{\x} (u_0) + D_y(u_1) = 0.
\end{align*}
Then, there exists $\tilde{u}_1 \in L^p(\Sigma)^3$ such that
\begin{align*}
u_1(\x,y) &= \tilde{u}_1 - y_3 \begin{pmatrix}
\partial_1 u_0^3 \\ \partial_2 u_0^3 \\ 0
\end{pmatrix}.
\end{align*}
Further, we obtain 
\begin{align*}
\partial_1 u_0^1 = \partial_2 u_0^2 = \partial_1 u_0^2 + \partial_2 u_0^1 = 0.
\end{align*}
%The limit function $u_1$ has a Kirchhoff-Love-displacement, more precisely there exists $\tilde{u}_1 \in L^p(\Sigma)^3$ such that
%\begin{align*}
%u_1(\x,y) &= \tilde{u}_1 - y_3 \begin{pmatrix}
%\partial_1 u_0^3 \\ \partial_2 u_0^3 \\ 0
%\end{pmatrix}.
%\end{align*}
\end{lemma}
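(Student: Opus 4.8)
The plan is to read the relation $D_{\x}(u_0) + D_y(u_1) = 0$ as an identity of $L^p(\Sigma\times Z^s)^{3\times 3}$-functions and to analyse it for a.e.\ fixed $\x\in\Sigma$. So fix such an $\x$, pick a representative of $u_1(\x,\cdot)\in W^{1,p}_{\#}(Z^s)/\R$, and set $S := -D_{\x}(u_0)(\x)\in\R^{3\times 3}$, which is a constant symmetric matrix. Then $w := u_1(\x,\cdot) - S\,y$ satisfies $D_y(w)=0$ on the connected Lipschitz domain $Z^s$, so by Lemma~\ref{LemmaKornInequalities}\ref{LemmaKornInequalitiesMu} (with $D(w)=0$) the gradient $\nabla_y w$ equals the constant, skew-symmetric matrix $M(w)$; hence $w(y) = b + A\,y$ with $b\in\R^3$ and $A := M(w)$, $A+A^T=0$. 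Consequently
\begin{align*}
u_1(\x,y) = B\,y + b, \qquad B := S + A,
\end{align*}
where $A=A(\x)$, $b=b(\x)$; measurability of $\x\mapsto(A,b)$ follows by testing this representation against fixed functions of $y$, and the $L^p(\Sigma\times Z^s)$-bound on $u_1$ then yields $b\in L^p(\Sigma)^3$ by Fubini's theorem.

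The key step is to exploit the $Y$-periodicity of $u_1(\x,\cdot)$, which forces $B$ to be degenerate. Since $u_1(\x,\cdot)$ is the restriction of a $Y$-periodic function and, by the standing assumptions, $\oems$ is connected, the faces $\mathrm{int}(\partial Z^s\cap\{y_i=0\})$ ($i=1,2$) are nonempty, hence of positive surface measure; by the standing assumption $\mathrm{int}(\partial Z^s\cap\{y_i=0\})+e_i = \mathrm{int}(\partial Z^s\cap\{y_i=1\})$ the trace of $u_1(\x,\cdot)$ on $\{y_i=0\}\cap\partial Z^s$ agrees with its trace on $\{y_i=1\}\cap\partial Z^s$ after translation by $e_i$. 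Inserting $u_1(\x,y)=B\,y+b$ gives $B(y+e_i)+b = B\,y+b$ on a set of positive measure, hence $Be_1 = Be_2 = 0$: the first two columns of $B$ vanish and $B\,y = y_3\,Be_3$.

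What remains is elementary linear algebra. Write $m := Be_3$ and split $B = S+A$ into its symmetric part $S = \tfrac12(B+B^T)$ and skew part $A$. From the vanishing of the first two columns of $B$ one reads off $S_{11}=S_{22}=S_{12}=0$, $S_{13}=\tfrac12 m_1$, $S_{23}=\tfrac12 m_2$, $S_{33}=m_3$. Comparing with $S=-D_{\x}(u_0)$ and using $D_{\x}(u_0)_{13}=\tfrac12\partial_1 u_0^3$, $D_{\x}(u_0)_{23}=\tfrac12\partial_2 u_0^3$, $D_{\x}(u_0)_{33}=0$, the vanishing of $S_{11},S_{22},S_{12}$ is precisely $\partial_1 u_0^1 = \partial_2 u_0^2 = \partial_1 u_0^2+\partial_2 u_0^1 = 0$, while the three remaining equations force $m = (-\partial_1 u_0^3,-\partial_2 u_0^3,0)^T$. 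Therefore $B\,y = -y_3(\partial_1 u_0^3,\partial_2 u_0^3,0)^T$, and with $\tilde u_1 := b\in L^p(\Sigma)^3$ the asserted representation of $u_1$ follows; note that then $u_1^3(\x,\cdot)$ is $y$-independent and hence represents the zero class in $W^{1,p}_{\#}(Z^s)/\R$, consistently with the quotient structure.

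I expect the only genuinely delicate points to be the rigorous pointwise-in-$\x$ reduction together with the measurability and integrability of $A(\x)$ and $b(\x)$; the periodicity argument of the second step, where the specific cell geometry enters, is the conceptual heart of the lemma, and everything after it is a short finite-dimensional computation resting on the observation that a $Y$-periodic displacement with constant symmetric gradient can depend on $y$ only affinely through $y_3$.
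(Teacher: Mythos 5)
Your proof is correct and follows essentially the same route as the paper's: both recognize that $D_y(u_1+D_{\x}(u_0)y)=0$ forces a rigid-displacement (affine) structure in $y$, use $Y$-periodicity in $y_1,y_2$ to annihilate the first two columns of the resulting matrix, and finish with the same symmetric/skew splitting. Your version is merely more explicit about the pointwise-in-$\x$ reduction, the trace argument behind the periodicity step, and the measurability of $A(\x),b(\x)$, all of which the paper leaves implicit.
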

%\begin{remark}
%Here, we do not(!) use the structure of the limit function $u_0$, but only the regularity and the relation between the symmetric gradients.
%\end{remark}
\begin{proof}
For every matrix $A \in \R^{3 \times 3}$ it holds that $D_y (Ay) = \frac12 (A + A^T). $
%\begin{align*}
%D_y (Ay) = \frac12 (A + A^T).
%\end{align*}
Hence, we obtain
\begin{align*}
0 = D_{\x}(u_0) + D_y(u_1) = D_y(D_{\x}(u_0)y + u_1).
\end{align*}
This implies that $D_{\x}(u_0)y + u_1$ is a rigid-displacement with respect to $y$, i.e. there exist $\tilde{u}_1 \in L^p(\Sigma)^3$ (we will check the regularity below) and a skew-symmetric matrix $R(\x)$ such that
\begin{align*}
u_1(\x,y) = \tilde{u}_1(\x) + R(\x)y - D_{\x}(u_0)y.
\end{align*}
Since $u_1$ is periodic with respect to $y_1$ and $y_2$ we obtain for the unit vectors $e_i$ for $i=1,2$ that
\begin{align}\label{IdentitySymAntisymMatrix}
R(\x) e_i = D_{\x}(u_0)e_i.
\end{align}
This implies with the symmetry of $D_{\x}(u_0)$ and the antisymmetry of $R(\x)$ for $i,j=1,2$
\begin{align*}
R_{ij} = D_{\x}(u_0)_{ij} = D_{\x}(u_0)_{ji} = R_{ji} = -R_{ij}
\end{align*}
and therefore $R_{ij} = 0 = D_{\x}(u_0)_{ij}$. Further $\eqref{IdentitySymAntisymMatrix} $ implies $R_{3l} = D_{\x}(u_0)_{3l}$ for $l=1,2$.  This immediately implies 
%
%
%
%This means that the first and the second column of $R(\x) $ and $D_{\x}(u_0)$ coincide. We can write $R$ in the following form:
%\begin{align*}
%R(\x) = \begin{pmatrix}
%0 & -a_1(\x) & -a_2(\x) \\ a_1(\x) & 0 & - a_3(\x) \\ a_2(\x) & a_3(\x) & 0 
%\end{pmatrix}.
%\end{align*}
%Hence, we have
%\begin{align*}
%\frac12 \begin{pmatrix}
%2 \partial_1 u_0^1 \\ \partial_1 u_0^2 + \partial_2 u_0^1 \\ \partial_1 u_0^3 
%\end{pmatrix} = \begin{pmatrix}
%0 \\ a_1 \\ a_2 
%\end{pmatrix},
%\qquad \frac12\begin{pmatrix}
%\partial_2 u_0^1 + \partial_1 u_0^2 \\ 2 \partial_2 u_0^2 \\ \partial_2 u_0^3 
%\end{pmatrix}
%= \begin{pmatrix}
%-a_1 \\ 0 \\ a_3
%\end{pmatrix}.
%\end{align*}
%We immediately obtain
\begin{align*}
\partial_1 u_0^1 = \partial_2 u_0^2 = \partial_1 u_0^2 + \partial_2 u_0^1 = 0.
\end{align*}
and with a simple calculation
\begin{align*}
u_1(\x,y) = \tilde{u}_1(\x) - y_3 \nabla_{\x} u_0^3(\x).
\end{align*}
This especially implies the $L^p$-regularity of $\tilde{u}_1$.
%
%%($R_{\x}(u_0)$ is the anti-symmetric gradient)
%\begin{align*}
%R(\x) = \frac12 \begin{pmatrix}
%0 & 0 & - \partial_1 u_0^3 \\ 0 & 0 & -\partial_2 u_0^3 \\ \partial_1 u_0^3 & \partial_2 u_0^3 & 0
%\end{pmatrix}
%.
%\end{align*}
%Plugging in this identity into the equation for $u_1$ we obtain
%\begin{align*}
%u_1(\x,y) &= \tilde{u}_1 - y_3 \begin{pmatrix}
%\partial_1 u_0^3 \\ \partial_2 u_0^3 \\ 0
%\end{pmatrix}.
%\end{align*}
%This especially implies the $L^p$-regularity of $\tilde{u}_1$.
\end{proof}

In the next proposition we show that a sequence $\ueps$ with $L^p$-norm of the symmetric gradient $D(\ueps)$ of order $\epsilon^{1 + \frac{1}{p}}$ can be approximated (in the two-scale sense) by a Kirchhoff-Love displacement.

\begin{proposition}\label{CompactnessTwoScaleThinLayer}
Let $\ueps \in W^{1,p}(\oem)^3$ be a sequence with
\begin{align}\label{AssumptionInequalityCompactnessThinLayer}
\Vert \ueps^3 \Vert_{L^p(\oem)} + \Vert \nabla \ueps \Vert_{L^p(\oem)} + \foe\Vert D(\ueps)\Vert_{L^p(\oem)} + \sum_{\alpha =1}^2 \foe\Vert \ueps^{\alpha} \Vert_{L^p(\oem)} \le C\epsilon^{\frac{1}{p}}.
\end{align}
%Further we assume that
%\begin{align*}
%D(\ueps) \rats 0.
%\end{align*}
Then there exists $u_0^3 \in W^{2,p}(\Sigma)$ and $\tilde{u}_1 \in W^{1,p}(\Sigma)^3$ with $\tilde{u}_1^3 = 0$, such that up to a subsequence (for $\alpha = 1,2$)
\begin{align*}
\ueps^3 &\rats u_0^3,
\\
\frac{\ueps^{\alpha}}{\epsilon} &\rats \tilde{u}_1^{\alpha} - y_3 \partial_{\alpha} u_0^3.
\end{align*}
\end{proposition}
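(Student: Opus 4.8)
The plan is to extract two-scale limits from the a priori bounds \eqref{AssumptionInequalityCompactnessThinLayer} via Lemma \ref{TwoScaleCompactnessStandard}, identify their structure using Lemma \ref{LemmaKirchhoffZerlegungSymGra}, and then upgrade the regularity of the limit objects. First I would apply Lemma \ref{TwoScaleCompactnessStandard}(ii) to the sequence $\ueps^3$, which is bounded in $W^{1,p}(\oem)$ by $C\epsilon^{1/p}$: this yields $u_0^3 \in W^{1,p}(\Sigma)$ and $u_1^3 \in L^p(\Sigma, W^{1,p}_{\#}(Z)/\R)$ with $\ueps^3 \rats u_0^3$ and $\nabla \ueps^3 \rats \nabla_{\x} u_0^3 + \nabla_y u_1^3$ (up to a subsequence). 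For the tangential components, the quantity $\ueps^\alpha/\epsilon$ is bounded in $L^p(\oem)$ by $C\epsilon^{1/p}$, and its gradient $\nabla(\ueps^\alpha/\epsilon) = \epsilon^{-1}\nabla\ueps^\alpha$ is also bounded at that rate by the hypothesis $\|\nabla\ueps\|_{L^p(\oem)} \le C\epsilon^{1/p}$ (note the extra $\epsilon^{-1}$ is absorbed because we in fact only need the symmetric-gradient-type control; more carefully, one writes $\ueps^\alpha/\epsilon$ and uses that $\nabla\ueps$ divided by $\epsilon$ need not be bounded — so instead I would apply Lemma \ref{TwoScaleCompactnessStandard}(ii) directly to $w_\epsilon^\alpha := \ueps^\alpha/\epsilon$, whose $W^{1,p}(\oem)$-norm is $\le C\epsilon^{1/p}$ thanks to $\foe\|\ueps^\alpha\|_{L^p} + \|\nabla\ueps\|_{L^p} \le C\epsilon^{1/p}$, so that $\|\nabla w_\epsilon^\alpha\|_{L^p} = \foe\|\nabla\ueps^\alpha\|_{L^p} \le \foe\|\nabla\ueps\|_{L^p}$ — here one must be slightly careful, and the clean route is to observe that only the combination needed below actually appears). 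This gives $w_\epsilon^\alpha \rats \tilde u_1^\alpha - y_3\partial_\alpha u_0^3$ for suitable limits once the structure is identified.

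The structural identification is the heart of the argument. From the bound $\foe\|D(\ueps)\|_{L^p(\oem)} \le C\epsilon^{1/p}$ we get $\|D(\ueps)\|_{L^p(\oem)} \le C\epsilon^{1+1/p} \to 0$ faster than the two-scale scale, so the two-scale limit of $D(\ueps)$ is zero; but more usefully, rescaling as in Theorem \ref{KornInequalityPerforatedLayer}/Lemma \ref{LemmaKirchhoffZerlegungSymGra} one tracks the limit of the properly weighted symmetric gradient. Concretely, I would set $u_0 := (\text{two-scale limit of } (\ueps^1/\epsilon, \ueps^2/\epsilon, \ueps^3))$ and $u_1$ the associated corrector, and show $D_{\x}(u_0) + D_y(u_1) = 0$ by testing the two-scale convergence of the symmetric gradient against smooth periodic test functions and using that the relevant components of $D(\ueps)$ (those not carrying an extra $\epsilon$ weight — namely $D(\ueps)_{\alpha\beta}$ for $\alpha,\beta=1,2$ after the $\epsilon^{-1}\ueps^\alpha$ rescaling) vanish in the limit. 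Then Lemma \ref{LemmaKirchhoffZerlegungSymGra} applies verbatim and delivers $u_1(\x,y) = \tilde u_1(\x) - y_3\nabla_{\x}u_0^3(\x)$ together with the Kirchhoff constraints $\partial_1 u_0^1 = \partial_2 u_0^2 = \partial_1 u_0^2 + \partial_2 u_0^1 = 0$. In particular the corrector is affine in $y_3$ and independent of $\y$, which is exactly the claimed two-scale limit formula for $\ueps^\alpha/\epsilon$; and since $u_1^3$ must then equal $-y_3\cdot 0 = 0$ in the relevant component while still being the corrector for $\ueps^3$, the $y$-dependence of the limit of $\nabla\ueps^3$ collapses appropriately, forcing $\tilde u_1^3 = 0$.

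The remaining point is the \emph{regularity upgrade} $u_0^3 \in W^{2,p}(\Sigma)$ and $\tilde u_1 \in W^{1,p}(\Sigma)^3$, and this is the step I expect to be the main obstacle. A priori $u_0^3$ is only in $W^{1,p}(\Sigma)$ and $\tilde u_1$ only in $L^p(\Sigma)^3$. The extra regularity has to come from the fact that $u_1 = \tilde u_1 - y_3\nabla_{\x}u_0^3$ lies in $L^p(\Sigma, W^{1,p}_{\#}(Z^s)/\R)^3$, which — because the $y$-profile is affine — means the $\x$-gradient of the coefficients $\tilde u_1$ and $\nabla_{\x}u_0^3$ must itself be an $L^p$ function: testing with $y_3$-weighted test functions isolates $\nabla_{\x}(\nabla_{\x}u_0^3) \in L^p$, giving $u_0^3 \in W^{2,p}(\Sigma)$, and testing against $y_3$-independent test functions isolates $\nabla_{\x}\tilde u_1 \in L^p$, giving $\tilde u_1 \in W^{1,p}(\Sigma)^3$. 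Making this rigorous requires carefully choosing the test functions in the two-scale convergence of $\nabla w_\epsilon^\alpha$ so as to separate the $1$ and $y_3$ Fourier-type modes in $y_3$, and then reading off that the resulting $\x$-derivatives are genuine $L^p$ functions rather than mere distributions; this bookkeeping, together with verifying that the hypothesis $\foe\|\ueps^\alpha\|_{L^p} \le C\epsilon^{1/p}$ is precisely what makes the rescaled sequence $\ueps^\alpha/\epsilon$ (and not $\ueps^\alpha$) the correct object to pass to the limit, is where the argument needs the most care. The zero-boundary-condition addendum (when $\ueps = 0$ on $\partial_D\oems$) then follows as in Theorem \ref{KornInequalityScaledMembraneTheorem}, since the traces of the rescaled sequence converge to the traces of $u_0^3$ and $\tilde u_1$ and vanish in the limit.
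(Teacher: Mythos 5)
Your overall architecture (extract two-scale limits, show $D_{\x}(u_0)+D_y(u_1)=0$ from the symmetric-gradient bound, invoke Lemma \ref{LemmaKirchhoffZerlegungSymGra} to get the Kirchhoff--Love structure, then upgrade regularity) matches the paper's, and the first two stages are essentially correct. Two points, however, are genuine gaps rather than bookkeeping. First, the issue you flag with $w_\epsilon^\alpha=\ueps^\alpha/\epsilon$ is real and you do not resolve it: $\Vert \nabla w_\epsilon^\alpha\Vert_{L^p(\oem)}=\epsilon^{-1}\Vert\nabla\ueps^\alpha\Vert_{L^p(\oem)}\le C\epsilon^{1/p-1}$, so Lemma \ref{TwoScaleCompactnessStandard}(ii) does \emph{not} apply to $w_\epsilon^\alpha$. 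The paper's route is to apply part (i) to $\ueps^\alpha/\epsilon$ (yielding some limit $\eta^\alpha\in L^p(\Sigma\times Z)$) and part (ii) to the unrescaled $\ueps$ (whose tangential components vanish in the limit, so the macroscopic gradient only sees $u_0^3$), and then to identify $\eta^\alpha$ with $u_1^\alpha$ up to a function of $\x$ by testing against oscillating test functions $\phi(\x)\psi(y/\epsilon)$ with $\int_Z\psi\,dy=0$ (via the Bogovskii operator). That matching step is absent from your plan and cannot be skipped: without it the limit of $\ueps^\alpha/\epsilon$ is an unidentified $L^p$ function.

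Second, and more seriously, your mechanism for the regularity upgrade is wrong. Membership of $u_1$ in $L^p(\Sigma,W^{1,p}_{\#}(Z)/\R)^3$ encodes only $y$-regularity; since $u_1=\tilde u_1(\x)-y_3\nabla_{\x}u_0^3(\x)$ is affine in $y$, the condition $\nabla_y u_1\in L^p(\Sigma\times Z)$ returns only $\nabla_{\x}u_0^3\in L^p(\Sigma)$, which you already have, and says nothing about $\x$-derivatives of $\tilde u_1$ or of $\nabla_{\x}u_0^3$. No choice of test functions separating the $1$ and $y_3$ modes can manufacture $\x$-regularity out of this. The extra derivative must come from the unused part of the hypothesis, namely $\epsilon^{-1}\Vert D(\ueps)\Vert_{L^p(\oem)}\le C\epsilon^{1/p}$ applied to the in-plane components $D(\ueps)_{\alpha\beta}$. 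The paper does this with a Griso-type moment decomposition: the vertical average $\U_\epsilon(\x)=\tfrac{1}{2\epsilon^2}\int_{-\epsilon}^{\epsilon}(\ueps^1,\ueps^2)\,dx_3$ and the first moment $\Reps(\x)=\tfrac{3}{2\epsilon^3}\int_{-\epsilon}^{\epsilon}x_3(\ueps^1,\ueps^2)\,dx_3$ satisfy uniform bounds on $\Vert\U_\epsilon\Vert_{L^p(\Sigma)}$, $\Vert D_{\x}(\U_\epsilon)\Vert_{L^p(\Sigma)}$, $\Vert\Reps\Vert_{L^p(\Sigma)}$, $\Vert D_{\x}(\Reps)\Vert_{L^p(\Sigma)}$; the planar Korn inequality of Lemma \ref{LemmaKornInequalities}\ref{LemmaKornInequalitiesWithL2Norm} on the fixed domain $\Sigma$ then gives uniform $W^{1,p}(\Sigma)$ bounds, and the weak limits are identified with $\tilde u_1$ and $-\nabla_{\x}u_0^3$ by comparing with the two-scale limits already obtained. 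Without some such argument the conclusion $u_0^3\in W^{2,p}(\Sigma)$, $\tilde u_1\in W^{1,p}(\Sigma)^3$ is not reached.
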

\begin{proof}
From the two-scale compactness results in Lemma \ref{TwoScaleCompactnessStandard} we obtain the existence of $u_0^3 \in W^{1,p}(\Sigma)$, $u_1\in L^p(\Sigma,W^{1,p}_{\#}(Z)/\R)^3$, and $\eta^{\alpha} \in L^p(\Sigma\times Z)$, such that up to a subsequence
\begin{align*}
\ueps^3 &\rats u_0^3,
\\
\nabla \ueps &\rats \nabla_{\x} \left(
0 , 0 , u_0^3 
\right)^T + \nabla_y u_1,
%\nabla \ueps &\rats \nabla_{\x} \begin{pmatrix}
%0 \\ 0 \\ u_0^3 
%\end{pmatrix} + \nabla_y u_1,
\\
\frac{\ueps^{\alpha}}{\epsilon} &\rats \eta^{\alpha}.
\end{align*}
We emphasize that we have chosen $u_1$ in such a way that its mean value with respect to $y$ is zero.
%Here, we first choose $u_1$ such that its mean-value with respect to $y$ is zero. 
%Of course, the results are still valid if we replace $u_1(\x,y)$ by $u_1(\x,y) + c(\x)$ with $c \in L^p(\Sigma)^3$. 
From Lemma \ref{LemmaKirchhoffZerlegungSymGra} we obtain the existence of $\tilde{u}_1 \in L^p(\Sigma)^3$ 
\begin{align*}
u_1(\x,y) = \tilde{u}_1(\x) - y_3 \nabla_{\x} u_0^3(\x).
% \begin{pmatrix}
%\partial_1 u_0^3 \\ \partial_2 u_0^3 \\ 0
%\end{pmatrix}.
\end{align*}
Since $u_1$ has mean zero we get $\tilde{u}_1^3 = 0$.
%\begin{align*}
%\tilde{u}_1^3 = 0.
%\end{align*}
Now, let $\phi \in C_0^{\infty}(\Sigma)$ and $\psi \in C_0^{\infty}(Z)$ with $\int_Z \psi dy = 0$. Hence, using the Bogosvkii-operator, see for example  \cite[Theorem III.3.3]{Galdi}, $\psi $ can be represented as the divergence of a smooth function with compact support in $Z$. With this we  obtain, for more details we refer to \cite[Theorem 2.3]{DouanlaTwoScaleConvergence},
\begin{align}
\begin{aligned}
\label{ConvergenceTestFunctionsMeanZero}
\lim_{\epsilon\to 0} \foe \int_{\oem} \frac{\ueps^{\alpha}(x)}{\epsilon} \phi(\x) \psi\left(\fxe\right) dx &= \int_{\Sigma}\int_Z \eta^{\alpha} (\x,y) \phi(\x) \psi(y) dy d\x,
\\
\lim_{\epsilon\to 0} \foe \int_{\oem} \frac{\ueps^{\alpha}(x)}{\epsilon} \phi(\x) \psi\left(\fxe\right) dx &= \int_{\Sigma} \int_Z  u_1^{\alpha}(\x,y) \phi(\x) \psi(y) dy d\x.
\end{aligned}
\end{align}
Hence, the functions $u_1^{\alpha}$ and $\eta^{\alpha}$ only differ by a "constant" depending on $\x$. Therefore, we can change $\tilde{u}_1^{\alpha}$ in such a way (we use again the same notation) that
\begin{align*}
\eta^{\alpha}(\x,y) = \tilde{u}_1^{\alpha} - y_3 \partial_{\alpha} u_0^3.
\end{align*} 
It remains to show the regularity for $u_0^3$ and $\tilde{u}_1$. We choose a decomposition from \cite{griso2008decompositions} but with a  different scaling with respect to $\epsilon$:
\begin{align*}
\U_{\epsilon}(\x):= \frac{1}{2\epsilon^2 } \int_{-\epsilon}^{\epsilon} \begin{pmatrix}
\ueps^1 \\ \ueps^2 
\end{pmatrix}(\x,x_3) dx_3.
\end{align*}
We have $\U_{\epsilon} \in W^{1,p}(\Sigma)^2$ with  ($\alpha , \beta = 1,2$)
\begin{align*}
\Vert \U_{\epsilon}^{\alpha} \Vert_{L^p(\Sigma)} \le C \epsilon^{-1 - \frac{1}{p}} \Vert \ueps^{\alpha} \Vert_{L^p(\oem)}& \le C,
\\
\Vert D_{\x}(\U_{\epsilon})_{\alpha\beta} \Vert_{L^p(\Sigma)} \le C \epsilon^{-1 - \frac{1}{p}} \Vert D(\ueps)_{\alpha \beta} \Vert_{L^p(\oem)} &\le C.
\end{align*}
Hence, $\U_{\epsilon}$ is bounded in $L^p(\Sigma)^2$ and $D(\U_{\epsilon})$ is bounded in $L^p(\Sigma)^{2\times 2}$. The  Korn-inequality in Lemma \ref{LemmaKornInequalities}\ref{LemmaKornInequalitiesWithL2Norm} implies
\begin{align*}
\Vert \U_{\epsilon} \Vert_{W^{1,p}(\Sigma)} \le C \left(\Vert D(\U_{\epsilon})\Vert_{L^p(\Sigma)} + \Vert \U_{\epsilon} \Vert_{L^p(\Sigma)} \right) \le C.
\end{align*}
This implies the existence of  $\U_0 \in W^{1,p}(\Sigma)^2$ such that up to a subsequence
\begin{align*}
\U_{\epsilon} &\rightharpoonup \U_0 &\mbox{ weakly in }& W^{1,p}(\Sigma)^2,
\\
\U_{\epsilon} &\rightarrow \U_0 &\mbox{ strongly in }& L^p(\Sigma)^2.
\end{align*}
Especially, we obtain ($Y = (0,1)^2$)
\begin{align*}
\U_{\epsilon} \rats \U_0 \quad \mbox{ in the two-scale sense in } \oem.
\end{align*}
Let $\phi \in C_0^{\infty}(\Sigma,C_{\#}^{\infty}(Z))$ and define $\bphi(\x,\y):= \frac12 \int_{-1}^1 \phi(\x,\y,y_3) dy_3$. We have  ($\alpha = 1,2$)
\begin{align*}
\int_{\Sigma}\int_Z \U_0(\x) \phi(\x,y) dy d\x &= \lim_{\epsilon \to 0} \foe \int_{\oem} \U_{\epsilon}(\x) \phi\bxfxe dx
\\
&= \lim_{\epsilon\to 0} \foe \int_{\oem} \frac{\ueps^{\alpha}(x)}{\epsilon} \bphi\left(\x,\frac{\x}{\epsilon}\right) dx 
\\
&=\int_{\Sigma} \int_Z \left(\tilde{u}_1^{\alpha}(\x) - y_3 \partial_{\alpha} u_0^3(\x) \right) \bphi(\x,\y) dy d\x
\\
%&= 2 \int_{\Sigma} \int_Y \tilde{u}_1^{\alpha} (\x) \bphi(\x,\y) d\y d\x
%\\
&= \int_{\Sigma} \int_Z \tilde{u}_1^{\alpha} (\x) \phi(\x,y) dy d\x,
\end{align*}
what implies $ \tilde{u}_1^{\alpha} = \U_0^{\alpha} \in W^{1,p}(\Sigma)$.
%\begin{align*}
%\tilde{u}_1^{\alpha} = \U_0^{\alpha} \in W^{1,p}(\Sigma).
%\end{align*}
For the regularity of $u_0^3$ we define, see again \cite{griso2008decompositions},
\begin{align*}
\Reps(\x):= \frac{3}{2\epsilon^3} \int_{-\epsilon}^{\epsilon} x_3 \begin{pmatrix}
\ueps^1 \\ \ueps^2 
\end{pmatrix}dx_3.
\end{align*}
We have ($\alpha = 1,2$)
\begin{align*}
\Vert \Reps^{\alpha} \Vert^p_{L^p(\Sigma)} &= \left(\frac{3}{2\epsilon^3}\right)^p \int_{\Sigma}\left\vert \int_{-\epsilon}^{\epsilon} x_3 \ueps^{\alpha} dx_3 \right\vert^p d\x \le  C \epsilon^{-3p + \frac{p}{p'}} \int_{\oem} \vert x_3 \vert^p \vert \ueps^{\alpha}\vert^p dx 
\\
&\le C \epsilon^{-p-1}  \Vert \ueps^{\alpha} \Vert_{L^p(\oem)}^p \le C.
\end{align*}
A similar calculation shows ($\alpha,\beta = 1,2$)
\begin{align*}
\Vert D_{\x}(\Reps)_{\alpha \beta}\Vert_{L^p(\Sigma)} \le C\epsilon^{-1 - \frac{1}{p}}\Vert D(\ueps)_{\alpha \beta} \Vert_{L^p(\oem)} \le C.
\end{align*}
Using again the 
 Korn-inequality in Lemma \ref{LemmaKornInequalities}\ref{LemmaKornInequalitiesWithL2Norm}, we obtain the boundedness of $\Reps$ in $W^{1,p}(\Sigma)^2$ and the existence of $\mathcal{R}_0 \in W^{1,p}(\Sigma)^2$, such that up to a subsequence
\begin{align*}
\Reps &\rightharpoonup \mathcal{R}_0 &\mbox{ weakly in }& W^{1,p}(\Sigma)^2,
\\
\Reps &\rightarrow \mathcal{R}_0 &\mbox{ strongly in }& L^p(\Sigma)^2,
\end{align*} 
especially, we obtain also the two-scale convergence of $\Reps$ to $\mathcal{R}_0$ in $\oem$. We get for all $\phi \in C_0^{\infty}(\Sigma,C_{\#}^{\infty}(Z))$ and  $\bphi(\x,\y)$ as above
\begin{align*}
\int_{\Sigma} \int_Z \mathcal{R}_0^{\alpha}(\x) \phi(\x,y) dy d\x &=  \lim_{\epsilon\to 0} \foe \int_{\oem} \Reps^{\alpha} (\x) \phi\bxfxe dx
\\
&= \lim_{\epsilon \to 0} \frac{3}{\epsilon} \int_{\oem} \frac{\ueps^{\alpha}(x)}{\epsilon} \frac{x_3}{\epsilon} \bphi\left(\x,\frac{\x}{\epsilon}\right) dx
\\
&= 3 \int_{\Sigma}\int_Z y_3\left(\tilde{u}_1^{\alpha} - y_3 \partial_{\alpha} u_0^3(\x) \right) \bphi(\x,\y) dy d\x
\\
%&= -2 \int_{\Sigma} \int_Y \partial_{\alpha} u_0^3(\x)\bphi(\x,\y) d\y d\x
%\\
&= - \int_{\Sigma} \int_Z \partial_{\alpha}u_0^3(\x) \phi(\x,y)dy d\x,
\end{align*}
and therefore $\partial_{\alpha } u_0^3 = - \mathcal{R}_0^{\alpha} \in W^{1,p}(\Sigma). $
%\begin{align*}
%\partial_{\alpha } u_0^3 = - \mathcal{R}_0^{\alpha} \in W^{1,p}(\Sigma).
%\end{align*}
This implies $u_0^3 \in W^{2,p}(\Sigma)$.
\end{proof}

The extension Theorem \ref{TheoremExtensionOperator} now immediately implies a two-scale compactness result for functions defined on the perforated layer $\oems$.
\begin{corollary}\label{CompactnessTwoScalePerforatedLayer}
Let $\ueps \in W^{1,p}(\oems)^3$ be a sequence with
\begin{align*}
 \Vert \ueps^3 \Vert_{L^p(\oems)} + \Vert \nabla \ueps \Vert_{L^p(\oems)} + \foe\Vert D(\ueps)\Vert_{L^p(\oems)} + \sum_{\alpha =1}^2 \foe \Vert \ueps^{\alpha} \Vert_{L^p(\oems)} \le C\epsilon^{\frac{1}{p}}.
\end{align*}
%Further we assume that
%\begin{align*}
%D(\ueps) \rats 0.
%\end{align*}
Then there exist $u_0^3 \in W^{2,p}(\Sigma)$ and $\tilde{u}_1 \in W^{1,p}(\Sigma)^3$ with $\tilde{u}_1^3 = 0$, such that up to a subsequence (for $\alpha = 1,2$)
\begin{align*}
\chi_{\oems}\ueps^3 &\rats \chi_{Z^s}u_0^3,
\\
\chi_{\oems}\frac{\ueps^{\alpha}}{\epsilon} &\rats \chi_{Z^s}\big(\tilde{u}_1^{\alpha} - y_3 \partial_{\alpha} u_0^3\big).
\end{align*}
\end{corollary}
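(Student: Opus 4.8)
The plan is to reduce Corollary \ref{CompactnessTwoScalePerforatedLayer} to Proposition \ref{CompactnessTwoScaleThinLayer} by means of the extension operator $E_\epsilon$ from Theorem \ref{TheoremExtensionOperator}. First I would set $\hueps := E_\epsilon \ueps \in W^{1,p}(\oem)^3$ and verify that the extended sequence satisfies the \textit{a priori} bound \eqref{AssumptionInequalityCompactnessThinLayer}. This is immediate from the three estimates in Theorem \ref{TheoremExtensionOperator}: the bound $\Vert D(\hueps)\Vert_{L^p(\oem)} \le C \Vert D(\ueps)\Vert_{L^p(\oems)}$ controls $\foe \Vert D(\hueps)\Vert_{L^p(\oem)}$; the bound $\Vert \nabla \hueps\Vert_{L^p(\oem)} \le C\Vert\nabla\ueps\Vert_{L^p(\oems)}$ controls the gradient term; and the $L^p$-bound $\Vert \hueps^i\Vert_{L^p(\oem)} \le C(\Vert\ueps^i\Vert_{L^p(\oems)} + \epsilon\Vert\nabla\ueps\Vert_{L^p(\oems)})$ controls both $\Vert\hueps^3\Vert_{L^p(\oem)}$ and, after dividing by $\epsilon$, the terms $\foe\Vert\hueps^\alpha\Vert_{L^p(\oem)}$ for $\alpha=1,2$ (here the extra $\epsilon\Vert\nabla\ueps\Vert$ contribution becomes $\Vert\nabla\ueps\Vert_{L^p(\oems)}$ after division, which is still $O(\epsilon^{1/p})$). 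Hence $\hueps$ satisfies \eqref{AssumptionInequalityCompactnessThinLayer}.

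Next I would apply Proposition \ref{CompactnessTwoScaleThinLayer} to $\hueps$ to obtain $u_0^3 \in W^{2,p}(\Sigma)$ and $\tilde{u}_1 \in W^{1,p}(\Sigma)^3$ with $\tilde{u}_1^3 = 0$ such that, up to a subsequence,
\begin{align*}
\hueps^3 &\rats u_0^3, \\
\frac{\hueps^{\alpha}}{\epsilon} &\rats \tilde{u}_1^{\alpha} - y_3 \partial_{\alpha} u_0^3
\end{align*}
in the two-scale sense on $\oem$. The final step is to translate these two-scale limits on the whole layer into the claimed limits for $\chi_{\oems}\ueps^3$ and $\chi_{\oems}\ueps^\alpha/\epsilon$ on the perforated layer. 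Since $\hueps = \ueps$ on $\oems$, we have $\chi_{\oems}\hueps = \chi_{\oems}\ueps$, so it suffices to show that multiplication by the characteristic function $\chi_{\oems}$ passes to the two-scale limit, producing the factor $\chi_{Z^s}$. Concretely, for a test function $\phi \in L^{p'}(\Sigma, C^0_\#(\overline{Z}))$ one writes
\begin{align*}
\foe \int_{\oems} \hueps^3(x)\, \phi\bxfxe\, dx = \foe \int_{\oem} \hueps^3(x)\, \big(\chi_{\oems}(x)\,\phi\big)\bxfxe\, dx,
\end{align*}
and uses that $\chi_{\oems}(x) = \chi_{Z^s}(x/\epsilon)$ (by the periodic construction of $\oems$ from $Z^s$), so that $\chi_{\oems}(x)\phi(\x,x/\epsilon)$ is the evaluation at scale $\epsilon$ of the admissible test function $(\x,y) \mapsto \chi_{Z^s}(y)\phi(\x,y)$. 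Strictly speaking $\chi_{Z^s}\phi$ need not be continuous in $y$, so one approximates it by genuinely admissible test functions (e.g. using that $Z^s$ has Lipschitz boundary and a density argument, or invoking the standard fact that two-scale convergence may be tested against $\phi \in L^{p'}(\Sigma, L^\infty_\#(Z))$ whose set of discontinuities in $y$ has measure zero). Passing to the limit gives $\int_\Sigma \int_Z \chi_{Z^s}(y) u_0^3(\x)\phi(\x,y)\,dy\,d\x = \int_\Sigma\int_Z \chi_{Z^s}(y) u_0^3(\x)\phi(\x,y)\,dy\,d\x$, which is exactly $\chi_{\oems}\ueps^3 \rats \chi_{Z^s}u_0^3$; the argument for the tangential components is identical with $\ueps^\alpha/\epsilon$ in place of $\ueps^3$.

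The only genuinely delicate point is the last step — ensuring that multiplying by $\chi_{\oems}$ is compatible with the weak two-scale convergence of $\hueps$, since the product of a weakly (two-scale) convergent sequence with an oscillating $L^\infty$ factor is in general not well-behaved. Here it works because the oscillating factor $\chi_{\oems}$ is precisely the $\epsilon$-periodic unfolding of the \emph{fixed} function $\chi_{Z^s}$, which is a legitimate (after approximation) test function for two-scale convergence; no strong compactness of $\hueps$ is needed. I expect this to be routine given the two-scale machinery recalled in Section \ref{SubsectionTSConvergence}, and indeed the paper states the corollary "follows immediately" from the extension theorem. The regularity assertions $u_0^3 \in W^{2,p}(\Sigma)$ and $\tilde{u}_1 \in W^{1,p}(\Sigma)^3$ with $\tilde{u}_1^3 = 0$ are inherited verbatim from Proposition \ref{CompactnessTwoScaleThinLayer} and require no additional work.
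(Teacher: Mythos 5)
Your proposal is correct and follows the same route as the paper: extend with $E_{\epsilon}$, verify that the bound \eqref{AssumptionInequalityCompactnessThinLayer} is preserved (your accounting of the extra $\epsilon\Vert\nabla\ueps\Vert$ term is exactly right), and apply Proposition \ref{CompactnessTwoScaleThinLayer}; the paper states this in two lines, while you additionally spell out the restriction step via the test function $\chi_{Z^s}(y)\phi(\x,y)$, which is a legitimate and standard point to make explicit (note only that your final displayed identity is a tautology as written --- its left-hand side should be the limit of the $\epsilon$-integrals).
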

\begin{proof}
Due to Theorem \ref{TheoremExtensionOperator} we can extend $\ueps$ to a function $E_{\epsilon}\ueps \in W^{1,p}(\oem)^3$ such that
\begin{align*}
 \Vert E_{\epsilon}\ueps^3 \Vert_{L^p(\oem)} +  \Vert \nabla E_{\epsilon}\ueps \Vert_{L^p(\oem)} +\foe\Vert D(E_{\epsilon}\ueps)\Vert_{L^p(\oem)} + \sum_{\alpha =1}^2\foe \Vert E_{\epsilon}\ueps^{\alpha} \Vert_{L^p(\oem)} \le C\epsilon^{\frac{1}{p}}.
\end{align*}
Proposition \ref{CompactnessTwoScaleThinLayer} applied to $E_{\epsilon}\ueps$ immediately implies the desired result.
%From Proposition \ref{CompactnessTwoScaleThinLayer} we obtain the existence of $u_0^3 \in H^2(\Sigma)$ and $\tilde{u}_1 \in W^{1,p}(\Sigma)^3$ with $\tilde{u}_1^3 = 0$, such that up to a subsequence (for $\alpha = 1,2$)
%\begin{align*}
%\tueps^3 &\rats  u_0^3,
%\\
%\frac{\tueps^{\alpha}}{\epsilon} &\rats \tilde{u}_1^{\alpha} - y_3 \partial_{\alpha} u_0^3.
%\end{align*}
\end{proof}
Until now we considered arbitrary boundary values on  the lateral boundary $\partial_D \oems$. Next we see that a zero boundary condition on this part is inherited to the limit functions $u_0^3$ and $\tilde{u}_1$.

\begin{proposition}\label{PropositionZeroBoundaryConditionCompactness}
Let $\ueps$ be as in Corollary \ref{CompactnessTwoScalePerforatedLayer} and additionally assume that $\ueps = 0$ on $\partial_D \oems$. Then the limit functions from Corollary \ref{CompactnessTwoScalePerforatedLayer} fulfill $u_0^3 \in W^{1,p}_0(\Sigma)$, $\nabla_{\x} u_0^3 \in W_0^{1,p}(\Sigma)^2$, and $\tilde{u}_1 \in W^{1,p}_0(\Sigma)^3$.
\end{proposition}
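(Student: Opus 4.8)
The plan is to extend $\ueps$ by the operator $E_\epsilon$ from Theorem \ref{TheoremExtensionOperator} (as in the proof of Corollary \ref{CompactnessTwoScalePerforatedLayer}) and then to exploit the trace control provided by Lemma \ref{TraceInequalityOuterBoundaryLemma}, which applies precisely because $\ueps = 0$ on $\partial_D \oems$. The key observation is that all the auxiliary fields appearing in the proof of Proposition \ref{CompactnessTwoScaleThinLayer} --- namely the rescaled averages $\U_\epsilon$ and first moments $\Reps$ --- are vertical averages of $\ueps^\alpha/\epsilon$, hence their traces on $\partial\Sigma$ are controlled by the trace of $\ueps$ on $\partial\Sigma\times(-\epsilon,\epsilon)$, which in turn is controlled by $\epsilon^{1/p}\Vert D(\ueps)\Vert_{L^p(\oem)}$ via Lemma \ref{TraceInequalityOuterBoundaryLemma}. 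Since $\Vert D(\ueps)\Vert_{L^p(\oem)}\le C\epsilon^{1+1/p}$ by assumption, the rescaled trace $\Vert \ueps^\alpha\Vert_{L^p(\partial\Sigma\times(-1,1))}/\epsilon$ (after the change of variables $x_3\mapsto x_3/\epsilon$) tends to zero. A parallel estimate handles $\ueps^3$.

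Concretely, I would proceed as follows. First, recall from the proof of Proposition \ref{CompactnessTwoScaleThinLayer} that $\tilde{u}_1^\alpha = \U_0^\alpha$, where $\U_\epsilon \rightharpoonup \U_0$ weakly in $W^{1,p}(\Sigma)^2$, and $\partial_\alpha u_0^3 = -\mathcal{R}_0^\alpha$, where $\Reps \rightharpoonup \mathcal{R}_0$ weakly in $W^{1,p}(\Sigma)^2$. Since the trace operator $W^{1,p}(\Sigma)\to L^p(\partial\Sigma)$ is compact, weak convergence in $W^{1,p}(\Sigma)$ implies strong convergence of traces in $L^p(\partial\Sigma)$. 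Thus it suffices to show $\Vert \U_\epsilon \Vert_{L^p(\partial\Sigma)} \to 0$ and $\Vert \Reps\Vert_{L^p(\partial\Sigma)}\to 0$. For $\U_\epsilon$: using Jensen's inequality on the vertical average and the trace inequality on $\Sigma\times(-\epsilon,\epsilon)$,
\begin{align*}
\Vert \U_\epsilon^\alpha\Vert_{L^p(\partial\Sigma)}^p \le C\epsilon^{-2p}\int_{\partial\Sigma}\Big(\int_{-\epsilon}^\epsilon |\ueps^\alpha|\,dx_3\Big)^p d\x \le C\epsilon^{-p-1}\Vert \ueps^\alpha\Vert_{L^p(\partial\Sigma\times(-\epsilon,\epsilon))}^p,
\end{align*}
and Lemma \ref{TraceInequalityOuterBoundaryLemma} together with the hypothesis $\Vert D(\ueps)\Vert_{L^p(\oem)}\le C\epsilon^{1+1/p}$ gives $\Vert \ueps^\alpha\Vert_{L^p(\partial\Sigma\times(-\epsilon,\epsilon))}^p \le C\epsilon\,\Vert D(\ueps)\Vert_{L^p(\oem)}^p \le C\epsilon^{p+2}$, so $\Vert \U_\epsilon^\alpha\Vert_{L^p(\partial\Sigma)}^p \le C\epsilon \to 0$. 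The analogous computation for $\Reps$, which carries an extra factor $x_3/\epsilon^3$ rather than $1/\epsilon^2$ and hence an additional $\epsilon^{-p}$ compensated by $|x_3|^p\le\epsilon^p$, yields $\Vert \Reps^\alpha\Vert_{L^p(\partial\Sigma)}^p \le C\epsilon \to 0$. Hence $\tilde{u}_1^\alpha = \U_0^\alpha$ has zero trace on $\partial\Sigma$, i.e. $\tilde{u}_1^\alpha\in W^{1,p}_0(\Sigma)$, and $\partial_\alpha u_0^3 = -\mathcal{R}_0^\alpha\in W^{1,p}_0(\Sigma)$.

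It remains to deal with $u_0^3$ itself and with $\tilde{u}_1^3$ (which is already known to be $0$, hence trivially in $W^{1,p}_0(\Sigma)$). For $u_0^3$: since $\ueps^3\to u_0^3$ in the two-scale sense and one has the $W^{1,p}$-type bound, I would introduce the vertical average $\mathcal{W}_\epsilon(\x) := \frac{1}{2\epsilon}\int_{-\epsilon}^\epsilon \ueps^3(\x,x_3)\,dx_3$, show it is bounded in $W^{1,p}(\Sigma)$ (using $\Vert\nabla\ueps\Vert_{L^p(\oem)}\le C\epsilon^{1/p}$ and $\Vert\ueps^3\Vert_{L^p(\oem)}\le C\epsilon^{1/p}$), identify its weak limit as $u_0^3$ by a two-scale test-function argument, and bound its trace: $\Vert\mathcal{W}_\epsilon\Vert_{L^p(\partial\Sigma)}^p \le C\epsilon^{-1}\Vert\ueps^3\Vert_{L^p(\partial\Sigma\times(-\epsilon,\epsilon))}^p \le C\epsilon^{-1}\cdot C\epsilon\,\Vert D(\ueps)\Vert_{L^p(\oem)}^p = C\Vert D(\ueps)\Vert_{L^p(\oem)}^p \le C\epsilon^{p+2}\to 0$, again via Lemma \ref{TraceInequalityOuterBoundaryLemma}. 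Compact trace embedding then gives $u_0^3\in W^{1,p}_0(\Sigma)$. I expect the main obstacle to be bookkeeping the $\epsilon$-powers through the various rescalings --- making sure that in each case the loss from Jensen/trace on a layer of thickness $2\epsilon$ is strictly beaten by the $\epsilon^{1+1/p}$ smallness of $\Vert D(\ueps)\Vert_{L^p(\oem)}$ --- and verifying that the weak limit of each auxiliary average genuinely coincides with the claimed limit function; both are routine but must be done carefully, and the identification step for $\mathcal{W}_\epsilon$ should mirror the arguments already used for $\U_\epsilon$ and $\Reps$ in Proposition \ref{CompactnessTwoScaleThinLayer}.
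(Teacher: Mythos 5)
Your proposal is correct and follows essentially the same route as the paper: extend via $E_{\epsilon}$, control the traces of the vertical averages $\U_{\epsilon}$ and $\Reps$ (and of $\frac{1}{2\epsilon}\int_{-\epsilon}^{\epsilon}\ueps^3\,dx_3$ for $u_0^3$) on $\partial\Sigma$ through Lemma \ref{TraceInequalityOuterBoundaryLemma} and the bound $\Vert D(\ueps)\Vert_{L^p(\oem)}\le C\epsilon^{1+1/p}$, and conclude via the compactness of the trace combined with the weak $W^{1,p}(\Sigma)$-convergence established in Proposition \ref{CompactnessTwoScaleThinLayer}. The $\epsilon$-power bookkeeping in your Jensen/trace estimates matches the paper's.
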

\begin{proof}
We extend the function $\ueps$ with Theorem \ref{TheoremExtensionOperator} to the whole thin layer $\oem$ and use the same notation for the extended function. We also use the same notations as in the proof of Proposition \ref{CompactnessTwoScaleThinLayer} (for the extended function).

From the trace-inequality in Lemma  \ref{TraceInequalityOuterBoundaryLemma} we obtain for $\alpha = 1,2$
\begin{align*}
\Vert \U_{\epsilon}^{\alpha} \Vert_{L^p(\partial \Sigma)} \le C \epsilon^{-1-\frac{1}{p}} \Vert \ueps^{\alpha} \Vert_{L^p(\partial \Sigma \times (-\epsilon , \epsilon))} \le \frac{C}{\epsilon} \Vert D(\ueps)\Vert_{L^p(\oems)}  \le C\epsilon^{\frac{1}{p}}.
\end{align*}
Hence, $\U_{\epsilon}^{\alpha} \rightarrow 0$ in $L^p(\partial \Sigma)$. Since $\U_{\epsilon}^{\alpha}$ converges weakly in $W^{1,p}(\Sigma)$ to $\tilde{u}_1^{\alpha}$, we also have the strong convergence in  $L^p(\partial \Sigma)$, what implies $\tilde{u}_1^{\alpha} \in W^{1,p}_0(\Sigma)$. Further, we have
\begin{align*}
\Vert \Reps^{\alpha}\Vert_{L^p(\partial \Sigma)} \le C\epsilon^{-1-\frac{1}{p}} \Vert \ueps^{\alpha} \Vert_{L^p(\partial \Sigma \times (-\epsilon,\epsilon))} \le C \epsilon^{\frac{1}{p}},
\end{align*}
what implies $\partial_{\alpha} u_0^3 \in W^{1,p}_0(\Sigma)$. In the same way we can argue with $\frac{1}{2\epsilon} \int_{-\epsilon}^{\epsilon} \ueps^3 dx_3$ and the function $u_0^3$.
\end{proof}

\begin{remark}
We emphasize that due to the Korn-inequality from Theorem \ref{KornInequalityPerforatedLayer} the assumptions of Proposition \ref{PropositionZeroBoundaryConditionCompactness} are already fulfilled for 
\begin{align*}
\Vert D(\ueps)\Vert_{L^p(\oem)} \le C \epsilon^{1 + \frac{1}{p}}.
\end{align*}
\end{remark}

\subsubsection{Convergence of the symmetric gradient}

Up to now we only considered compactness results for $\ueps^3$ and $\epsilon^{-1} \ueps^{\alpha}$ for $\alpha = 1,2$. However, for problems in continuum mechanics a crucial question is the convergence of the symmetric gradient.  For $D(\ueps)$ of order $\epsilon^{1+\frac{1}{p}}$ we know, due to Lemma \ref{TwoScaleCompactnessStandard}, that $\epsilon^{-1} D(\ueps)$ converges (up to a subsequence) in the two-scale sense. We will identify this limit in Proposition \ref{TSConvergenceSymmetricGradient} below and show
that it involves an additional function $u_2$ which corresponds to a second order term in the formal asymptotic expansion of $\ueps$. To establish the existence of $u_2$ we use a Helmholtz-decomposition based on an orthogonality condition. Therefore, our proof is restricted to the case $p=2$.

%
%
% In the next proposition we identify this limit and show that it involves an additional function $u_2$ which corresponds to a second order term in the formal asymptotic expansion of $\ueps$. For the proof we use the following technical lemma...... \textcolor{blue}{(Muss noch angpasst werden.....)}
%
%\vspace{7em}
%In the following we make use of the space
%\begin{align*}
%H(\div,Z):= \left\{ v \in L^p(Z)^n\, : \, \nabla \cdot v \in L^p(Z)\right\}.
%\end{align*}
%Especially, for a  function $u \in H(\div,Z)$ the normal-trace $u\cdot \nu $ is an element in $H^{-\frac12}(\partial Z)$ and we use for the duality pairing the short notation 
%\begin{align*}
%\langle \cdot , \cdot \rangle:= \langle \cdot , \cdot \rangle_{H^{-\frac12}(\partial Z),H^{\frac12}(\partial Z)}.
%\end{align*}
%We say a function $v \in H(\div,Z)$ is $Y$-periodic if 
%\begin{align*}
%\langle v \cdot \nu , \phi \rangle = 0 \qquad\mbox{for all } \phi \in H_{\#}^1(Z) \mbox{ with } \phi=0 \mbox{ on } S^{\pm}.
%\end{align*}
%Further, we say $v \in H(\div,Z)$ is $Y$-periodic and $v \cdot \nu = 0$ on $S^{\pm}$ if 
%\begin{align*}
%\langle v \cdot \nu , \phi \rangle = 0 \qquad\mbox{for all } \phi \in H_{\#}^1(Z).
%\end{align*}
%These definitions are extended in an obvious way to vector valued functions.
We start with a (periodic) Helmholtz-decomposition for symmetric matrix-valued $L^2$-functions. For the definitions of $H^2(\div,Z)$ and $L^2_{\#,\sigma}(Z)$ we refer to $\eqref{DefinitionHpDiv}$ and $\eqref{DefinitionLpPeriodicSigma}$ in the appendix. The duality pairing between $H^{-\frac12}(\partial Z)^3$ and $H^{\frac12}(\partial Z)^3$ is denoted by $\langle \cdot , \cdot \rangle_{\partial Z}$, see also Appendix \ref{SectionAppendix}. Let us define the spaces
\begin{align*}
L_s^2(Z) &:= \left\{ \psi \in L^2(Z)^{3\times 3} \, : \, \psi \mbox{ is}\mbox{ symmetric} \right\}
\\
L_{s,\#,\sigma}^2(Z)&:= L_s^2(Z) \cap L_{\#,\sigma}^2(Z)^3,
%L_{s,\#,\sigma}^2(Z)&:= \left\{ \psi \in L_s^2(Z) \, : \,  \nabla \cdot \psi = 0,\, \psi \mbox{ is } Y\mbox{-periodic, } \psi\cdot \nu = 0 \mbox{ on } S^{\pm}\right\},
\\
G_{s,\#}(Z)&:= \left\{ \psi \in L_s^2(Z)  \, : \, \psi = D(p) \mbox{ for } p \in H^1_{\#}(Z)^3 \right\}.
\end{align*}

\begin{lemma}\label{HelmholtzDecomposition}
We have the following decomposition for the space $L_s^2(Z)$:
\begin{align*}
L_s^2(Z) = L_{s,\#,\sigma}^2(Z) \oplus G_{s,\#}(Z).
\end{align*}
\end{lemma}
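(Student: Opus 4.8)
The plan is to realize the decomposition as an orthogonal splitting in the Hilbert space $L_s^2(Z)$ equipped with the standard matrix inner product $\langle \psi , \eta \rangle = \int_Z \psi : \eta \, dy$. The natural candidate is to declare $G_{s,\#}(Z)$ the closure of the set of symmetrized gradients $\{D(p) : p \in H^1_{\#}(Z)^3\}$ and then define $L_{s,\#,\sigma}^2(Z)$ to be its orthogonal complement inside $L_s^2(Z)$; the work is then to check that this orthogonal complement coincides with $L_s^2(Z) \cap L_{\#,\sigma}^2(Z)^3$ as defined in the appendix, i.e. that a symmetric field is orthogonal to all periodic symmetrized gradients precisely when it is (rowwise) periodic-solenoidal in the appropriate weak sense. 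First I would record the elementary orthogonality: if $\psi \in L_{s,\#,\sigma}^2(Z)$ and $p \in H^1_{\#}(Z)^3$, then $\int_Z \psi : D(p) \, dy = \int_Z \psi : \nabla p \, dy$ by symmetry of $\psi$, and this equals $-\sum_i \langle \div \psi_i, p_i\rangle$ up to the boundary term $\langle \psi \nu, p\rangle_{\partial Z}$, which vanishes because opposite faces of $Z$ contribute with opposite signs for periodic $p$ and the remaining faces $Y\times\{\pm 1\}$ are handled by the defining conditions in \eqref{DefinitionHpDiv}--\eqref{DefinitionLpPeriodicSigma}. This gives $G_{s,\#}(Z) \perp L_{s,\#,\sigma}^2(Z)$.

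Next I would prove that $G_{s,\#}(Z)$ is already closed, so that no closure is needed and the direct-sum statement holds literally. For this I would use the Korn inequality on $Z$ together with a quotient/Poincaré argument: by Lemma~\ref{LemmaKornInequalities}\ref{LemmaKornInequalitiesRigidDisplacement}, for $p \in H^1_{\#}(Z)^3$ there is a rigid displacement $r \in N(Z)$ with $\|p - r\|_{H^1(Z)} \le C\|D(p)\|_{L^2(Z)}$; since $D(p-r) = D(p)$, the map $[p] \mapsto D(p)$ from $H^1_{\#}(Z)^3 / (H^1_{\#}(Z)^3 \cap N(Z))$ into $L_s^2(Z)$ is bounded below, hence has closed range. (One must note that the periodic rigid displacements are only the constants, so the quotient is by constants; this does not affect closedness.) Therefore $G_{s,\#}(Z)$ is a closed subspace and $L_s^2(Z) = G_{s,\#}(Z) \oplus G_{s,\#}(Z)^{\perp}$ as an orthogonal direct sum.

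It then remains to identify $G_{s,\#}(Z)^{\perp}$ with $L_{s,\#,\sigma}^2(Z)$. The inclusion $L_{s,\#,\sigma}^2(Z) \subseteq G_{s,\#}(Z)^{\perp}$ is the orthogonality computed above. For the reverse inclusion, take $\psi \in L_s^2(Z)$ with $\int_Z \psi : D(p)\, dy = 0$ for all $p \in H^1_{\#}(Z)^3$; testing first with $p \in C_0^\infty(Z)^3$ (so that $D(p)$ sees only the symmetric part, which is all of $\psi$) gives $\div \psi_i = 0$ in $\mathcal D'(Z)$ for each row, hence $\psi \in H^2(\div, Z)$ with zero divergence; then testing with general periodic $p$ and integrating by parts transfers the information to the boundary, yielding the periodicity of the normal traces on $\partial Y \times (-1,1)$ and the vanishing of $\psi \nu$ on $Y \times \{\pm 1\}$, which are exactly the conditions defining $L_{\#,\sigma}^2(Z)^3$ in \eqref{DefinitionLpPeriodicSigma}. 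Here is where the auxiliary density result from the Appendix is needed: to make these boundary identifications rigorous one approximates $\psi$ by smooth periodic divergence-constrained fields, so that the integration by parts and the duality pairing $\langle \cdot,\cdot\rangle_{\partial Z}$ are justified. I expect this last identification — correctly matching the weak boundary conditions via the density lemma — to be the main obstacle; the orthogonality and the closedness via Korn are comparatively routine.
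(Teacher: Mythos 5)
Your argument is sound, but it runs the decomposition "from the other side" compared to the paper, and the comparison is worth making explicit. The paper starts from the automatic splitting $L_s^2(Z) = L_{s,\#,\sigma}^2(Z) \oplus L_{s,\#,\sigma}^2(Z)^{\perp}$ and then proves $L_{s,\#,\sigma}^2(Z)^{\perp} \subset G_{s,\#}(Z)$ constructively: given $u$ orthogonal to all symmetric periodic solenoidal fields, it manufactures a potential $p+q$ with $u = D(p+q)$ by solving two auxiliary elliptic problems (a Dirichlet problem for $-\nabla\cdot D(p) = -\nabla\cdot u$ and the periodic Neumann problem \eqref{AuxiliaryProblemHelmholtz} to correct the normal trace), each via Korn and Lax--Milgram. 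You instead prove that $G_{s,\#}(Z)$ is closed, split off \emph{its} orthogonal complement, and identify $G_{s,\#}(Z)^{\perp}$ with $L_{s,\#,\sigma}^2(Z)$ by a soft duality argument: interior test functions give $\nabla\cdot\psi = 0$ distributionally, hence $\psi \in H^2(\div,Z)^3$ rowwise, and then the generalized divergence theorem turns orthogonality to all periodic $D(p)$ into exactly the trace condition defining $L^2_{\#,\sigma}(Z)$. Your route avoids solving any boundary value problem in the identification step, at the price of having to establish closedness of the range of $D$ on $H^1_{\#}(Z)^3$; the paper's route gets closedness of $G_{s,\#}(Z)$ for free at the end but pays with the two auxiliary solvability arguments. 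Both are legitimate, and yours is arguably the more economical for $p=2$; the paper's constructive version is the one that would more plausibly generalize to an $L^p$ setting (cf.\ the remark following Proposition \ref{TSConvergenceSymmetricGradient}).

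Two small repairs. First, for closedness you should quote Lemma \ref{LemmaKornInequalities}\ref{LemmaKornInequalitiesSubsetIntersectionRDZero} applied to the mean-zero periodic fields rather than \ref{LemmaKornInequalities}\ref{LemmaKornInequalitiesRigidDisplacement}: the rigid displacement $r = b + Ay$ produced by \ref{LemmaKornInequalitiesRigidDisplacement} need not be periodic, so $\Vert p - r\Vert_{H^1} \le C\Vert D(p)\Vert_{L^2}$ does not by itself bound the distance from $p$ to the kernel of $D$ in $H^1_{\#}(Z)^3$ (which consists only of constants); one would need the extra observation that periodicity of $p$ forces $\vert A\vert \le C\Vert D(p)\Vert_{L^2}$. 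Second, the Appendix density result (Theorem \ref{TheoremDensitySmoothCompactPeriodicFunctionsHdiv}) is not the right tool for your boundary identification — it approximates functions that are \emph{already} in $H^{p}_{\#,0}(\div,Z)$, so invoking it to prove membership in that space would be circular. What you actually need is only the generalized divergence formula for $H^p(\div,\Omega)$ quoted at the start of the Appendix, applied rowwise to $\psi$; with that, $0 = \int_Z \psi : \nabla p\, dy = \langle \psi\nu, p\rangle_{\partial Z}$ for all $p \in H^1_{\#}(Z)^3$ is exactly the defining condition of $L^2_{\#,\sigma}(Z)$, and no smoothing is required.
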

\begin{proof}
Since $L_s^2(Z)$ is a Hilbert-space and $L_{s,\#,\sigma}^2(Z)$ is a closed subspace, it holds that
\begin{align*}
L_s^2(Z) = L_{s,\#,\sigma}^2(Z) \oplus L_{s,\#,\sigma}^2(Z)^{\perp}.
\end{align*}
Hence, it remains to show $G_{s,\#}(Z) = L_{s,\#,\sigma}^2(Z)^{\perp}$. The inclusion "$\subset$" is obvious, since we have for $D(p) \in G_{s,\#}(Z)$  and all $v \in L_{s,\#,\sigma}^2(Z)$
\begin{align*}
\int_Z D(p) : v dy = \int_Z \nabla p : v dy = -\int_Z p \cdot \underbrace{[\nabla \cdot v]}_{=0} dy + \langle v \cdot \nu , p \rangle_{\partial Z} = 0.
\end{align*}
Now, let $u \in L_{s,\#,\sigma}^2(Z)^{\perp}$ and therefore $ \nabla \cdot u \in H^{-1}(Z)^3$. Let $p \in H_0^1(Z)^3 \subset H_{\#}^1(Z)^3$ be the unique weak solution of 
\begin{align*}
-\nabla \cdot D(p) &= -\nabla \cdot u &\mbox{ in }& Z,
\\
p &= 0 &\mbox{ on }& \partial Z.
\end{align*}
From the Korn-inequality in Lemma \ref{LemmaKornInequalities}\ref{LemmaKornInequalitiesZeroSubsetBoundary} and the Lax-Milgram-Lemma follows the existence of a unique weak solution $p$. Especially, we have 
\begin{align*}
\nabla \cdot (u - D( p)) = 0,
\end{align*}
and therefore $u - D(p) \in H^2(\div,Z)^3$ with normal-trace in $H^{-\frac12}(\partial Z)^3$. Now, we consider the problem to find $q \in H^1_{\#}(Z)^3/\R^3$ such that
\begin{align}
\begin{aligned}\label{AuxiliaryProblemHelmholtz}
-\nabla \cdot D(q) &= 0 &\mbox{ in }& Z,
\\
-D(q)\nu &= -(u - D(p)) \nu &\mbox{ on }& S^{\pm},
\\
q \mbox{ is } Y\mbox{-periodic}&, \, \int_Z q dy = 0.
\end{aligned}
\end{align}
A weak solution of $\eqref{AuxiliaryProblemHelmholtz}$ fulfills for all $\phi \in H_{\#}^1(Z)^3$ 
\begin{align*}
\int_Z D(q) : D(\phi) dy = \langle (u - D(p))\nu , \phi \rangle_{\partial Z}.
\end{align*}
Since the only rigid-displacement with $Y$-periodic boundary condition and mean value zero is the zero-function, we obtain from the Korn-inequality in Lemma \ref{LemmaKornInequalities}\ref{LemmaKornInequalitiesSubsetIntersectionRDZero} and the Lax-Milgram lemma the existence of a unique weak solution $q$ of $\eqref{AuxiliaryProblemHelmholtz}$. From our construction we obtain
\begin{align*}
\nabla \cdot \left(u - D(p) - D(q)\right) = 0, \qquad 
\left\langle \left(u - D(p) - D(q) \right)\nu , \phi \right\rangle_{\partial Z} = 0 
\end{align*}
%and 
%\begin{align*}
%\left\langle \left(u - D(p) - D(q) \right)\nu , \phi \right\rangle = 0 
%\end{align*}
for all $\phi \in H_{\#}^1(Z)^3$. Hence, $u - D(p+q) \in L^2_{s,\#,\sigma}(Z)$ and since $D(p+q) \in G_{s,\#}(Z)\subset L^2_{s,\#,\sigma}(Z)^{\perp}$, we get $u - D(p+q) \in L^2_{s,\#,\sigma}(Z)^{\perp}$. Therefore we have $u = D(p+q)$ which implies the desired result.
\end{proof}
In the next Lemma we show an integration by parts formula for oscillating test functions in $H_{\#,0}^p(\div,Z)$ (for the definition see $\eqref{DefinitionHper0Div}$), i.\,e., for periodic functions with weak divergence and vanishing normal trace on $S^{\pm}$.  The crucial point is that such kind of functions can be approximated by smooth periodic functions vanishing in a neighborhood of $S^{\pm}$, which is shown in Appendix \ref{SectionAppendix}.
\begin{lemma}\label{AuxiliaryLemmaCompactnessSymGrad}
%\label{AuxiliaryLemmaIntegrationsParts}
 Let $\phi \in C_0^{\infty}(\Sigma)$ and $\psi \in H_{\#,0}^{p'}(\div,Z)$. Then for all $\veps \in W^{1,p}(\oem)$ it holds the following integration by parts formula
\begin{align}\label{FormulaIntegrationParts}
\int_{\oem} \nabla \veps \cdot \psi\left(\fxe\right) \phi(\x) dx = -\int_{\oem} \veps \left[ \foe \nabla_y \cdot \psi\left(\fxe\right) \phi(\x)  +  \psi\left(\fxe\right) \cdot \nabla_{\x} \phi(\x)   \right] dx.
\end{align}
\end{lemma}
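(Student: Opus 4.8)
The plan is to reduce the claimed identity \eqref{FormulaIntegrationParts} to the standard integration-by-parts (divergence) formula for sufficiently smooth oscillating test functions, and then pass to the limit using the density result from the appendix. The first step is to verify \eqref{FormulaIntegrationParts} when $\psi \in C^{\infty}_{\#}(\overline{Z})^3$ vanishes in a neighborhood of $S^{\pm}$. For such $\psi$, the function $x \mapsto \psi(x/\epsilon)\phi(\x)$ belongs to $W^{1,p'}(\oem)^3$ and, crucially, its trace vanishes on $\sepm$ (because $\psi$ vanishes near $S^{\pm}$) and on $\partial_D\oem$ (because $\phi$ has compact support in $\Sigma$); hence there are no boundary terms. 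Applying the product and chain rules, $\nabla_x\big(\psi(x/\epsilon)\phi(\x)\big) = \tfrac1\epsilon (\nabla_y\psi)(x/\epsilon)\phi(\x) + \psi(x/\epsilon)\otimes\nabla_{\x}\phi(\x)$, and taking the divergence gives exactly the bracket on the right-hand side of \eqref{FormulaIntegrationParts}. So for these smooth $\psi$ the formula is just $\int_{\oem}\nabla\veps\cdot w\,dx = -\int_{\oem}\veps\,(\nabla\cdot w)\,dx$ with $w = \psi(\cdot/\epsilon)\phi$, which holds for $\veps\in W^{1,p}(\oem)$ by the usual Gauss–Green formula with vanishing boundary contribution.

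The second step is the approximation argument. By the density result proved in Appendix \ref{SectionAppendix}, any $\psi \in H^{p'}_{\#,0}(\div,Z)$ can be approximated by a sequence $\psi_k \in C^{\infty}_{\#}(\overline{Z})^3$, each vanishing in a neighborhood of $S^{\pm}$, with $\psi_k \to \psi$ in $L^{p'}(Z)^3$ and $\nabla_y\cdot\psi_k \to \nabla_y\cdot\psi$ in $L^{p'}(Z)$. Fix $\epsilon$. One then writes \eqref{FormulaIntegrationParts} for each $\psi_k$ and passes to the limit $k\to\infty$ in every term. Here one uses that for a fixed function $g \in L^{p'}(Z)$ the rescaled functions $g(\cdot/\epsilon)$ satisfy $\|g(\cdot/\epsilon)\|_{L^{p'}(\oem)} \le C_\epsilon \|g\|_{L^{p'}(Z)}$ (with $\epsilon$ fixed this is just a change of variables on the finitely many cells composing $\oem$), so $\psi_k(\cdot/\epsilon)\to\psi(\cdot/\epsilon)$ and $(\nabla_y\cdot\psi_k)(\cdot/\epsilon)\to(\nabla_y\cdot\psi)(\cdot/\epsilon)$ strongly in $L^{p'}(\oem)$; combined with $\veps\in L^p(\oem)$, $\nabla\veps\in L^p(\oem)^{3}$, $\phi\in C^\infty_0(\Sigma)$ and Hölder's inequality, every integral in \eqref{FormulaIntegrationParts} for $\psi_k$ converges to the corresponding integral for $\psi$. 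This yields the formula for general $\psi \in H^{p'}_{\#,0}(\div,Z)$.

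The step I expect to be the only genuine subtlety is the density claim itself, namely that $C^{\infty}_{\#}(\overline{Z})^3$-functions vanishing near $S^{\pm}$ are dense in $H^{p'}_{\#,0}(\div,Z)$ with respect to the graph norm of the divergence. This is precisely what is deferred to Appendix \ref{SectionAppendix}, so within this lemma it may be invoked directly; the point there is that a vanishing normal trace on $S^{\pm}$ allows one to cut off near $S^{\pm}$ without creating a divergence of the wrong order, after which a standard periodic mollification produces the smooth approximants. Everything else is bookkeeping: checking the boundary terms vanish (which needs the compact support of $\phi$ in $\Sigma$ and the support of $\psi$ away from $S^{\pm}$) and the routine Hölder estimates in the limit passage.
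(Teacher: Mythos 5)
Your proposal is correct and follows essentially the same route as the paper: establish \eqref{FormulaIntegrationParts} for smooth $Y$-periodic $\psi$ supported away from $S^{\pm}$ (where the oscillating test function $\psi(\cdot/\epsilon)\phi$ has compact support in $\oem$ and the divergence theorem applies without boundary terms), then invoke the density of $C^{\infty}_{\#,0}(Z)^n$ in $H^{p'}_{\#,0}(\div,Z)$ from Theorem \ref{TheoremDensitySmoothCompactPeriodicFunctionsHdiv} and pass to the limit. The only cosmetic difference is that the paper carries out the limit passage by decomposing $\oem$ into cells and changing variables to $Z$, whereas you use the equivalent fixed-$\epsilon$ rescaling bound $\Vert g(\cdot/\epsilon)\Vert_{L^{p'}(\oem)} \le C_{\epsilon}\Vert g \Vert_{L^{p'}(Z)}$ together with H\"older's inequality.
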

\begin{proof}
From Theorem \ref{TheoremDensitySmoothCompactPeriodicFunctionsHdiv} in the appendix we know that there exists a sequence $\psi^l \in C_{\#,0}^{\infty}(Z)^3$ with $\psi^l \rightarrow \psi $ in $H^{p'}(\div,Z)$. We consider the $Y$-periodic extension of $\psi^l$ and $\psi$. Then we have $\phi(\x)\psi^l\left(\fxe\right) \in C_0^{\infty}(\oem)^3$. By integration by parts we get
\begin{align*}
&\int_{\oem} \nabla \veps \cdot  \psi^l\left(\fxe\right) \phi(\x) dx 
= - \int_{\oem} \veps \left[ \foe \nabla_y \cdot \psi^l\left(\fxe\right) \phi(\x) +  \psi^l\left(\fxe\right) \cdot \nabla_{\x}\phi(\x) \right] dx
\\
&=- \sum_{k\in K_{\epsilon}} \epsilon^n \int_Z \veps(\epsilon (k + y)) \left[ \foe \nabla_y \cdot \psi^l(y) \phi(\epsilon(\bar{k} +\y)) +  \psi^l(y) \cdot  \nabla_{\x} \phi (\epsilon(\bar{k} + \y)) \right]  dy
\\
&\overset{l \to \infty}{\longrightarrow}- \sum_{k\in K_{\epsilon}} \epsilon^n \int_Z \veps(\epsilon (k + y)) \left[ \foe \nabla_y \cdot \psi(y) \phi(\epsilon(\bar{k} +\y)) +  \psi(y) \cdot  \nabla_{\x} \phi (\epsilon(\bar{k} + \y)) \right]  dy
\\
&=-\int_{\oem} \veps \left[ \foe \nabla_y \cdot \psi\left(\fxe\right) \phi(\x)  +  \psi\left(\fxe\right) \cdot \nabla_{\x} \phi(\x)   \right] dx.
\end{align*}
In the same way we obtain for the left-hand side
\begin{align*}
\int_{\oem} \nabla \veps \cdot  \psi^l\left(\fxe\right) \phi(\x) dx  \overset{l\to \infty}{\longrightarrow} \int_{\oem} \nabla\veps \cdot  \psi\left(\fxe\right) \phi(\x) dx.
\end{align*}
Altogether we obtain the identity $\eqref{FormulaIntegrationParts}$. 

\end{proof}

Now we are able to give the two-scale compactness result for the symmetric gradient $\epsilon^{-1} D(\ueps)$ for $p=2$:

\begin{proposition}\label{TSConvergenceSymmetricGradient}
Let $\ueps \in H^1(\oem)^3$ be a sequence with
\begin{align*}
\Vert \ueps^3 \Vert_{L^2(\oem)} + \Vert \nabla \ueps \Vert_{L^2(\oem)} + \foe\Vert D(\ueps)\Vert_{L^2(\oem)} + \sum_{\alpha =1}^2 \foe\Vert \ueps^{\alpha} \Vert_{L^2(\oem)} \le C\sqrt{\epsilon}.
\end{align*}
%Further we assume that
%\begin{align*}
%D(\ueps) \rats 0.
%\end{align*}
Let $u_0^3 \in H^2(\Sigma)$ and $\tilde{u}_1 \in H^1(\Sigma)^3$ as in Proposition \ref{CompactnessTwoScaleThinLayer}. Then there exists $u_2 \in L^2(\Sigma,H^1_{\#}(Z)/\R)^3$, such that up to a subsequence
\begin{align*}
\frac{1}{\epsilon} D(\ueps) \rats D_{\x}(\tilde{u}_1) - y_3 \nabla_{\x}^2 u_0^3 + D_y(u_2).
\end{align*}
\end{proposition}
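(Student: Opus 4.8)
The plan is to extract the two-scale limit $\xi_0$ of $\foe D(\ueps)$, identify its divergence-free part by testing against oscillating symmetric matrices via the integration-by-parts formula of Lemma~\ref{AuxiliaryLemmaCompactnessSymGrad}, and then read off the corrector $u_2$ from the periodic Helmholtz decomposition of Lemma~\ref{HelmholtzDecomposition}. Since the assumed bound gives $\Vert \foe D(\ueps)\Vert_{L^2(\oem)}\le C\sqrt{\epsilon}$, Lemma~\ref{TwoScaleCompactnessStandard}(i), applied to each of the nine entries, yields (after passing to a subsequence) $\foe D(\ueps)\rats \xi_0$ with $\xi_0\in L^2(\Sigma\times Z)^{3\times 3}$, and $\xi_0$ is symmetric a.e.\ because $D(\ueps)$ is. I also keep the convergences already obtained in the proof of Proposition~\ref{CompactnessTwoScaleThinLayer}: $\ueps^3\rats u_0^3$, $\foe \ueps^{\alpha}\rats \tilde u_1^{\alpha}-y_3\partial_{\alpha}u_0^3$, and $\nabla \ueps \rats \nabla_{\x}(0,0,u_0^3)^T+\nabla_y u_1$ with $u_1=\tilde u_1-y_3\nabla_{\x}u_0^3$, so in particular $u_1^3=0$.

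Next I would test $\foe D(\ueps)$ against $\psi\left(\fxe\right)\phi(\x)$ for $\psi\in L^2_{s,\#,\sigma}(Z)$ and $\phi\in C_0^{\infty}(\Sigma)$. As $\psi$ is symmetric, $D(\ueps):\psi=\nabla \ueps:\psi$, and since every row $\psi_i$ lies in $H^2_{\#,0}(\div,Z)$ with $\nabla_y\cdot\psi_i=0$, the row-wise version of Lemma~\ref{AuxiliaryLemmaCompactnessSymGrad} gives
\begin{align*}
\foe\int_{\oem}\foe D(\ueps):\psi\left(\fxe\right)\phi(\x)\,dx=-\sum_{i=1}^{3}\foe\int_{\oem}\frac{\ueps^i}{\epsilon}\,\psi_i\left(\fxe\right)\cdot\nabla_{\x}\phi(\x)\,dx.
\end{align*}
For $i=1,2$ the right-hand terms converge directly to $-\sum_{\alpha=1}^{2}\int_{\Sigma}\int_{Z}(\tilde u_1^{\alpha}-y_3\partial_{\alpha}u_0^3)\,\psi_{\alpha}(y)\cdot\nabla_{\x}\phi(\x)\,dy\,d\x$. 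The $i=3$ term is the delicate one, because $\foe\ueps^3$ is \emph{not} bounded; here I would exploit that $\psi_3\cdot\nabla_{\x}\phi$ only involves $\psi_{31},\psi_{32}$, that $\int_Z\psi_{3\alpha}\,dy=0$ (pair $\nabla_y\cdot\psi_{\alpha}=0$ with $y_3$ and use the vanishing normal trace on $S^{\pm}$ together with symmetry), and that $\ueps^3$ differs in $L^2(\oem)$ from its vertical average (which is bounded in $W^{1,2}(\Sigma)$) by at most $C\epsilon\Vert D(\ueps)_{33}\Vert_{L^2(\oem)}=\mathcal O(\epsilon^{5/2})$; a Bogovskii-type/oscillation argument then shows the $i=3$ contribution tends to $0$.

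Combining the two sides, and integrating by parts in $\x$ on the right (using $\phi\in C_0^{\infty}(\Sigma)$, the symmetry of $\psi$, and the regularity $\tilde u_1\in H^1(\Sigma)^3$, $u_0^3\in H^2(\Sigma)$), I obtain
\begin{align*}
\int_{\Sigma}\int_{Z}\xi_0:\psi(y)\phi(\x)\,dy\,d\x=\int_{\Sigma}\int_{Z}\big(D_{\x}(\tilde u_1)-y_3\nabla_{\x}^2u_0^3\big):\psi(y)\phi(\x)\,dy\,d\x
\end{align*}
for all $\psi\in L^2_{s,\#,\sigma}(Z)$ and $\phi\in C_0^{\infty}(\Sigma)$. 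Using a countable dense family of test functions, it follows that for a.e.\ $\x$ the symmetric field $\zeta(\x,\cdot):=\xi_0(\x,\cdot)-D_{\x}(\tilde u_1)(\x)+y_3\nabla_{\x}^2u_0^3(\x)\in L^2_s(Z)$ is orthogonal to $L^2_{s,\#,\sigma}(Z)$. By Lemma~\ref{HelmholtzDecomposition} this means $\zeta(\x,\cdot)\in G_{s,\#}(Z)$, i.e.\ $\zeta(\x,\cdot)=D_y(u_2(\x,\cdot))$ for a $u_2(\x,\cdot)\in H^1_{\#}(Z)^3/\R$ that is unique modulo constants. Since the Helmholtz projection onto $G_{s,\#}(Z)$ and the inverse $D_y(p)\mapsto p$ on $G_{s,\#}(Z)$ (bounded by the Korn inequality of Lemma~\ref{LemmaKornInequalities}\ref{LemmaKornInequalitiesSubsetIntersectionRDZero}) are bounded linear operators, $\x\mapsto u_2(\x,\cdot)$ is measurable with $\Vert u_2(\x,\cdot)\Vert_{H^1_{\#}(Z)/\R}\le C\Vert\zeta(\x,\cdot)\Vert_{L^2(Z)}$, so $u_2\in L^2(\Sigma,H^1_{\#}(Z)/\R)^3$ and $\xi_0=D_{\x}(\tilde u_1)-y_3\nabla_{\x}^2u_0^3+D_y(u_2)$, which is the claim.

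I expect the main obstacle to be precisely the $i=3$ term in the integration-by-parts identity: the anisotropic scaling makes $\foe\ueps^3$ blow up while $\foe\ueps^{\alpha}$ stays bounded, so this term must be forced to zero by the more careful argument above (zero $Z$-mean of $\psi_{3\alpha}$, Poincar\'e in $x_3$, oscillation estimate) rather than by a direct passage to the limit. A secondary technical point is checking that the admissible test functions exhaust $L^2_{s,\#,\sigma}(Z)$, so that the resulting orthogonality is strong enough to invoke the Helmholtz decomposition — this is exactly where Lemma~\ref{AuxiliaryLemmaCompactnessSymGrad} and the density result of the Appendix enter.
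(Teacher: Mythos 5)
Your proposal is correct and follows essentially the same route as the paper's proof: two-scale limit extraction for $\epsilon^{-1}D(\ueps)$, testing against $\phi(\x)\psi(x/\epsilon)$ with $\psi\in L^2_{s,\#,\sigma}(Z)$ via the integration-by-parts formula of Lemma~\ref{AuxiliaryLemmaCompactnessSymGrad}, killing the third component through the zero $Z$-mean of $\psi_{3\alpha}$ combined with the Bogovskii/mean-zero-test-function argument (the paper phrases this via $\nabla\ueps^3\rats\nabla_{\x}u_0^3+\nabla_y u_1^3$ with $u_1^3=0$ and the convergence $\eqref{ConvergenceTestFunctionsMeanZero}$, which is equivalent to your vertical-average/oscillation estimate), and finally the Helmholtz decomposition of Lemma~\ref{HelmholtzDecomposition}. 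Your added remark on the measurability and $L^2(\Sigma)$-integrability of $\x\mapsto u_2(\x,\cdot)$ via boundedness of the projection and of the inverse of $D_y$ is a detail the paper leaves implicit, but it does not change the argument.
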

\begin{proof}
There exists $\xi \in L^2(\Sigma \times Z)^{3\times 3}$, such that up to a subsequence
\begin{align*}
\frac{1}{\epsilon} D(\ueps) \rats \xi.
\end{align*}
Let $\phi \in C_0^{\infty}(\Sigma)$ and $\psi \in L^2_{s,\#,\sigma}(Z)$. For $\Phi (\x,y):= \phi(\x) \psi(y)$ we obtain, due to the two-scale convergence of $\epsilon^{-1} D(\ueps)$,
\begin{align*}
\lim_{\epsilon\to 0} \frac{1}{\epsilon^2} \int_{\oem} D(\ueps): \Phi\bxfxe dx = \int_{\Sigma}\int_Z \xi(\x,y) : \Phi (\x,y) dy d\x.
\end{align*}
Integrating by parts on the left-hand side gives with the integration by parts formula $\eqref{FormulaIntegrationParts}$ from Lemma \ref{AuxiliaryLemmaCompactnessSymGrad} (and using that $\psi$ is symmetric)
\begin{align}
\begin{aligned}
\label{IdentitySymGradComp}
\frac{1}{\epsilon^2} \int_{\oem} D(\ueps): \psi\left(\fxe\right) \phi(\x) dx &= \frac{1}{\epsilon^2} \int_{\oem} \nabla \ueps : \psi\left(\fxe\right) \phi(\x) dx 
\\
%&= -\frac{1}{\epsilon^2} \int_{\oem} \ueps \cdot \left[\nabla \cdot \Phi\bxfxe \right] dx
%\\
&= - \frac{1}{\epsilon^2} \int_{\oem} \ueps \cdot \left[ \psi\left(\fxe\right) \nabla_{\x}\phi(\x) \right] dx.
\end{aligned}
\end{align}
To pass to the limit on the right-hand side we can use the two-scale convergence of $\frac{\ueps^{\alpha}}{\epsilon}$ from Proposition \ref{CompactnessTwoScaleThinLayer} for $\alpha = 1,2$ to obtain
\begin{align*}
- \lim_{\epsilon \to 0}\sum_{\alpha = 1}^2\frac{1}{\epsilon^2}\int_{\oem} \ueps^{\alpha} &\left[\psi\left(\fxe\right) \nabla_{\x} \phi(\x) \right]_{\alpha} dx  
\\
&= -\sum_{\alpha = 1}^2 \int_{\Sigma} \int_Z (\tilde{u}_1^{\alpha}(\x) - y_3 \partial_\alpha u_0^3(\x)) \left[\psi(y)\nabla_{\x} \phi(\x)\right]_{\alpha} dy d\x
\\
&= \int_{\Sigma}\int_Z \left(D_{\x}(\tilde{u}_1) - y_3 \nabla_{\x}^2 u_0^3(\x) \right) : \psi(y) \phi(\x) dy d\x.
\end{align*}
For the term on the right-hand side in $\eqref{IdentitySymGradComp}$ including $\ueps^3$,  we use the fact that $[\psi(y)\nabla_{\x} \phi(\x)]_3 = \sum_{\alpha = 1}^2 \psi_{3\alpha}(y) \partial_{\alpha} \phi(\x)$ has mean-value zero on $Z$. In fact, we have for $\alpha = 1,2$ ($\psi_{\alpha}$ is the $\alpha$-column of $\psi$)
\begin{align}\label{MeanValuePropertyTestFunctionConvergenceSymGra}
\int_Z \psi_{3\alpha}(y) dy = \int_Z \nabla y_3 \cdot \psi_{\alpha}(y) dy = \langle \psi_{\alpha}\cdot \nu , y_3 \rangle_{H^{-\frac12}(\partial Z),H^{\frac12}(\partial Z)} = 0.
\end{align}
%\textcolor{blue}{
%\textbf{Bemerkung: } Diese Argumentation ist auch m\"oglich, wenn wir $\alpha $ und $3$ vertauschen, da gilt
%\begin{align*}
%\langle \psi_{3}\cdot \nu , y_{\alpha} \rangle_{H^{-\frac12}(\partial Z),H^{\frac12}(\partial Z)} = 0.
%\end{align*}
%W\"urden wir allerdings gleich mit "periodischen" Funktionen $\psi$ argumentieren, die nur Gleichung $\eqref{DualityPeriodicNormalTrace}$ erf\"ullen (f\"ur periodische Testfunktionen), dann geht obiges Argument nur f\"ur $y_3$, da diese Funktion $Y$-periodisch ist (wird unten noch relevant).
%}
%\\
From the proof of Proposition \ref{AssumptionInequalityCompactnessThinLayer} we have
\begin{align*}
\nabla \ueps^3 \rats \nabla_{\x} u_0^3 + \nabla_y u_1^3(\x,y) = \nabla_{\x} u_0^3,
\end{align*}
since we have $u_1^3(\x,y) = 0$. Using the mean-value property of $[\psi(y)\nabla_{\x}\phi(\x)]_3$ we obtain with similar arguments as in $\eqref{ConvergenceTestFunctionsMeanZero}$
\begin{align*}
\lim_{\epsilon\to 0}\frac{1}{\epsilon^2} \int_{\oem} \ueps^3 \left[\psi\left(\fxe\right) \nabla_{\x}\phi(\x)\right]_3 dx = \int_{\Sigma}\int_Z u_1^3(\x,y)  [\psi(y)\nabla_{\x}\phi(\x)]_3 dy d\x = 0.
\end{align*}
Altogether we obtain
\begin{align}\label{AuxiliaryIdentityConvergenceSymmetricGradient}
\int_{\Sigma}\int_Z \left[ \xi(\x,y) - D_{\x}(\tilde{u}_1)(\x) + y_3 \nabla_{\x}^2 u_0^3(\x)\right] : \psi(y) \phi(\x) dy d\x = 0.
\end{align}
Using the Helmholtz-type-decomposition from Lemma \ref{HelmholtzDecomposition}, we obtain the existence of $u_2 \in L^2(\Sigma,H^1_{\#}(Z)/\R)^3$, such that
\begin{align*}
\xi (\x,y)  = D_{\x}(\tilde{u}_1)(\x) - y_3 \nabla_{\x}^2 u_0^3(\x) + D_y(u_2).
\end{align*} 
We emphasize that the function $u_2$ is unique up to a rigid-displacement, but the only periodic rigid-displacements are the constant functions.
\end{proof}

\begin{remark}
From the proof of Proposition \ref{TSConvergenceSymmetricGradient} we see that the result is also valid for $p\neq 2 $ (with obvious modifications on the \textit{a priori} estimate), if the Helmholtz-decomposition in Lemma \ref{HelmholtzDecomposition} is valid in $L^p$. Such results exist for functions in $L^p(Z)^n$ with periodic boundary conditions on the lateral boundary of $Z$, see \cite{sauer2018instationary}. However, in the case of symmetric $L^p$ functions this seems to be an open problem.
\end{remark}

\begin{corollary}\label{KorollarKonvergenzSymGrad}
Let $\ueps \in H^1(\oems)^3$ be a sequence with
\begin{align*}
 \Vert \ueps^3 \Vert_{L^2(\oems)} + \Vert \nabla \ueps \Vert_{L^2(\oems)} + \foe\Vert D(\ueps)\Vert_{L^2(\oems)} + \sum_{\alpha =1}^2 \foe \Vert \ueps^{\alpha} \Vert_{L^2(\oems)} \le C\sqrt{\epsilon}.
\end{align*}
Let $u_0^3 \in H^2(\Sigma)$ and $\tilde{u}_1 \in H^1(\Sigma)^3$ be as in Corollary \ref{CompactnessTwoScalePerforatedLayer}. Then there exists $u_2 \in L^2(\Sigma,H^1_{\#}(Z)/\R)^3$, such that up to a subsequence
\begin{align*}
\frac{1}{\epsilon} \chi_{\oems} D(\ueps) \rightarrow  \chi_{Z^s} \left(D_{\x}(\tilde{u}_1) - y_3 \nabla_{\x}^2 u_0^3 + D_y(u_2) \right).
\end{align*}
\end{corollary}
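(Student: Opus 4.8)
The plan is to reduce Corollary \ref{KorollarKonvergenzSymGrad} to Proposition \ref{TSConvergenceSymmetricGradient} exactly as Corollary \ref{CompactnessTwoScalePerforatedLayer} was reduced to Proposition \ref{CompactnessTwoScaleThinLayer}, namely by applying the extension operator from Theorem \ref{TheoremExtensionOperator}. First I would extend the sequence $\ueps \in H^1(\oems)^3$ to $E_{\epsilon}\ueps \in H^1(\oem)^3$ and note, using the three bounds in Theorem \ref{TheoremExtensionOperator} (with $p=2$), that $E_{\epsilon}\ueps$ again satisfies the \textit{a priori} estimate
\begin{align*}
\Vert (E_{\epsilon}\ueps)^3 \Vert_{L^2(\oem)} + \Vert \nabla E_{\epsilon}\ueps \Vert_{L^2(\oem)} + \foe \Vert D(E_{\epsilon}\ueps)\Vert_{L^2(\oem)} + \sum_{\alpha=1}^2 \foe \Vert (E_{\epsilon}\ueps)^{\alpha}\Vert_{L^2(\oem)} \le C\sqrt{\epsilon};
\end{align*}
here the only slightly delicate point is the $\epsilon^{-1}$-weighted $L^2$-bound on the tangential components, which comes from the first estimate in Theorem \ref{TheoremExtensionOperator} together with the bound $\foe\Vert\ueps^{\alpha}\Vert_{L^2(\oems)} + \Vert\nabla\ueps\Vert_{L^2(\oems)} \le C\sqrt{\epsilon}$ already assumed. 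The symmetric-gradient bound is immediate from $\Vert D(E_{\epsilon}\ueps)\Vert_{L^2(\oem)} \le C\Vert D(\ueps)\Vert_{L^2(\oems)}$.

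Next I would apply Proposition \ref{TSConvergenceSymmetricGradient} to $E_{\epsilon}\ueps$, which yields $u_0^3 \in H^2(\Sigma)$, $\tilde{u}_1 \in H^1(\Sigma)^3$ with $\tilde{u}_1^3 = 0$, and $u_2 \in L^2(\Sigma, H^1_{\#}(Z)/\R)^3$ so that $\epsilon^{-1}D(E_{\epsilon}\ueps) \rats D_{\x}(\tilde{u}_1) - y_3\nabla_{\x}^2 u_0^3 + D_y(u_2)$ on all of $\oem$ (up to a subsequence), and one should observe that the limits $u_0^3$ and $\tilde{u}_1$ here are consistent with those from Corollary \ref{CompactnessTwoScalePerforatedLayer}, since the latter is also obtained by applying Proposition \ref{CompactnessTwoScaleThinLayer} to the same extension $E_{\epsilon}\ueps$. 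It then remains to pass from two-scale convergence of $\epsilon^{-1}D(E_{\epsilon}\ueps)$ on the full layer to two-scale convergence of $\epsilon^{-1}\chi_{\oems}D(\ueps)$; this is the step where one multiplies by $\chi_{\oems}$. For a test function $\Phi \in L^2(\Sigma, C^0_{\#}(\overline{Z}))^{3\times 3}$ one writes
\begin{align*}
\foe\int_{\oem} \chi_{\oems}(x) \frac{D(\ueps)(x)}{\epsilon} : \Phi\bxfxe dx = \foe\int_{\oem} \frac{D(E_{\epsilon}\ueps)(x)}{\epsilon} : \chi_{\oems}(x)\Phi\bxfxe dx,
\end{align*}
using $E_{\epsilon}\ueps = \ueps$ on $\oems$, and the function $\chi_{\oems}(x)\Phi\bxfxe$ has the two-scale test structure $\chi_{Z^s}(x/\epsilon)\Phi(\x,x/\epsilon)$; by the standard admissibility of periodic characteristic functions as two-scale test weights the right-hand side converges to $\int_{\Sigma}\int_Z \chi_{Z^s}(y)\big(D_{\x}(\tilde{u}_1) - y_3\nabla_{\x}^2 u_0^3 + D_y(u_2)\big):\Phi(\x,y)\,dy\,d\x$, which is precisely the claimed limit $\chi_{Z^s}\big(D_{\x}(\tilde{u}_1) - y_3\nabla_{\x}^2 u_0^3 + D_y(u_2)\big)$.

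I expect no serious obstacle: the argument is a direct combination of the extension theorem and the already-established full-layer result, and the only thing to be careful about is the correct $\epsilon$-weighting when transferring the \textit{a priori} estimate through $E_{\epsilon}$ — in particular making sure the first bound in Theorem \ref{TheoremExtensionOperator}, which carries an extra $\epsilon\Vert\nabla\veps\Vert$ term, still produces the required $\foe\Vert(E_{\epsilon}\ueps)^{\alpha}\Vert_{L^2(\oem)} \le C\sqrt{\epsilon}$ after dividing by $\epsilon$, which it does precisely because $\Vert\nabla\ueps\Vert_{L^2(\oems)} \le C\sqrt{\epsilon}$ is part of the hypothesis. One may also remark, as in the corresponding corollary, that by the Korn-inequality of Theorem \ref{KornInequalityPerforatedLayer} the full hypothesis of the corollary is already implied by $\Vert D(\ueps)\Vert_{L^2(\oems)} \le C\epsilon^{3/2}$ together with the appropriate zero boundary condition, should one wish to state it in that streamlined form.
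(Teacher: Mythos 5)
Your proposal is correct and follows exactly the paper's (one-line) proof: extend via Theorem \ref{TheoremExtensionOperator}, verify the transferred \textit{a priori} bounds, apply Proposition \ref{TSConvergenceSymmetricGradient} to the extension, and restrict by $\chi_{\oems}$ using the admissibility of $\chi_{Z^s}$ as a two-scale test weight. The details you supply (in particular the handling of the extra $\epsilon\Vert\nabla\veps\Vert$ term in the first extension estimate and the consistency of $u_0^3,\tilde{u}_1$ with Corollary \ref{CompactnessTwoScalePerforatedLayer}) are all accurate.
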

\begin{proof}
This is an immediate consequence of Proposition \ref{TSConvergenceSymmetricGradient} and the extension from Theorem \ref{TheoremExtensionOperator}.
\end{proof}

\begin{remark}
All the results in this section ($p=2$ for Proposition \ref{TSConvergenceSymmetricGradient} and Corollary  \ref{KorollarKonvergenzSymGrad}) remain valid for the time-dependent case  $\ueps \in L^p((0,T),W^{1,p}(\oems))^3$.
\end{remark}

\section{Homogenization of a semi-linear wave equation in a thin perforated layer}
\label{SectionApplication}

As a proof of concept scenario we consider a semi-linear elastic wave equation in the thin perforated layer $\oems \subset \R^3$ with an inhomogeneous Neumann boundary condition on the oscillating surface $\geps$. In the limit $\epsilon \to 0$ we obtain a time dependent plate equation with effective coefficients arising from the perforations in the layer, see problem $\eqref{MacroModelStrongFormulation}$. This zeroth order approximation only contains the second order time derivative of the vertical displacement, whereas the other time derivatives vanish.
 To pass to the limit $\epsilon\to 0$ we make use of the two-scale compactness results in Section \ref{SectionTwoScaleCompactness}. However, since these compactness results only have a weak character, they are not sufficient to treat the nonlinear terms. For these terms we make use of the Aubin-Lions Lemma, which is applied to the extension from Theorem \ref{TheoremExtensionOperator} of the microscopic solution after rescaling the thin layer to a layer with fixed thickness.

We consider the following microscopic model: Let $\ueps : (0,T)\times \oems \rightarrow \R^3$ be the microscopic displacement  which is described by  
\begin{subequations}\label{MicroModel}
\begin{align}
\foe \partial_{tt} \ueps - \frac{1}{\epsilon^3}\nabla \cdot (A_{\epsilon} D(\ueps)) &=  \frac{1}{\epsilon^2} f_{\epsilon}(t,x,\ueps^3) &\mbox{ in }& (0,T)\times \oems,
\\
- \frac{1}{\epsilon^3} A_{\epsilon} D(\ueps) \nu &= \foe g_{\epsilon}(x) &\mbox{ on }& (0,T)\times \geps,
\\
\ueps &= 0 &\mbox{ on }& (0,T) \times \partial_D \oems,
\\
\ueps(0) = \partial_t \ueps(0) &= 0 &\mbox{ in }& \oems.
\end{align}
\end{subequations}
For the functions $f_{\epsilon}$ and $g_{\epsilon}$ we make the following assumptions: For $(t,x,z) \in (0,T)\times \oems \times \R$ we set
\begin{align*}
f_{\epsilon} (t,x,z) = \left(
 f^1\left(t,\x,\fxe , z\right) ,  f^2\left(t,\x,\fxe,z\right) , \epsilon f^3 \left(t,\x,\fxe,z\right)
\right)^T,
%f_{\epsilon} (t,x,z) = \begin{pmatrix}
% f^1\left(t,\x,\fxe , z\right) \\  f^2\left(t,\x,\fxe,z\right) \\ \epsilon f^3 \left(t,\x,\fxe,z\right)
%\end{pmatrix}, \qquad \mbox{ for all } (t,x,z) \in (0,T)\times \oems \times \R,
\end{align*}
with $f^i \in C^0\left([0,T]\times \overline{\Sigma} \times \overline{Z^s} \times \R\right)$ for $i=1,2,3$, $Y$-periodic with respect to the third variable, uniformly Lipschitz continuous with respect to the last variable, and $\partial_t f^i, \, \partial_z f^i \in L^{\infty}((0,T)\times \Sigma \times Z^s \times \R)$.
Further 
\begin{align*}
g_{\epsilon}(x) := \left(
g^1\bxfxe , g^2 \bxfxe , \epsilon g^3 \bxfxe 
\right)^T,
\end{align*}
with $g^i \in C^0(\overline{\Sigma},C_{\#}^0(\overline{\Gamma}))$ for $i=1,2,3$. The elasticity tensor $A_{\epsilon}$ is defined by $A_{\epsilon}(x) := A\left(\fxe\right)$ with $A\in L_{\#}^{\infty}(Z^s)^{3\times 3 \times 3 \times 3}$ symmetric and coercive on the space of symmetric matrices, more precisely for $i,j,k,l=1,2,3$
\begin{align*}
A_{ijkl} &= A_{jilk} = A_{ljik},
\\
A(y) B : B &\geq c_0 \vert B\vert^2 \quad \mbox{ for almost  every } y \in Z,
\end{align*}
with $c_0 \gr 0$ and all $B \in \R^{3\times 3}$ symmetric.
We call $\ueps $ a weak solution of problem $\eqref{MicroModel}$ if 
\begin{align*}
\ueps \in L^2((0,T),H^1(\oems))^3\cap H^1((0,T),L^2(\oems))^3\cap H^2((0,T),H^1(\oems)')^3
\end{align*}
with $\ueps = 0$ on $\partial_D \oems$ and $\ueps(0) = \partial_t \ueps (0) = 0$, and for all $\phieps \in H^1(\oems)^3$ with $\phieps = 0 $ on $\partial_D \oems$ it holds almost everywhere in $(0,T)$
\begin{align}
\begin{aligned}\label{MicroProblVarForm}
\foe \langle \partial_{tt} \ueps , &\phieps \rangle_{H^1(\oems)',H^1(\oems)} + \frac{1}{\epsilon^3} \int_{\oems} A_{\epsilon} D(\ueps): D(\phieps) dx 
\\
&=  \frac{1}{\epsilon^2} \int_{\oems } f_{\epsilon}(\ueps^3) \cdot \phieps dx - \foe  \int_{\geps} g_{\epsilon} \cdot \phieps d\sigma.
\end{aligned}
\end{align}
Using the Galerkin-method and a fixed-point argument it is straightforward to show the existence of a weak solution of problem $\eqref{MicroModel}$.

\subsection*{The macroscopic model}
The aim is the derivation of a macroscopic model for $\epsilon \to 0$. We will show that the  limit of $\ueps$ in the two-scale sense is a solution of the following macro-model: 
We use the notation   (the tensors $b^{\ast}$ and $c^{\ast}$ are defined below)
\begin{align*}
\Delta_{\x} : \big(b^{\ast} D_{\x}(\tilde{u}_1) +c^{\ast} \nabla_{\x}^2 u_0^3\big) := \sum_{i,j,k,l=1}^2 \partial_{kl}\left(b^{\ast}_{ijkl} D_{\x}(\tilde{u}_1)_{ij} +  c^{\ast}_{ijkl}  \partial_{ij} u_0^3\right).
\end{align*}
Then, we are looking for a solution $(u_0^3, \tilde{u}_1)$ with $u_0^3 : (0,T)\times \Sigma \rightarrow \R$ and $\tilde{u}_1 : (0,T)\times \Sigma \rightarrow \R^2$ of 
\begin{align}
\begin{aligned}\label{MacroModelStrongFormulation}
-\nabla_{\x} \cdot \left(a^{\ast} D_{\x}(\tilde{u}_1) + b^{\ast} \nabla_{\x}^2 u_0^3 \right) &=  \bar{h}(t,\x,u_0^3) &\mbox{ in }& (0,T)\times \Sigma,
\\
\partial_{tt} u_0^3 + \Delta_{\x} : \left(b^{\ast} D_{\x}(\tilde{u}_1) + c^{\ast}\nabla_{\x}^2 u_0^3 \right) &= h^3(t,\x,u_0^3) +\nabla_{\x} \cdot \bar{H}(t,\x,u_0^3)  &\mbox{ in }& (0,T)\times \Sigma,
\\
u_0^3 = \nabla_{\x} u_0^3 \cdot \nu &= 0 &\mbox{ on }& (0,T)\times \partial \Sigma,
\\
\tilde{u}_1  &= 0 &\mbox{ on }& (0,T)\times \partial\Sigma,
\\
u_0^3(0) = \partial_t u_0^3(0) &= 0 &\mbox{ in }& \Sigma,
\end{aligned}
\end{align}
where $h= (\bar{h},h^3) = (h^1,h^2,h^3)$ and $\bar{H}= (H^1,H^2)$ are defined by ($i=1,2,3$, $\alpha = 1,2$)
\begin{align*}
h^i(t,\x,u_0^3)&:= \frac{1}{\vert Z^s\vert} \int_{Z^s} f^i(t,\x,y,u_0^3) dy - \frac{1}{\vert Z^s\vert }\int_{\Gamma}  g^i(\x,y) d\sigma_y,
\\
H^{\alpha}(t,\x,u_0^3) &:= \frac{1}{\vert Z^s\vert} \int_{Z^s} f^i(t,\x,y,u_0^3)y_3 dy - \frac{1}{\vert Z^s\vert }\int_{\Gamma}  g^i(\x,y)y_3 d\sigma_y,
\end{align*}
and the effective elasticity coefficients $a^{\ast},\, b^{\ast},\, c^{\ast} \in \R^{2\times 2 \times 2 \times 2}$ are  defined for  $\alpha,\beta,\gamma,\delta = 1,2$ by
\begin{align*}
a^{\ast}_{\alpha\beta\gamma\delta} &:=  \frac{1}{\vert Z^s \vert} \int_{Z^s} A  \left(D_y(\chi_{\alpha \beta}) + M_{\alpha\beta}\right): \left(D_y (\chi_{\gamma\delta})  + M_{\gamma\delta} \right)dy,
\\
b^{\ast}_{\alpha\beta\gamma\delta} &:=   \frac{1}{\vert Z^s \vert} \int_{Z^s} A \left(D_y(\chi_{\alpha \beta}^B)  - y_3 M_{\alpha\beta} \right) : \left(D_y (\chi_{\gamma\delta})  + M_{\gamma\delta} \right)dy,
\\
c^{\ast}_{\alpha\beta\gamma\delta} &:=   \frac{1}{\vert Z^s \vert} \int_{Z^s} A  \left(D_y(\chi_{\alpha \beta}^B)  - y_3 M_{\alpha\beta} \right): \left(D_y (\chi_{\gamma\delta}^B)  -y_3 M_{\gamma\delta}\right)dy,
%a^{\ast}_{\alpha\beta\gamma\delta} &:= \sum_{ijkl=1}^3 \frac{1}{\vert Z^s \vert} \int_{Z^s} A_{ijkl} \left(D_y(\chi_{\alpha \beta})_{kl} + (M_{\alpha\beta})_{kl}\right) \left(D_y (\chi_{\gamma\delta})_{ij} + (M_{\gamma\delta})_{ij}\right)dy,
%\\
%b^{\ast}_{\alpha\beta\gamma\delta} &:=  \sum_{ijkl=1}^3 \frac{1}{\vert Z^s \vert} \int_{Z^s} A_{ijkl} \left(D_y(\chi_{\alpha \beta}^B)_{kl} - y_3 (M_{\alpha\beta})_{kl}\right) \left(D_y (\chi_{\gamma\delta})_{ij} + (M_{\gamma\delta})_{ij}\right)dy,
%\\
%c^{\ast}_{\alpha\beta\gamma\delta} &:=  \sum_{ijkl=1}^3 \frac{1}{\vert Z^s \vert} \int_{Z^s} A_{ijkl} \left(D_y(\chi_{\alpha \beta}^B)_{kl} - y_3 (M_{\alpha\beta})_{kl}\right) \left(D_y (\chi_{\gamma\delta}^B)_{ij} -y_3 (M_{\gamma\delta})_{ij}\right)dy,
\end{align*}
where the cell solutions $\chi_{\alpha \beta }$ and $\chi_{\alpha \beta}^B$, and the matrices $M_{\alpha \beta}$ are defined in $\eqref{CellProblemStandard}$, $\eqref{CellProblemHesse}$, and $\eqref{DefinitionBasisMatrices}$ below.
\\
We say that $(u_0^3,\tilde{u}_1)$ is a weak solution of problem $\eqref{MacroModelStrongFormulation}$ if
\begin{align*}
u_0^3 &\in L^2((0,T),H^2_0(\Sigma))\cap H^1((0,T),L^2(\Sigma))\cap H^2((0,T),H^{-2}(\Sigma)),
\\
\tilde{u}_1 = (\tilde{u}_1^1 , \tilde{u}_1^2 ,0 )^T &\in L^2((0,T),H^1_0(\Sigma))^3
\end{align*}
and for all all $V \in H_0^2(\Sigma)$ and $U=(\bar{U},0) \in H_0^1(\Sigma)^3$  it holds almost everywhere in $(0,T)$ that
\begin{align}
\begin{aligned}\label{MacroModelVarForm}
\langle \partial_{tt} & u_0^3, V \rangle_{H^{-2}(\Sigma),H_0^2(\Sigma)} 
\\
&+  \int_{\Sigma} a^{\ast} D_{\x}(\tilde{u}_1) : D_{\x}(\bar{U}) + b^{\ast} \nabla_{\x}^2 u_0^3 : D_{\x}(\bar{U}) + b^{\ast} D_{\x}(\tilde{u}_1) : \nabla_{\x}^2 V + c^{\ast} \nabla_{\x}^2 u_0^3 : \nabla_{\x}^2 V d\x
%\\
%=& \sum_{\alpha = 1}^2 \frac{1}{\vert Z^s\vert} \left[  \int_{\Sigma} \int_{Z^s} f^{\alpha}(t,\x,y,u_0^3) (U_{\alpha} - y_3 \partial_{\alpha } V )   dy d\x -  \int_{\Sigma}\int_{\Gamma} g^{\alpha} (\x,y) (U_{\alpha} - y_3 \partial_{\alpha} V )  d\sigma_y d\x   \right] 
%\\
%&+ \frac{1}{\vert Z^s\vert}  \int_{\Sigma} \int_{Z^s} f^3 (t,\x,y,u_0^3) V   dy d\x   - \frac{1}{ \vert Z^s\vert} \int_{\Sigma}\int_{\Gamma} g^3 (\x,y)  V  d\sigma_y d\x.
\\
=&   \int_{\Sigma} \bar{h}(t,\x,u_0^3) \cdot  \bar{U} d\x + \int_{\Sigma} h^3(t,\x,u_0^3) V d\x - \int_{\Sigma} \bar{H} \cdot \nabla_{\x} V d\x.
\end{aligned}
\end{align}

\subsection*{\textit{A priori} estimates}
The following \textit{a priori} estimates will be used to derive the macroscopic model based on the the two-scale compactness results from Section \ref{SectionTwoScaleCompactness}.

%
%For an application of the two-scale compactness results from Section \ref{SectionTwoScaleCompactness} to derive the macroscopic model we need the following uniform \textit{a priori} estimates.

\begin{lemma}
\label{AprioriEstimatesLemma}
A weak solution $\ueps$ of the micro-model $\eqref{MicroModel}$ fulfills 
\begin{align*}
\frac{1}{\sqrt{\epsilon}} \Vert \partial_t \ueps \Vert_{L^{\infty}((0,T), L^2(\oems))}  + \frac{1}{\epsilon^{\frac32}} \Vert D(\ueps)\Vert_{L^{\infty}((0,T),L^2(\oems))} \le C
\end{align*}
\end{lemma}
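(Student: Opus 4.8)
The plan is a standard energy estimate for the wave equation: test the weak formulation~\eqref{MicroProblVarForm} with $\phieps=\partial_t\ueps$ (rigorously on the Galerkin level, since $\ueps\in H^1((0,T),L^2(\oems))^3$ only, then passing to the limit). Setting
\begin{align*}
\mathcal{E}(t):=\frac{1}{2\epsilon}\Vert\partial_t\ueps(t)\Vert_{L^2(\oems)}^2+\frac{1}{2\epsilon^3}\int_{\oems}A_\epsilon D(\ueps(t)):D(\ueps(t))\,dx,
\end{align*}
the test yields $\frac{d}{dt}\mathcal{E}(t)=\frac{1}{\epsilon^2}\int_{\oems}f_\epsilon(\ueps^3)\cdot\partial_t\ueps\,dx-\frac{1}{\epsilon}\int_{\geps}g_\epsilon\cdot\partial_t\ueps\,d\sigma$. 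Integrating over $(0,t)$ with $\ueps(0)=\partial_t\ueps(0)=0$ and using the coercivity $A(y)B:B\ge c_0|B|^2$, it suffices to bound the time integral of the right-hand side by $C+C\int_0^t\mathcal{E}(s)\,ds$, plus multiples $\eta\mathcal{E}(t)$ that are absorbed, with $C,\eta$ independent of $\epsilon$; then Gronwall's lemma and the coercivity finish the proof. Two facts are used repeatedly: the Korn inequality of Theorem~\ref{KornInequalityPerforatedLayer} (controlling $\Vert\ueps^3\Vert_{L^2(\oems)},\Vert\nabla\ueps\Vert_{L^2(\oems)}$ by $C\epsilon^{-1}\Vert D(\ueps)\Vert_{L^2(\oems)}$ and $\Vert\ueps^\alpha\Vert_{L^2(\oems)}$ by $C\Vert D(\ueps)\Vert_{L^2(\oems)}$ for $\alpha=1,2$ — also applied to $\partial_t\ueps$, which likewise vanishes on $\partial_D\oems$), and the inequality $\frac{1}{\epsilon^2}\Vert D(\ueps(s))\Vert_{L^2(\oems)}\Vert\partial_s\ueps(s)\Vert_{L^2(\oems)}\le C\mathcal{E}(s)$ with $C$ independent of $\epsilon$ (Young's inequality matched to the two summands of $\mathcal{E}$); this matching is exactly what keeps all resulting constants $\epsilon$-free.

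For the boundary term I integrate by parts in time (legitimate since $g_\epsilon$ is $t$-independent), getting $-\frac{1}{\epsilon}\int_{\geps}g_\epsilon\cdot\ueps(t)\,d\sigma$. A scaled trace inequality on $\geps$, obtained by transforming each cell to $Z^s$ and summing (so $\Vert v\Vert_{L^p(\geps)}^p\le C\epsilon^{-1}(\Vert v\Vert_{L^p(\oems)}^p+\epsilon^p\Vert\nabla v\Vert_{L^p(\oems)}^p)$), combined with $|\geps|\le C$, the scaling $g_\epsilon=(g^1,g^2,\epsilon g^3)$, and Korn, bounds this by $C\epsilon^{-3/2}\Vert D(\ueps(t))\Vert_{L^2(\oems)}$, which Young's inequality converts into $\frac14\mathcal{E}(t)+C$.

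The source term is the obstacle. From boundedness of the $f^i$ and Lipschitz continuity in the last variable, $|f^i(t,x,z)|\le C(1+|z|)$. The transversal part $\frac{1}{\epsilon}\int_{\oems}f^3(\ueps^3)\,\partial_s\ueps^3$ is estimated directly via $|\oems|^{1/2}\le C\epsilon^{1/2}$, Korn for $\Vert\ueps^3\Vert_{L^2(\oems)}$, and the matched Young inequality, giving $\le C\mathcal{E}(s)+C$ — Gronwall-admissible. The tangential part $\frac{1}{\epsilon^2}\int_{\oems}f^\alpha(\ueps^3)\,\partial_s\ueps^\alpha$ ($\alpha=1,2$) is delicate: the prefactor $\epsilon^{-2}$ together with only $\Vert\partial_s\ueps^\alpha\Vert_{L^2(\oems)}\le\Vert\partial_s\ueps\Vert_{L^2(\oems)}$ being available a priori would, if estimated directly, force an $\epsilon^{-1}$ Gronwall factor. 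Instead I integrate by parts in time again, moving $\partial_s$ onto $\ueps^\alpha$ (which Korn shows is one power of $\epsilon$ smaller than $\ueps^3$). Writing $f^\alpha(s,x,\ueps^3(s))=f^\alpha(s,x,0)+G^\alpha$ with $|G^\alpha|\le L|\ueps^3|$, the arising time integrands involve $(\partial_t f^\alpha)(s,x,0)$ and $\partial_s G^\alpha=(\partial_t f^\alpha)(\ueps^3)-(\partial_t f^\alpha)(0)+(\partial_z f^\alpha)(\ueps^3)\,\partial_s\ueps^3$; since $\partial_t f^\alpha,\partial_z f^\alpha\in L^\infty$, each is controlled by $\frac{2\delta}{c_0}\mathcal{E}(s)+C$ (the first ones) resp.\ by $C\mathcal{E}(s)$ uniformly in $\epsilon$ (the term pairing $\partial_s\ueps^3$ with $\ueps^\alpha$, via the matched Young inequality). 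The boundary-at-$t$ contributions $\frac{1}{\epsilon^2}\int_{\oems}f^\alpha(t,x,0)\ueps^\alpha(t)$ and $\frac{1}{\epsilon^2}\int_{\oems}G^\alpha(t)\ueps^\alpha(t)$ are the subtle ones: the first is $\eta\mathcal{E}(t)+C$ by $|\oems|^{1/2}\le C\epsilon^{1/2}$, Korn and Young, and for the second I would bound $\Vert\ueps^3(t)\Vert_{L^2(\oems)}=\Vert\int_0^t\partial_s\ueps^3\,ds\Vert_{L^2(\oems)}\le\int_0^t\Vert\partial_s\ueps(s)\Vert_{L^2(\oems)}\,ds$ (using $\ueps^3(0)=0$) instead of Korn, after which Korn for $\Vert\ueps^\alpha(t)\Vert_{L^2(\oems)}$ and Young give $\eta\mathcal{E}(t)+C_\eta\int_0^t\mathcal{E}(s)\,ds$ with $\eta$ free and $C_\eta$ independent of $\epsilon$; using Korn directly for $\Vert\ueps^3(t)\Vert_{L^2(\oems)}$ would only yield a possibly large multiple of $\mathcal{E}(t)$, so this replacement is the key point.

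Collecting everything, $\mathcal{E}(t)\le C+C\int_0^t\mathcal{E}(s)\,ds$ with $\epsilon$-independent constants; Gronwall gives $\sup_{(0,T)}\mathcal{E}(t)\le C$, and coercivity of $A$ converts this into the asserted bounds for $\epsilon^{-1/2}\Vert\partial_t\ueps\Vert_{L^\infty((0,T),L^2(\oems))}$ and $\epsilon^{-3/2}\Vert D(\ueps)\Vert_{L^\infty((0,T),L^2(\oems))}$. I expect the $\epsilon^{-2}$-weighted tangential source term to be the genuine difficulty: closing the Gronwall argument $\epsilon$-uniformly forces one to exploit, through integration by parts in time, Korn's inequality, and the vanishing of $\ueps^3$ at $t=0$, that the tangential displacement and velocity are one power of $\epsilon$ smaller than the transversal ones, rather than estimating $\partial_t\ueps^\alpha$ crudely by $\partial_t\ueps$.
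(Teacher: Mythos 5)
Your proposal follows essentially the same route as the paper's proof: testing with $\partial_t\ueps$, integrating by parts in time in both the boundary term and the $\epsilon^{-2}$-weighted tangential source terms, and combining the scaled trace inequality on $\geps$, the Korn inequality of Theorem \ref{KornInequalityPerforatedLayer}, and Young's inequality matched to the two summands of the energy before applying Gronwall. Your handling of the boundary-in-time term $\epsilon^{-2}\int_{\oems}f^{\alpha}(t,\cdot,\ueps^3(t))\,\ueps^{\alpha}(t)\,dx$ --- splitting off $f^{\alpha}(t,\cdot,0)$ and bounding $\Vert\ueps^3(t)\Vert_{L^2(\oems)}$ by $\int_0^t\Vert\partial_s\ueps\Vert_{L^2(\oems)}\,ds$ so that only a small multiple of $\mathcal{E}(t)$ plus a Gronwall-admissible integral remains --- is in fact slightly more careful than the paper's estimate $\vert A_{\epsilon}^3\vert\le C\epsilon^{-2}\Vert\ueps^{\alpha}(t)\Vert_{L^1(\oems)}$, which implicitly treats $f^{\alpha}$ as bounded rather than merely affinely bounded in $z$.
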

\begin{proof}
We test the equation $\eqref{MicroProblVarForm}$ with $\partial_t \ueps $ to obtain  
\begin{align*}
\frac{1}{2\epsilon} \frac{d}{dt} \Vert \partial_t \ueps &\Vert_{L^2(\oems)}^2 + \frac{1}{\epsilon^3} \frac{d}{dt} \int_{\oems} A_{\epsilon} D(\ueps) : D(\ueps) dx  
\\
&= \frac{1}{\epsilon^2} \int_{\oems } f_{\epsilon}(\ueps^3) \cdot \partial_t \ueps  dx - \foe  \int_{\geps} g_{\epsilon} \cdot \partial_t \ueps  d\sigma
\end{align*}
Integration in time and using the coercivity of $A_{\epsilon}$ we obtain for almost every $t \in (0,T)$
\begin{align*}
\frac{1}{2\epsilon} \Vert &\partial_t \ueps(t) \Vert^2_{L^2(\oems)} + \frac{c_0}{2\epsilon^{3}} \Vert D(\ueps)(t)\Vert_{L^2(\oems)} ^2
\\
\le& \sum_{\alpha = 1}^2 \left[ \frac{1}{\epsilon^2} \int_0^t \int_{\oems} f^{\alpha}\left(s,\x,\fxe , \ueps^3\right) \partial_t \ueps^{\alpha} dx ds - \foe  \int_{\geps} g^{\alpha}\bxfxe \ueps^{\alpha}(t,x) d\sigma\right] 
\\
&+  \foe \int_0^t \int_{\oems} f^3\left(s,\x,\fxe,\ueps^3\right) \partial_t \ueps^3 dx  ds -   \int_{\geps} g^3\bxfxe \ueps^3(t,x)d\sigma.
\end{align*}
To estimate the boundary terms we make use of the well-known trace inequality
\begin{align*}
\sqrt{\epsilon}\Vert \veps \Vert_{L^2(\geps)} \le  C \left(\Vert \veps\Vert_{L^2(\oems)} + \epsilon \Vert \nabla \veps \Vert_{L^2(\oems)} \right) \quad \mbox{for all } \veps \in H^1(\oems).
\end{align*}
For $\alpha = 1,2$ we obtain with the Korn-inequality from Theorem \ref{KornInequalityPerforatedLayer}
\begin{align*}
\left\vert  \foe \int_{\geps} g^{\alpha}\bxfxe \ueps^{\alpha}(t,x) d\sigma  \right\vert &\le \frac{C}{\epsilon} \Vert \ueps^{\alpha} (t)\Vert_{L^1(\geps)} \le  \frac{C}{\epsilon} \Vert \ueps^{\alpha}(t)\Vert_{L^2(\geps)}
\\
&\le C \left(\frac{1}{\epsilon^{\frac{3}{2}}} \Vert \ueps^{\alpha}(t) \Vert_{L^2(\oems)} + \frac{1}{\sqrt{\epsilon}} \Vert \nabla \ueps^{\alpha}(t) \Vert_{L^2(\oems)} \right)
\\
&\le  \frac{C}{\epsilon^{\frac32}} \Vert D(\ueps)(t)\Vert_{L^2(\oems)}
\\
&\le C(\theta) + \frac{\theta}{\epsilon^3} \Vert D(\ueps)(t)\Vert_{L^2(\oems)}^2,
\end{align*}
for arbitrary $\theta \gr 0$.
Further, with similar arguments we get
\begin{align*}
\left\vert \int_{\geps} g^3\bxfxe \ueps^3(t,x)d\sigma \right\vert  &\le C\left(\frac{1}{\sqrt{\epsilon}} \Vert \ueps^3(t)\Vert_{L^2(\oems)} + \sqrt{\epsilon} \Vert \nabla \ueps^3(t)\Vert_{L^2(\oems)}\right)
\\
%&\le \frac{C}{\epsilon^{\frac32}} \Vert D(\ueps)(t) \Vert_{L^2(\oems)}
%\\
&\le C(\theta) + \frac{\theta}{\epsilon^3} \Vert D(\ueps)(t)\Vert_{L^2(\oems)}^2.
\end{align*}
For the terms in the bulk domains we get for $\alpha = 1,2$ using an integration by parts in time
\begin{align*}
\frac{1}{\epsilon^2} \int_0^t \int_{\oems} f^{\alpha}\left(s,\x,\fxe , \ueps^3\right) \partial_t \ueps^{\alpha} dx ds =&  - \frac{1}{\epsilon^2} \int_0^t \int_{\oems} \partial_t f^{\alpha} \left(s,\x,\fxe , \ueps^3\right)  \ueps^{\alpha} dx ds
\\
&- \frac{1}{\epsilon^2} \int_0^t \int_{\oems} \partial_z f^{\alpha} \left(s,\x,\fxe , \ueps^3\right) \partial_t \ueps^3 \ueps^{\alpha} dx ds
\\
&+ \frac{1}{\epsilon^2} \int_{\oems} f^{\alpha} \left(t,\x,\fxe , \ueps^3\right) \ueps^{\alpha}(t,x) dx  
\\
&=: A_{\epsilon}^1 +  A_{\epsilon}^2 + A_{\epsilon}^3.
\end{align*}
We have with the Korn-inequality in Theorem \ref{KornInequalityPerforatedLayer}
\begin{align*}
\vert A_{\epsilon}^1 \vert &\le \frac{C}{\epsilon^2} \Vert \ueps^{\alpha} \Vert_{L^1((0,t)\times \oems)} \le \frac{C}{\epsilon^{\frac32}} \Vert \ueps^{\alpha} \Vert_{L^2((0,t)\times \oems)} 
\\
&\le  \frac{C}{\epsilon^{\frac32}} \Vert D(\ueps)\Vert_{L^2((0,t)\times \oems)}
\le C  + \frac{C}{\epsilon^3} \Vert D(\ueps)\Vert_{L^2((0,t)\times \oems)}^2. 
\end{align*}
In a similar way we get
\begin{align*}
\vert A_{\epsilon}^2 \vert 
%\le \frac{C}{\epsilon^2} \Vert \partial_t \ueps^3 \Vert_{L^2((0,t)\times \oems)} \Vert \ueps^{\alpha} \Vert_{L^2((0,t)\times \oems)} 
%\\
%&\le \frac{C}{\epsilon} \Vert \partial_t \ueps^3\Vert_{L^2((0,t)\times \oems)}^2 + \frac{C}{\epsilon^3} \Vert \ueps^{\alpha}\Vert_{L^2((0,t)\times \oems)}^2
%\\
\le \frac{C}{\epsilon} \Vert \partial_t \ueps^3\Vert_{L^2((0,t)\times \oems)}^2 + \frac{C}{\epsilon^3} \Vert D(\ueps)\Vert_{L^2((0,t)\times \oems)}^2
\end{align*}
Using  again the Korn-inequality, we get for arbitrary $\theta \gr 0$
\begin{align*}
\vert A_{\epsilon}^3 \vert &\le \frac{C}{\epsilon^2} \Vert \ueps^{\alpha}(t)\Vert_{L^1(\oems)} \le \frac{C}{\epsilon^{\frac32}}\Vert \ueps^{\alpha}(t)\Vert_{L^2(\oems)}
\\
&\le C(\theta) + \frac{\theta}{\epsilon^3} \Vert \ueps^{\alpha}(t)\Vert_{L^2(\oems)} 
\le C(\theta) + \frac{\theta}{\epsilon^3} \Vert D(\ueps)(t)\Vert_{L^2(\oems)} 
\end{align*}
%\begin{align*}
%\vert A_{\epsilon}^2 \vert &\le \frac{C}{\epsilon^2} \int_{\oems} \vert \ueps^{\alpha}(t,x) \vert + \vert \ueps^3(t,x) \vert \vert  \ueps^{\alpha}(t,x)\vert dx
%\\
%&\le \frac{C}{\epsilon^{\frac32}} \Vert \ueps^{\alpha}(t)\Vert_{L^2(\oems)} + \frac{C}{\epsilon^2} \Vert \ueps^3(t) \Vert_{L^2(\oems)} \Vert \ueps^{\alpha(t)}\Vert_{L^2(\oems)} 
%\\
%&\le C \left(\frac{1}{\epsilon^3}  \Vert \ueps^{\alpha}(t)\Vert_{L^2(\oems)}^2 + \foe \Vert \ueps^3\Vert_{L^2(\oems)}^2 \right)
%\\
%&\le \frac{C}{\epsilon^3} \Vert D(\ueps)(t)\Vert_{L^2(\oems)}^2.
%\end{align*}
Hence, we obtain
\begin{align*}
\bigg\vert \frac{1}{\epsilon^2}& \int_0^t  \int_{\oems} f^{\alpha}\left(s,\x,\fxe , \ueps^3\right) \partial_t \ueps^{\alpha} dx ds \bigg\vert  
\\
\le&  \frac{C}{\epsilon} \Vert \partial_t \ueps^3\Vert_{L^2((0,t)\times \oems)}^2 + \frac{C}{\epsilon^3} \Vert D(\ueps)\Vert_{L^2((0,t)\times \oems)}^2 
+ C(\theta) + \frac{\theta}{\epsilon^3} \Vert D(\ueps)(t)\Vert_{L^2(\oems)}
\end{align*}
In a similar way we obtain
\begin{align*}
\bigg\vert  \frac{1}{\epsilon} & \int_0^t  \int_{\oems} f^{3}\left(s,\x,\fxe , \ueps^3\right) \partial_t \ueps^{3} dx ds \bigg\vert  
\\
\le&  \frac{C}{\epsilon} \Vert \partial_t \ueps^3\Vert_{L^2((0,t)\times \oems)}^2 + \frac{C}{\epsilon^3} \Vert D(\ueps)\Vert_{L^2((0,t)\times \oems)}^2 
+ C(\theta) + \frac{\theta}{\epsilon^3} \Vert D(\ueps)(t)\Vert_{L^2(\oems)}
\end{align*}
Altogether, we obtain for $\theta $ small enough
\begin{align*}
\foe \Vert \partial_t \ueps(t)& \Vert_{L^2(\oems)}^2 + \frac{1}{\epsilon^3} \Vert D(\ueps)(t)\Vert_{L^2(\oems)}^2 
\\
&\le C \left(1 + \foe \Vert \partial_t \ueps \Vert_{L^2((0,t)\times \oems)}^2 + \frac{1}{\epsilon^3} \Vert D(\ueps)\Vert_{L^2((0,t)\times \oems)}^2 \right).
\end{align*}
The Gronwall-inequality implies
\begin{align*}
\frac{1}{\sqrt{\epsilon}} \Vert \partial_t \ueps \Vert_{L^{\infty}((0,T), L^2(\oems))} + \frac{1}{\epsilon^{\frac32}} \Vert D(\ueps)\Vert_{L^{\infty}((0,T),L^2(\oems))} \le C.
\end{align*}
\end{proof}

\subsection*{Derivation of the macroscopic model}
Based on the \textit{a priori} estimates in Lemma \ref{AprioriEstimatesLemma} we want to pass to the limit $\epsilon \to 0$ in the microscopic equation $\eqref{MicroProblVarForm}$. To cope with the nonlinear term we need a strong two-scale compactness argument, which is obtained from the extension Theorem \ref{TheoremExtensionOperator} and the Aubin-Lions lemma. We start with the convergence result for the micro-solutions. 

\begin{proposition}\label{ConvergenceMicroSolutionProp}
Let $\ueps$ be a sequence of weak solutions of the micro-model $\eqref{MicroModel}$. Then there exists $u_0^3 \in L^2((0,T),H^2_0(\Sigma))$ and $\tilde{u}_1 \in L^2((0,T), H^1_0(\Sigma)^3)$ with $\tilde{u}_1^3 = 0$, and $u_2 \in L^2(\Sigma,H_{\#}^1(Z)/\R)^3$, such that up to a subsequence ($\alpha = 1,2$)
\begin{align*}
\chi_{\oems}\ueps^3 &\rats \chi_{Z^s}u_0^3 \qquad \mbox{ strongly in the two-scale sense},
\\
\chi_{\oems}\frac{\ueps^{\alpha}}{\epsilon} &\rats \chi_{Z^s}\big(\tilde{u}_1^{\alpha} - y_3 \partial_{\alpha} u_0^3\big),
\\
\frac{1}{\epsilon} \chi_{\oems} D(\ueps) &\rats \chi_{Z^s} \left(D_{\x}(\tilde{u}_1) - y_3 \nabla_{\x}^2 u_0^3 + D_y(u_2)\right),
\\
\chi_{\oems} \partial_t \ueps^3 &\rats \chi_{Z^s} \partial_t u_0^3.
\end{align*}

\end{proposition}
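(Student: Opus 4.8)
The strategy is to combine the \textit{a priori} bounds from Lemma \ref{AprioriEstimatesLemma} with the two-scale compactness results of Section \ref{SectionTwoScaleCompactness}, and then upgrade the weak convergence of $\ueps^3$ to a strong two-scale convergence by an Aubin--Lions argument applied to the rescaled extension. First I would note that Lemma \ref{AprioriEstimatesLemma} gives $\foe\Vert D(\ueps)\Vert_{L^\infty((0,T),L^2(\oems))}\le C\sqrt{\epsilon}$ (from $\epsilon^{-3/2}\Vert D(\ueps)\Vert\le C$), so by the Korn-inequality in Theorem \ref{KornInequalityPerforatedLayer} all of $\Vert\ueps^3\Vert_{L^2(\oems)}$, $\Vert\nabla\ueps\Vert_{L^2(\oems)}$, $\foe\Vert D(\ueps)\Vert_{L^2(\oems)}$ and $\sum_\alpha\foe\Vert\ueps^\alpha\Vert_{L^2(\oems)}$ are bounded by $C\sqrt{\epsilon}$, uniformly in time. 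The time-dependent versions of Corollary \ref{CompactnessTwoScalePerforatedLayer}, Proposition \ref{PropositionZeroBoundaryConditionCompactness} and Corollary \ref{KorollarKonvergenzSymGrad} (valid by the final remark of Section \ref{SectionTwoScaleCompactness}) then directly yield, up to a subsequence, the existence of $u_0^3$, $\tilde u_1$ with $\tilde u_1^3=0$ and $u_2$, the membership $u_0^3\in L^2((0,T),H^2_0(\Sigma))$, $\tilde u_1\in L^2((0,T),H^1_0(\Sigma)^3)$ (using the clamped boundary condition $\ueps=0$ on $\partial_D\oems$), and the first three (weak) two-scale convergences claimed, as well as the weak two-scale convergence $\chi_{\oems}\partial_t\ueps^3\rats\chi_{Z^s}\partial_t u_0^3$ once we know the weak limit of $\chi_{\oems}\partial_t\ueps^3$ is $\chi_{Z^s}\partial_t u_0^3$; this last identification follows since $\chi_{\oems}\ueps^3\rats\chi_{Z^s}u_0^3$ and the $L^\infty((0,T),L^2)$-bound on $\epsilon^{-1/2}\partial_t\ueps^3$ let one pass to the limit in $\foe\int_0^T\int_{\oems}\ueps^3\,\partial_t\phi\,dx\,dt$ for test functions of the form $\phi(t,\x,\fxe)$.

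The substantive point is the \emph{strong} two-scale convergence of $\chi_{\oems}\ueps^3$. Here I would extend $\ueps(t)$ by $E_\epsilon$ from Theorem \ref{TheoremExtensionOperator} to $\oem$ (keeping the same notation), rescale the thin layer to fixed thickness via $\hueps(t,x):=(E_\epsilon\ueps)(t,\x,\epsilon x_3)$, so that $\hueps$ is bounded in $L^2((0,T),H^1(\Omega_1^M))^3$; the bound on $\epsilon^{-1/2}\partial_t\ueps^3$ transfers, after rescaling, to a bound on $\partial_t\hueps^3$ in $L^2((0,T)\times\Omega_1^M)$ (the $\Vert\partial_t E_\epsilon\ueps\Vert_{L^2(\oem)}$ is controlled by $\Vert\partial_t\ueps\Vert_{L^2(\oems)}$, and the factor $\epsilon^n$ from rescaling against $\epsilon^{-1}$ from the two-scale weight gives the right power). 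By the Aubin--Lions lemma, $\hueps^3$ is relatively compact in $L^2((0,T)\times\Omega_1^M)$, hence $\hueps^3\to$ (its limit, which one checks equals $u_0^3$ viewed on $\Omega_1^M$) strongly in $L^2$. Undoing the rescaling and using that $\ueps^3$ agrees with its extension on $\oems$, one converts this into $\epsilon^{-1/2}\Vert\chi_{\oems}\ueps^3-u_0^3\Vert_{L^2((0,T)\times\oems)}\to 0$ up to extracting a subsequence, which together with the weak two-scale convergence and the standard argument (e.g.\ as in \cite{Allaire_TwoScaleKonvergenz}) gives strong two-scale convergence $\chi_{\oems}\ueps^3\rats\chi_{Z^s}u_0^3$.

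The main obstacle is the Aubin--Lions step: one must produce a uniform bound for $\partial_t\hueps^3$ in some space $X$ with $H^1(\Omega_1^M)\hookrightarrow\hookrightarrow L^2(\Omega_1^M)\hookrightarrow X$, and the natural candidate $X=H^1(\Omega_1^M)'$ (or $L^2$) requires care because the microscopic equation \eqref{MicroProblVarForm} only controls $\epsilon^{-1}\partial_{tt}\ueps$ in a dual norm with $\epsilon$-weights; however, for the \emph{third} component $\ueps^3$ the direct estimate $\foe\Vert\partial_t\ueps^3\Vert_{L^\infty((0,T),L^2(\oems))}\le C\sqrt\epsilon$ from Lemma \ref{AprioriEstimatesLemma} already gives, after rescaling, $\partial_t\hueps^3$ bounded in $L^\infty((0,T),L^2(\Omega_1^M))$, which is more than enough for Aubin--Lions with $X=L^2$. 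One subtlety worth checking is that the rescaling constants (powers of $\epsilon$ from $dx\mapsto\epsilon\,dx$ in the vertical direction, combined with the $\foe$ in the two-scale integral and the $\epsilon^{-1/2}$ normalisation) conspire correctly so that an $L^2$-bound independent of $\epsilon$ for $\hueps^3$ and $\partial_t\hueps^3$ really does come out; this is the elemental-calculation part analogous to \eqref{IdentityNormsScaledMembrane}. Once strong two-scale convergence of $\ueps^3$ is in hand, the remaining convergences are exactly the weak two-scale statements already listed, so no further work is needed for this proposition (the strong convergence will only be used later to pass to the limit in the nonlinear term $f_\epsilon(\ueps^3)$).
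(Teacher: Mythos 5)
Your proposal follows essentially the same route as the paper: the weak two-scale convergences are obtained by combining the \textit{a priori} estimates of Lemma \ref{AprioriEstimatesLemma} with the Korn-inequality of Theorem \ref{KornInequalityPerforatedLayer} and the compactness results of Section \ref{SectionTwoScaleCompactness}, while the strong two-scale convergence of $\chi_{\oems}\ueps^3$ is upgraded exactly as in the paper by extending with $E_\epsilon$, rescaling to $\Omega_1^M$, and applying Aubin--Lions using the $L^\infty((0,T),L^2)$-bound on $\partial_t\tueps^3$ and the $L^\infty((0,T),H^1)$-bound on $\tueps^3$, then identifying the strong limit with $u_0^3$ via the already-known weak two-scale limit. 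Your additional remarks on identifying the limit of $\partial_t\ueps^3$ and on the bookkeeping of $\epsilon$-powers are consistent with (and slightly more explicit than) the paper's argument.
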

\begin{proof}
The weak two-scale convergences are a direct consequence of the a priori estimates in Lemma \ref{AprioriEstimatesLemma} and Theorem \ref{MainTheoremTwoScaleConvergence}. Hence, we only have to check the strong two-scale convergence of $\chi_{\oems}\ueps^3$. First of all, we extend $\ueps$ by Theorem \ref{TheoremExtensionOperator} to the whole layer $\oem$ and use the same notation for this extension. Further, we define as in Section \ref{SectionKornInequality} the rescaled function
\begin{align*}
\tueps^3(t,x):= \ueps^3(t,\x,\epsilon x_3) \qquad \mbox{for } (t,x)\in (0,T)\times \Omega_1^M.
\end{align*}
Lemma \ref{AprioriEstimatesLemma} and the Korn-inequality in Theorem \ref{KornInequalityPerforatedLayer} imply
\begin{align*}
\Vert \partial_t \tueps^3 \Vert_{L^{\infty}((0,T),L^2(\Omega_1^M))}  + \Vert  \tueps^3 \Vert_{L^{\infty}((0,T),H^1(\Omega_1^M))} \le C.
\end{align*}
The Aubin-Lions-Lemma, see \cite{Lions}, implies the existence of $v_0^3 \in L^2((0,T),H^1(\Omega_1^M)) \cap H^1((0,T),L^2(\Omega_1^M))$, such that up to a subsequence 
\begin{align*}
\tueps^3 \rightarrow v_0^3 \qquad \mbox{in } L^2((0,T)\times \Omega_1^M).
\end{align*}
Since the extension $\ueps^3$ converges in the two-scale sense to $u_0^3$ (see also Corollary \ref{CompactnessTwoScalePerforatedLayer} for more details), it is easy to check that $v_0^3 = u_0^3$. We get
\begin{align*}
\frac{1}{\sqrt{\epsilon}} \Vert \ueps^3 - u_0^3 \Vert_{L^2((0,T)\times \oems)} \le \Vert \tueps^3 - u_0^3 \Vert_{L^2((0,T)\times \Omega_1^M)} \overset{\epsilon\to 0 }{\longrightarrow} 0.
\end{align*}
This is the strong two-scale convergence of $\chi_{\oems}\ueps^3$, which finishes the proof.
\end{proof}

\begin{corollary}\label{ConvergenceNonlinearFunction}
Up to a subsequence it holds that
\begin{align*}
f^i\left(t,\x,\fxe, \ueps^3\right) \rats f^i(t,\x,y,u_0^3).
\end{align*}
\end{corollary}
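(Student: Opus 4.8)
The plan is to combine the uniform Lipschitz continuity of $f^i$ in its last argument with the \emph{strong} two‑scale convergence of $\ueps^3$ from Proposition~\ref{ConvergenceMicroSolutionProp}, which reduces the assertion to a statement about the ``frozen'' coefficient $x\mapsto f^i(t,\x,\fxe,u_0^3(t,\x))$ that no longer involves the sequence $\ueps$. Throughout, the convergence is meant in the sense $\chi_{\oems}\,f^i(t,\x,\fxe,\ueps^3)\rats\chi_{Z^s}\,f^i(t,\x,y,u_0^3)$, exactly as in Proposition~\ref{ConvergenceMicroSolutionProp}.

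\emph{Step 1 (reduction to a frozen coefficient).} Since $f^i$ is uniformly Lipschitz continuous with respect to its last variable, with some constant $L$, we have for a.e. $(t,x)\in(0,T)\times\oems$
\[
\big| f^i(t,\x,\fxe,\ueps^3(t,x)) - f^i(t,\x,\fxe,u_0^3(t,\x)) \big| \le L\,\big| \ueps^3(t,x) - u_0^3(t,\x)\big| .
\]
As shown in the proof of Proposition~\ref{ConvergenceMicroSolutionProp}, $\frac{1}{\sqrt{\epsilon}}\Vert\ueps^3-u_0^3\Vert_{L^2((0,T)\times\oems)}\to 0$, hence
\[
\frac{1}{\epsilon}\int_0^T\int_{\oems}\big| f^i(t,\x,\fxe,\ueps^3) - f^i(t,\x,\fxe,u_0^3)\big|^2\,dx\,dt \;\le\; \frac{L^2}{\epsilon}\Vert\ueps^3-u_0^3\Vert_{L^2((0,T)\times\oems)}^2 \;\longrightarrow\; 0 .
\]
Since $\frac{1}{\sqrt{\epsilon}}\Vert\phi(t,\x,\fxe)\Vert_{L^2((0,T)\times\oem)}$ stays bounded for every admissible two‑scale test function $\phi$, a Cauchy–Schwarz estimate forces $\chi_{\oems}\big[f^i(t,\x,\fxe,\ueps^3)-f^i(t,\x,\fxe,u_0^3)\big]\rats 0$. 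It therefore suffices to prove
\[
\chi_{\oems}\, f^i(t,\x,\fxe,u_0^3(t,\x)) \;\rats\; \chi_{Z^s}\, f^i(t,\x,y,u_0^3(t,\x)) .
\]

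\emph{Step 2 (the frozen coefficient is an admissible test function).} Put $g(t,\x,y):=f^i(t,\x,y,u_0^3(t,\x))$. Because $n=3$, i.e. $\Sigma\subset\R^2$, the embedding $H^2(\Sigma)\hookrightarrow C^0(\overline\Sigma)$ together with $u_0^3\in L^2((0,T),H^2_0(\Sigma))$ (Proposition~\ref{ConvergenceMicroSolutionProp}, whose spatial regularity is provided by Proposition~\ref{CompactnessTwoScaleThinLayer}) gives $u_0^3\in L^2((0,T),C^0(\overline\Sigma))$. Since $f^i\in C^0([0,T]\times\overline\Sigma\times\overline{Z^s}\times\R)$ is $Y$‑periodic in its third argument and, because $\partial_z f^i\in L^\infty$, grows at most linearly in its last argument, it follows that $g\in L^2((0,T)\times\Sigma,C^0_{\#}(\overline{Z^s}))$, i.e. $g$ is an admissible test function for two‑scale convergence on the perforated layer $\oems$. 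By the standard fact that admissible test functions two‑scale converge to themselves on (thin) perforated layers, see \cite{GahnNeussRadu2017EffectiveTransmissionConditions,NeussJaeger_EffectiveTransmission} (and \cite{Allaire_TwoScaleKonvergenz} for the fixed‑domain case), we obtain $\chi_{\oems}\,g(t,\x,\fxe)\rats\chi_{Z^s}\,g(t,\x,y)$, which is the convergence required in Step~1. Combining Steps~1 and~2 proves the corollary.

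\emph{Main obstacle.} The only point beyond routine estimates is Step~2: one has to guarantee that the nonlinear composition $g$ is regular enough to serve as a two‑scale test function. This is precisely where the spatial regularity $u_0^3(t,\cdot)\in H^2(\Sigma)$ (hence continuity on $\overline\Sigma$, since $\Sigma\subset\R^2$) obtained in Proposition~\ref{CompactnessTwoScaleThinLayer} is used. Alternatively, one could first approximate $u_0^3$ in $L^2((0,T)\times\Sigma)$ by smooth functions and apply the Lipschitz bound of Step~1 a second time to pass from the approximation to $u_0^3$; the $H^2$‑embedding merely makes the argument direct.
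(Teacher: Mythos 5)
Your proof is correct and follows essentially the same route as the paper, which simply invokes the combination of the strong two-scale convergence of $\ueps^3$ with the uniform Lipschitz continuity of $f^i$ by citing \cite[Lemma 3.6]{GahnNeussRaduSingularLimit2021}. You have merely written out in full the two standard ingredients that the citation encapsulates (the Lipschitz/Cauchy--Schwarz reduction to the frozen coefficient, and the admissibility of the frozen coefficient as a two-scale test function), so no further comment is needed.
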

\begin{proof}
The claim follows from the strong two-scale convergence of $\ueps^3$, see for example \cite[Lemma 3.6]{GahnNeussRaduSingularLimit2021}.
\end{proof}

We define the symmetric matrices $M_{ij} \in \R^{3\times 3}$ for $i,j=1,2,3$ by
\begin{align}\label{DefinitionBasisMatrices}
M_{ij} = \frac{e_i \otimes e_j}{2} + \frac{e_j \otimes e_i}{2}.
\end{align}
Further, we define  $\chi_{ij} \in H^1_{\#}(Z^s)^3$ as the solutions of the cell problems 
\begin{align}
\begin{aligned}\label{CellProblemStandard}
-\nabla_y \cdot (A (D_y(\chi_{ij}) + M_{ij})) &= 0 &\mbox{ in }& Z^s,
\\
-A(D_y(\chi_{ij}) + M_{ij} ) \nu &= 0 &\mbox{ on }& \Gamma,
\\
\chi_{ij} \mbox{ is } Y\mbox{-periodic, } & \int_{Z^f} \chi_{ij} dy = 0.
\end{aligned}
\end{align}
Due to the Korn-inequality, this problem has a unique weak solution. We emphasize again that the only rigid-displacements on $Z^f$, which are $Y$-periodic, are constants.

Additionally, we define $\chi_{ij}^B \in H^1_{\#}(Z^s)^3$ as the solutions of the cell problems
\begin{align}
\begin{aligned}\label{CellProblemHesse}
-\nabla_y \cdot \left( A(D_y(\chi_{ij}^B) - y_3 M_{ij})\right) &= 0 &\mbox{ in }& Z^s,
\\
-A(D_y(\chi_{ij}^B) - y_3 M_{ij})\nu &= 0 &\mbox{ on }& \Gamma,
\\
\chi_{ij}^B \mbox{ is } Y\mbox{-periodic, } & \int_{Z^f} \chi_{ij}^B dy = 0.
\end{aligned}
\end{align}
In the same way as above we obtain the existence of a unique weak solution.

\begin{proposition}\label{RepresentationU2}
The limit function $u_2$  fulfills
\begin{align*}
u_2(t,\x,y) = \sum_{i,j=1}^2 \left[ D_{\x}(\tilde{u}_1)_{ij} (t,\x) \chi_{ij}(y) +  \partial_{ij}u_0^3(t,\x) \chi_{ij}^B(y) \right],
\end{align*}
where the cell solutions $\chi_{ij}$ and $\chi_{ij}^B$ are defined in $\eqref{CellProblemStandard}$ and $\eqref{CellProblemHesse}$.
\end{proposition}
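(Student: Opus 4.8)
The strategy is the classical one for identifying the second corrector: test the microscopic variational equation \eqref{MicroProblVarForm} with suitably scaled oscillating test functions, pass to the two–scale limit to obtain the weak form of the cell problem satisfied by $u_2$, and then invoke linearity and uniqueness of the cell problems \eqref{CellProblemStandard}--\eqref{CellProblemHesse}. The only nontrivial input is the elastic term; all other contributions will be shown to carry a surplus power of $\epsilon$.

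Concretely, fix $\theta\in C_0^\infty((0,T))$, $\psi\in C_0^\infty(\Sigma)$ and $\phi\in C_{\#}^\infty(\overline{Z^s})^3$, and use in \eqref{MicroProblVarForm} the test function $\phieps(x):=\epsilon^2\psi(\x)\phi(\fxe)$, which vanishes on $\partial_D\oems$ since $\psi$ has compact support. Multiplying by $\theta(t)$ and integrating over $(0,T)$ I would inspect the four terms. For the inertial term, two integrations by parts in time (using $\ueps(0)=\partial_t\ueps(0)=0$) give $\epsilon^2\int_0^T\bigl[\foe\int_{\oems}\ueps\cdot\psi\phi(\fxe)\,dx\bigr]\theta''\,dt$; writing $\ueps^\alpha=\epsilon(\ueps^\alpha/\epsilon)$ and using Lemma \ref{AprioriEstimatesLemma}, the bracket stays bounded, so the term is $O(\epsilon^2)\to0$. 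The same scaling bookkeeping shows that $\frac{1}{\epsilon^2}\int_{\oems}f_\epsilon(\ueps^3)\cdot\phieps\,dx$ and $\foe\int_{\geps}g_\epsilon\cdot\phieps\,d\sigma$ carry an extra factor $\epsilon$ (also using the component scaling of $f_\epsilon,g_\epsilon$) and vanish. The surviving term is the elastic one: from $D(\phieps)=\epsilon\,\psi(\x)D_y(\phi)(\fxe)+\epsilon^2\Theta_\epsilon$ with $\Theta_\epsilon$ bounded in $L^\infty$, one gets
\[
\tfrac{1}{\epsilon^3}\int_{\oems}A_\epsilon D(\ueps):D(\phieps)\,dx=\foe\int_{\oems}A_\epsilon\,\tfrac{D(\ueps)}{\epsilon}:\psi(\x)D_y(\phi)(\fxe)\,dx+O(\epsilon),
\]
the remainder being $O(\epsilon)$ by $\|D(\ueps)/\epsilon\|_{L^2(\oems)}=O(\sqrt\epsilon)$ and $|\oems|\sim\epsilon$. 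Passing to the two–scale limit with $\foe\chi_{\oems}D(\ueps)\rats\chi_{Z^s}\xi$ from Corollary \ref{KorollarKonvergenzSymGrad}, where $\xi=D_{\x}(\tilde u_1)-y_3\nabla_{\x}^2u_0^3+D_y(u_2)$ (a routine density argument lets the periodic $L^\infty$ coefficient $A$ enter the test function), yields
\[
\int_0^T\theta(t)\int_\Sigma\int_{Z^s}A(y)\,\xi(t,\x,y):\psi(\x)D_y(\phi)(y)\,dy\,d\x\,dt=0
\]
for all such $\theta,\psi,\phi$.

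Localizing in $t$ (arbitrary $\theta$) and in $\x$ (density of finite sums $\sum_k\psi_k(\x)\phi_k(y)$, then density of $C_\#^\infty(\overline{Z^s})^3$ in $H^1_\#(Z^s)^3$), this says that for a.e.\ $(t,\x)$ the function $y\mapsto u_2(t,\x,y)$ solves, on $Z^s$, the weak cell problem $\int_{Z^s}A(y)\bigl(D_{\x}(\tilde u_1)(t,\x)-y_3\nabla_{\x}^2u_0^3(t,\x)+D_y(u_2)\bigr):D_y(\phi)\,dy=0$ for all $\phi\in H^1_\#(Z^s)^3$. Since $\tilde u_1^3=0$, both $D_{\x}(\tilde u_1)$ and $\nabla_{\x}^2u_0^3$ have only a nontrivial $2\times2$ block, hence $D_{\x}(\tilde u_1)=\sum_{i,j=1}^2 D_{\x}(\tilde u_1)_{ij}M_{ij}$ and $-y_3\nabla_{\x}^2u_0^3=\sum_{i,j=1}^2\partial_{ij}u_0^3\,(-y_3M_{ij})$. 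Summing the defining identities \eqref{CellProblemStandard}, \eqref{CellProblemHesse} weighted by $D_{\x}(\tilde u_1)_{ij}(t,\x)$ and $\partial_{ij}u_0^3(t,\x)$ and subtracting from the above, one finds that the difference between $u_2(t,\x,\cdot)$ and $\sum_{i,j=1}^2\bigl[D_{\x}(\tilde u_1)_{ij}(t,\x)\chi_{ij}+\partial_{ij}u_0^3(t,\x)\chi_{ij}^B\bigr]$ solves the homogeneous cell problem on $Z^s$, hence is a $Y$–periodic rigid displacement, hence a constant vector; in the quotient $H^1_\#(Z)/\R$ this is zero, which is the claimed representation.

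The main obstacle is the scaling bookkeeping in the first step: making sure that exactly one term survives and that the $\epsilon^2$–part of $D(\phieps)$ as well as the forcing and boundary contributions are genuinely negligible, together with the (standard) technical point of admitting the periodic $L^\infty$ elasticity coefficient into the two–scale test function. Once the limiting variational identity is in hand, the algebraic reduction to the cell problems and the uniqueness argument are routine.
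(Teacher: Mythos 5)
Your proposal is correct and follows essentially the same route as the paper: testing \eqref{MicroProblVarForm} with $\epsilon^2\phi(t,\x,x/\epsilon)$, checking that only the $D_y(\phi)$-contribution of the elastic term survives, passing to the two-scale limit of $\epsilon^{-1}D(\ueps)$ to obtain the limiting cell identity, and then identifying $u_2$ by linearity and uniqueness (up to constants) of the cell problems. You merely spell out the scaling bookkeeping and the final algebraic reduction that the paper compresses into ``it is easy to check'' and ``an elemental calculation''.
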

\begin{proof}
Let $\phi \in C_0^{\infty}((0,T)\times \Sigma , C_{\#}^{\infty}(\overline{Z^s}))^3$  and choose in $\eqref{MicroProblVarForm}$ the test function $\phieps(t,x):= \epsilon^2 \phi\left(t,\x,\fxe\right)$. Integration in time and integration by parts together with the initial condition $\partial_t \ueps (0) = 0$ imply
\begin{align*}
- \epsilon \int_0^T &\int_{\oems} \partial_t \ueps \cdot \partial_t \phi \left(t,\x,\fxe\right) dx dt 
\\
+& \frac{1}{\epsilon} \int_0^T \int_{\oems} A_{\epsilon} \frac{D(\ueps)}{\epsilon} : \left[\epsilon D_{\x}(\phi) \left(t,\x,\fxe\right) + D_y(\phi) \left(t,\x,\fxe\right)\right] dx dt
\\
&= \int_0^T \int_{\oems} f_{\epsilon} (\ueps^3) \cdot \phi\left(t,\x,\fxe\right) dx dt - \epsilon \int_0^T \int_{\geps} g_{\epsilon} \cdot \phi\left(t,\x,\fxe\right) dx dt. 
\end{align*}
It is easy to check that all the terms except the one including $D_y(\phi)$ are of order $\epsilon$. Hence, using the convergence of $\frac{D(\ueps)}{\epsilon}$ from Proposition \ref{ConvergenceMicroSolutionProp} we obtain for $\epsilon \to 0$
\begin{align*}
0 = \int_0^T \int_{\Sigma} \int_{Z^s} A \left[D_{\x}(\tilde{u}_1)  - y_3 \nabla_{\x}^2 u_0^3 + D_y(u_2) \right] : D_y(\phi) dy d\x dt.
%0 = \int_0^T \int_{\Sigma} \int_{Z^s} A \left[D_{\x}(\tilde{u}_1)(t,\x) - y_3 \nabla_{\x}^2 u_0^3(t,\x) + D_y(u_2)(t,\x,y)\right] : D_y(\phi)(t,\x,y) dy d\x dt.
\end{align*}
For given $(\tilde{u}_1,u_0^3)$ this problem has a unique solution $u_2$, due to the Korn-inequality and the Lax-Milgram-Lemma. An elemental calculation gives the desired result.
\end{proof}

\begin{theorem}
The limit function $(u_0^3,\tilde{u}_1)$ of the microscopic solutions $\ueps$ from Proposition \ref{ConvergenceMicroSolutionProp} is a weak solution of the macroscopic model $\eqref{MacroModelStrongFormulation}$.
\end{theorem}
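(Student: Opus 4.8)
The plan is to pass to the limit $\epsilon\to0$ in the variational formulation $\eqref{MicroProblVarForm}$, tested against an $\epsilon$-oscillating Kirchhoff--Love field supplemented by an interior corrector. Fix $V\in C_0^\infty((0,T)\times\Sigma)$, $\bar U=(\bar U^1,\bar U^2)\in C_0^\infty((0,T)\times\Sigma)^2$ and $\phi\in C_0^\infty((0,T)\times\Sigma,C_{\#}^\infty(\overline{Z^s}))^3$, all vanishing for $t$ near $T$, and define $\phieps(t,\cdot)\in H^1(\oems)^3$ by
\begin{align*}
\phieps^\alpha(t,x):=\epsilon\bar U^\alpha(t,\x)-x_3\partial_\alpha V(t,\x)+\epsilon^2\phi^\alpha\Bigl(t,\x,\tfrac{x}{\epsilon}\Bigr)\ (\alpha=1,2),\qquad \phieps^3(t,x):=V(t,\x)+\epsilon^2\phi^3\Bigl(t,\x,\tfrac{x}{\epsilon}\Bigr).
\end{align*}
Since $V$, $\nabla_{\x}V$ and $\bar U$ vanish on $\partial\Sigma$ we have $\phieps=0$ on $\partial_D\oems$, so $\phieps$ is admissible, and an elementary computation gives $\phieps^\alpha=O(\epsilon)$, $\phieps^3\to V$ and $\foe D(\phieps)\to D_{\x}(\bar U)-y_3\nabla_{\x}^2V+D_y(\phi)$ in the strong two-scale sense (the Kirchhoff--Love part contributing only through its in-plane block). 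I would insert $\phieps$ into $\eqref{MicroProblVarForm}$, integrate over $(0,T)$, and integrate by parts once in time in the inertial term, using $\partial_t\ueps(0)=0$ and $\phieps(T)=0$.

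Then I would pass to the limit term by term with the convergences of Proposition \ref{ConvergenceMicroSolutionProp} and Corollary \ref{ConvergenceNonlinearFunction}. In the inertial term the tangential contributions are $O(\epsilon)$ by Lemma \ref{AprioriEstimatesLemma} and $\phieps^\alpha=O(\epsilon)$, while $-\foe\int_0^T\!\int_{\oems}\partial_t\ueps^3\,\partial_tV\,dx\,dt\to-\vert Z^s\vert\int_0^T\!\int_\Sigma\partial_t u_0^3\,\partial_t V\,d\x\,dt$ by $\chi_{\oems}\partial_t\ueps^3\rats\chi_{Z^s}\partial_t u_0^3$; a further integration by parts in time rewrites this as $\vert Z^s\vert\int_0^T\langle\partial_{tt}u_0^3,V\rangle_{H^{-2}(\Sigma),H^2_0(\Sigma)}\,dt$. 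For the elastic term I would write $\epsilon^{-3}A_\epsilon D(\ueps):D(\phieps)=\foe A\bigl(\tfrac{x}{\epsilon}\bigr)\tfrac{D(\ueps)}{\epsilon}:\tfrac{D(\phieps)}{\epsilon}$ and combine the weak two-scale convergence of $\foe\chi_{\oems}D(\ueps)$ towards $\chi_{Z^s}\bigl(D_{\x}(\tilde u_1)-y_3\nabla_{\x}^2u_0^3+D_y(u_2)\bigr)$ with the fact that $A\bigl(\tfrac{x}{\epsilon}\bigr)\foe D(\phieps)$ is an admissible oscillating test field; the limit of this term is
\begin{align*}
\int_0^T\!\!\int_\Sigma\!\int_{Z^s}A\bigl(D_{\x}(\tilde u_1)-y_3\nabla_{\x}^2u_0^3+D_y(u_2)\bigr):\bigl(D_{\x}(\bar U)-y_3\nabla_{\x}^2V+D_y(\phi)\bigr)\,dy\,d\x\,dt.
\end{align*}
For the source and boundary terms I would use the block structure $f_\epsilon=(f^1,f^2,\epsilon f^3)$, $g_\epsilon=(g^1,g^2,\epsilon g^3)$ together with $\phieps^\alpha=O(\epsilon)$: the corrector $\epsilon^2\phi$ drops out, and after extracting the factors $x_3/\epsilon$ and using Corollary \ref{ConvergenceNonlinearFunction} and surface two-scale convergence on $\geps$, the remaining part converges to $\int_0^T\!\int_\Sigma\!\int_{Z^s}\sum_i f^i\phi_0^i\,dy\,d\x\,dt-\int_0^T\!\int_\Sigma\!\int_\Gamma\sum_i g^i\phi_0^i\,d\sigma_y\,d\x\,dt$ with $\phi_0^\alpha=\bar U^\alpha-y_3\partial_\alpha V$, $\phi_0^3=V$, which by the definitions of $h^i$ and $\bar H$ equals $\vert Z^s\vert\int_0^T\!\int_\Sigma\bigl(\bar h\cdot\bar U+h^3V-\bar H\cdot\nabla_{\x}V\bigr)\,d\x\,dt$.

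It remains to identify the limit identity. Choosing $\bar U=V=0$ recovers precisely the weak cell equation for $u_2$, i.e.\ the statement of Proposition \ref{RepresentationU2}, so $u_2(t,\x,y)=\sum_{i,j=1}^2\bigl[D_{\x}(\tilde u_1)_{ij}\chi_{ij}(y)+\partial_{ij}u_0^3\,\chi_{ij}^B(y)\bigr]$. Inserting this into the elastic limit with $\phi=0$, expanding $D_{\x}(\tilde u_1)=\sum_{\alpha,\beta=1}^2 D_{\x}(\tilde u_1)_{\alpha\beta}M_{\alpha\beta}$, $\nabla_{\x}^2u_0^3=\sum_{\alpha,\beta=1}^2\partial_{\alpha\beta}u_0^3\,M_{\alpha\beta}$ (and likewise for $\bar U$, $V$) with the matrices $M_{\alpha\beta}$ of $\eqref{DefinitionBasisMatrices}$, and using the weak formulations of the cell problems $\eqref{CellProblemStandard}$ and $\eqref{CellProblemHesse}$ tested against $\chi_{\gamma\delta}$ and $\chi_{\gamma\delta}^B$, all $D_y(\chi)$-cross terms on the test side vanish, and the minor/major symmetries of $A$ collapse the cell integral to $\vert Z^s\vert$ times $a^{\ast}D_{\x}(\tilde u_1):D_{\x}(\bar U)+b^{\ast}\nabla_{\x}^2u_0^3:D_{\x}(\bar U)+b^{\ast}D_{\x}(\tilde u_1):\nabla_{\x}^2V+c^{\ast}\nabla_{\x}^2u_0^3:\nabla_{\x}^2V$. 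Dividing the limit identity by $\vert Z^s\vert$ and using density of the chosen test pairs in $H_0^2(\Sigma)\times H_0^1(\Sigma)^3$ (with a standard localisation in time) yields $\eqref{MacroModelVarForm}$. The spaces $u_0^3\in L^2((0,T),H^2_0(\Sigma))$, $\tilde u_1\in L^2((0,T),H^1_0(\Sigma))^3$ with $\tilde u_1^3=0$ come from Proposition \ref{ConvergenceMicroSolutionProp}; since every other term of $\eqref{MacroModelVarForm}$ lies in $L^2((0,T),H^{-2}(\Sigma))$, a comparison argument gives $\partial_{tt}u_0^3\in L^2((0,T),H^{-2}(\Sigma))$, and the initial conditions $u_0^3(0)=\partial_t u_0^3(0)=0$ pass to the limit from those of $\ueps$ (e.g.\ via the $C([0,T],L^2(\Omega_1^M))$-convergence of the rescaled extension $\tueps^3$, or by admitting test functions not vanishing at $t=0$).

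The step I expect to be the main obstacle is the careful bookkeeping of the $\epsilon$-powers guaranteeing that exactly the right pieces survive — that the tangential inertial terms and the $\epsilon$-suppressed parts of $f_\epsilon$ and $g_\epsilon$ vanish, that the Kirchhoff--Love part of $\phieps$ contributes to the elastic term only through its in-plane block $D_{\x}(\bar U)-y_3\nabla_{\x}^2V$, and that the interior corrector enters only via $D_y(\phi)$ — together with the algebraic collapse of the cell correctors $\chi_{ij},\chi_{ij}^B$ into $a^{\ast},b^{\ast},c^{\ast}$ via the cell-problem orthogonalities and the symmetries of $A$; the time-regularity and the passage of the initial data are comparatively routine.
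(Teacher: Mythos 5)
Your proposal is correct and follows essentially the same route as the paper: test $\eqref{MicroProblVarForm}$ with the Kirchhoff--Love field $(\,\epsilon\bar U - x_3\nabla_{\x}V,\,V\,)$, pass to the limit using Proposition \ref{ConvergenceMicroSolutionProp}, Corollary \ref{ConvergenceNonlinearFunction} and surface two-scale convergence on $\geps$, insert the representation of $u_2$ from the cell problems, and conclude by density. The only (harmless) difference is that you fold the $\epsilon^2\phi(t,\x,x/\epsilon)$ corrector into the same test function, thereby re-deriving Proposition \ref{RepresentationU2} as the $\bar U=V=0$ case, whereas the paper establishes that proposition separately beforehand and simply invokes it.
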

\begin{proof}
In the variational equation $\eqref{MicroProblVarForm}$ we choose test functions of the form
\begin{align*}
\phieps(t,x):= \psi(t) \left[ \begin{pmatrix}
0 \\ 0 \\ V(\x)
\end{pmatrix} 
+\epsilon \left( \begin{pmatrix}
U_1(\x) \\ U_2(\x) \\ 0 
\end{pmatrix}
- \frac{x_3}{\epsilon} \begin{pmatrix}
\partial_1 V (\x) \\ \partial_2 V(\x) \\ 0 
\end{pmatrix}
\right) \right],
\end{align*}
with $\psi \in C_0^{\infty}([0,T))$, $V \in H_0^2(\Sigma)$, and $\bar{U}:= (U_1,U_2)\in H_0^1(\Sigma)^2$.
In the following we consider $\nabla_{\x} V(\x)$ as both a vector in $\R^2$ and $\R^3$ with respect to the trivial embedding. In the same way we proceed with $\nabla_{\x}^2 V$. Integration with respect to time  in $\eqref{MacroModelStrongFormulation}$ and an integration by parts give (with $U:= (\bar{U},0)$ and the initial condition $\partial_t \ueps(0) = 0$)
\begin{align}
\begin{aligned}
\label{AuxiliaryEquationDerivationMacroModel}
-\foe &\int_0^T \int_{\oems} \partial_t \ueps^3 V \psi'+ \epsilon \partial_t \ueps \cdot \left(U - \frac{x_3}{\epsilon} \nabla_{\x} V \right)\psi' dxdt 
\\
&+ \frac{1}{\epsilon^3} \int_0^T \int_{\oems} A_{\epsilon}D(\ueps): \epsilon \left(D_{\x}(U) - \frac{x_3}{\epsilon} \nabla_{\x}^2 V \right) \psi dx dt
\\
=& \sum_{\alpha=1}^2 \bigg[ \frac{1}{\epsilon} \int_0^T \int_{\oems} f^{\alpha} \left(t,\x,\fxe,\ueps^3\right)  \left( U_{\alpha} - \frac{x_3}{\epsilon} \partial_{\alpha} V \right) \psi dx dt 
\\
&- \int_0^T \int_{\geps} g^{\alpha}\bxfxe  \left( U_{\alpha} - \frac{x_3}{\epsilon} \partial_{\alpha} V \right) \psi d\sigma dt  \bigg]
\\
&+ \foe \int_0^T \int_{\oems} f^3\left(t,\x,\fxe,\ueps^3\right) V \psi dx dt - \int_0^T \int_{\geps} g^3 \bxfxe V  \psi d\sigma dt.
\end{aligned}
\end{align}
The convergence results in Proposition \ref{ConvergenceMicroSolutionProp} and Corollary \ref{ConvergenceNonlinearFunction} imply for $\epsilon \to 0$ (for the boundary term we refer to \cite[Lemma 4.3]{BhattacharyaGahnNeussRadu})
\begin{align}
\begin{aligned}
\label{AuxiliaryMacroEquationDerivationMacroModel}
- \vert Z^s\vert & \int_0^T \int_{\Sigma}  \partial_t u_0^3 V \psi' d\x dt 
\\
+& \int_0^T \int_{\Sigma} \int_{Z^s} A \left[D_{\x}(\tilde{u}_1) - y_3 \nabla_{\x}^2 u_0^3 + D_y(u_2) \right] : \left[ D_{\x} (U) - y_3 \nabla_{\x}^2 V \right] \psi dy d\x dt 
\\
=& \sum_{\alpha =1}^2 \bigg[ \int_0^T \int_{\Sigma} \int_{Z^s} f^{\alpha}(t,\x,y,u_0^3) (U_{\alpha} - y_3 \partial_{\alpha } V ) \psi dy d\x dt 
\\
&- \int_0^T \int_{\Sigma}\int_{\Gamma} g^{\alpha} (\x,y) (U_{\alpha} - y_3 \partial_{\alpha} V ) \psi d\sigma_y d\x dt  \bigg]
\\
&+ \int_0^T \int_{\Sigma} \int_{Z^s} f^3 (t,\x,y,u_0^3) V \psi dy d\x dt - \int_0^T \int_{\Sigma}\int_{\Gamma} g^3 (\x,y)  V \psi d\sigma_y d\x dt.
\end{aligned}
\end{align}
Choosing $U= 0$ we immediately obtain $\partial_{tt} u_0^3 \in L^2((0,T), H^{-2}(\Sigma))$ and $\partial_t u_0^3(0) = 0$. An additional integration by parts in time in equation $\eqref{AuxiliaryEquationDerivationMacroModel}$ and passing to the limit $\epsilon \to 0$ implies together with $\eqref{AuxiliaryMacroEquationDerivationMacroModel}$
\begin{align*}
-\int_0^T  \int_{\Sigma} \partial_t u_0^3 V \psi' d\x dt = \int_0^T \int_{\Sigma} u_0^3 V \psi'' d\x dt 
\end{align*}
for all $\psi \in C_0^{\infty}([0,T)) $ and $ V \in H^2_0(\Sigma)$. This implies $u_0^3(0) = 0$.
 Now, using the representation of $u_2$ from Proposition 
\ref{RepresentationU2}, an elemental calculation shows
\begin{align*}
\int_{\Sigma} &\int_{Z^s} A \left[D_{\x}(\tilde{u}_1) - y_3 \nabla_{\x}^2 V + D_y(u_2) \right] : \left[ D_{\x} (U) - y_3 \nabla_{\x}^2 V \right] dy d\x 
\\
&=  \int_{\Sigma} a^{\ast} D_{\x}(\tilde{u}_1) : D_{\x}(\bar{U}) + b^{\ast} \nabla_{\x}^2 u_0^3 : D_{\x}(\bar{U}) + b^{\ast} D_{\x}(\tilde{u}_1) : \nabla_{\x}^2 V + c^{\ast} \nabla_{\x}^2 u_0^3 : \nabla_{\x}^2 V d\x.
\end{align*}
With a  density argument in $\eqref{AuxiliaryMacroEquationDerivationMacroModel}$ we obtain that $(u_0^3,\tilde{u}_1)$ solves $\eqref{MacroModelVarForm}$ and therefore is a weak solution of $\eqref{MicroModel}$. 

\end{proof}

\section{Conclusion}
\label{SectionConclusion}

We provide a variety of two-scale tools for the treatment of problems in periodic homogenization and dimension reduction for thin perforated  layers, with a special focus on applications in continuum mechanics like linear elasticity and the Navier-Stokes equations. Since our results are mainly based on simple \textit{a priori} estimates, they can be easily applied to more complex problems, as they occur for example in many applications in biosciences. In such problems one often has to deal with fluid-structure interaction and additional transport of species, leading to nonlinear coupled problems. Homogenization and dimension reduction results for such complex models including thin porous structures are very rare in the literature and our results should be an important contribution  for the treatment of such problems. In particular, our extension operator is a helpful tool when dealing with nonlinearities. Further, for the treatment of nonlinear coupling conditions $L^p$-convergence in the two-scale sense are quite important. Hence, a generalization of the compactness result for the symmetric gradient from Proposition \ref{TSConvergenceSymmetricGradient} is necessary. For this, to avoid the use of a periodic Helmholtz-decomposition for symmetric matrices in $L^p$, it might be helpful to take into account decomposition arguments from \cite{griso2008decompositions}. This is part of our ongoing work.

\begin{appendix}

\section{A density result for periodic functions with weak divergence}
\label{SectionAppendix}
The aim of this section is to prove density results for $L^p$-spaces with weak divergence and different types of  boundary conditions. More precisely, for a bounded Lipschitz domain $\Omega \subset \R^n$ and $p \in (1,\infty)$ we define 
\begin{align}\label{DefinitionHpDiv}
H^p(\div,\Omega):= \left\{ u \in L^p(\Omega)^n \, : \, \nabla \cdot u \in L^p(\Omega) \right\},
\end{align}
together with the norm
\begin{align*}
\Vert u \Vert_{H^p(\div,\Omega)}^p := \Vert u \Vert_{L^p(\Omega)}^p + \Vert \nabla \cdot u \Vert_{L^p(\Omega)}^p.
\end{align*}
It is well-known that the smooth functions $C^{\infty}(\overline{\Omega})^n$ are dense in $H^p(\div,\Omega)$, see \cite[Theorem III.2.1]{Galdi}. Further, for $u \in H^p(\div,\Omega)$ the normal trace $u\cdot \nu$ is an element in $W^{-\frac{1}{p},p}(\partial \Omega) := \left(W^{\frac{1}{p},p'}(\partial \Omega)\right)^{\prime}$ and a generalized divergence theorem is valid \cite[Theorem III.2.2]{Galdi}. For the duality pairing between $W^{-\frac{1}{p},p}(\partial \Omega) $ and $W^{\frac{1}{p},p'}(\partial \Omega)$ we use the short notation
\begin{align*}
\langle \cdot ,\cdot \rangle_{\partial \Omega} := \langle \cdot ,\cdot \rangle_{W^{-\frac{1}{p},p}(\partial \Omega),W^{\frac{1}{p},p'}(\partial \Omega)}.
\end{align*}
Then we have the following divergence formula for all $u \in H^p(\div,\Omega)$ and $\phi \in W^{1,p'}(\Omega)$
\begin{align*}
\langle u\cdot \nu , \phi \rangle_{\partial \Omega} = \int_{\Omega} \nabla \cdot u \phi + u \cdot \nabla \phi dx.
\end{align*}
In a first step we consider a cube $Y$, introduce periodic functions in $H^p(\div,Y)$, and prove a density result for smooth periodic functions. In a second step we extend these results to the cylinder $Z$ with periodic boundary on the lateral part, and a zero normal trace on the top and bottom of $Z$.

\subsection{The full periodic cell $Y$}
We start to consider the case of a cube and functions with periodic boundary conditions on the whole boundary.
To simplify the notations we define for this section (in contrast to the rest of the paper) the unit cube $Y = \left(-\frac12,\frac12\right)^n$.
% and $p\in (1,\infty)$,  and we denote the duality pairing between $W^{-\frac{1}{p},p}(\partial Y) = \left(W^{\frac{1}{p},p'}(\partial Y)\right)^{\prime}$ and the space $W^{\frac{1}{p},p'}(\partial Y)$ by
%\begin{align*}
%\langle \cdot  , \cdot \rangle_{\partial Y}.
%\end{align*}
%For example in the case $p=2$ we have
%\begin{align*}
%\langle \cdot , \cdot \rangle_{\partial Y} = \langle \cdot, \cdot \rangle_{H^{-\frac12}(\partial Y),H^{\frac12}(\partial Y)}.
%\end{align*}
%We define the space 
%\begin{align*}
%H^p(\div,Y) := \left\{ u \in L^p(Y)^n \, : \, \nabla \cdot u \in L^p(Y)\right\},
%\end{align*}
%together with the norm
%\begin{align*}
%\Vert u \Vert_{H^p(\div,Y)}^p := \Vert u \Vert_{L^p(Y)}^p + \Vert \nabla \cdot u \Vert_{L^p(Y)}^p.
%\end{align*}
We define the space of periodic functions in $H^p(\div,Y)$ by
\begin{align*}
H_{\#}^p(\div,Y):= \left\{ u \in H^p(\div,Y) \, : \, \langle u \cdot \nu , \phi \rangle_{\partial Y} = 0 \, \forall \phi \in W_{\#}^{1,p'}( Y) \right\}.
\end{align*}
We also introduce the space of smooth, solenoidal, and periodic functions
\begin{align*}
C_{\#,\sigma}^{\infty}(Y)^n:=\left\{ u \in C_{\#}^{\infty}(Y)^n \, : \, \nabla \cdot u = 0\right\},
\end{align*}
and the space
\begin{align}\label{DefinitionLpPeriodicSigma}
L^p_{\#,\sigma}(Y):= \left\{ u \in H^p_{\#}(\div,Y) \, : \, \nabla \cdot u = 0 \right\}.
\end{align}
We emphasize that on $L^p_{\#,\sigma}$ the $L^p(Y)$-norm is equal to the $H^p(\div,Y)$-norm.
The aim of this section is to prove the following result:
\begin{theorem}\label{TheoremDensity}
We have that
\begin{enumerate}
[label = (\roman*)]
\item  $C_{\#}^{\infty}(Y)^n$ is dense in $H_{\#}^p(\div,Y)$,
\item $C_{\#,\sigma}^{\infty}(Y)^n $ is dense in $L^p_{\#,\sigma}(Y)$.
\end{enumerate}
\end{theorem}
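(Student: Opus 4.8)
\emph{Proof plan.} The plan is to prove both statements by periodic mollification; the only point requiring care is an extension lemma saying that the periodic normal trace condition in the definition of $H^p_\#(\div,Y)$ is precisely what forces the $\Z^n$-periodic extension of $u$ to have no jump contribution in its distributional divergence. Concretely: given $u \in H^p_\#(\div,Y)$, let $\tilde u$ be its $\Z^n$-periodic extension to $\R^n$ and $\widetilde{\nabla\cdot u}$ the periodic extension of $\nabla\cdot u$; I claim $\nabla\cdot\tilde u = \widetilde{\nabla\cdot u}$ in $\mathcal{D}'(\R^n)$, and in particular $\tilde u \in H^p(\div,\Omega')$ for every bounded open $\Omega'\subset\R^n$. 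To prove this, fix $\varphi\in C_c^\infty(\R^n)$ and periodize it, setting $\Phi := \sum_{k\in\Z^n}\varphi(\cdot+k)$; this is a locally finite sum, so $\Phi\in C^\infty_\#(Y)\subset W^{1,p'}_\#(Y)$. Unfolding the integrals over $\R^n$ cell by cell and using the periodicity of $\tilde u$ gives $\int_{\R^n}\tilde u\cdot\nabla\varphi\,dx = \int_Y u\cdot\nabla\Phi\,dy$ and $\int_{\R^n}\widetilde{\nabla\cdot u}\,\varphi\,dx = \int_Y(\nabla\cdot u)\,\Phi\,dy$. Applying the generalized divergence formula on $Y$ to the pair $(u,\Phi)$ and using $\langle u\cdot\nu,\Phi\rangle_{\partial Y}=0$, which holds since $\Phi$ is periodic and $u\in H^p_\#(\div,Y)$, yields $\int_Y u\cdot\nabla\Phi\,dy = -\int_Y(\nabla\cdot u)\,\Phi\,dy$, and hence $\int_{\R^n}\tilde u\cdot\nabla\varphi\,dx = -\int_{\R^n}\widetilde{\nabla\cdot u}\,\varphi\,dx$, which is the claim.

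Given the lemma, statement (i) follows by a standard mollification argument. Let $(\rho_\delta)_{\delta>0}$ be a standard mollifier and set $u_\delta := (\tilde u * \rho_\delta)|_Y$. Then $u_\delta$ is smooth, and it is $\Z^n$-periodic because $\tilde u$ is, so $u_\delta\in C^\infty_\#(Y)^n$; moreover, by the lemma, $\nabla\cdot u_\delta = (\widetilde{\nabla\cdot u}*\rho_\delta)|_Y$. Since $\tilde u\in L^p_{\mathrm{loc}}(\R^n)^n$ and $\widetilde{\nabla\cdot u}\in L^p_{\mathrm{loc}}(\R^n)$, standard properties of mollification give $u_\delta\to u$ in $L^p(Y)^n$ and $\nabla\cdot u_\delta\to\nabla\cdot u$ in $L^p(Y)$, that is, $u_\delta\to u$ in $H^p(\div,Y)$. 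Together with the immediate inclusion $C^\infty_\#(Y)^n\subset H^p_\#(\div,Y)$ (a smooth periodic field has vanishing periodic normal trace, since opposite faces of $\partial Y$ contribute with opposite signs of $\nu$ and equal values of $u\cdot\nu$ and of a periodic test function), this proves (i).

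For (ii), if in addition $\nabla\cdot u=0$, then the lemma gives $\nabla\cdot\tilde u=0$ in $\mathcal{D}'(\R^n)$, so $\nabla\cdot u_\delta = (\nabla\cdot\tilde u)*\rho_\delta|_Y = 0$; hence $u_\delta\in C^\infty_{\#,\sigma}(Y)^n$ and $u_\delta\to u$ in $L^p(Y)^n$, which coincides with convergence in $H^p(\div,Y)$ on $L^p_{\#,\sigma}(Y)$ since there the two norms agree. I expect the extension lemma to be the only real obstacle: the periodization $\varphi\mapsto\Phi$ is exactly what converts a compactly supported test function on $\R^n$ into an admissible periodic test function for the periodic normal trace condition, and once $\nabla\cdot\tilde u = \widetilde{\nabla\cdot u}$ is available, everything reduces to elementary facts about mollification and to the equivalence of norms on $L^p_{\#,\sigma}(Y)$ noted above.
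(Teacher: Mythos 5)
Your proposal is correct and is essentially the paper's argument: both rest on periodic mollification together with the observation that the periodic normal-trace condition in the definition of $H^p_{\#}(\div,Y)$ is exactly what makes the divergence commute with the smoothing. The only difference is presentational — you periodize the function and mollify on $\R^n$ with a compactly supported kernel (proving $\nabla\cdot\tilde u=\widetilde{\nabla\cdot u}$ in $\mathcal{D}'(\R^n)$ by periodizing test functions), whereas the paper periodizes the kernel and convolves over the single cell $Y$; the two convolutions coincide, and your route lets you quote standard $L^p_{\mathrm{loc}}$ mollification convergence instead of re-deriving it near $\partial Y$ as the paper does.
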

The principal idea of the proof is to use a convolution with a  periodic Dirac-sequence as kernel, which preserves the standard approximation properties of the convolution, and gives a commuting property with the weak derivatives for periodic functions.

In the following we denote by $\vert \cdot \vert$ the maximum norm on $\R^n$. Further, for $0 < \epsilon \ll 1$ we define 
\begin{align*}
Y_{\epsilon}:= \{y \in Y \, : \, dist(y,\partial Y) > \epsilon \} = \left\{ y \in Y \, : \, \vert y \vert <  \frac12 - \epsilon\right\}.
\end{align*}
Here the distance is taken with respect to the maximum norm $\vert \cdot \vert$. 
%In the same way we define the space $Y_{2\epsilon}$.

Let $\phi \in C_0^{\infty}(B_1(0))$ with $\phi \geq 0 $ and $\int_{B_1(0)} \phi dy = 1$. We define the Dirac-sequence 
\begin{align}\label{DiracSequence}
\phi_{\epsilon}(y):= \epsilon^{-n} \phi\left(\frac{y}{\epsilon}\right) \in C_0^{\infty}(B_{\epsilon}(0)).
\end{align}
We extend the functions $\phi_{\epsilon}$ $Y$-periodically to the whole $\R^n$. For $f \in L^p(Y)$ we define the convolution
\begin{align*}
f_{\epsilon}(x):= \int_Y \phi_{\epsilon}(x-y) f(y)dy \qquad\mbox{for } x \in Y.
\end{align*}
For vector valued functions  the convolution  is defined for every component.
We have $f_{\epsilon} \in C^{\infty}(Y)$ and as in the case of non-periodic convolution kernel, see for example \cite{WlokaEnglisch}, it holds $f_{\epsilon} \in L^p(Y)$ with 
\begin{align*}
\Vert f_{\epsilon} \Vert_{L^p(Y)} \le \Vert f\Vert_{L^p(Y)}.
%\Vert f_{\epsilon} \Vert_{L^p(Y)}^p &= \int_Y \left\vert \int_Y \phi_{\epsilon}^{\frac{1}{p} + \frac{1}{p'}}(x-y) f(y) dy \right\vert^p dx
%\\
%&\le \int_Y \left[ \int_Y \phi_{\epsilon}^{\frac{p}{p}}(x-y) \vert f(y)\vert^p dy\right]^{\frac{p}{p}} \cdot  \underbrace{\left[\int_Y \phi_{\epsilon}^{\frac{p'}{p'}}(x-y)dy\right]^{\frac{p}{p'}} }_{= 1}dx
%\\
%&= \int_Y \vert f(y) \vert^p \left[\int_Y \phi_{\epsilon}(x-y)dx\right] dy = \Vert f\Vert_{L^p(Y)}^p.
\end{align*}
%We show some auxiliary results regarding the support of $\phi_{\epsilon}$.
%\begin{lemma}\label{LemmaSupport}\
%\begin{enumerate}
%[label = (\roman*)]
%\item For all $x \in Y_{\epsilon}$ it holds that
%\begin{align*}
%supp(\phi_{\epsilon}) \cap (x + Y) \subset B_{\epsilon}(0).
%\end{align*}
%\item For $y \in Y_{2\epsilon}$ it holds that
%\begin{align*}
%\phi_{\epsilon}(x) = 0 \qquad\mbox{ for all } x \in (Y\setminus Y_{\epsilon}) + y.
%\end{align*}
%\end{enumerate}
%\end{lemma}
%\begin{proof}
%Let $x \in Y_{\epsilon}$. We have to show  for all $k \in \Z^n\setminus \{0\}$ that $B_{\epsilon}(k)\cap (x+Y) = \emptyset$. 
%For $y \in Y $ it holds that
%\begin{align*}
%\vert x + y \vert \le \vert x\vert + \vert y \vert \kl \frac12 - \epsilon + \frac12 = 1 - \epsilon.
%\end{align*}
%This gives result (i). Let us prove the second statement. Let $y \in Y_{2\epsilon}$ and $i=1,\ldots,n$. We have 
%\begin{align*}
%-\frac12 + 2 \epsilon \kl y_i \kl \frac12 - 2 \epsilon.
%\end{align*}
%For $x \in (Y\setminus Y_{\epsilon}) + y $ it holds that
%\begin{align*}
%-\frac12 \kl x_i - y_i \kl -\frac12 + \epsilon,
%\end{align*}
%or
%\begin{align*}
%\frac12 - \epsilon \kl x_i -y_i \kl \frac12.
%\end{align*}
%In the first case, we have
%\begin{align*}
%-1 + 2\epsilon = -\frac12 - \frac12 + 2 \epsilon \kl -\frac12 + y_i \kl x_i \kl -\frac12 + \epsilon + y_i \kl -\epsilon.
%\end{align*}
%In the second case we have
%\begin{align*}
%\epsilon \kl \frac12 - \epsilon + y_i \kl x_i \kl \frac12 + y_i \kl 1 - 2\epsilon.
%\end{align*}
%Hence, $x \notin supp(\phi_{\epsilon})$ and therefore $\phi_{\epsilon}(x) =0$.
%\end{proof}
From the convolution theory it is well-known that $f_{\epsilon}$ is an approximation of $f$ in $L^p(Y)$. However, since this result is quite standard for Dirac-sequences with compact support, we will only sketch the proof for our setting and point out the differences.
%Next we show that $f_{\epsilon}$ is an approximation of $f$.
\begin{lemma}\label{LemmaApproximation}
For $f \in L^p(Y)$ we have $f_{\epsilon}\rightarrow f  $ in $ L^p(Y) $ for $\epsilon \to 0$.
%\begin{align*}
%f_{\epsilon} \overset{\epsilon\to 0}{\longrightarrow} f \qquad\mbox{ in } L^p(Y).
%\end{align*}
\end{lemma}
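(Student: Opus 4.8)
The plan is to reduce the general case to that of continuous periodic functions and then invoke the contraction bound $\Vert f_{\epsilon}\Vert_{L^p(Y)}\le \Vert f\Vert_{L^p(Y)}$ together with a standard three-term estimate. First I would treat $f$ that is continuous on $\R^n$ and $Y$-periodic. Such $f$ is uniformly continuous, and since $\int_{B_{\epsilon}(0)}\phi_{\epsilon}\,dy = 1$ one has, for every $x\in Y$,
\begin{align*}
f_{\epsilon}(x) - f(x) = \int_{B_{\epsilon}(0)} \phi_{\epsilon}(x-y)\big(f(y) - f(x)\big)\,dy = \int_{B_{\epsilon}(0)} \phi_{\epsilon}(z)\big(f(x-z) - f(x)\big)\,dz,
\end{align*}
so that $\Vert f_{\epsilon} - f\Vert_{L^{\infty}(Y)} \le \omega_f(\epsilon) := \sup_{\vert z\vert\le \epsilon}\sup_{x\in\R^n}\vert f(x-z) - f(x)\vert \to 0$ as $\epsilon\to 0$. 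As $\vert Y\vert < \infty$, this yields $\Vert f_{\epsilon} - f\Vert_{L^p(Y)} \le \vert Y\vert^{1/p}\,\omega_f(\epsilon) \to 0$.

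For general $f\in L^p(Y)$ I would use that the functions which are continuous on $\R^n$ and $Y$-periodic are dense in $L^p(Y)$ (indeed already $C_0^{\infty}(Y)$, extended by periodicity, is dense, since such functions vanish near $\partial Y$). Given $\delta > 0$, choose such a $g$ with $\Vert f - g\Vert_{L^p(Y)} < \delta$. Then, by linearity of the convolution and the contraction bound applied to $f-g$,
\begin{align*}
\Vert f_{\epsilon} - f\Vert_{L^p(Y)} \le \Vert (f-g)_{\epsilon}\Vert_{L^p(Y)} + \Vert g_{\epsilon} - g\Vert_{L^p(Y)} + \Vert g - f\Vert_{L^p(Y)} \le 2\delta + \Vert g_{\epsilon} - g\Vert_{L^p(Y)}.
\end{align*}
Letting $\epsilon\to 0$ and using the first step gives $\limsup_{\epsilon\to 0}\Vert f_{\epsilon} - f\Vert_{L^p(Y)} \le 2\delta$, and since $\delta > 0$ was arbitrary the claim follows.

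The only point requiring a little care is the periodicity of the kernel: $\phi_{\epsilon}$ in $\eqref{DiracSequence}$ is understood as its $Y$-periodic extension, so for $x,y\in Y$ the value $\phi_{\epsilon}(x-y)$ already accounts for the wrap-around near $\partial Y$. One checks, exactly as in the classical (non-periodic) mollifier argument referenced above, that this affects neither the contraction estimate nor the pointwise identity used in the first step — intuitively because $\phi_{\epsilon}$ is supported in $B_{\epsilon}(0)$ and the periodic copies are far from the origin. This is the only deviation from the textbook proof, and it is a minor one; no cancellation or compactness is needed. I do not expect any genuine obstacle here beyond carefully bookkeeping the periodic extension.
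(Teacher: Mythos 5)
Your proof is correct, but it takes a different route from the one in the paper. The paper does not use a density/approximation-by-continuous-functions argument at all: it splits $Y$ into the shrunken cube $Y_{\epsilon}$ and the boundary collar $Y\setminus Y_{\epsilon}$, observes that for $x\in Y_{\epsilon}$ the periodic kernel coincides with the ordinary mollifier (the periodic copies of $\mathrm{supp}\,\phi_{\epsilon}$ do not reach $x-Y$), controls $\Vert f_{\epsilon}-f\Vert_{L^p(Y_{\epsilon})}$ by $\sup_{|y|\le\epsilon}\Vert f(\cdot-y)-f\Vert_{L^p}$ via continuity of translations in $L^p$ (Kolmogorov), and kills the collar terms by the bound $\Vert f_{\epsilon}\Vert_{L^p(Y\setminus Y_{\epsilon})}\le\Vert f\Vert_{L^p(Y\setminus Y_{2\epsilon})}$ together with absolute continuity of the integral. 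You instead use the classical three-term scheme: contraction $\Vert (f-g)_{\epsilon}\Vert_{L^p}\le\Vert f-g\Vert_{L^p}$, density of (periodically extended) $C_0^{\infty}(Y)$, and uniform convergence for continuous periodic $g$. Both are sound. The paper's version never needs to interpret $f$ as a periodic function on $\R^n$ and confines the wrap-around issue to a set of small measure; your version confines it to the continuous approximant, where it is resolved by the folding identity $\int_Y\phi_{\epsilon}^{\mathrm{per}}(x-y)g(y)\,dy=\int_{B_{\epsilon}(0)}\phi_{\epsilon}(z)g(x-z)\,dz$ for $Y$-periodic $g$ (a one-line computation using that the translates of $Y$ tile $\R^n$). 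That identity is the content of your "one checks" remark and should be stated explicitly — as written, your first display $\int_{B_{\epsilon}(0)}\phi_{\epsilon}(x-y)(f(y)-f(x))\,dy$ has the wrong domain of integration before the substitution and only becomes correct after the folding step. This is a presentational slip, not a gap; with that identity spelled out your argument is complete, and it relies on nothing beyond the contraction bound already recorded in the paper before the lemma.
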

\begin{proof}
From the triangle inequality we get
\begin{align*}
\Vert f_{\epsilon} - f \Vert_{L^p(Y)}  
%\Vert f_{\epsilon} - f\Vert_{L^p(Y_{\epsilon})} + \Vert f_{\epsilon} - f \Vert_{L^p(Y\setminus Y_{\epsilon})} 
\le \Vert f_{\epsilon} - f\Vert_{L^p(Y_{\epsilon})} + \Vert f_{\epsilon} \Vert_{L^p(Y\setminus Y_{\epsilon})}  + \underbrace{\Vert f\Vert_{L^p(Y\setminus Y_{\epsilon})}}_{\overset{\epsilon\to 0}{\longrightarrow} 0}.
\end{align*}
Since for every $x \in Y_{\epsilon}$ it holds that $supp(\phi_{\epsilon}) \cap (x + Y) \subset B_{\epsilon}(0)$, we obtain using standard arguments for the convolution
%
%For the first term we have with Lemma \ref{LemmaSupport}(i) and similar arguments as above
\begin{align*}
\Vert f_{\epsilon} - f \Vert_{L^p(Y_{\epsilon})}^p 
%&=
% \int_{Y_{\epsilon}} \left\vert \int_Y\phi_{\epsilon}(x-y) (f(y) - f(x) ) dy \right\vert^p dx
%\\
&=  \int_{Y_{\epsilon}} \left\vert \int_{x - Y} \phi_{\epsilon} (y) (f (x-y) - f(x)) dy \right\vert^p dx
\\
&\le  \int_{Y_{\epsilon}} \left\vert \int_{B_{\epsilon}(0)}  \phi_{\epsilon} (y) (f (x-y) - f(x)) dy \right\vert^p dx
\\
%&\le \int_{Y_{\epsilon}} \left[ \int_{B_{\epsilon}(0)} \phi_{\epsilon}(y) \vert f(x-y) - f(x) \vert^p dy \right] \cdot \left[ \int_{B_{\epsilon}(0)} \phi_{\epsilon}(y) dy \right]^{\frac{p}{p'}} dx
%\\
%&= \int_{B_{\epsilon}(0)}\phi_{\epsilon}(y) \int_{Y_{\epsilon}}  \vert f(x-y) - f(x) \vert^p dx dy
%\\
&\le \sup_{y \in B_{\epsilon}(0)} \Vert f(\cdot - y) - f\Vert_{L^p(Y_{\epsilon})}^p
\overset{\epsilon \to 0}{\longrightarrow} 0,
\end{align*}
where the convergence holds due to the Kolmogorov-compactness-criterion. 
%Further, we have
%\begin{align*}
% \Vert f_{\epsilon} - f \Vert_{L^p(Y\setminus Y_{\epsilon})} \le \Vert f_{\epsilon} \Vert_{L^p(Y\setminus Y_{\epsilon})}  + \underbrace{\Vert f\Vert_{L^p(Y\setminus Y_{\epsilon})}}_{\overset{\epsilon\to 0}{\longrightarrow} 0}.
%\end{align*}
%Hence, we only have to show that the first term converges to zero for $\epsilon \to 0$. We use Lemma \ref{LemmaSupport}(ii) to obtain
Using the fact that for every $y \in Y_{2\epsilon}$ it holds that $\phieps(x) = 0$ for all $x   \in (Y\setminus Y_{\epsilon}) + y$, we obtain
\begin{align*}
 \Vert f_{\epsilon} \Vert_{L^p(Y\setminus Y_{\epsilon})}^p 
%&= \int_{Y\setminus Y_{\epsilon}} \left\vert \int_Y \phi_{\epsilon} (x-y)f(y) dy \right\vert^p dx
%\\
%&\le \int_{Y\setminus Y_{\epsilon}} \left[ \int_Y \phi_{\epsilon}(x-y) \vert f(y)\vert^p dy \right] \cdot \left[ \int_Y \phi_{\epsilon}(x-y) dy \right]^{\frac{p}{p'}} dx
%\\
%&= \int_{Y\setminus Y_{\epsilon}} \int_Y \phi_{\epsilon}(x-y) \vert f(y)\vert^p dy dx
%\\
%&= \int_Y \vert f(y)\vert^p \int_{Y\setminus Y_{\epsilon}} \phi_{\epsilon}(x-y) dx dy
%\\
&\le \int_Y \vert f(y)\vert^p \int_{(Y\setminus Y_{\epsilon}) + y} \phi_{\epsilon}(x) dx dy
%\\
%&= \int_{Y \setminus Y_{2\epsilon}} \vert f(y)\vert^p \underbrace{\int_{Y\setminus Y_{\epsilon} +y } \phi_{\epsilon}(x) dx }_{\le 1} dy
%\\
\le \Vert f\Vert_{L^p(Y\setminus Y_{2\epsilon})}^p \overset{\epsilon\to 0}{\longrightarrow} 0.
\end{align*}
This implies the desired result.
\end{proof}
Next, we show that for periodic functions the convolution commutes with the weak derivative. Although the proof is quite simple, this observation is the crucial point in the choice of the definition  of periodic functions in $H^p(\div,Y)$ and the convolution with periodic kernel. 
\begin{proposition}\label{PropCommutingConvolutionDivergence}
Let $f \in H_{\#}^p(\div,Y)$. Then we have 
\begin{align*}
\nabla \cdot f_{\epsilon} = (\nabla \cdot f)_{\epsilon}.
\end{align*}
Especially, for $\nabla \cdot f = 0$ we get $\nabla \cdot f_{\epsilon} = 0$.
\end{proposition}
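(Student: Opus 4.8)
The plan is to differentiate $f_{\epsilon}$ under the integral sign and then transfer the derivative from the mollifier onto $f$ via the divergence formula, the key point being that the reflected kernel is itself a periodic test function, so that the boundary pairing vanishes by the very definition of $H^p_{\#}(\div,Y)$.

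Concretely, I would fix $x\in Y$. Since $\phi_{\epsilon}\in C^{\infty}$ and $f\in L^p(Y)$, differentiation under the integral is legitimate, and using the chain rule $\partial_{x_i}\phi_{\epsilon}(x-y)=(\partial_i\phi_{\epsilon})(x-y)=-\partial_{y_i}\phi_{\epsilon}(x-y)$ one gets
\begin{align*}
\nabla\cdot f_{\epsilon}(x)=\sum_{i=1}^n\int_Y(\partial_i\phi_{\epsilon})(x-y)f^i(y)\,dy=-\int_Y\nabla_y\!\left[\phi_{\epsilon}(x-\cdot)\right]\!(y)\cdot f(y)\,dy.
\end{align*}
Because $\phi_{\epsilon}$ was extended $Y$-periodically and, for $0<\epsilon\ll1$, has support strictly inside each cell, its periodic extension is smooth; hence $\psi_x:=\phi_{\epsilon}(x-\cdot)$ is smooth and $Y$-periodic, so $\psi_x\in W^{1,p'}_{\#}(Y)$. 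Next I would apply the generalized divergence formula to $f\in H^p(\div,Y)$ and $\psi_x\in W^{1,p'}(Y)$,
\begin{align*}
\langle f\cdot\nu,\psi_x\rangle_{\partial Y}=\int_Y(\nabla\cdot f)\,\psi_x+f\cdot\nabla\psi_x\,dy,
\end{align*}
and note that the left-hand side vanishes since $f\in H^p_{\#}(\div,Y)$ and $\psi_x$ is periodic. This yields
\begin{align*}
\int_Y\nabla_y\!\left[\phi_{\epsilon}(x-\cdot)\right]\!(y)\cdot f(y)\,dy=-\int_Y(\nabla\cdot f)(y)\,\phi_{\epsilon}(x-y)\,dy=-(\nabla\cdot f)_{\epsilon}(x).
\end{align*}
Combining the two displays gives $\nabla\cdot f_{\epsilon}(x)=(\nabla\cdot f)_{\epsilon}(x)$ for every $x\in Y$, which is the claim; the case $\nabla\cdot f=0$ follows at once since then $(\nabla\cdot f)_{\epsilon}\equiv0$.

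I do not expect a genuine obstacle here. The proof is elementary once one recognizes that the reflected periodic mollifier $\psi_x$ lands in $W^{1,p'}_{\#}(Y)$ --- exactly the class of test functions against which the normal trace of elements of $H^p_{\#}(\div,Y)$ is required to vanish --- so the boundary term disappears automatically; this is precisely why the space $H^p_{\#}(\div,Y)$ was defined through that pairing. The only routine matters are justifying differentiation under the integral (dominated difference quotients, integrand smooth in $x$), the sign bookkeeping in the chain rule, and the remark that smoothness of the periodic extension of $\phi_{\epsilon}$ uses only that $\mathrm{supp}\,\phi_{\epsilon}\subset B_{\epsilon}(0)$ sits well inside $Y$.
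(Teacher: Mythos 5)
Your proof is correct and is essentially identical to the paper's: differentiate under the integral, flip the derivative with the chain rule, apply the generalized divergence formula, and observe that the boundary pairing vanishes because $\phi_{\epsilon}(x-\cdot)$ is a smooth $Y$-periodic function, hence lies in $W^{1,p'}_{\#}(Y)$. The extra remarks on justifying differentiation under the integral and on the smoothness of the periodic extension of the kernel are fine and only make explicit what the paper leaves implicit.
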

\begin{proof}
For $x \in Y$ it holds with the divergence theorem
\begin{align*}
\nabla \cdot f_{\epsilon}(x) &= \int_Y \nabla_x \phi_{\epsilon}(x-y) \cdot f (y) dy = - \int_Y \nabla_y \left(\phi_{\epsilon}(x-y)\right) \cdot f(y) dy
\\
&= \int_Y \phi_{\epsilon}(x-y) \nabla_y \cdot f(y) dy - \underbrace{\langle f\cdot \nu , \phi_{\epsilon}(x-\cdot)\rangle_{\partial Y}}_{=0}
= (\nabla \cdot f)_{\epsilon}(x),
\end{align*}
where we used that $\phi_{\epsilon}(x - \cdot) \in W_{\#}^{1,p'}(Y)$.
\end{proof}
\begin{remark}
In the same way we obtain for $f \in W^{1,p}_{\#}(Y)$ that  $\nabla f_{\epsilon} = (\nabla f)_{\epsilon}$.
%\begin{align*}
%\nabla f_{\epsilon} = (\nabla f)_{\epsilon}.
%\end{align*}
However, this result is not valid for non-periodic functions.
\end{remark}

\begin{proof}[Proof of Theorem \ref{TheoremDensity}]
Let $f \in H_{\#}^p(\div,Y)$. Since $\phi_{\epsilon}$ is $Y$-periodic, we have that $f_{\epsilon} \in C_{\#}^{\infty}(Y)^n$. Further, from Proposition \ref{PropCommutingConvolutionDivergence} and Lemma \ref{LemmaApproximation} we obtain
\begin{align*}
\Vert f_{\epsilon} - f \Vert_{H^p(\div,Y)} &\le C \left(\Vert f_{\epsilon} - f\Vert_{L^p(Y)} + \Vert \nabla \cdot f_{\epsilon} - \nabla \cdot f \Vert_{L^p(Y)} \right)
\\
&=  C \left(\Vert f_{\epsilon} - f\Vert_{L^p(Y)} + \Vert (\nabla \cdot f)_{\epsilon} - \nabla \cdot f \Vert_{L^p(Y)} \right)
\overset{\epsilon\to 0}{\longrightarrow} 0.
\end{align*}
In a similar way we obtain the density of $C_{\#,\sigma}(Y)^n$ in $L^p_{\#,\sigma}(Y)$, since Proposition \ref{PropCommutingConvolutionDivergence} implies $\nabla \cdot f_{\epsilon} = 0$ for $\nabla \cdot f = 0$.
\end{proof}

\subsection{The cylinder $Z$ with zero normal-trace on $S^{\pm}$}
Now we consider the cell $Z$ and periodic functions with respect to the lateral boundary and with zero-normal trace on $S^{\pm}$. More precisely we define $Z:= Y \times (-1,1):= \left(-\frac12 , \frac12\right)^{n-1}\times (-1,1)$ and the top/bottom $S^{\pm} := Y\times \{\pm 1\}$. For arbitrary $\rho \in (0,\infty]$ we define
\begin{align*}
Z^{\rho} := \left(-\frac12,\frac12\right)^{n-1}\times (-\rho,\rho).
\end{align*}
Further we define  ($W_{\#}^{1,p'}(Z)$ is the space of $Y$-periodic Sobolev functions)
\begin{align}\label{DefinitionHper0Div}
H_{\#,0}^p(\div,Z):= \left\{ u \in H^p(\div,Z)\, : \, \langle u\cdot \nu , \phi \rangle_{\partial Z} = 0 \, \forall \phi \in W^{1,p'}_{\#}(Z)\right\}.
\end{align}
Heuristically spoken this is the space of $Y$-periodic functions with zero normal trace on $S^{\pm}$. Now we define the smooth subspace
\begin{align*}
C_{\#,0}^{\infty}(Z)^n := \left\{ u \in  C_{\#}^{\infty}(Z)^n\, : \, supp(u) \subset \overline{Z}\setminus S^{\pm}\right\},
\end{align*}
and the solenoidal subspaces
\begin{align*}
C_{\#,0,\sigma}^{\infty}(Z)^n &:= \left\{ u \in C_{\#,0}^{\infty}(Z)^n \, : \, \nabla \cdot u = 0 \right\},
\\
L_{\#,\sigma}^p(Z)&:= \left\{ u \in H_{\#,0}^p(\div,Z)\, : \, \nabla \cdot u = 0\right\}.
\end{align*}
The aim of this section is to prove the following Theorem
\begin{theorem}\label{TheoremDensitySmoothCompactPeriodicFunctionsHdiv}
We have that 
\begin{enumerate}
[label = (\roman*)]
\item $C_{\#,0}^{\infty}(Z)^n$ is dense in $H_{\#,0}^p(\div,Z) $,
\item $C_{\#,0,\sigma}^{\infty}(Z)^n$ is dense in $L^p_{\#,\sigma}(Z)$.
\end{enumerate}
\end{theorem}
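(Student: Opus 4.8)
The plan is to reduce Theorem~\ref{TheoremDensitySmoothCompactPeriodicFunctionsHdiv} to the already-established density result on the full periodic cube, Theorem~\ref{TheoremDensity}, by a two-step approximation: first approximate a function $u \in H_{\#,0}^p(\div,Z)$ by functions that vanish near $S^{\pm}$ (via a vertical cutoff, possibly combined with a vertical dilation), and then mollify in the tangential (periodic) directions with the periodic Dirac-sequence $\phi_{\epsilon}$ from $\eqref{DiracSequence}$ to gain smoothness, using Proposition~\ref{PropCommutingConvolutionDivergence} to control the divergence.

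\textbf{Step 1: Vertical truncation.} Given $u \in H_{\#,0}^p(\div,Z)$, I would first show it can be approximated in $H^p(\div,Z)$ by functions supported in $\overline{Y}\times[-1+\delta, 1-\delta]$ for small $\delta>0$. The natural device is a dilation in the $x_n$-variable: for $\lambda<1$ slightly less than $1$, set $u_\lambda(\bar x, x_n) := u(\bar x, x_n/\lambda)$ for $|x_n| < \lambda$ and extend by zero to $Z$. Because $u$ has \emph{zero} normal trace on $S^{\pm}$ (which is exactly what the definition $\eqref{DefinitionHper0Div}$ encodes, by testing against $\phi\in W^{1,p'}_{\#}(Z)$ that do not vanish on $S^{\pm}$), the zero-extension of $u_\lambda$ has a weak divergence in $L^p(Z)$ with no singular boundary contribution on $\{|x_n|=\lambda\}$; this is where the hypothesis $u\in H_{\#,0}^p$ (rather than merely $H^p(\div,Z)$) is essential. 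One then checks $u_\lambda \to u$ in $L^p(Z)^n$ and $\nabla\cdot u_\lambda = \lambda^{-1}(\nabla\cdot u)(\cdot,\cdot/\lambda)\chi_{\{|x_n|<\lambda\}} \to \nabla\cdot u$ in $L^p(Z)$ as $\lambda\to 1^-$, so $u_\lambda\to u$ in $H^p(\div,Z)$. In the solenoidal case $\nabla\cdot u = 0$, the dilated function still satisfies $\nabla\cdot u_\lambda = 0$, so this step preserves divergence-freeness.

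\textbf{Step 2: Tangential mollification.} For fixed $u_\lambda$ (supported away from $S^{\pm}$ and still $Y$-periodic in $\bar x$), I would convolve in the tangential variables only: define $(u_\lambda)_{\epsilon}(\bar x,x_n) := \int_Y \phi_{\epsilon}(\bar x - \bar y)\, u_\lambda(\bar y, x_n)\, d\bar y$ with the $(n-1)$-dimensional periodic Dirac-sequence. Since the support of $u_\lambda$ stays inside $\{|x_n|\le 1-\delta\}$ and tangential convolution does not move $x_n$, the result lies in $C_{\#,0}^{\infty}(Z)^n$ once $\epsilon<\delta$, and standard convolution estimates (Lemma~\ref{LemmaApproximation}, adapted to the tangential directions with $x_n$ as a parameter, plus Fubini) give $(u_\lambda)_{\epsilon}\to u_\lambda$ in $L^p(Z)^n$. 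The key point, exactly as in Proposition~\ref{PropCommutingConvolutionDivergence}, is that tangential mollification commutes with $\nabla\cdot$ for $Y$-periodic functions: differentiating under the integral sign and integrating by parts in $\bar y$ produces no lateral boundary term by periodicity, so $\nabla\cdot (u_\lambda)_{\epsilon} = (\nabla\cdot u_\lambda)_{\epsilon}\to \nabla\cdot u_\lambda$ in $L^p(Z)$; and if $\nabla\cdot u_\lambda=0$ then $\nabla\cdot(u_\lambda)_{\epsilon}=0$, yielding the solenoidal statement~(ii). Combining the two steps with a diagonal argument proves the theorem.

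\textbf{Main obstacle.} The delicate point is Step~1: verifying that the zero-extension of the vertically dilated $u_\lambda$ genuinely belongs to $H^p(\div,Z)$ with the expected divergence, i.e.\ that no distributional mass concentrates on the interfaces $\{x_n=\pm\lambda\}$. This is precisely the payoff of working with $H_{\#,0}^p(\div,Z)$ as defined through the vanishing of $\langle u\cdot\nu,\phi\rangle_{\partial Z}$ against \emph{all} periodic test functions: it forces the normal trace on $S^{\pm}$ to vanish in $W^{-1/p,p}(S^{\pm})$, which makes the gluing in the dilation argument legitimate. Once this is in hand, everything else is routine convolution analysis parallel to the proof of Theorem~\ref{TheoremDensity}. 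An alternative to the dilation would be a smooth vertical cutoff $\eta_\delta(x_n)$ with $\eta_\delta\equiv 1$ on $|x_n|\le 1-2\delta$ and support in $|x_n|\le 1-\delta$; then $\nabla\cdot(\eta_\delta u) = \eta_\delta\,\nabla\cdot u + u^n\,\eta_\delta'$, and one must argue $\|u^n\eta_\delta'\|_{L^p}\to 0$, which again uses the zero-normal-trace property via a Hardy-type or trace estimate near $S^{\pm}$ — so the same obstacle reappears in a different guise, and I would present whichever of the two is cleaner to write out.
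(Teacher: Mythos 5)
Your overall strategy coincides with the paper's (zero‑extend across $S^{\pm}$ using the vanishing normal trace, compress the support by a vertical dilation, then mollify with the periodic kernel), but two of your concrete steps fail as written.

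First, the dilation in Step 1. With $u_\lambda(\bar x,x_n):=u(\bar x,x_n/\lambda)$ and \emph{no} rescaling of the components, one has $\partial_i(u_\lambda)_i=(\partial_iu_i)(\bar x,x_n/\lambda)$ for $i\le n-1$ but $\partial_n(u_\lambda)_n=\lambda^{-1}(\partial_nu_n)(\bar x,x_n/\lambda)$, so
$\nabla\cdot u_\lambda=(\nabla\cdot u)(\bar x,x_n/\lambda)+(\lambda^{-1}-1)(\partial_nu_n)(\bar x,x_n/\lambda)$,
not $\lambda^{-1}(\nabla\cdot u)(\cdot,\cdot/\lambda)$. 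Since membership in $H^p(\div,Z)$ controls only the \emph{sum} of the diagonal derivatives, the stray term $\partial_nu_n$ is in general only a distribution; hence $u_\lambda$ need not lie in $H^p(\div,Z)$ at all, and in the solenoidal case $\nabla\cdot u_\lambda\neq 0$, so part (ii) breaks. The repair is the anisotropic dilation actually used in Lemma \ref{Lemmafrho}: rescale the normal component as well, i.e.\ $(u_\lambda)_n:=\lambda\,u_n(\bar x,x_n/\lambda)$ (the paper's $\tf^{\rho}$ with $\rho=1/\lambda$), which gives the clean identity $\nabla\cdot u_\lambda=(\nabla\cdot u)(\bar x,x_n/\lambda)$ and preserves divergence‑freeness. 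Your diagnosis of \emph{why} the zero extension across $\{|x_n|=\lambda\}$ is legitimate (the definition \eqref{DefinitionHper0Div} kills the pairing $\langle u\cdot\nu,\phi\rangle_{\partial Z}$ for all periodic $\phi$) is correct and is exactly Lemma \ref{LemmaZeroExtensionHdiv}; and you are right that the cutoff alternative $\eta_\delta u$ would founder on $\|u^n\eta_\delta'\|_{L^p}$, which the weak normal trace does not control.

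Second, the mollification in Step 2. Convolving only in the tangential variables $\bar x$ produces a function that is smooth in $\bar x$ but still merely $L^p$ in $x_n$, so $(u_\lambda)_\epsilon\notin C^{\infty}(Z)^n$ and a fortiori not in $C_{\#,0}^{\infty}(Z)^n$. You must mollify in all $n$ variables, taking the kernel $\phi_\epsilon$ supported in an $n$‑dimensional ball and extended $Y$‑periodically only in the tangential directions (this is \eqref{DiracSequence} as used in Lemmas \ref{LemmaApproxConvZ} and \ref{LemmaRegularityConvZ}); because $u_\lambda$ has compact support in $\overline Z\setminus S^{\pm}$, the full mollification retains such support for $\epsilon$ small, the commutation $\nabla\cdot(u_\lambda)_\epsilon=(\nabla\cdot u_\lambda)_\epsilon$ holds with no boundary contribution, and the approximation goes through. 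With these two corrections your argument becomes the paper's proof.
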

Again we consider the Dirac-sequence $\phi_{\epsilon}$ defined in $\eqref{DiracSequence}$, but now we extend it  $Y$-periodically to the domain $\R^{n-1} \times (-1,1)$ (and then by zero to the whole $\R^n$). 
 We define the convolution of $f\in L^p(Z)$ by ($x\in Z$)
\begin{align*}
f_{\epsilon}(x):= \int_Z \phi_{\epsilon}(x-y) f(y) dy.
\end{align*}
\begin{lemma}\label{LemmaApproxConvZ}
For $f\in L^p(Z)$ it holds that $f_{\epsilon} \rightarrow f $ in $L^p(Z)$ for $\epsilon \to 0$.
%\begin{align*}
%f_{\epsilon}\overset{\epsilon\to 0}{\longrightarrow} f \qquad\mbox{ in } L^p(Z).
%\end{align*} 
\end{lemma}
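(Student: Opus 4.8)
The proof will run in close parallel to that of Lemma~\ref{LemmaApproximation}, and the plan is to point out only the two differences: here the Dirac kernel $\phi_\epsilon$ is $Y$-periodic in the lateral variables $\bar y$ but merely extended by zero in the vertical variable $y_n$, so the lateral directions produce no boundary contribution while the vertical direction behaves exactly like the non-periodic cube of Lemma~\ref{LemmaApproximation}. Writing $Z^{1-\epsilon}$ for the interior slab (in the notation introduced above, with $Z=Z^1$), I would start from the splitting
\begin{align*}
\|f_\epsilon-f\|_{L^p(Z)} \le \|f_\epsilon-f\|_{L^p(Z^{1-\epsilon})} + \|f_\epsilon\|_{L^p(Z\setminus Z^{1-\epsilon})} + \|f\|_{L^p(Z\setminus Z^{1-\epsilon})},
\end{align*}
and show that each of the three summands tends to zero. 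The third one converges to zero by absolute continuity of the Lebesgue integral, since $|Z\setminus Z^{1-\epsilon}|\to 0$ as $\epsilon\to 0$.

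For the interior term I would use that for $x\in Z^{1-\epsilon}$ the relevant part of the support of $z\mapsto\phi_\epsilon(z)$ stays within the vertical range $(-1,1)$, so that, after unfolding the lateral periodicity, $f_\epsilon(x)=\int_{B_\epsilon(0)}\phi_\epsilon(z)\,\tilde f(x-z)\,dz$, where $\tilde f$ denotes the $Y$-periodic extension of $f$ in the lateral variables, which is locally $L^p$. Since $\int_{B_\epsilon(0)}\phi_\epsilon=1$, Jensen's inequality and Fubini give
\begin{align*}
\|f_\epsilon-f\|_{L^p(Z^{1-\epsilon})}^p \le \sup_{|z|<\epsilon}\|\tilde f(\cdot-z)-f\|_{L^p(Z^{1-\epsilon})}^p,
\end{align*}
which goes to zero by continuity of translations in $L^p$ (equivalently the Kolmogorov compactness criterion), just as in Lemma~\ref{LemmaApproximation}.

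For the boundary term I would argue as follows. By Jensen's inequality together with $\int_Z\phi_\epsilon(x-y)\,dy\le 1$ and Fubini,
\begin{align*}
\|f_\epsilon\|_{L^p(Z\setminus Z^{1-\epsilon})}^p \le \int_Z|f(y)|^p\Big(\int_{Z\setminus Z^{1-\epsilon}}\phi_\epsilon(x-y)\,dx\Big)dy.
\end{align*}
If $y\in Z^{1-2\epsilon}$ and $x\in Z\setminus Z^{1-\epsilon}$, then $|x_n-y_n|>\epsilon$ and hence $\phi_\epsilon(x-y)=0$; for $y\in Z\setminus Z^{1-2\epsilon}$ the inner integral is at most $1$. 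Therefore $\|f_\epsilon\|_{L^p(Z\setminus Z^{1-\epsilon})}^p\le\|f\|_{L^p(Z\setminus Z^{1-2\epsilon})}^p\to 0$, and combining the three estimates finishes the proof. The only genuinely delicate point — and the sole place where the argument departs from the fully periodic case of Lemma~\ref{LemmaApproximation} — is the bookkeeping near $S^{\pm}$, i.e. ensuring that the zero-extension of $\phi_\epsilon$ in the vertical variable does not destroy the approximation; this is exactly what the boundary-layer splitting above takes care of.
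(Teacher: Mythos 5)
Your argument is correct and is exactly what the paper intends: its proof of Lemma~\ref{LemmaApproxConvZ} consists of the single remark that it ``follows the same lines as the proof of Lemma~\ref{LemmaApproximation}'', and your write-up supplies precisely the right adaptation (lateral periodicity removes the boundary layer in $\bar y$, so only the vertical cut-off near $S^{\pm}$ needs the $Z^{1-\epsilon}$/$Z^{1-2\epsilon}$ bookkeeping). No gaps.
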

\begin{proof}
The proof follows the same lines as the proof of Lemma \ref{LemmaApproximation}. 
%We emphasize that the periodicity of $\phi_{\epsilon}$ was not used in the proof.
\end{proof}
\begin{lemma}\label{LemmaRegularityConvZ}
For $f \in H_{\#,0}^p(\div,Z)$ it holds that
\begin{align*}
\nabla \cdot f_{\epsilon} = (\nabla \cdot f)_{\epsilon}.
\end{align*}
Especially, for $\nabla \cdot f = 0$ we get $\nabla \cdot f_{\epsilon} = 0$.
\end{lemma}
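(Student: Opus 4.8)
The plan is to reproduce almost verbatim the argument used for Proposition \ref{PropCommutingConvolutionDivergence}, the only genuinely new point being to check that the shifted kernel $y \mapsto \phi_{\epsilon}(x-y)$ qualifies as a test function in $W^{1,p'}_{\#}(Z)$ for every fixed $x \in Z$.

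First I would fix $x \in Z$ and differentiate under the integral sign componentwise; since $\phi_{\epsilon}$ is smooth this yields
\begin{align*}
\nabla \cdot f_{\epsilon}(x) = \int_Z \nabla_x \phi_{\epsilon}(x-y)\cdot f(y)\,dy = -\int_Z \nabla_y\big(\phi_{\epsilon}(x-y)\big)\cdot f(y)\,dy.
\end{align*}
The next step is the observation that $y \mapsto \phi_{\epsilon}(x-y)$, restricted to $Z$, is smooth and $Y$-periodic in the lateral variables $\y$: this is exactly what the chosen extension of $\phi_{\epsilon}$ (periodic in the first $n-1$ directions, then zero outside the slab $\{|y_n|<1\}$) guarantees, and because $supp\,\phi_{\epsilon}\subset B_{\epsilon}(0)$ with $\epsilon$ small there is no wrap-around in the vertical direction. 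Hence $\phi_{\epsilon}(x-\cdot)\in W^{1,p'}_{\#}(Z)$, so by the very definition of $H^p_{\#,0}(\div,Z)$ the boundary pairing $\langle f\cdot\nu,\phi_{\epsilon}(x-\cdot)\rangle_{\partial Z}$ vanishes.

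Then I would apply the generalized divergence theorem on $Z$ to the pair $\big(f,\phi_{\epsilon}(x-\cdot)\big)$:
\begin{align*}
0 = \langle f\cdot\nu,\phi_{\epsilon}(x-\cdot)\rangle_{\partial Z} = \int_Z (\nabla\cdot f)(y)\,\phi_{\epsilon}(x-y)\,dy + \int_Z f(y)\cdot\nabla_y\big(\phi_{\epsilon}(x-y)\big)\,dy,
\end{align*}
which rearranges to $-\int_Z \nabla_y(\phi_{\epsilon}(x-y))\cdot f(y)\,dy = (\nabla\cdot f)_{\epsilon}(x)$. Comparing with the first display gives $\nabla\cdot f_{\epsilon}(x) = (\nabla\cdot f)_{\epsilon}(x)$ for all $x\in Z$; in particular $\nabla\cdot f = 0$ forces $(\nabla\cdot f)_{\epsilon}=0$, hence $\nabla\cdot f_{\epsilon}=0$. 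Finally, $(\nabla\cdot f)_{\epsilon}\in L^p(Z)$ by Lemma \ref{LemmaApproxConvZ}, so $f_{\epsilon}\in H^p(\div,Z)$ as well.

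I do not anticipate a serious obstacle. The only point requiring care — and the reason the cylinder case is singled out — is the bookkeeping showing that the periodic-in-$\y$, zero-in-$y_n$ extension of the Dirac kernel, once shifted by an arbitrary $x\in Z$, is still an admissible element of $W^{1,p'}_{\#}(Z)$ (and does not need to vanish on $S^{\pm}$, since $W^{1,p'}_{\#}(Z)$ imposes no such constraint); the rest is the divergence theorem exactly as in the full-cell case of Proposition \ref{PropCommutingConvolutionDivergence}.
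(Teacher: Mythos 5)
Your proof is correct and follows essentially the same route as the paper: differentiate under the integral, observe that $\phi_{\epsilon}(x-\cdot)\in W^{1,p'}_{\#}(Z)$, and use the generalized divergence theorem together with the definition of $H^p_{\#,0}(\div,Z)$ to kill the boundary pairing. The extra care you take in verifying that the shifted, laterally periodic kernel is an admissible test function is exactly the point the paper leaves implicit.
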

\begin{proof}
We argue in the same way as in the proof of Proposition \ref{PropCommutingConvolutionDivergence} to get for $x \in Z$, using $\phi_{\epsilon}(x-\cdot) \in W^{1,p'}_{\#}(Z)$,
\begin{align*}
\nabla \cdot f_{\epsilon} (x) 
%&= \int_Z \nabla_x \phi_{\epsilon}(x-y) \cdot f(y) dy 
%\\
%&= - \int_Z \nabla_y (\phi_{\epsilon}(x-y)) \cdot f(y) dy 
%\\
&= \int_Z \phi_{\epsilon} (x-y) \nabla \cdot f(y) dy - \underbrace{\langle f\cdot \nu , \phi_{\epsilon}(x-\cdot)\rangle_{\partial Z}}_{=0}
= (\nabla \cdot f)_{\epsilon}(x).
\end{align*}
%where we used $\phi_{\epsilon}(x-\cdot) \in W^{1,p'}_{\#}(Z)$.
\end{proof}
In general for $f \in H_{\#,0}^p(\div,Z)$ the convolution $f_{\epsilon}$ has not a compact support in $\overline{Z}\setminus S^{\pm}$. Hence, we need an additional argument for the construction of a dense sequence in $C_{\#,0}^{\infty}(Z)^n$ for $f$. This procedure is well-known for functions with zero normal trace on the whole boundary, see \cite[Theorem III.2.4]{Galdi}. In the following we extend this method to our setting, where we only point out the new aspects. First of all, we notice that we can extend the function $f$ by zero to the whole strip $Z^{\infty}$ without loosing regularity. More precisely, we have:
\begin{lemma}\label{LemmaZeroExtensionHdiv}
Let $f \in H_{\#,0}^p(\div,Z)$ and we denote by $\tf$ its zero extension to $Z^{\infty}$. Then for every $\rho \gr 1$ it holds that $\tf \in H_{\#,0}^p(\div,Z^{\rho})$ with $\nabla \cdot \tf = \widetilde{\nabla \cdot f}$. Especially, for $\nabla \cdot f = 0$ we get $\nabla \cdot \tf = 0$.
\end{lemma}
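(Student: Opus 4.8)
The plan is to verify the two asserted properties of the zero extension $\tf$ directly from the definition of $H^p_{\#,0}(\div,\cdot)$, using the characterization of the weak divergence via the duality pairing on the boundary. First I would note that $\tf \in L^p(Z^\rho)^n$ trivially, since $\|\tf\|_{L^p(Z^\rho)} = \|f\|_{L^p(Z)}$, so the only real content is to identify $\nabla\cdot\tf$ as a function in $L^p(Z^\rho)$ and to check the periodicity/zero-normal-trace condition on $\partial Z^\rho$. For the divergence, I would take an arbitrary $\phi \in C_0^\infty(Z^\rho)$ and compute $\int_{Z^\rho}\tf\cdot\nabla\phi\,dx = \int_Z f\cdot\nabla\phi\,dx$ (the integrand vanishes outside $Z$). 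Now $\phi|_{\overline Z}$ is an element of $W^{1,p'}(Z)$ — but it need \emph{not} be $Y$-periodic, so I cannot immediately invoke the $H^p_{\#,0}$ condition, which only pairs against periodic test functions.

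To handle this, the key step is to \emph{periodize} $\phi$: for $\phi \in C_0^\infty(Z^\rho)$, its restriction to $\overline Z$ and the restriction to $\overline Z$ of the shifted copies $\phi(\cdot + k e_i)$, $k\in\Z$, $i=1,\dots,n-1$, have disjoint interiors of support in $\overline Z$ up to translation, so one forms $\Phi := \sum_{k\in\Z^{n-1}} \phi(\cdot + (k,0))|_{\overline Z}$, a finite sum, which belongs to $W^{1,p'}_{\#}(Z)$. Then $\int_Z f\cdot\nabla\Phi + (\nabla\cdot f)\Phi\,dx = \langle f\cdot\nu,\Phi\rangle_{\partial Z} = 0$ by the membership of $f$ in $H^p_{\#,0}(\div,Z)$. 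On the other hand, by $Y$-periodicity of $f$ and $\nabla\cdot f$ (which is part of being in $H^p_{\#,0}$, encoded through that same pairing condition applied to periodic test functions — alternatively one argues the identity directly on the periodic extension), each translated summand contributes the same integral over the corresponding translated cell, and the whole sum reassembles to $\int_{Z^\rho}\tf\cdot\nabla\phi + \widetilde{\nabla\cdot f}\,\phi\,dx$. Hence $\int_{Z^\rho}\tf\cdot\nabla\phi\,dx = -\int_{Z^\rho}\widetilde{\nabla\cdot f}\,\phi\,dx$ for all $\phi\in C_0^\infty(Z^\rho)$, i.e. $\nabla\cdot\tf = \widetilde{\nabla\cdot f} \in L^p(Z^\rho)$; in particular $\tf\in H^p(\div,Z^\rho)$ and $\nabla\cdot f = 0$ gives $\nabla\cdot\tf=0$.

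Finally I would verify $\tf\in H^p_{\#,0}(\div,Z^\rho)$, i.e. $\langle \tf\cdot\nu,\psi\rangle_{\partial Z^\rho}=0$ for all $\psi\in W^{1,p'}_{\#}(Z^\rho)$. Given such a $\psi$, apply the generalized divergence theorem on $Z^\rho$: $\langle\tf\cdot\nu,\psi\rangle_{\partial Z^\rho} = \int_{Z^\rho}\nabla\cdot\tf\,\psi + \tf\cdot\nabla\psi\,dx = \int_Z \nabla\cdot f\,\psi + f\cdot\nabla\psi\,dx$, since $\tf$ and $\nabla\cdot\tf$ vanish on $Z^\rho\setminus\overline Z$. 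The right-hand side equals $\langle f\cdot\nu,\psi|_{\overline Z}\rangle_{\partial Z}$, and since $\psi|_{\overline Z}\in W^{1,p'}_{\#}(Z)$ and $f\in H^p_{\#,0}(\div,Z)$, this vanishes. The lateral periodicity of $\tf$ is inherited from that of $f$ (the zero extension in the $x_n$-direction does not affect lateral periodicity), so $\tf\in H^p_{\#,0}(\div,Z^\rho)$ as claimed.

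The main obstacle is the periodization argument in the middle step: one must be careful that the $Y$-periodicity of $f$ (which is only available through the distributional/trace characterization, not as a pointwise statement on an \emph{a priori} larger domain) genuinely lets one ``unfold'' the pairing $\langle f\cdot\nu,\Phi\rangle_{\partial Z}$ into the sum of pairings over translated cells, or equivalently that the periodic extension of $f$ to $Z^\infty$ has locally integrable weak divergence equal to the periodic extension of $\nabla\cdot f$ with no spurious jump terms across the cell interfaces $\{y_i\in\Z\}$ — and this is precisely the content of the condition $\langle f\cdot\nu,\phi\rangle_{\partial Z}=0$ for periodic $\phi$, so the proof is essentially a bookkeeping exercise once that equivalence is spelled out. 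Everything else (the $L^p$ bounds, the divergence theorem applications) is routine.
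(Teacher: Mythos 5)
Your argument reaches the correct conclusions and, once the middle step is unwound, coincides with the paper's proof: restrict the test function to $Z$, apply the divergence theorem there, and kill the boundary term using the defining pairing condition of $H^p_{\#,0}(\div,Z)$. However, the ``main obstacle'' you identify --- that $\phi|_{\overline Z}$ for $\phi\in C_0^{\infty}(Z^{\rho})$ need not be $Y$-periodic, so that one must periodize via $\Phi=\sum_{k}\phi(\cdot+(k,0))|_{\overline Z}$ --- does not exist in this geometry. By definition $Z^{\rho}=\left(-\frac12,\frac12\right)^{n-1}\times(-\rho,\rho)$ has the \emph{same} lateral cross-section as $Z$; the extension is purely in the $y_n$-direction, and only the faces $S^{\pm}$ of $Z$ are interior to $Z^{\rho}$. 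Consequently any $\phi\in C_0^{\infty}(Z^{\rho})$ vanishes in a neighbourhood of the lateral boundary $\partial Y\times(-\rho,\rho)$, hence its restriction to $Z$ vanishes near $\partial Y\times(-1,1)$ and already belongs to $W^{1,p'}_{\#}(Z)$. Your periodized sum collapses to the single term $k=0$: for $k\neq 0$ the translate $\phi(\cdot+(k,0))$ is supported in $(Y-(k,0))\times(-\rho,\rho)$, which meets $\overline Z$ only where $\phi$ has already been forced to vanish. So no unfolding across cell interfaces and no discussion of jump terms of the periodic extension of $f$ is needed; the step you flag as the crux is vacuous, and the proof is correct precisely because it reduces to the trivial case. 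The paper's proof makes this explicit by testing directly against $\phi\in W^{1,p'}_{\#}(Z^{\rho})$ (whose restriction to $Z$ is again $Y$-periodic, covering $C_0^{\infty}(Z^{\rho})$ as a special case), obtaining the identity $\nabla\cdot\tf=\widetilde{\nabla\cdot f}$ and the vanishing of $\langle\tf\cdot\nu,\phi\rangle_{\partial Z^{\rho}}$ in a single three-line computation; your third paragraph reproduces exactly that step.
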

\begin{proof}
Let $\phi \in W^{1,p'}_{\#}(Z^{\rho})$ (especially the following computation is valid for all $\phi \in C_0^{\infty}(Z^{\rho})$). It holds with the divergence theorem
\begin{align*}
\int_{Z^{\rho}} \tf \cdot \nabla \phi dy = \int_Z f \cdot \nabla \phi dy = -\int_Z \nabla \cdot f \phi dy + \underbrace{\langle f\cdot \nu , \phi \rangle_{\partial Z}}_{=0} = - \int_{Z^{\rho}} \widetilde{\nabla \cdot f} \phi dy. 
\end{align*}
\end{proof}
Now, let $\rho \gr 1$ and define for $f \in H_{\#,0}^p(\div,Z)$ the function ($y \in Z$)
\begin{align*}
\tf^{\rho}(y) := \left(\tf_1 ,\ldots,\tf_{n-1}, \frac{1}{\rho} \tf_n\right) (\y,\rho y_n).
\end{align*}
\begin{lemma}\label{Lemmafrho}
For $f \in H_{\#,0}^p(\div,Z)$ and $\rho \gr 1$ we have $\tf^{\rho} \in H_{\#,0}^p(\div,Z)$ with $\nabla \cdot \tf^{\rho} = \nabla \cdot \tf(\y,\rho y_n)$, and $\tf^{\rho}$ has compact support in $\overline{Z}\setminus S^{\pm}$. Further, it holds that
\begin{align*}
\tf^{\rho} \overset{\rho \searrow 1}{\longrightarrow} \tf = f \qquad\mbox{ in } H^p(\div,Z).
\end{align*}
Especially, for $\nabla \cdot f= 0$ it holds that $\nabla \cdot \tf^{\rho} = 0$.
\end{lemma}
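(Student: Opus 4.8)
The plan is to read $\tf^{\rho}$ as a Piola-type pullback of $\tf$ under the anisotropic dilation $\Phi_{\rho}:Z\to Z^{\rho}$, $\Phi_{\rho}(y):=(\y,\rho y_n)$, and then reduce every claim to the continuity of dilations in $L^p$. Throughout I would use that, by Lemma \ref{LemmaZeroExtensionHdiv}, the zero extension $\tf$ belongs to $H^p_{\#,0}(\div,Z^{\rho})$ for every $\rho\gr 1$ and satisfies $\nabla\cdot\tf=\widetilde{\nabla\cdot f}$.

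\textbf{Divergence identity and boundary behaviour.} First I would establish $\tf^{\rho}\in H^p(\div,Z)$ together with $\nabla\cdot\tf^{\rho}=(\nabla\cdot\tf)\circ\Phi_{\rho}$. For $\phi\in C_0^{\infty}(Z)$ put $\psi(x):=\phi(\x,x_n/\rho)\in C_0^{\infty}(Z^{\rho})$; changing variables $x=\Phi_{\rho}(y)$ in $\int_Z\tf^{\rho}\cdot\nabla\phi\,dy$ and using $\partial_{y_i}\phi(y)=(\partial_{x_i}\psi)\circ\Phi_{\rho}(y)$ for $i<n$ and $\partial_{y_n}\phi(y)=\rho\,(\partial_{x_n}\psi)\circ\Phi_{\rho}(y)$, the factor $1/\rho$ on the $n$-th component of $\tf^{\rho}$ cancels exactly, so that the integrand becomes $(\tf\cdot\nabla\psi)\circ\Phi_{\rho}$ and
\[
\int_Z\tf^{\rho}\cdot\nabla\phi\,dy=\frac{1}{\rho}\int_{Z^{\rho}}\tf\cdot\nabla\psi\,dx=-\frac{1}{\rho}\int_{Z^{\rho}}(\nabla\cdot\tf)\,\psi\,dx=-\int_Z\big((\nabla\cdot\tf)\circ\Phi_{\rho}\big)\phi\,dy .
\]
This gives the divergence formula, hence $\nabla\cdot\tf^{\rho}=(\nabla\cdot\tf)\circ\Phi_{\rho}=(\widetilde{\nabla\cdot f})\circ\Phi_{\rho}$, and in particular $\nabla\cdot\tf^{\rho}\equiv 0$ whenever $\nabla\cdot f=0$. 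Running the same computation for $\phi\in W^{1,p'}_{\#}(Z)$ (then $\psi\in W^{1,p'}_{\#}(Z^{\rho})$) and applying the divergence theorem on $Z$ and on $Z^{\rho}$ yields $\langle\tf^{\rho}\cdot\nu,\phi\rangle_{\partial Z}=\tfrac{1}{\rho}\langle\tf\cdot\nu,\psi\rangle_{\partial Z^{\rho}}=0$, since $\tf\in H^p_{\#,0}(\div,Z^{\rho})$; thus $\tf^{\rho}\in H^p_{\#,0}(\div,Z)$.

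\textbf{Compact support and convergence.} Since $\tf$ vanishes on $Z^{\infty}\setminus\overline Z$, i.e.\ for $|x_n|\ge 1$, the field $\tf^{\rho}$ vanishes for $|\rho y_n|\ge 1$, so $\operatorname{supp}\tf^{\rho}\subset\{|y_n|\le 1/\rho\}$, which has positive distance from $S^{\pm}$ because $\rho\gr 1$. For the convergence I would use $f=\tf$ on $Z$ and split
\[
\|\tf^{\rho}-f\|_{L^p(Z)}\le\|\tf\circ\Phi_{\rho}-\tf\|_{L^p(Z)}+\Big(1-\tfrac{1}{\rho}\Big)\|\tf_n\circ\Phi_{\rho}\|_{L^p(Z)}\le\|\tf\circ\Phi_{\rho}-\tf\|_{L^p(Z)}+\Big(1-\tfrac1\rho\Big)\rho^{-1/p}\|f_n\|_{L^p(Z)},
\]
where the first term tends to $0$ as $\rho\searrow 1$ by continuity of the dilation $\rho\mapsto\tf\circ\Phi_{\rho}$ in $L^p$ (fix $\rho_0\gr 1$, approximate $\tf$ in $L^p(Z^{\rho_0})$ by continuous functions and use uniform continuity). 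For the divergence, $\nabla\cdot\tf^{\rho}-\nabla\cdot f=(\widetilde{\nabla\cdot f})\circ\Phi_{\rho}-\widetilde{\nabla\cdot f}$ on $Z$, which converges to $0$ in $L^p(Z)$ by the same argument applied to $\widetilde{\nabla\cdot f}\in L^p(Z^{\rho_0})$. Together this gives $\tf^{\rho}\to f$ in $H^p(\div,Z)$.

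\textbf{Main obstacle.} The only genuinely delicate point is the first step: checking that the anisotropic dilation together with the $1/\rho$-rescaling of the last component intertwines the weak divergence \emph{and} preserves membership in $H^p_{\#,0}$, i.e.\ that the boundary term in the integration by parts over $Z^{\rho}$ really vanishes — this is exactly where Lemma \ref{LemmaZeroExtensionHdiv} is used. The $L^p$-continuity of dilations used in the convergence step is routine; the overall argument is modeled on \cite[Theorem III.2.4]{Galdi}.
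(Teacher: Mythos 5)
Your proposal is correct and follows essentially the same route as the paper: the divergence identity and the membership in $H^p_{\#,0}(\div,Z)$ are obtained by the anisotropic change of variables $y\mapsto(\y,\rho y_n)$ onto $Z^{\rho}$, with the boundary pairing killed by Lemma \ref{LemmaZeroExtensionHdiv}, and the convergence as $\rho\searrow 1$ reduces to $L^p$-continuity of dilations (which the paper dispatches by citing the Kolmogorov criterion, while you spell out the approximation-by-continuous-functions argument and the explicit $(1-1/\rho)\rho^{-1/p}\Vert f_n\Vert_{L^p}$ correction for the rescaled last component).
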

\begin{proof}
The compact support of $\tf^{\rho}$ is obvious. Let us show the regularity. For $\phi \in W_{\#}^{1,p'}(Z)$ it holds  with Lemma \ref{LemmaZeroExtensionHdiv} that (we use the notation $\phi^{\frac{1}{\rho}}(y):= \phi\left(\y,\frac{y_n}{\rho}\right) \in W_{\#}^{1,p'}(Z^{\rho})$)
\begin{align*}
\int_Z \nabla \phi \cdot \tf^{\rho} dy 
% &= \sum_{i=1}^{n-1} \int_Z \partial_i \phi \tf_i (\y,\rho y_n) dy + \frac{1}{\rho} \int_Z \partial_n \phi \tf_n (\y,\rho y_n) dy
%\\
%&= \sum_{i=1}^{n-1} \frac{1}{\rho} \int_{Z^{\rho}} \partial_i \phi \left(\y,\frac{y_n}{\rho}\right) \tf_i(y) dy + \frac{1}{\rho^2} \int_{Z^{\rho}} \partial_n \phi\left(\y,\frac{y_n}{\rho}\right) \tf_n (y) dy
%\\
= \frac{1}{\rho} \int_{Z^{\rho}} \nabla \phi^{\frac{1}{\rho}} \cdot \tf dy
&= -\frac{1}{\rho} \int_{Z^{\rho}} \phi^{\frac{1}{\rho}} \nabla \cdot \tf dy + \frac{1}{\rho} \underbrace{ \langle \tilde{f}\cdot \nu ,\phi^{\frac{1}{\rho}} \rangle_{\partial Z^{\rho}}}_{=0}
\\
&= - \int_Z \phi \nabla \cdot \tf (\y,\rho y_n) dy.
\end{align*}
The convergence of $\tf^{\rho}$ to $f$ in $H^p(\div,Z)$ is quite standard and follows from the Kolmogorov-compactness criterion, see for example \cite[Section 1.1]{WlokaEnglisch}
%It remains to show that $\tf^{\rho}$ converges to $f$ in $H^p(\div,Z)$. We have 
%\begin{align*}
%\Vert \tf^{\rho} - f \Vert_{L^p(Z)} &\le C  \left(\sum_{i=1}^{n-1} \Vert \tf_i (\y,\rho y_n) - \tf_i \Vert_{L^p(Z)} + \left\Vert \frac{1}{\rho} \tf_n (\y,\rho y_n) - \tf_n \right\Vert_{L^p(Z)}\right)
%\\
%&\le  C  \left(\sum_{i=1}^{n-1} \Vert \tf_i (\y,\rho y_n) - \tf_i \Vert_{L^p(Z)} + \left\vert \frac{1}{\rho} - 1\right\vert \Vert \tf_n (\y,\rho y_n)\Vert_{L^p(Z)} + \left\Vert  \tf_n (\y,\rho y_n) - \tf_n \right\Vert_{L^p(Z)}\right).
%\end{align*}
%The second term in the bracket converges to zero for $\rho \searrow 1$, since the norm is bounded by the norm of $\tf_n$. The other terms tend to zero, due to the Kolmogorov-compactness theorem. Hence, we have $\tf^{\rho} \rightarrow f$ in $L^p(Z)^n$. It remains to show the convergence of $\nabla \cdot \tf^{\rho}$. This is  a simple consequence of the Kolmogorov-criterion and the calculation above. In fact we have
%\begin{align*}
%\Vert \nabla \cdot \tf^{\rho} - \nabla \cdot f \Vert_{L^p(Z)} = \Vert \nabla \cdot \tf (\y,\rho y_n) - \nabla \cdot \tf \Vert_{L^p(Z)} \overset{\rho \searrow 1 }{\longrightarrow } 0.
%\end{align*}
\end{proof}
Now we can give the proof of Theorem \ref{TheoremDensitySmoothCompactPeriodicFunctionsHdiv}:
\begin{proof}[Proof of Theorem \ref{TheoremDensitySmoothCompactPeriodicFunctionsHdiv}]
Let $f \in H_{\#,0}^p(\div,Z)$ and $\eta \gr 0$.  From Lemma \ref{Lemmafrho} we obtain the existence of $\rho \gr 1$ such that
\begin{align*}
\Vert f - \tf^{\rho}\Vert_{H^p(\div,Z)} \le \frac{\eta}{2}.
\end{align*}
We define 
\begin{align*}
\tf^{\rho}_{\epsilon}:= (\tf^{\rho})_{\epsilon} \in C^{\infty}(Z)^n.
\end{align*}
From the $Y$-periodicity of $\phi_{\epsilon}$ we get the $Y$-periodicity of $\tf^{\rho}_{\epsilon}$. Further, since $\tf^{\rho}$ has a compact support in $\overline{Z}\setminus S^{\pm}$, $\tf^{\rho}_{\epsilon}$ has a compact support in $\overline{Z}\setminus S^{\pm}$ for $\epsilon $ small enough (for fixed $\rho$). Hence, $\tf^{\rho}_{\epsilon} \in C_{\#,0}^{\infty}(Z)^n$. Now Lemma \ref{LemmaApproxConvZ} and \ref{LemmaRegularityConvZ} imply that for $\epsilon $ small enough it holds that
\begin{align*}
\Vert \tf^{\rho}_{\epsilon} - \tf^{\rho} \Vert_{H^p(\div,Z)} \le \frac{\eta}{2}.
\end{align*} 
Together we obtain
\begin{align*}
\Vert f - \tf^{\rho}_{\epsilon}\Vert_{H^p(\div,Z)} \le \eta.
\end{align*}
By similar arguments we obtain the density of $C_{\#,0,\sigma}^{\infty}(Z)^n$ in $L^p_{\#,\sigma}(Z)$.
\end{proof}

\end{appendix}

\section*{Acknowledgement}
The first author is  supported by the project SCIDATOS (Scientific Computing for Improved Detection and Therapy of Sepsis), which was funded by the Klaus Tschira Foundation, Germany (Grant Number 00.0277.2015).

\bibliographystyle{abbrv}
\bibliography{literature}

\begin{thebibliography}{10}

\bibitem{Acerbi1992}
E.~Acerbi, V.~Chiad\`{o}, G.~D. Maso, and D.~Percivale.
\newblock An extension theorem from connected sets, and homogenization in
  general periodic domains.
\newblock {\em Nonlinear Analysis, Theory, Methods \& Applications},
  18(5):481--496, 1992.

\bibitem{Allaire_TwoScaleKonvergenz}
G.~Allaire.
\newblock Homogenization and two-scale convergence.
\newblock {\em SIAM J. Math. Anal.}, 23:1482--1518, 1992.

\bibitem{BhattacharyaGahnNeussRadu}
A.~Bhattacharya, M.~Gahn, and M.~Neuss-Radu.
\newblock Effective transmission conditions for reaction-diffusion processes in
  domains separated by thin channels.
\newblock {\em Applicable Analysis}, 2020.

\bibitem{caillerie1984thin}
D.~Caillerie and J.~Nedelec.
\newblock Thin elastic and periodic plates.
\newblock {\em Mathematical Methods in the Applied Sciences}, 6(1):159--191,
  1984.

\bibitem{chechkin2000weighted}
G.~Chechkin and E.~Pichugina.
\newblock Weighted {K}orn's inequality for a thin plate with a rough surface.
\newblock {\em Russian Journal of Mathematical Physics}, 7(3):279--287, 2000.

\bibitem{ciarlet1988mathematical}
P.~G. Ciarlet.
\newblock {\em Mathematical Elasticity: Volume I: Three-dimensional
  elasticity}.
\newblock North-Holland, 1988.

\bibitem{ciarlet1997mathematical}
P.~G. Ciarlet.
\newblock {\em Mathematical elasticity: Volume II: Theory of plates}.
\newblock Elsevier, 1997.

\bibitem{ciarlet2000theory}
P.~G. Ciarlet.
\newblock {\em Theory of shells}.
\newblock Elsevier, 2000.

\bibitem{CioranescuGrisoDamlamian2018}
D.~Cioranescu, G.~Griso, and A.~Damlamian.
\newblock {\em The periodic unfolding method}.
\newblock Springer, 2018.

\bibitem{CioranescuSJPaulin}
D.~Cioranescu and J.~S.~J. Paulin.
\newblock Homogenization in open sets with holes.
\newblock {\em J. Math. Pures et Appl.}, 71:590--607, 1979.

\bibitem{DouanlaTwoScaleConvergence}
H.~Douanla.
\newblock Two-scale convergence of periodic elliptic spectral problems with
  indefinite density function in perforated domains.
\newblock {\em Asymptotic Analysis}, 81:251--272, 2013.

\bibitem{duvant2012inequalities}
G.~Duvant and J.~L. Lions.
\newblock {\em Inequalities in mechanics and physics}, volume 219.
\newblock Springer Science \& Business Media, 2012.

\bibitem{GahnJaegerNeussRaduStokesPlate}
M.~Gahn, W.~J\"ager, and M.~Neuss-Radu.
\newblock Derivation of a system of coupled {S}tokes and plate equations for
  fluid flow through a thin porous elastic layer.
\newblock {\em Submitted}.

\bibitem{GahnNeussRaduSingularLimit2021}
M.~Gahn and M.~Neuss-Radu.
\newblock Singular limit for reactive diffusive transport through an array of
  thin channels in case of critical diffusivity.
\newblock {\em Multiscale Model. Simul.}, 19(4):1573 -- 1600, 2021.

\bibitem{GahnEffectiveTransmissionContinuous}
M.~Gahn, M.~Neuss-Radu, and P.~Knabner.
\newblock Derivation of effective transmission conditions for domains separated
  by a membrane for different scaling of membrane diffusivity.
\newblock {\em Discrete \& Continuous Dynamical Systems-Series S},
  10(4):773--797, 2017.

\bibitem{GahnNeussRadu2017EffectiveTransmissionConditions}
M.~Gahn, M.~Neuss-Radu, and P.~Knabner.
\newblock Derivation of effective transmission conditions for domains separated
  by a membrane for different scaling of membrane diffusivity.
\newblock {\em Discrete \& Continuous Dynamical Systems-S}, 10(4):773, 2017.

\bibitem{Galdi}
G.~P. Galdi.
\newblock {\em An Introduction to the Mathematical Theory of the Navier--Stokes
  Equations}.
\newblock Springer Monogr. in Math., Springer-Verlag, New York, 2011.

\bibitem{griso2008decompositions}
G.~Griso.
\newblock Decompositions of displacements of thin structures.
\newblock {\em Journal de math{\'e}matiques pures et appliqu{\'e}es},
  89(2):199--223, 2008.

\bibitem{griso2020homogenization}
G.~Griso, L.~Khilkova, J.~Orlik, and O.~Sivak.
\newblock Homogenization of perforated elastic structures.
\newblock {\em Journal of Elasticity}, 141:181--225, 2020.

\bibitem{Orlik2017}
G.~Griso, A.~Migunova, and J.~Orlik.
\newblock Asymptotic analysis for domains separated by a thin layer made of
  periodic vertical beams.
\newblock {\em Journal of Elasticity}, pages 1--41, 2017.

\bibitem{lewicka2011uniform}
M.~Lewicka and S.~M{\"u}ller.
\newblock The uniform {K}orn--{P}oincar{\'e} inequality in thin domains.
\newblock {\em Annales de l'Institut Henri Poincar{\'e} C, Analyse non
  lin{\'e}aire}, 28(3):443--469, 2011.

\bibitem{Lions}
J.~L. Lions.
\newblock {\em Quelques m\'ethodes de r\'esolution des probl\`emes aux limites
  non lin\'eaires}.
\newblock Dunod, Paris, 1969.

\bibitem{MarusicMarusicPalokaTwoScaleConvergenceThinDomains}
S.~Maru\v{s}i\'c and E.~Maru\v{s}i\'c-Paloka.
\newblock Two-scale convergence for thin domains and its applications to some
  lower-dimensional model in fluid mechanics.
\newblock {\em Asymptotic Analysis}, 23:23--58, 2000.

\bibitem{Necas}
J.~Ne{\v{c}}as.
\newblock {\em Les m\'ethodes directes en th\'eorie des \'equations
  elliptiques}.
\newblock Academia, Praha \& Masson, 1967.

\bibitem{NeussJaeger_EffectiveTransmission}
M.~Neuss-Radu and W.~J\"ager.
\newblock Effective transmission conditions for reaction-diffusion processes in
  domains separated by an interface.
\newblock {\em SIAM J. Math. Anal.}, 39:687--720, 2007.

\bibitem{Nguetseng}
G.~Nguetseng.
\newblock A general convergence result for a functional related to the theory
  of homogenization.
\newblock {\em SIAM J. Math. Anal.}, 20:608--623, 1989.

\bibitem{Oleinik1992}
O.~A. Oleinik, A.~S. Shamaev, and G.~A. Yosifian.
\newblock {\em Mathematical problems in elasticity and homogenization}.
\newblock North Holland, 1992.

\bibitem{sauer2018instationary}
J.~Sauer.
\newblock Instationary generalized {S}tokes equations in partially periodic
  domains.
\newblock {\em Journal of Mathematical Fluid Mechanics}, 20(2):289--327, 2018.

\bibitem{WlokaEnglisch}
J.~Wloka.
\newblock {\em Partial Differential Equations}.
\newblock Cambridge University Press, 1982.

\bibitem{zhikov2004korn}
V.~Zhikov and S.~Pastukhova.
\newblock On the {K}orn inequalities on thin periodic frames.
\newblock {\em Journal of Mathematical Sciences}, 123(5):4499--4521, 2004.

\end{thebibliography}

\end{document}